\documentclass{article}
\usepackage[numbers,sort&compress]{natbib}
\usepackage[english]{babel}
\usepackage{placeins}
\usepackage[letterpaper,top=2cm,bottom=2cm,left=2.8cm,right=3cm,marginparwidth=1.75cm]{geometry}
\usepackage{amssymb}
\usepackage{booktabs}
\usepackage{enumitem}

\usepackage{algpseudocode}

\usepackage{algorithm}
\usepackage{breqn}
\usepackage{verbatim}
\usepackage{caption}
\usepackage{subcaption}
\usepackage{tikz}
\usetikzlibrary{decorations.pathreplacing,decorations.markings}
\usepackage{float}
\usepackage{pifont}
\usepackage{enumerate}
\usepackage{geometry} 
\usepackage{amsmath}
\usepackage{graphicx}
\usepackage[colorlinks=true, allcolors=blue]{hyperref}
\newtheorem{theorem}{Theorem}[section]
\newtheorem{conjecture}[theorem]{Conjecture}
\newtheorem{problem}[theorem]{Problem}
\newtheorem{definition}[theorem]{Definition}
\newtheorem{lemma}[theorem]{Lemma}

\newtheorem{corollary}[theorem]{Corollary}
\newtheorem{case}{\indent Case}[section]

\newtheorem{claim}{\indent Claim}[theorem]      
\newenvironment{proof}{
\noindent {\bf Proof.}\rm}
{\mbox{}\hfill\rule{0.5em}{0.809em}\par}

\usepackage{authblk}

\newcommand{\iskfour}{\textnormal{ISK}_4}
\newcommand{\isk}{\textnormal{ISK}_4}
\newcommand{\iskt}{\left\{\textnormal{ISK}_4, \textnormal{triangle}\right\}}

\newcommand{\iskk}{\left\{\textnormal{ISK}_4, K_{3,3}\right\}}
\newcommand{\iskdb}{\left\{\textnormal{ISK}_4, \textnormal{diamond}, \textnormal{bowtie}\right\}}
\newcommand{\iskdbk}{\left\{\textnormal{ISK}_4, \textnormal{diamond}, \textnormal{bowtie}, K_{3,3}\right\}}
\newcommand{\iskdbpk}{\left\{\textnormal{ISK}_4, \textnormal{diamond}, \textnormal{bowtie}, \textnormal{prism}, K_{3,3}\right\}}
\newcommand{\isktk}{\left\{\textnormal{ISK}_4, \textnormal{triangle}, K_{3,3}\right\}}

\newcommand{\iskdbp}{\left\{\textnormal{ISK}_4, \textnormal{diamond}, \textnormal{bowtie}, \textnormal{prism}\right\}}
\newcommand{\db}{\left\{ \textnormal{diamond}, \textnormal{bowtie}\right\}}

\title{ \bf On the structures of \{diamond, bowtie\}-free graphs that do not contain an induced subdivision of $K_4$}

	\author{\bf Feng Liu\footnote{Email: liufeng0609@126.com.}}
	
	\author{\bf Shuang Sun\footnote{Email: chocolatesun@sjtu.edu.cn.}}
		
	\author{\bf Yan Wang\footnote{Email: yan.w@sjtu.edu.cn (corresponding author).}}

     \affil{\footnotesize School of Mathematics Sciences, Shanghai Jiao Tong University, 800 Dongchuan Road, Shanghai, 200240, China}
\begin{document}
\maketitle

\begin{abstract}
A graph is $\mathrm{ISK}_4$-free if it contains no induced subdivision of $K_4$. L\'ev\^eque et al. [\emph{J. Combin. Theory Ser. B} \textbf{102} (2012) 924--947] conjectured that all $\mathrm{ISK}_4$-free graphs are 4-colorable. Chen et al. [\emph{J. Graph Theory} \textbf{96} (2021) 554--577] proved that $\{\mathrm{ISK}_4, \mathrm{diamond}, \mathrm{bowtie}\}$-free graphs are 4-colorable and asked whether such graphs are 3-colorable, where a diamond is $K_4$ minus one edge and a bowtie consists of two triangles sharing a vertex. In this paper, we characterize the structures of $\{\mathrm{ISK}_4, \mathrm{diamond}, \mathrm{bowtie}\}$-free graphs and prove that such graphs are 3-colorable, which answers a question of Chen et al. [\emph{J. Graph Theory} \textbf{96} (2021) 554--577] affirmatively and extends a result of Chudnovsky et al. [\emph{J. Graph Theory} \textbf{92} (2019) 67--95]. Furthermore, our structural theorem yields a polynomial-time algorithm for decomposing $\{\mathrm{ISK}_4, \mathrm{diamond}, \mathrm{bowtie}\}$-free graphs, and consequently a polynomial-time algorithm for coloring this class of graphs.

\smallskip
 \noindent{\bf Keywords: Chromatic number; Induced subgraph; $\mathrm{ISK_4}$-free; $K_4$-subdivision}
			
\smallskip
\noindent{\bf AMS Subject Classification: 05C17, 05C75, 05C15}  
\end{abstract}

\section{Introduction }

All graphs in this paper are finite and simple. 
A graph $G$ is $k$-colorable if there exists a mapping $\phi: V(G)\rightarrow [k]$ such that $\phi(u)\neq \phi(v)$ whenever $uv\in E(G)$. The \emph{chromatic number} $\chi(G)$ of $G$ is the minimum integer $k$ such that $G$ is $k$-colorable. The clique number $\omega(G)$ of $G$ is the maximum integer $k$ such that $G$ contains a complete graph of size $k$. 
Determining the chromatic number $\chi(H)$ and the clique number $\omega(H)$ for a graph $H$ are NP-complete in general~\cite{Karp}.  

For a graph $H$, we say that $G$ is $H$-free if $G$ has no induced subgraph isomorphic to $H$. Let $\mathcal{F}$ be a family of graphs. We say that $G$ is $\mathcal{F}$-free if $G$ is $F$-free for every member $F$ of $\mathcal{F}$. 
A class of graphs is $\chi$-\emph{bounded} if there exists a function $f$ such that every graph of the class satisfies $\chi(G)\leq f(\omega(G))$. The concept of $\chi$-boundedness was raised by Gy\'{a}rf\'{a}s in 1975 \cite{g2}. 
One important research direction in the study of $\chi$-boundedness is to determine graph families~$\mathcal{H}$ such that the class of $\mathcal{H}$-free graphs is $\chi$-bounded, and to find the smallest $\chi$-binding function for such hereditary classes. A graph $G$ is {\em perfect} if $\chi(H)=\omega(H)$ for each induced subgraph $H$ of $G$.
	Perfect graphs are a well-known hereditary $\chi$-bounded graph class.
    It is obvious that the identity function is a $\chi$-binding function for perfect graphs.
	A {\em hole} in a graph is an induced subgraph which is a cycle of length at least
	four, and a hole is {\em even} (resp., {\em odd}) if its length is even (resp., {\em odd}).
	An {\em antihole} of a graph $G$ is an induced subgraph
	of $G$ whose complement graph is a cycle of length at least four.
	Chudnovsky et al. \cite{MC06} proved the famous Strong Perfect Graph Theorem, which shows that perfect graphs are equivalent to the class of $\{\text{odd hole, odd antihole}\}$-free graphs. 
By a probabilistic construction due to Erd\H{o}s \cite{Erdos1959}, if $\mathcal{H}$ is finite and none of the graphs in $\mathcal{H}$ is acyclic, then the class of $\mathcal{H}$-free graphs is not $\chi$-bounded.
Gy\'arf\'as~\cite{g2} and Sumner~\cite{Sumner1981} independently conjectured that for every forest~\(T\), the class of \(T\)-free graphs is \(\chi\)-bounded.
This conjecture has been confirmed for some special trees (\cite{Chudnovsky20192,g2,g3,Kierstead1994,Kierstead2004,scott1997,ss2020}).
We refer the readers to \cite{ss} for a survey on $\chi$-bounded problems.

A subdivision of a graph is obtained by replacing some edges with paths whose internal vertices all have degree two. Scott \cite{scott1997} conjectured that for any graph $H$, the class of those graphs that do not contain any subdivision of $H$ as an induced subgraph is $\chi$-bounded.
This conjecture was disproved by Pawlik et al. \cite{AP2014}. They showed that Scott's conjecture is false whenever $H$ is obtained from a non-planar graph by subdividing every edge at least once. 
However, it has been shown that Scott's conjecture is still true for several graphs $H$.
Scott et al. \cite{scott2016} proved the conjecture in the case when $H$ is a forest. Chudnovsky et al. \cite{Chudnovsky2016} showed that Scott’s conjecture holds when $H$ is a paw or a bull, where a $paw$ is the graph obtained by attaching a pendant edge to one vertex of a triangle, and a $bull$ is the graph obtained by attaching a pendant edge to two distinct vertices of a triangle.
In this paper, we focus on the case when~$H = K_4$.

 The structure of graphs containing no subdivision of $K_4$ as a (not necessarily induced) subgraph has been extensively studied. 
They are \emph{series-parallel} graphs. 
Note that every series-parallel graph has a vertex of degree at most two~\cite{duffin1965}.
An $\isk$ is a graph that is isomorphic to a subdivision of $K_4$. 
However, the structure of $\iskfour$-free graphs is still not fully understood.
 The study of such graphs was initiated by L\'ev\^eque et al.~\cite{Leveque2012},
who established a decomposition theorem for $\iskfour$-free graphs.
To state their result, we first introduce the necessary definitions and notation.

Given a graph $H$, the \emph{line graph} of $H$, denoted by $L(H)$, is the graph whose vertex set is $E(H)$, where two distinct vertices of $L(H)$ are adjacent if and only if the corresponding edges of $H$ share a common endpoint.
Two disjoint vertex sets $X$ and $Y$ are \emph{anticomplete} if no vertex in $X$ is adjacent to any vertex in $Y$.
 A \emph{cutset} of a (possibly disconnected) graph $G$ is a nonempty set of vertices whose removal increases the number of connected components of $G$.
 A cutset $X$ is a \emph{clique cutset} if $X$ is a clique in $G$. 
It is a \emph{$k$-cutset} if $|X|=k$.
A \emph{proper $2$-cutset} of a graph $G$ is a $2$-cutset $T=\{x,y\}$ such that $xy\notin E(G)$, there exists a partition $(T,X,Y)$ of $V(G)$ with $X,Y\neq\emptyset$ and $X$ is anticomplete to $Y$, and neither $G[X\cup T]$ nor $G[Y\cup T]$ is a path between $x$ and $y$. A \emph{star-cutset} of a graph is a cutset $S$ such that $S$ contains a vertex that is adjacent to every other vertex in $S$. A \emph{double star-cutset} of a graph is a cutset $D$ such that $D$ contains two adjacent vertices $u$ and $v$, and every vertex of $D$ is adjacent to at least one of $u$ and $v$. We now state the decomposition theorem for $\iskfour$-free graphs due to L\'ev\^eque et al.~\cite{Leveque2012}.
\begin{theorem}[L\'ev\^eque et al. \cite{Leveque2012}]\label{decomplsition-isk4-Main}
Let $G$ be an $\iskfour$-free graph. Then either $G$ is series-parallel, or $G$ is the line graph of a graph with maximum degree at most three, or $G$ has a clique-cutset, a proper $2$-cutset, a star-cutset, or a double star-cutset.
\end{theorem}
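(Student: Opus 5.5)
This is the decomposition theorem of L\'ev\^eque et al., so the plan follows the standard route for such results: assume $G$ is $\iskfour$-free and has none of the listed cutsets (no clique cutset, proper $2$-cutset, star-cutset or double star-cutset). We must then show that $G$ is series-parallel or the line graph of a graph with maximum degree at most three. The argument branches on which small configurations $G$ contains. If $G$ contains no \emph{prism}, no $K_{3,3}$ and no wheel-like structure, we aim to show that $G$ has no subdivision of $K_4$ at all, induced or not, and hence is series-parallel. Otherwise we grow the configuration into a line-graph structure that must exhaust $G$.

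First we handle wheels. A wheel is a hole $H$ together with a vertex $v$ having at least three neighbours on $H$. In an $\iskfour$-free graph such a $v$ has very few neighbours on $H$: a short analysis of the sectors of $H$ shows that otherwise an induced subdivision of $K_4$ appears. The remaining cases are where the neighbours of $v$ form a triangle-like pattern, or where $N(v)\cap V(H)$ together with $v$ forms a star-cutset separating a sector from the rest of $G$. We will prove that every such wheel forces a star-cutset or a clique cutset. So we may assume $G$ is wheel-free.

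Second, suppose $G$ contains a prism or a $K_{3,3}$ (more generally, an induced subdivision of one). Among such subgraphs we take a maximal one $L$ that is the line graph of a subcubic graph, in the sense of a subdivided ``line-graph skeleton''. We then classify how an outside vertex, or a path of outside vertices, can attach to $L$. Using $\iskfour$-freeness and wheel-freeness, every attachment is one of three kinds:
\begin{itemize}
\item it extends $L$ to a larger line graph, contradicting maximality;
\item it attaches to a clique of $L$ (a triangle corresponding to a degree-$3$ vertex), giving a clique cutset or a star-cutset;
\item it attaches to two nonadjacent vertices of a subdivided path, giving a proper $2$-cutset or a double star-cutset.
\end{itemize}
Since no cutset exists, $V(G)=V(L)$, and $G$ is a line graph of a subcubic graph.

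Third, if $G$ is wheel-free, prism-free and $K_{3,3}$-free, we show that any (non-induced) $K_4$-subdivision can be shortened, via chords, into an induced one or into a prism, wheel or $K_{3,3}$. This is a minimality argument on the number of vertices of the subdivision. Hence $G$ has no $K_4$-subdivision and is series-parallel.

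The main obstacle is the attachment analysis in the second step. There one must enumerate how a connected set of outside vertices attaches to a subdivided prism or $K_{3,3}$ and check that each case produces an $\iskfour$, a larger line graph, or one of the four cutsets. I would first treat single vertices (their neighbourhoods in $L$ must lie within one ``branch'' or one triangle). Then I would treat minimal attaching paths, using the standard ``minimal connected attachment'' technique.
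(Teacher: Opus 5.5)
Your plan fails at Step~1, and Steps~2 and~3 inherit the failure. It is not true that in an $\iskfour$-free graph every wheel forces a clique cutset or a star-cutset, so you cannot reduce to wheel-free graphs before the line-graph case is settled.

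Take $G=L(K_{3,3})$, the $3\times 3$ rook's graph.
\begin{itemize}
\item It is $\iskfour$-free, as is the line graph of every graph of maximum degree at most three.
\item It contains a wheel. If $Z$ is a Hamiltonian cycle of $K_{3,3}$ and $e$ is a chord of $Z$, then $L(Z)$ is a hole and $e$ has four neighbours on it.
\item It has none of the four cutsets. $G$ is $4$-connected, so there is no clique cutset and no proper $2$-cutset. For every vertex $c$, $G-N[c]$ is a $4$-cycle to which every vertex of $N(c)$ has a neighbour, so no subset of $N[c]$ containing $c$ is a cutset; hence there is no star-cutset. For an edge $uv$, the complement of $N[u]\cup N[v]$ is a single edge to which every other vertex attaches, which rules out double star-cutsets.
\end{itemize}
So $G$ contains wheels, has no cutset of any listed type, and is exactly the kind of graph the line-graph outcome must capture. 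In particular the maximal line graph $L$ in your Step~2 may itself contain wheels, so wheel-freeness is not available in your attachment analysis.

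Step~2 has a second problem. $K_{3,3}$ and its subdivisions contain claws, so they cannot seed a maximal line-graph structure at all.

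The order of cases has to be reversed. The paper does not reprove this theorem; it quotes it from L\'ev\^eque et al., together with the ingredients of their proof.
\begin{enumerate}
\item If $G$ contains $K_{3,3}$, Theorem~\ref{lemma: K33} gives a clique cutset or a complete bipartite or complete tripartite graph. Such a graph containing $K_{3,3}$ has a part with at least three vertices, so $N[x]$ for $x$ in that part is a star-cutset.
\item If $G$ contains a prism, Lemma~\ref{lemma: prism0} gives one of the following: the line graph of a subcubic graph, a rich square (whose central square is a double star-cutset), a clique cutset, or a proper $2$-cutset. This is where your attachment analysis belongs, and it must be carried out with no wheel-freeness hypothesis.
\item Only then, in graphs with no prism and no $K_{3,3}$, are wheels turned into cutsets, and even here double star-cutsets are needed. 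The octahedron $L(K_4)$ contains a wheel and has no prism and no $K_{3,3}$. It has no star-cutset, since the antipode of any centre dominates what remains; it has only a double star-cutset.
\item Finally, an $\iskfour$-free graph with no prism, wheel or $K_{3,3}$ is series-parallel by Lemma~\ref{lem:ISK4-decomposition}. This is essentially your Step~3, which is fine in outline.
\end{enumerate}
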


Furthermore, L\'ev\^eque et al. \cite{Leveque2012} showed
that the chromatic number of $\iskfour$-free graphs is bounded
by a constant $c \ge 2^{512}$, which implies that Scott's conjecture is true when $H=K_4$. The proof is based on a decomposition theorem for $\isk$-free
graphs in \cite{Leveque2012} and a result of K$\ddot{\mathrm{u}}$hn et al.
\cite{DKDO2004}.

A $wheel$ in a graph is a pair $W = (C,x)$, where $C$ is a hole and $x$ has at least three neighbors in $V(C)$. L\'ev\^eque et al.~\cite{Leveque2012}  gave a structural characterization of $\{\mathrm{ISK}_4,\textnormal{wheel}\}$-free graphs and used it to show that this class is $3$-colorable.
L\'ev\^eque et al.~\cite{Leveque2012} asked whether there exists a more powerful decomposition theorem for $\isk$-free graphs.
Moreover, they proposed the following conjecture.

\begin{conjecture}[L\'ev\^eque et al.~\cite{Leveque2012}]\label{4color}
Every $\isk$-free graph is $4$-colorable.
\end{conjecture}

Le \cite{NKL2017} showed that every $\mathrm{ISK_4}$-free graph is $24$-colorable by applying the layering method,
which was further improved to $8$ in \cite{CCCFL2020}. Trotignon et al.~\cite{Trotignon2017} showed
that every $\mathrm{ISK_4}$-free graph with girth at least $5$ is $3$-colorable.
They further conjectured that every $\iskt$-free graph is $3$-colorable. 
Le \cite{NKL2017} showed that every $\iskt$-free graph is $4$-colorable.
Recently, Chudnovsky et al. \cite{Chudnovsky2019} confirmed this conjecture. 

\begin{theorem}[Chudnovsky et al. \cite{Chudnovsky2019}]\label{thm:ISK4-triangle}
Every $\iskt$-free graph is $3$-colorable.
\end{theorem}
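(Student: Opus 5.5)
The plan is induction on $|V(G)|$, taking a minimal counterexample $G$: an $\iskt$-free graph that is not $3$-colorable but all of whose proper induced subgraphs are. Such a $G$ is connected. It has no vertex of degree at most two, since such a vertex could be deleted, the rest $3$-colored by minimality, and the vertex colored last. Since $G$ is triangle-free, every clique has size at most two. By Theorem~\ref{decomplsition-isk4-Main}, $G$ is then series-parallel, or the line graph of a graph of maximum degree at most three, or $G$ has a clique cutset, a proper $2$-cutset, a star-cutset, or a double star-cutset.

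Most of these outcomes can be dispatched directly.
\begin{itemize}
\item \emph{Series-parallel.} $G$ would have a vertex of degree at most two, which is excluded.
\item \emph{Line graph.} If $G=L(H)$ is triangle-free, then $H$ has maximum degree at most two and no triangle, so $G$ is a disjoint union of paths and cycles, which is $3$-colorable.
\item \emph{Clique cutset.} The clique is a vertex or an edge. Color each side by minimality and permute colors so they agree on the cutset.
\item \emph{Proper $2$-cutset $\{x,y\}$.} Here $xy\notin E(G)$. On each side $G[X\cup T]$, replace the missing structure by a gadget: add a path of length two or three between $x$ and $y$, chosen to force $x,y$ to be equal or different as needed. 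One must check the gadget keeps the block $\iskt$-free and smaller. This is exactly the point where "neither side is a path" is used, and I would follow Chudnovsky et al.
\end{itemize}

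The main obstacle is the star-cutset and double star-cutset cases, because these do not glue colorings. There I would instead prove a stronger structural statement for triangle-free $\isk$-free graphs. Triangle-freeness means a star-cutset centered at $v$ lies inside $N[v]$, and $N(v)$ is stable. I would show that if $G$ has minimum degree at least three and no clique cutset or proper $2$-cutset, then $G$ is one of a few basic classes, such as $K_{3,3}$, the Petersen graph, the Heawood graph, or rich square-like graphs, each of which can be checked to be $3$-colorable by hand. I would first establish this reduction by analyzing a wheel or a $K_{3,3}$ subgraph: show that whenever $G$ contains a $K_{3,3}$ or a long hole with attachments, the $\isk$-freeness forces either a small cutset or membership in one of the explicit families.

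If that structural theorem proves too hard, the fallback is to use the known result in Theorem~\ref{thm:ISK4-triangle} directly, as the paper allows, since this statement is exactly Chudnovsky et al.'s theorem.
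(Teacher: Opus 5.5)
Your treatment of the series-parallel, line-graph and clique-cutset outcomes is fine, but there is a genuine gap exactly where the difficulty lies.

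Starting from Theorem~\ref{decomplsition-isk4-Main} leaves you with star-cutsets and double star-cutsets, which do not glue $3$-colorings. You replace them by a structure theorem that you neither prove nor state correctly: apart from $K_{3,3}$, your proposed basic graphs are not in the class.
\begin{itemize}
\item Deleting the two ends of an edge of the Petersen graph leaves an induced subdivision of $K_4$.
\item Deleting the closed neighborhood of a point of the Heawood graph leaves the $1$-subdivision of $K_4$, because the four lines avoiding that point pairwise meet in the six remaining points.
\item A rich square always contains a triangle, since every link end is adjacent to two consecutive vertices of the central square.
\end{itemize}

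The statement you actually need is Theorem~\ref{thm:ISK4-triangle-structure}: an $\iskt$-free graph has a clique cutset of size at most two, is complete bipartite, or has a vertex of degree at most two. Given that, the coloring is an immediate induction, and no $2$-cutset ever has to be handled.

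In this paper that statement is the triangle-free case of the prism-free part of Theorem~\ref{Main-theorem-structure}, so Theorem~\ref{thm:ISK4-triangle} is a special case of Theorem~\ref{Main-theorem-color}. It rests on substantial machinery:
\begin{itemize}
\item A non-series-parallel graph in the class contains a wheel or a $K_{3,3}$ (Lemma~\ref{lem:ISK4-decomposition}; prisms contain triangles).
\item The $K_{3,3}$ case gives a complete bipartite graph or a clique cutset.
\item A wheel yields a proper wheel (Lemma~\ref{lem:proper-wheel-exist}).
\item For a proper wheel with center $s$ and fewest spokes, each component of $G-N[s]$ contains the interior of at most one sector (Theorem~\ref{thm:proper-wheel-property}).
\item A minimal-counterexample argument over noncentral triples (Theorem~\ref{thm:property-p}), which contracts series-parallel components and uses girth bounds, produces a vertex of degree at most two away from a prescribed clique.
\end{itemize}
Nothing in your plan replaces this. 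Your fallback of invoking Theorem~\ref{thm:ISK4-triangle} itself is circular.

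Your proper $2$-cutset step also fails as written. Attaching a path of length two or three between $x$ and $y$ forces nothing: with three colors, both $c(x)=c(y)$ and $c(x)\neq c(y)$ extend along such a path. Gadgets that do force the relation create triangles, so the modified block leaves the class and induction no longer applies to it. Examples are two adjacent new vertices both joined to $x$ and $y$ (forcing equality), or the edge $xy$ when $x,y$ have a common neighbor (forcing inequality).

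What is really required is that the smaller side admits one coloring with $c(x)=c(y)$ and another with $c(x)\neq c(y)$. This is a structural statement in its own right. Compare the proof of Theorem~\ref{Main-theorem-color-1}: there a vertex $u$ adjacent to $a$ and $b$ is added, and the resulting graph is pinned down through the prism and line-graph analysis and Lemma~\ref{lemma:cubic-edge-color}. There is also nothing to follow in Chudnovsky et al.\ on this point, because their decomposition theorem has no proper $2$-cutset outcome.
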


Chudnovsky et al. \cite{Chudnovsky2019} also studied the structure of $\iskt$-free graphs and obtained a decomposition theorem. 
\begin{theorem}[Chudnovsky et al. \cite{Chudnovsky2019}]
\label{thm:ISK4-triangle-structure}
    Let $G$ be an $\iskt$-free graph. Then, either $G$ has a clique cutset (of size at most two), $G$ is complete bipartite, or $G$ has a vertex of degree at most two.
\end{theorem}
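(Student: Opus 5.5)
We would prove Theorem~\ref{thm:ISK4-triangle-structure} by taking a triangle-free, $\isk$-free graph $G$ with no clique cutset and minimum degree at least three, and showing that $G$ must be complete bipartite. Since $G$ is triangle-free, any clique cutset has size at most two, which explains the parenthetical in the statement. The starting point is Theorem~\ref{decomplsition-isk4-Main}.

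\textbf{Eliminating the outcomes of Theorem~\ref{decomplsition-isk4-Main}.}
\begin{itemize}[leftmargin=*]
\item Series-parallel graphs have a vertex of degree at most two~\cite{duffin1965}, so this outcome is excluded.
\item If $G=L(H)$ with $\Delta(H)\le 3$ and $G$ is triangle-free, then no vertex of $H$ has degree three, so $\Delta(H)\le 2$. Then $G$ is a disjoint union of paths and cycles, and hence has a vertex of degree at most two.
\item A clique cutset is excluded by assumption.
\item This leaves a proper $2$-cutset, a star-cutset, or a double star-cutset.
\end{itemize}

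\textbf{Handling the remaining cutsets.} In a triangle-free graph a star-cutset is a vertex $v$ together with a subset of $N(v)$. A double star-cutset is an edge $uv$ together with subsets of $N(u)\cup N(v)$. I would proceed as follows.
\begin{enumerate}[leftmargin=*]
\item First aim for a direct structural lemma: if $G$ contains an induced $K_{3,3}$, then either $G$ is complete bipartite or $G$ has a clique cutset or a vertex of degree at most two. To prove it, take a maximal induced complete bipartite subgraph $K_{A,B}$ with $|A|,|B|\ge 3$. Analyse how a vertex outside it attaches: triangle-freeness, together with the fact that a vertex with neighbours on both sides creates a triangle, forces it to attach on one side only. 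Partial attachments produce an ISK$_4$ using a $K_{3,3}$-type branch structure. Then paths leaving the $K_{A,B}$ either yield an ISK$_4$ or are separated by a cutset of size at most one.
\item Otherwise $G$ is $K_{3,3}$-free. Here I would show that a minimal-side cutset cannot coexist with minimum degree three, no clique cutset, and the $\isk$ restriction. Choose a cutset, say a proper $2$-cutset $\{x,y\}$ or a star-cutset, with a side $X$ of minimum size. In $G[X\cup T]$, with $x$ and $y$ joined by a marker path, every vertex of $X$ has degree at least three. Minimality together with the absence of clique cutsets forces $G[X\cup T]$ plus the marker path to be $\isk$-free, triangle-free, with no proper cutset. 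An induction on $|V|$ then applies to this smaller graph.
\item The induction gives that the smaller block is complete bipartite or has a low-degree vertex. In either case we either contradict the minimality, recover a vertex of degree at most two in $X$, or build an ISK$_4$ by routing through the other side $Y$.
\end{enumerate}

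\textbf{Main obstacle.} I expect the star-cutset and double star-cutset cases to be the hardest. Unlike $2$-cutsets, they do not yield clean blocks. Here I would first try the approach of Trotignon et al.: replace the analysis with a direct argument on a hole $C$ and a vertex of degree at least three. Pick a longest or shortest hole and examine attachments of vertices to it, since in an ISK$_4$-free triangle-free graph a vertex has either at most two neighbours on a hole or gives a $K_{2,3}$-like configuration. Iterating this toward a $K_{3,3}$ reduces the problem to the lemma in step~1.
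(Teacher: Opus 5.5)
Your eliminations of the series-parallel, line-graph and clique-cutset outcomes are correct. Your step~1 is essentially Lévêque et al.'s Theorem~\ref{lemma: K33} specialized to triangle-free graphs: there the tripartite outcome is impossible and clique cutsets have size at most two. (A component attaching to an adjacent pair $a\in A$, $b\in B$ gives an edge cutset, not a cutset of size one.)

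\textbf{The gap.} The $K_{3,3}$-free case is the entire content of the theorem, and your steps~2--3 do not work there, for two reasons.

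\emph{The induction has nothing to bite on.} Once you attach a marker path to $G[X\cup T]$, its interior vertices have degree two. So the block trivially ``has a vertex of degree at most two'', and the inductive hypothesis as you state it says nothing about $X$. Even with a single marker vertex, the low-degree vertex returned may be the marker or one of $x,y$, whose degrees in $G$ are large. An induction of this kind only works with a strengthened statement that places the low-degree vertex away from a prescribed vertex, edge or triangle. In the paper this is property~$\mathcal{P}$: a vertex of degree at most two outside $N[x]\cup N[y]\cup N[z]$, for a noncentral triple. Your plan has no such statement.

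\emph{Star- and double-star-cutsets do not split $G$ into smaller graphs of the same type.} ``Smallest side plus a marker path'' is not even defined when the cutset has more than two vertices. You give no gadget that keeps the pieces triangle-free, $\isk$-free and of minimum degree three. Your fallback (hole attachments, iterate toward a $K_{3,3}$) has no mechanism either. In a triangle-free $\isk$-free graph, a vertex with three neighbours on a hole has at least four, i.e.\ it is a wheel center. Wheels, not $K_{3,3}$'s, are what must be controlled.

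\textbf{How the paper gets it.} The statement follows at once from the prism-free part of Theorem~\ref{Main-theorem-structure}, since triangle-free graphs are diamond-, bowtie- and prism-free and series-parallel graphs have a vertex of degree at most two. The $K_{3,3}$-free half of that theorem is Corollary~\ref{cor:degree-2}, whose proof bypasses Theorem~\ref{decomplsition-isk4-Main} entirely:
\begin{itemize}
\item A non-series-parallel graph in the class has a wheel, hence a proper wheel (Lemma~\ref{lem:proper-wheel-exist}).
\item For a proper wheel with center $s$ and fewest spokes, distinct sector interiors lie in distinct components of $G-N[s]$ (Theorem~\ref{thm:proper-wheel-property}).
\item Theorem~\ref{thm:property-p} is then proved by minimal counterexample. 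Induction is applied to the pieces $G[V(C_i)\cup N_i\cup\{s\}]$ with $s$ inserted into the triple, so the vertex found lies outside $N[s]$ and keeps its degree in $G$. Lemma~\ref{lem:component-center} ensures noncentrality is inherited.
\item Series-parallel pieces are contracted (Lemmas~\ref{lem:series-parallel-contracting} and~\ref{lem:non-centers}). The remaining configuration is finished by girth arguments and Theorem~\ref{main-theorem:triangle}.
\end{itemize}
These are the two missing ingredients: a location-specific inductive hypothesis, and the proper-wheel star cutset to which it can actually be applied.
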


A natural next step is to determine the optimal upper bound on the chromatic number of $\iskfour$-free graphs that contain triangles, settling Conjecture~\ref{4color}.
A {\it diamond} is the graph obtained from $K_4$ by removing one edge, a {\it bowtie} is the graph consisting of two triangles with one vertex identified 
(see Figure~\ref{Figure-diamond-bowtie-tripod}).  Chen et al.~\cite{Chen2021}~considered the case in which triangles are relatively far from each other.
In particular, they proved the following. 
\begin{theorem}[Chen et al. \cite{Chen2021}]\label{ISK4db}
Every $\iskdb$-free graph is $4$-colorable.
\end{theorem}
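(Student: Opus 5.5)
We aim to show that every $\iskdb$-free graph $G$ is $4$-colorable, arguing by induction on $|V(G)|$ with a minimal counterexample $G$. Standard reductions give the following properties of $G$.

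First, $G$ is connected and has no clique cutset. Otherwise the pieces $G[X\cup K]$ and $G[Y\cup K]$ are $4$-colorable by minimality. Since a clique receives distinct colors, we can permute the colors in one piece so the two colorings agree on $K$, and glue them.

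Second, $G$ has minimum degree at least $4$. A vertex of degree at most $3$ could be deleted, the rest $4$-colored, and the vertex colored last.

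Third, since $G$ is diamond-free and bowtie-free, every vertex lies in at most one triangle. So each neighborhood $N(v)$ induces a matching plus isolated vertices, and $\omega(G)\le 3$.

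The plan is then to apply Theorem~\ref{decomplsition-isk4-Main} and rule out every outcome for this minimal $G$.

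\textbf{Series-parallel.} A series-parallel graph has a vertex of degree at most two, which contradicts minimum degree $4$.

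\textbf{Line graph.} If $G=L(H)$ with $\Delta(H)\le 3$, then each vertex of $G$ (an edge $uv$ of $H$) has degree at most $4$ in $G$. Equality forces $\deg_H(u)=\deg_H(v)=3$, so the vertex lies in two triangles of $G$, one from the star at $u$ and one from the star at $v$. Those two triangles share only that vertex and form a bowtie unless extra edges are present, and extra edges would create a diamond. Hence some vertex has degree at most $3$, a contradiction. Alternatively, $\chi(L(H))=\chi'(H)\le 4$ by Vizing's theorem.

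\textbf{Proper $2$-cutset.} Let $\{x,y\}$ be a proper $2$-cutset with sides $X$ and $Y$. I would form the blocks $G_X$ and $G_Y$ by adding to each side a marker path of length $2$ or $3$ between $x$ and $y$. The length is chosen according to whether $x$ and $y$ must receive equal or different colors: a path of length $2$ forces nothing if $4$ colors are available, so one can choose to add either the edge $xy$ or a length-$2$ path. The blocks must remain $\isk$-free, which is the standard argument using that an $\isk$ in a block would reroute through the other side. Diamond- and bowtie-freeness are preserved because the marker is triangle-free. Color both blocks, then permute colors to match on $\{x,y\}$. This needs care: if $x,y$ get equal colors in one block and distinct colors in the other, we need to force agreement. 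The usual trick is to add the edge $xy$ to both sides. This is valid when it keeps both sides $\isk$-free and diamond/bowtie-free, which holds if neither side has a common neighbour of $x$ and $y$ forming a triangle with the new edge. If it fails, contract instead.

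\textbf{Star cutset and double star cutset.} I expect these to be the main obstacle, since coloring does not glue across star cutsets. Here I would use the structure: a star cutset $S$ centered at $v$ has $S\setminus\{v\}\subseteq N(v)$, which induces a matching. The strategy is to exploit the minimal counterexample more strongly, via the degree-$4$ condition together with the Lévêque et al.\ argument showing that in an $\isk$-free graph with no clique cutset, a star cutset implies some more restricted cutset or a low-degree vertex. Concretely, I would take a minimal star or double-star cutset and a component $C$ of $G-S$. I would then show, using $\isk$-freeness and the absence of diamonds and bowties, that $C$ together with its attachments contains a vertex of degree at most $3$ or a clique cutset. This contradicts minimality.

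If this direct route stalls, the fallback is the layering method of Le~\cite{NKL2017} and Chen et al.\ (BFS layers from a vertex). One shows that each layer induces a graph whose $\isk$-, diamond- and bowtie-freeness forces it to be a disjoint union of paths and cycles with isolated triangles. One then $2$-colors the layers alternately with disjoint palettes after handling triangles, which gives $4$ colors overall.
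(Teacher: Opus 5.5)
The preliminary reductions are fine: connected, no clique cutset, $\delta(G)\ge 4$, and every vertex in at most one triangle. The series-parallel and line-graph cases are also fine. The proof has a genuine gap exactly where the difficulty lies, namely the star-cutset and double-star-cutset outcomes of Theorem~\ref{decomplsition-isk4-Main}. There you only state a hope. The result you attribute to L\'ev\^eque et al.\ (that a star cutset in an $\isk$-free graph without clique cutsets yields a more restricted cutset or a low-degree vertex) is not available, and it would essentially be the theorem itself.

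Star cutsets cannot be excluded from a minimal counterexample. If $G$ is $\iskdbpk$-free and not series-parallel, it has a proper wheel (Lemmas~\ref{lem:ISK4-decomposition} and~\ref{lem:proper-wheel-exist}), and for its center $x$ the graph $G-N[x]$ has at least three components (Corollary~\ref{cor:component-number}). Nor does a local search in one piece $G[V(C)\cup N\cup\{x\}]$ suffice: the low-degree vertex it finds may lie in $N[x]$, where its degree in $G$ is larger.

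The paper cites this statement and also recovers it from its $3$-colourability Theorem~\ref{Main-theorem-color}; it avoids Theorem~\ref{decomplsition-isk4-Main} altogether. It handles $K_{3,3}$ and prisms by Theorem~\ref{lemma: K33} and Lemma~\ref{lemma: prism}. For $\iskdbpk$-free graphs without clique cutsets, it proves that a vertex of degree at most two exists (Corollary~\ref{cor:degree-2}). The missing idea is the strengthened induction of Theorem~\ref{thm:property-p}: find a vertex of degree $\le 2$ outside $N[x]\cup N[y]\cup N[z]$, for a clique $\{x,y,z\}$ of non-centers of proper wheels. This lets one recurse into $G[V(C_i)\cup N_i\cup\{s\}]$ with $s$ added to the triple (Lemma~\ref{lem:component-center}). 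It also lets one contract series-parallel components (Lemmas~\ref{lem:series-parallel-contracting} and~\ref{lem:non-centers}) and finish with the triangle-free Theorem~\ref{main-theorem:triangle}. All of this rests on the proper-wheel theory of Section~3.

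Two other ingredients also fail as written.

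\emph{Proper $2$-cutset.} Adding $xy$ to a side creates a diamond when $x,y$ have two (necessarily non-adjacent) common neighbours there, and possibly a bowtie when they have one. Your fallback ``contract instead'' is unjustified. Identifying $x$ and $y$ creates a diamond or bowtie whenever each of them lies in a triangle on that side, and nothing guarantees $\isk$-freeness. The paper instead picks $T=\{a,b\}$ with $|X_T|$ minimum, attaches a new vertex $u$ to $a$ and $b$, and shows the resulting $G'$ has no clique cutset or proper $2$-cutset. It deduces that $G'$ is a $7$-vertex prism, or the line graph of a cubic graph with one edge subdivided twice and the others once. In that case Lemma~\ref{lemma:cubic-edge-color} gives colourings of $G[X_T\cup T]$ with $c(a)=c(b)$ and with $c(a)\neq c(b)$.

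\emph{Layering fallback.} This is false. Take the prism with triangles $a'b'c'$ and $abc$, joined by the edges $b'b$, $c'c$ and the path $a'ma$; it lies in the class. BFS from $a'$ places the whole triangle $abc$ in layer $2$. So layers need not be bipartite, and two alternating $2$-colour palettes cannot work.
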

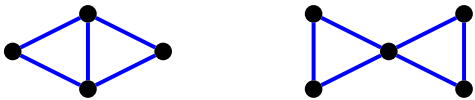
\begin{figure}[ht]
	\begin{center}
		\begin{tikzpicture}
			
			\tikzset{std node fill/.style={draw=black, circle,fill=black, line width=1pt, inner sep=2pt}}
			
			\node[std node fill] (x1) at (0,-2.5) {};
			\node[std node fill] (x2) at (1,-3) {};
			\node[std node fill] (x4) at (1,-2) {};
			\node[std node fill] (x5) at (2,-2.5) {};
			\foreach \i/\j in {1/2,1/4,2/4,2/5,4/5}{\draw[blue,line width=1.5pt] (x\i)--(x\j);}
			
			\node[std node fill] (y1) at (4,-3) {};
			\node[std node fill] (y2) at (4,-2) {};
			\node[std node fill] (y3) at (5,-2.5) {};
			\node[std node fill] (y4) at (6,-3) {};
			\node[std node fill] (y5) at (6,-2) {};
			\foreach \i/\j in {1/2,1/3,2/3,3/5,3/4,4/5}{\draw[blue,line width=1.5pt] (y\i)--(y\j);}
			
		\def\dx{0.8}  
		
		
		
		
		
		\end{tikzpicture}
		\caption{\small A diamond and a bowtie.}
		\label{Figure-diamond-bowtie-tripod}
	\end{center}
\end{figure}

Furthermore, Chen et al. proposed the following problem in \cite{Chen2021}.
\begin{problem}[Chen et al. \cite{Chen2021}]\label{pro:chen}
    Is every $\iskdb$-free graph $3$-colorable?
\end{problem}

The main result of this paper provides an affirmative answer to Problem~\ref{pro:chen}. 
\begin{theorem}\label{Main-theorem-color}
	Every $\iskdb$-free graph is $3$-colorable.
\end{theorem}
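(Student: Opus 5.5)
We argue by a minimal counterexample. Let $G$ be an $\iskdb$-free graph with $\chi(G)\ge 4$ and $|V(G)|$ minimum. Then $G$ is connected and $4$-vertex-critical, so $\delta(G)\ge 3$ and $G$ has no clique cutset: colorings of the two sides can be permuted to agree on a clique. We will also show that $G$ has no proper $2$-cutset $\{x,y\}$. Let $(T,X,Y)$ be the partition. Since $G$ is diamond-free and bowtie-free, any triangle meets $\{x,y\}$ in a controlled way. We 3-color $G[X\cup T]$ and $G[Y\cup T]$, each augmented either by the edge $xy$ or by a path $x$--$z$--$y$, depending on whether some induced $x$--$y$ path through the opposite side has even or odd length. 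If such an augmented graph contained an $\isk$, a diamond or a bowtie, we would replace the added edge or path by an actual induced path through the other side and obtain the same configuration in $G$. The parity choice lets us glue the two colorings so that $x$ and $y$ receive equal or distinct colors as required. Both augmented graphs are smaller than $G$ (because neither side is a path), so this contradicts minimality. The delicate point is that adding the edge $xy$ might create a diamond or bowtie together with triangles at $x$ or $y$. We handle this by choosing the path gadget in those cases, and use $\delta\ge 3$ to exclude the remaining configurations.

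The main structural step is a decomposition theorem refining Theorem~\ref{decomplsition-isk4-Main} for this class. We aim to prove: if $G$ is $\iskdb$-free, then either $G$ is triangle-free, or $G$ is the line graph of a graph of maximum degree at most three with special structure, or $G$ is a specific small basic graph (prism-like), or $G$ has a clique cutset, a proper $2$-cutset, or a vertex of degree at most two. The triangle-free case is settled by Theorem~\ref{thm:ISK4-triangle}. For the rest, we fix a triangle $T=abc$. Diamond-freeness means every outside vertex has at most one neighbor in $T$, and bowtie-freeness means the triangles through $a$ are only $T$. We then grow $T$ to a maximal prism or line-graph-of-cubic-graph substructure $H$ and analyse attachments of the components of $G-V(H)$. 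Each attachment either yields an $\isk$ (typically via three disjoint paths to $T$ plus a vertex) or is confined to a clique or two nonadjacent vertices of $H$, producing a clique cutset or a proper $2$-cutset. This mirrors the approach of L\'ev\^eque et al.\ for prisms and that of Chudnovsky et al.\ for Theorem~\ref{thm:ISK4-triangle-structure}.

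Given the decomposition, the minimal counterexample must be basic, since degree at most two, clique cutsets and proper $2$-cutsets are already excluded. We then check $3$-colorability of the basic graphs directly. For line graphs $L(H)$ with $\Delta(H)\le 3$, a $3$-coloring of $L(H)$ is a $3$-edge-coloring of $H$. Here we must use the extra structure forced by bowtie- and diamond-freeness together with $\isk$-freeness: for example, $L(K_4)$ contains a diamond, and $L$ of the Petersen graph contains an $\isk$. Our aim is to show $H$ is class~1, most likely because $H$ is bipartite-like or series-parallel-like after excluding the forbidden configurations.

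The main obstacle is the attachment analysis in the decomposition step, in particular ruling out vertices with neighbors on several triangles of a line-graph structure without forming an $\isk$. We will organize it by considering a shortest path between two triangles and proceeding case by case.
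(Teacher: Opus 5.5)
There are two genuine gaps, and they sit exactly where the paper does its real work.

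First, your reduction of proper $2$-cutsets does not work. Adding the edge $xy$ forces $c(x)\neq c(y)$, but the gadget $x$--$z$--$y$ forces nothing, because $z$ can be colored whether or not $c(x)=c(y)$. Parity of $x$--$y$ paths carries no information about $3$-colorings.

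So once you must use the path gadget on one side, minimality only hands you \emph{some} coloring of that side. You concede this happens whenever adding $xy$ creates a diamond or bowtie. That coloring may have $c(x)=c(y)$ while the other side has $c(x)\neq c(y)$, and then the two do not glue. Nor can you force equality instead: the natural gadget forcing $c(x)=c(y)$, namely $K_4$ minus $xy$, is itself a diamond.

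What is really needed is that one side admits $3$-colorings of \emph{both} types, and this is where the paper spends its effort. It picks the proper $2$-cutset $\{a,b\}$ with $|X_T|$ minimum. It adds a vertex $u$ adjacent to $a$ and $b$ to get $G'$, and shows $G'$ has neither a clique cutset nor a proper $2$-cutset. Then it applies the structure theory:
\begin{itemize}
\item if $G'$ is prism-free, Theorem~\ref{thm:property-p} yields a vertex of degree at most two in $X_T$, contradicting $\delta(G)\ge 3$;
\item otherwise $G'$ is either the $7$-vertex prism with one subdivided edge, or $L(H)$ for a cubic graph $H$ with one edge subdivided twice and all others once. In that case the Kempe-chain Lemma~\ref{lemma:cubic-edge-color} supplies colorings with $c(a)=c(b)$ and with $c(a)\neq c(b)$.
\end{itemize}

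Second, your decomposition step misses the main case. A triangle need not lie in any prism or line-graph structure. For instance, the centre of a wheel may be adjacent to two consecutive rim vertices; such wheels exist in the class and contain no prism. By Lemma~\ref{lem:ISK4-decomposition}, a prism-free, $K_{3,3}$-free graph in the class that is not series-parallel contains a wheel. Your attachment analysis around a fixed triangle says nothing about such graphs.

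Handling them is the bulk of the paper: Corollary~\ref{cor:degree-2} shows that every $\{\mathrm{ISK}_4,\mathrm{diamond},\mathrm{bowtie},\mathrm{prism},K_{3,3}\}$-free graph without a clique cutset has a vertex of degree at most two. Its proof uses three ingredients:
\begin{itemize}
\item proper wheels, including their existence and Theorem~\ref{thm:proper-wheel-property} on how the components of $G-N[x]$ meet the sectors;
\item contraction of series-parallel pieces without creating new wheel centres;
\item a refined form of the triangle-free structure theorem, Theorem~\ref{main-theorem:triangle}.
\end{itemize}

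By contrast, the line-graph case is easier than you expect. Diamond- and bowtie-freeness of $L(H)$ force $H$ to be sparse, so $L(H)$ has maximum degree at most $3$ and is $K_4$-free, and Brooks' theorem finishes it. No class-$1$ argument about $H$ being bipartite-like is needed.
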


To prove Theorem \ref{Main-theorem-color}, we establish a more refined structural decomposition theorem for graphs that are both $\iskfour$-free and $\db$-free. 
A graph $G$ is called \emph{sparse} if, for every edge $e$ of $G$, at most one vertex incident with $e$ has degree greater than two. 

\begin{theorem}\label{Main-theorem-structure}
Let $G$ be an $\iskdb$-free graph. Then $G$ is either series-parallel, a complete bipartite graph, the line graph of a sparse graph with maximum degree at most three, a graph with a clique cutset or a proper $2$-cutset, or a graph having a vertex of degree at most two.
Furthermore, if $G$ is prism-free, then $G$ is either series-parallel, a complete bipartite graph, a graph with a clique cutset, or a graph having a vertex of degree at most two. 
\end{theorem}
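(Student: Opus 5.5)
Our plan is to start from the decomposition theorem of L\'ev\^eque et al.\ (Theorem~\ref{decomplsition-isk4-Main}) and refine each outcome under the extra hypothesis that $G$ is $\db$-free. If $G$ is series-parallel, or has a clique cutset or a proper $2$-cutset, we are done. The remaining outcomes are that $G$ is the line graph of a graph $H$ with $\Delta(H)\le 3$, or that $G$ has a star-cutset or a double star-cutset. So the work is: (i) sharpen the line-graph outcome to a line graph of a \emph{sparse} graph, and (ii) show that a star-cutset or double star-cutset in a $\db$-free $\isk$-free graph without clique cutsets forces a vertex of degree at most two, complete bipartiteness, or a proper $2$-cutset.

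For (i), let $G=L(H)$ with $\Delta(H)\le 3$. We may assume $G$ has no clique cutset and minimum degree at least three. Suppose an edge $uv$ of $H$ has both endpoints of degree $3$ in $H$. Then the vertex $uv$ of $G$ lies in two triangles of $G$, one coming from the edges at $u$ and one from the edges at $v$. These two triangles share only the vertex $uv$. Since $H$ has no triangle through $uv$ producing extra adjacencies (any such adjacency would create a diamond), they form an induced bowtie, a contradiction. Hence $H$ is sparse. Degree-one vertices of $H$ and edges whose endpoints both have degree at most two give vertices of degree at most two in $G$, so they can be discarded as well.

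For (ii), we take a minimal star-cutset $S$ centred at $x$, with no clique cutset present. Because $G$ is diamond-free and bowtie-free, $N(x)$ induces a graph whose components are single vertices or single edges, and at most one of them is an edge. So $S$ is "almost" a stable set plus $x$. We then analyse the components of $G\setminus S$ using holes through $x$. If a component $C$ has two attachments $a,b\in S\setminus\{x\}$ joined through $C$, and another component gives a second such path, then together with a third neighbour of $x$ we can build an $\isk$ with branch vertex $x$, unless the attachments degenerate. Degeneration happens in two ways. Either every component is a single vertex complete to the nonadjacent part of $S$, which leads towards $G$ being complete bipartite. Or some side is a path, which forces a vertex of degree two or produces a proper $2$-cutset. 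The double star-cutset case is handled the same way with the edge $uv$ as centre, using bowtie-freeness to control $N(u)\cup N(v)$.

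The main obstacle is (ii), and especially the complete bipartite outcome. We expect to need an auxiliary lemma: a $\db$-free, $\isk$-free graph containing an induced $K_{3,3}$ (or a $K_{2,3}$ with suitable attachments) is either complete bipartite or has a clique cutset. We would prove it by growing a maximal complete bipartite induced subgraph and showing, by finding an $\isk$ or a diamond, that any outside vertex has at most one neighbour in it.

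For the prism-free part, the line-graph and proper-$2$-cutset outcomes must be eliminated. The line graph of a sparse subcubic graph without degree-$\le 2$ vertices contains a prism, namely the line graph of a theta subgraph with degree-$3$ branch vertices. For a proper $2$-cutset $\{x,y\}$, we would take a minimal one and pick a side that is not a path. That side contains either a triangle-based structure, which together with the other side yields a prism or an $\isk$, or a degree-$2$ vertex. This reduces the second statement to the first.
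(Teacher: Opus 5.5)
The gap is your step (ii), and it is not a detail. Showing that a star-cutset or double star-cutset in an $\iskdb$-free graph without clique cutsets forces one of the remaining outcomes is essentially the whole theorem. Star-cutsets are everywhere in this class: in $\iskdbpk$-free graphs, $N[s]$ is one whenever $s$ is the centre of a proper wheel (Corollary~\ref{cor:component-number}).

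Your argument about two neighbours of $x$ joined through two different components rules out at most one configuration. Even granting that, nothing forces a component of $G\setminus S$ to be a single vertex or a path. Around a proper-wheel centre $s$, for example, the sector interiors lie in distinct components of $G-N[s]$ (Theorem~\ref{thm:proper-wheel-property}), yet each such component can be a large graph that itself contains wheels. Your dichotomy is asserted, not derived, and the double star-cutset case ``handled the same way'' inherits the problem.

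In general the low-degree vertex lies deep inside some component $K$, so you must recurse into $G[V(K)\cup N\cup\{s\}]$. There the attachment vertices have smaller degree than in $G$, so a degree-$\le 2$ vertex supplied by induction may lie in $N[s]$ and be useless. This is the missing idea. The paper handles it by strengthening the statement (Theorem~\ref{thm:property-p}): for any clique $\{x,y,z\}$ containing no proper-wheel centre, a non-series-parallel $\iskdbpk$-free graph without clique cutsets has a degree-$\le 2$ vertex outside $N[x]\cup N[y]\cup N[z]$. Running that induction needs the following, none of which a local analysis of a minimal star-cutset can replace:
\begin{itemize}
\item the proper-wheel machinery of Section~3: Lemma~\ref{lem:proper-wheel-exist} and Theorem~\ref{thm:proper-wheel-property} give a component of $G-N[s]$ away from the prescribed vertices, and Lemma~\ref{lem:component-center} ensures $s$ is not a proper-wheel centre in the piece;
\item contraction of series-parallel components (Lemmas~\ref{lem:series-parallel-contracting} and~\ref{lem:non-centers});
\item a reduction to the triangle-free Theorem~\ref{main-theorem:triangle} via girth arguments.
\end{itemize}

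The prism-free part has the same kind of gap. Eliminating a proper $2$-cutset by finding, on a non-path side, ``a triangle-based structure or a degree-$2$ vertex'' is again an unproved global claim. That side can be a complicated graph in which every vertex away from the cutset has degree at least three. When the paper needs such a step (in the proof of Theorem~\ref{Main-theorem-color}), it adds a vertex adjacent to both cutset vertices and applies Theorem~\ref{thm:property-p} to the new graph.

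For the structure theorem itself, the paper avoids both issues by not starting from Theorem~\ref{decomplsition-isk4-Main}. It splits on what $G$ contains:
\begin{itemize}
\item an induced $K_{3,3}$: Theorem~\ref{lemma: K33} plus diamond-freeness gives complete bipartite or a clique cutset, so the auxiliary lemma you anticipate is already available;
\item an induced prism: Lemma~\ref{lemma: prism} (L\'ev\^eque et al.'s prism lemma, with rich squares excluded by Lemma~\ref{lemma:rich-square-chen}) gives a line graph of a sparse subcubic graph, a clique cutset, or a proper $2$-cutset;
\item neither: Corollary~\ref{cor:degree-2} applies.
\end{itemize}
Line graphs and proper $2$-cutsets arise only in the prism case, so the prism-free statement is immediate. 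Your step (i), deriving sparseness of $H$ from diamond- and bowtie-freeness, is correct and matches the remark preceding Lemma~\ref{lemma: prism}.
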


It is easy to see that Theorem \ref{Main-theorem-structure} implies Theorem \ref{thm:ISK4-triangle-structure}.
In~\cite{12}, it is mentioned that deciding in polynomial time whether a given graph is $\text{ISK}_4$-free remains an open problem of interest.  
By Theorem~\ref{Main-theorem-structure} and additional structural analysis, we give a decomposition algorithm for any $\iskdb$-free graph and construct a proper $3$-coloring of it.


\medskip

This paper is organized as follows.  
In Section~2, we introduce additional notation and terminology, and collect some useful results needed for subsequent proofs.  
In Section~3, we first prove the existence of a specific wheel in $\iskdbp$-free graphs, which is later referred to as a proper wheel, and then develop a structural theorem for $\iskdbp$-free graphs centered around this wheel.  
In Section~4, we present several results concerning the centers of a wheel, which enables us to determine whether a vertex in an $\iskdbp$-free graph is the center of a proper wheel.  
In Section~5, we derive several lemmas that help identify vertices of degree at most $2$ in $\isktk$-free graphs.  
In Section~6, we first prove a theorem guaranteeing the existence of a vertex of degree at most $2$ in $\iskdbp$-free graphs, and then prove Theorems~\ref{Main-theorem-structure} and~\ref{Main-theorem-color}.  
Finally, in Section~7, we give a polynomial-time algorithm for decomposing $\iskdb$-free graphs, and consequently a polynomial-time algorithm for coloring this class of graphs.
\section{Preliminary}\label{section-pre}

In this section, we introduce some notation and collect several useful lemmas
that will be used later.
The order of a graph is its number of vertices, and the size is its number of edges. We denote by $V(G)$ and $E(G)$ the vertex set and edge set of a graph $G,$ respectively.  For a vertex $x$ in a graph $G$, the neighborhood and the closed neighborhood of $x$
are denoted by $N_G(x)$ and $N_G[x]$, respectively, and the degree of $x$ is denoted by
$\deg_G(x)$. When $G$ is clear from the context, the subscript $G$ is omitted.
For a path, we denote by $P=p_1p_2\ldots p_k$ the path of order $k$ with $k\geq 1$. If $1\leq i\leq j\leq k$, we denote by $p_iPp_j$ the path $p_ip_{i+1}\ldots p_j$. For a path $P=p_1p_2\ldots p_k$, we call the set $V(P)\setminus \{p_1,p_k\}$ the interior of $P$, and denote it by $P^*$.

A \emph{prism} is a graph consisting of three vertex-disjoint paths
$P_i=a_i\ldots b_i$ $(i=1,2,3)$, each of length at least $1$, such that
$a_1a_2a_3a_1$ and $b_1b_2b_3b_1$ are triangles, and there are no other
edges between the paths (see Figure~\ref{fig:prism}).
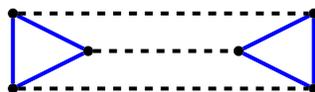
\begin{figure}[!ht]
    \centering
 \begin{tikzpicture}[scale=1,
    v/.style={circle,draw,fill=black,inner sep=1.2pt}]

\node[v] (a1) at (0,1) {};
\node[v] (a2) at (0,0) {};
\node[v] (a3) at (1,0.5) {};

\node[v] (b1) at (4,1) {};
\node[v] (b2) at (4,0) {};
\node[v] (b3) at (3,0.5) {};

\draw[blue, line width=1.5pt] (a1)--(a2)--(a3)--(a1);
\draw[blue, line width=1.5pt] (b1)--(b2)--(b3)--(b1);

\draw[dashed, line width=1.5pt] (a1)--(b1);
\draw[dashed, line width=1.5pt] (a2)--(b2);
\draw[dashed, line width=1.5pt] (a3)--(b3);

\end{tikzpicture}

    \caption{Illustration of a prism.}
    \label{fig:prism}
\end{figure}
The following   two useful lemmas were proved in \cite{Leveque2012}.
\begin{lemma}[L\'ev\^eque et al. \cite{Leveque2012}]\label{lem:ISK4-decomposition}
	Let $G$ be an $\iskfour$-free graph. Then either $G$ is a series-parallel graph, or $G$ contains a prism, a wheel, or a $K_{3,3}$. If $G$ contains a subdivision of $K_{3,3}$, then $G$ contains a $K_{3,3}$.
\end{lemma}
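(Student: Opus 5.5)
Lemma~\ref{lem:ISK4-decomposition} has two parts. For the first, I will take $G$ to be $\isk$-free and not series-parallel, and show it contains a prism, a wheel, or a $K_{3,3}$. Since $G$ is not series-parallel, it contains a subdivision of $K_4$ as a (not necessarily induced) subgraph. Among all such subgraphs I choose one, $H$, with $|V(H)|$ minimal. Let $H$ have branch vertices $a,b,c,d$ and six branch paths $P_{ab},\dots,P_{cd}$. Because $G$ is $\isk$-free, $H$ is not induced, so $G$ has an edge $uv$ with $u,v\in V(H)$ and $uv\notin E(H)$; call it a chord.

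The core step is a case analysis on where a chord can lie, using minimality of $H$. Suppose $u,v$ lie on the same branch path, or on two branch paths sharing a branch vertex, in such a way that rerouting along $uv$ skips at least one vertex of $H$ while keeping four branch vertices of degree three. Then we obtain a smaller $K_4$-subdivision, a contradiction. So every chord is forced into a very restricted configuration. Typically a chord has an endpoint that is a branch vertex, or an endpoint adjacent to one, and it joins two paths that do not share that branch vertex. I will then examine the hole formed by two branch paths through a common pair of branch vertices, together with the vertices having several neighbours on it.

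If some vertex $x\in V(H)$ has at least three neighbours on a hole $C\subseteq H$, we get a wheel $(C,x)$. Otherwise every vertex sees at most two vertices of each such hole. Then the chords must pair up: two disjoint triangles linked by three paths, which gives a prism. Alternatively, the chords together with $H$ yield a subdivision of $K_{3,3}$, obtained by replacing a branch vertex by two vertices joined through a chord.

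This reduces the first part to the second: if $G$ contains a subdivision of $K_{3,3}$, then it contains an induced $K_{3,3}$ (or, in the first part, a wheel or prism). For this I again take a minimal subdivision $S$ of $K_{3,3}$, with sides $\{a_1,a_2,a_3\}$ and $\{b_1,b_2,b_3\}$. I aim to show that if some branch path has length greater than $1$, or if $S$ has a chord, then $G$ contains an induced $K_4$-subdivision or a smaller $K_{3,3}$-subdivision. Concretely, deleting a branch vertex $a_1$ and contracting leaves a structure in which $a_2,a_3,b_1,b_2$ together with suitable paths form a $K_4$-subdivision. If that subdivision is induced, this contradicts $\isk$-freeness. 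If it is not induced, each chord of it either shortens $S$, contradicting minimality, or creates a shorter $K_{3,3}$-subdivision. Hence all nine branch paths are single edges and there are no chords, and $S$ is an induced $K_{3,3}$.

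I expect the main obstacle to be the chord case analysis in the first part. Many positions of the chord $uv$ have to be checked, and one must verify carefully that each rerouting really yields a smaller $K_4$-subdivision, or else exhibits a wheel, prism, or $K_{3,3}$-subdivision. I would organise the cases by the number of branch paths containing $u$ and $v$, and by whether they share a branch vertex. Alternatively, I would simply cite L\'ev\^eque et al.~\cite{Leveque2012}, where this lemma is proved.
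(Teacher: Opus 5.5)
The paper gives no proof of this lemma; it quotes it from L\'ev\^eque et al., so your fallback of citing it is exactly what the paper does. As a proof, however, your sketch has a genuine gap, and its second half aims at a false statement.

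You read the second sentence as being about a $K_{3,3}$-subdivision that is merely a subgraph. You then claim that a vertex-minimal such subdivision $S$ in an $\isk$-free graph must be an induced $K_{3,3}$, because every chord either shortens $S$ or yields a smaller $K_{3,3}$-subdivision. This fails for $G=L(K_{3,3})$. Concretely, take the $3\times 3$ rook's graph: vertices $(i,j)$, $i,j\in\{0,1,2\}$, adjacent when they share a row or a column.
\begin{itemize}
\item $G$ is $\isk$-free. In the line graph of a graph of maximum degree three, a branch vertex of an induced $K_4$-subdivision is an edge $uv$ with $\{\deg u,\deg v\}=\{2,3\}$. One checks that the three edges at the degree-$3$ end are then all branch vertices. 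A fourth branch vertex would force three more, which is too many.
\item $G$ is claw-free, so it has no induced $K_{3,3}$.
\item $G$ contains a $K_{3,3}$-subdivision as a subgraph. Take sides $\{(0,0),(1,1),(2,2)\}$ and $\{(0,1),(1,2),(2,0)\}$, and join the three non-adjacent pairs through $(0,2)$, $(1,0)$, $(2,1)$.
\end{itemize}
Here the chords produce prisms (any two rows), not smaller subdivisions.

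So the reduction you set up in the first part cannot deliver an induced $K_{3,3}$ from ``the chords give a $K_{3,3}$-subdivision''. That case must be allowed to end in a wheel or a prism, and this is where the real content of the lemma lies. Your first part leaves that content unproved. The steps ``the chords must pair up \dots\ which gives a prism'' and ``the chords together with $H$ yield a subdivision of $K_{3,3}$'' are asserted, not derived. Also, cycles of $H$ need not be holes of $G$, so the wheel step is not justified either.

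The second sentence of the lemma should be read, as ``contains'' is used throughout the paper, for an \emph{induced} subdivision, and then it is a one-line fact. $K_{3,3}$ minus an edge $a_1b_1$ is a subdivision of $K_4$: its branch vertices are $a_2,a_3,b_2,b_3$, with $a_2a_3$ subdivided by $b_1$ and $b_2b_3$ by $a_1$. So if an induced $K_{3,3}$-subdivision has a branch path $P$ of length at least two, deleting $P^*$ leaves an induced $\isk$. Note also that your step ``deleting a branch vertex $a_1$'' is off: $K_{3,3}-a_1$ is $K_{2,3}$, a theta, not a $K_4$-subdivision. One must delete a branch path, not a branch vertex.
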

\begin{theorem}[L\'ev\^eque et al. \cite{Leveque2012}]\label{lemma: K33}
 Let $G$ be an $\isk$-free graph containing a $K_{3,3}$. Then $G$ is either a complete bipartite graph, a tripartite graph, or has a clique cutset of size at most $3$.
\end{theorem}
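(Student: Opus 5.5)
The plan is to fix a maximal ``complete multipartite core'' containing the given $K_{3,3}$ and to show that everything outside it attaches to it through a clique of size at most $3$. Concretely, among all induced subgraphs of $G$ that contain the given $K_{3,3}$ and are complete bipartite $K_{A,B}$ with $|A|,|B|\ge 3$, or complete tripartite with parts $A,B,C$ where $|A|,|B|\ge 3$, choose one, $H$, with $|V(H)|$ maximum. If $V(H)=V(G)$ we are done. Otherwise we analyse the vertices and components of $G-V(H)$, and the main tool throughout is the construction of an $\isk$ from a few vertices of $H$ together with a short path outside $H$.

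\textbf{Step 1 (single attachments).} Let $v\notin V(H)$ and $N=N(v)\cap V(H)$. I want to show that either $N$ is a clique of $H$ (so $|N|\le 3$, namely a vertex, an edge, or a triangle meeting each part once), or $v$ can be added to $H$, contradicting maximality. For the latter, the enlarging configurations are: $v$ complete to $B\cup C$ and anticomplete to $A$, so $v$ joins $A$; symmetrically $v$ joins $B$ or $C$; or $H$ is bipartite and $v$ is complete to $A\cup B$, so $v$ becomes a new third part.

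The exclusions go as follows. If $v$ is adjacent to $a_1,a_2\in A$ but misses some $a_3\in A$, then $\{v,a_1,a_2,b_1,b_2,a_3\}$ yields an $\isk$, a diamond-like obstruction, unless the pattern is consistent with $v$ joining $B$. Similarly, if $v$ is adjacent to $a_1\in A$ and $b_1\in B$ but misses $a_2\in A$ and $b_2\in B$, then the cycle $v a_1 b_2 a_2 b_1 v$ together with the vertices $a_3,b_3$ gives an $\isk$ with branch vertices $a_1,b_1,a_2,b_2$. Iterating such small cases shows that $N$ is either a clique or an enlarging pattern. This step is a finite case check, but it must be done carefully for both the bipartite and the tripartite shapes.

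\textbf{Step 2 (components).} Let $D$ be a component of $G-V(H)$, and let $N(D)$ be its set of attachments in $H$. The claim is that $N(D)$ is a clique; then $N(D)$ is a clique cutset of size at most $3$, separating $D$ from the rest of $H$, which is nonempty since $|A|,|B|\ge 3$.

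Suppose instead that two nonadjacent vertices $x,y\in N(D)$ exist; they lie in the same part, say $x=a_1$ and $y=a_2$. Take a shortest path $P$ in $D$ whose ends see $a_1$ and $a_2$ respectively, chosen minimal so that its interior has no attachments except those forced, which Step 1 controls. Then use branch vertices $a_1,a_2,b_1,b_2$. The six edges of the $K_4$ are realised by $a_1b_1$, $a_1b_2$, $a_2b_1$, $a_2b_2$, the path $b_1a_3b_2$, and the path $a_1Pa_2$. This is an $\isk$ provided $P$ has no further neighbours among $\{a_1,a_2,a_3,b_1,b_2\}$.

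\textbf{Main obstacle.} The hardest part is exactly the case where interior vertices of $P$, or its ends, have extra neighbours in $H$, such as an edge $a_ib_j$ or a triangle in the tripartite case. Here one must reroute, choosing different representatives among the at least three vertices of each of $A$ and $B$, or shortening $P$, so that some choice of $a$'s and $b$'s avoids all attachments. Since by Step 1 each outside vertex sees at most one vertex per part, and each part has at least $3$ vertices, there is always room to pick untouched $a_3,b_1,b_2$. I would handle this by a minimal-counterexample choice of $(D,P)$, and treat separately the case where the two attachments lie in different parts but are not adjacent, which is impossible in the complete multipartite $H$ except through the third part. That sub-case needs a similar but separate $\isk$ built on branch vertices taken from three parts.
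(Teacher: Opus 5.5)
The paper only quotes this statement from L\'ev\^eque et al.\ and gives no proof, so I assess your plan on its own. The overall strategy is sound: take a maximum induced complete bipartite or tripartite $H$ with two parts of size at least $3$, then show that every outside vertex and every component of $G-V(H)$ attaches to a clique of $H$. However, two steps fail as written.

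\textbf{Step~1.} Your second exclusion is false. $N(v)\cap V(H)=\{a_1,b_1\}$ is exactly an admissible clique attachment, and $K_{3,3}$ plus such a $v$ is $\isk$-free. The cycle $va_1b_2a_2b_1v$ has the chord $a_1b_1$, and $a_1,a_2,a_3,b_1,b_2,b_3$ induce $K_{3,3}$, not an $\isk$. The obstructions you actually need are these.
\begin{enumerate}
\item If $v$ sees $a_1,a_2,b_1$ and misses $b_2$, then $\{v,a_1,a_2,b_1,b_2\}$ is an $\isk$: a $K_4$ on $v,a_1,a_2,b_1$ with $a_1a_2$ subdivided by $b_2$. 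The same holds with $A$ and $B$ exchanged.
\item If $v$ sees $a_1,a_2$ but misses $a_3$ and all of $B$, use $\{v,a_1,a_2,a_3,b_1,b_2\}$.
\item In the tripartite case, if $v$ sees $a_1,a_2$ (resp.\ $c_1,c_2$) and misses some $b\in B$ and some $c\in C$ (resp.\ some $a\in A$, $b\in B$), then $\{v,a_1,a_2,b,c\}$ (resp.\ $\{v,c_1,c_2,a,b\}$) is an $\isk$.
\end{enumerate}
With these, the conclusion of Step~1 holds, so this is a repairable slip.

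\textbf{Step~2 (the genuine gap).} Your resolution of the main obstacle, ``there is always room to pick untouched $a_3,b_1,b_2$'', is false.

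Knowing that each outside vertex sees at most one vertex per part does not bound how many vertices of $B$ a path sees. You must take $P=p_1\cdots p_k$ minimal over \emph{all} paths in $G-V(H)$ whose ends see two nonadjacent vertices of $H$. If that pair is $a_1,a_2$, minimality gives $k\ge 2$, and $P$ sees $A$ only in $a_1$ (via $p_1$) and $a_2$ (via $p_k$). It also gives that $P$ sees $B$ either in at most one vertex, or in exactly $b_1$ (only via $p_1$) and $b_2\neq b_1$ (only via $p_k$).

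In that last case with $|B|=3$, only $b_3$ is untouched. So no choice of representatives yields your $\isk$ with branch vertices $a_1,a_2,b_i,b_j$, and exchanging $A$ and $B$ fails for the same reason. A concrete instance is $K_{3,3}$ plus a path $p_1p_2$ with $N(p_1)\cap V(H)=\{a_1,b_1\}$ and $N(p_2)\cap V(H)=\{a_2,b_2\}$.

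A different $\isk$ is needed here. For example, $G[\{a_1,b_1,a_3,b_2\}\cup V(P)]$ is a subdivision of $K_4$ with branch vertices $a_1,b_1,p_1,b_2$: triangle $a_1b_1p_1$, edge $a_1b_2$, and paths $b_1a_3b_2$ and $p_1Pp_kb_2$.

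\textbf{Smaller issues.}
\begin{enumerate}
\item The sub-case you promise (two attachments in different parts that are nonadjacent) cannot occur, since $H$ is complete multipartite.
\item A pair $c_1,c_2$ in the third part does need its own argument. The analogue of your connector $a_3$ would be a third vertex of $C$, which need not exist. Instead use the $K_4$ on $c_1,c_2,a,b$ with $c_1c_2$ subdivided by $P$, where $a\in A$ and $b\in B$ are untouched by $P$; these exist by the same minimality.
\item If $N(D)=\emptyset$, you are using the empty set as a clique cutset. The paper's definition of a cutset requires it to be nonempty, so the statement must be read for connected $G$.
\end{enumerate}
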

Given a hole $C$ and a vertex $v \not \in C$, $v$ is {\em linked} to $C$
if there are three paths $P_1,P_2,P_3$ such that
\begin{itemize}
\item[$(\romannumeral1)$] $P_1^* \cup P_2^* \cup P_3^* \cup \{v\}$ is disjoint from $C$;
\item[$(\romannumeral2)$] each $P_i$ has one end  $v$ and the other end in $C$, and there are 
no other edges between $P_i$ and $C$;
\item[$(\romannumeral3)$] for $i, j \in [3]$ with $i \neq j$, $V(P_i) \cap V(P_j) = \{v\}$;
\item[$(\romannumeral4)$] if $x \in P_i$ is adjacent to $y \in P_j$ then either $v \in \{x,y\}$
or $\{x,y\} \subseteq V(C)$;
\item[$(\romannumeral5)$] if $v$ has a neighbor $c \in C$, then $c \in P_i$ for some $i$.
\end{itemize}
\begin{lemma}[Chudnovsky et al. \cite{Chudnovsky2019}]\label{lem:nolink}
If $G$ is $\isk$-free, then no vertex of $G$ can be linked to a hole.
\end{lemma}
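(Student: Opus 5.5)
We prove the lemma by contradiction: assuming some vertex $v$ is linked to a hole $C$ via paths $P_1,P_2,P_3$, we exhibit an induced subdivision of $K_4$ in $G$. For $i\in[3]$ let $c_i$ be the end of $P_i$ in $C$. The conditions say that $v$ together with $C$ and the three paths forms an ``internally induced'' spider attached to the hole. By (ii), each interior vertex of $P_i$ has no neighbours in $C$. By (iv), the only edges between different legs go through $v$ or lie inside $C$. By (v), every neighbour of $v$ in $C$ is some $c_i$, namely when $P_i$ is the single edge $vc_i$.

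First we argue that $c_1,c_2,c_3$ are distinct. Since $V(P_i)\cap V(P_j)=\{v\}$ and $v\notin C$, the ends of distinct legs are distinct vertices of $C$.

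Next we check that $H=G[V(C)\cup V(P_1)\cup V(P_2)\cup V(P_3)]$ has exactly the edges of $C$, the edges of the three paths, and possibly chords inside some $P_i$. Chords inside $P_i$ are removed by taking $P_i$ induced from the start: replace each $P_i$ by a shortest path from $v$ to $c_i$ in $G[V(P_i)]$. This preserves all conditions, since it only shrinks the vertex sets. Any remaining edge either lies between two legs, which (iv) restricts to edges at $v$ or edges inside $C$, or runs from a leg to $C$, which (ii) and (v) forbid unless the endpoint on the leg is the end $c_i$ itself. An edge from $v$ to $C$ must therefore be the last edge of some $P_i$. Edges inside $C$ are just edges of $C$, because $C$ is a hole and hence induced.

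Then $H$ is a subdivision of $K_4$ with branch vertices $v,c_1,c_2,c_3$. The three arcs of $C$ between consecutive $c_i$'s give the subdivided edges $c_ic_j$, and the legs $P_i$ give the subdivided edges $vc_i$. These six paths are internally disjoint. Since $H$ is induced with exactly these edges, $G$ contains an $\isk$, which is a contradiction.

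The main (though mild) obstacle is the bookkeeping in the edge check: ensuring that a leg of length one, whose $v$-end is adjacent to $c_i$, creates no extra adjacency, and that an interior vertex of one leg is not adjacent to the end $c_j$ of another leg. The second case is exactly the situation excluded by (ii) together with (iv), since $c_j\in C$.
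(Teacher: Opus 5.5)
Your proof is correct. It is the standard argument behind this lemma, which the paper cites from Chudnovsky et al.\ without proof: once the legs are made induced, $G[V(C)\cup V(P_1)\cup V(P_2)\cup V(P_3)]$ is an $\isk$ with corners $v,c_1,c_2,c_3$. One small slip: your opening claim that interior vertices of $P_i$ have no neighbours in $C$ should exempt the last edge of $P_i$ (from its penultimate vertex to $c_i$); this is harmless, since your edge check counts that edge as an edge of $P_i$.
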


Let $S=u_1u_2u_3u_4u_1$ be a square in $G$.
A \emph{link} of $S$ is an induced path $P$ with ends $p,p'$ such that
$P^*$ is anticomplete to $V(S)$ and one of the following holds:
$p=p'$ and $N(p)\cap V(S)=V(S)$;
$N(p)\cap V(S)=\{u_1,u_2\}$ and $N(p')\cap V(S)=\{u_3,u_4\}$;
or $N(p)\cap V(S)=\{u_1,u_4\}$ and $N(p')\cap V(S)=\{u_2,u_3\}$. A graph $K$ is called a \emph{rich square} if it contains an induced square $S$
such that $V(S)$ is a vertex cut of $K$ and every component of $K-V(S)$
is a link of $S$. The square $S$ is called the \emph{central square} of $K$.

\begin{lemma}[Chen et al. \cite{Chen2021}]\label{lemma:rich-square-chen}
If $G$ is a rich square, then $G$ contains a diamond or a bowtie.
\end{lemma}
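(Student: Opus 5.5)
The statement is Lemma~\ref{lemma:rich-square-chen}: a rich square always contains a diamond or a bowtie. I would argue directly from the definition of a rich square. Let $K$ be a rich square with central square $S=u_1u_2u_3u_4u_1$, and let $P$ be any component of $K-V(S)$. Since $V(S)$ is a vertex cut of $K$, such a component exists, and by definition $P$ is a link of $S$. The plan is to split into the three allowed shapes of a link and exhibit a diamond or a bowtie as an induced subgraph in each case.

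\textbf{Case $p=p'$.} Then $p$ is adjacent to all of $u_1,\dots,u_4$. Since $S$ is induced, $u_1u_3\notin E(K)$. Hence $\{p,u_1,u_2,u_3\}$ induces $K_4$ minus the edge $u_1u_3$, which is a diamond.

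\textbf{Case $N(p)\cap V(S)=\{u_1,u_2\}$ and $N(p')\cap V(S)=\{u_3,u_4\}$.} This forces $p\neq p'$, so $pu_1u_2$ and $p'u_3u_4$ are two vertex-disjoint triangles. I would not look for a bowtie inside $P\cup S$ alone, since the two triangles share no vertex there. Instead I would check whether the fixed sets $V(S)\cup V(P)$ already give a diamond. The obstruction is that they do not: the subgraph looks like a prism with triangles $pu_1u_2$ and $p'u_4u_3$ joined by the paths $u_1u_4$, $u_2u_3$ and $P$.

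\textbf{Reinterpreting the definition.} This shows that a rich square with a single link of this type is a prism, which is diamond- and bowtie-free. So the lemma as literally stated would be false. The intended definition, following Chen et al., must require at least two links, and in fact enough links to force the conclusion. With that reading, I would take two components $P,Q$ of $K-V(S)$. If either one is a single vertex complete to $S$, the first case gives a diamond. Otherwise each link attaches to a pair of opposite edges of $S$.

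\textbf{Two links attaching to the same edge.} Suppose both $P$ and $Q$ have an end adjacent to exactly $\{u_1,u_2\}$, say $p$ and $q$. Then $p$ and $q$ lie in different components, so $pq\notin E(K)$, and $u_1u_2pq$ induces a diamond.

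\textbf{Links attaching to different edge pairs.} Now suppose $P$ attaches to $\{u_1,u_2\},\{u_3,u_4\}$ and $Q$ attaches to $\{u_1,u_4\},\{u_2,u_3\}$. Then $u_1$ lies in the triangles $u_1u_2p$ and $u_1u_4q$. The only possible extra edges among $\{u_1,u_2,p,u_4,q\}$ are $u_2u_4$, which is absent because $S$ is induced, and edges between $p,q$ and $u_4,u_2$, which are absent by the neighbourhood conditions. So these five vertices induce a bowtie.

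\textbf{Main obstacle.} The hard part is pinning down the precise definition of a rich square so that this case analysis is exhaustive. The computations themselves are immediate.
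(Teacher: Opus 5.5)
Your case analysis is correct and complete once you know that $K-V(S)$ has at least two components. A one-vertex link complete to $S$ gives the diamond $\{p,u_1,u_2,u_3\}$. Two links of the same type give the diamond $\{u_1,u_2,p,q\}$ or its analogue. Links of different types give the bowtie with triangles $u_1u_2p$ and $u_1u_4q$. The paper gives no proof of its own and only cites Chen et al.; this direct check is the natural argument.

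What is wrong is your claim that the lemma is false as literally stated. The definition already supplies the second link. Every component of $K-V(S)$ is a link, so it has a neighbour in the connected square $S$, and hence $K$ is connected. The hypothesis that $V(S)$ is a vertex cut of $K$ (a set whose removal increases the number of components) therefore says exactly that $K-V(S)$ has at least two components. In particular, the prism formed by $S$ and a single link is not a rich square, because deleting $V(S)$ leaves only one component.

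You used the vertex-cut hypothesis only to produce one component. Use it to produce two and the ``reinterpretation'' step disappears. No condition requiring ``enough links'' is needed either: your own argument shows that two links always suffice.
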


We note that if $L(H)$ is $\db$-free, then $H$ is sparse, which will be used in the next lemma.

\begin{lemma}[L\'ev\^eque et al. \cite{Leveque2012}]\label{lemma: prism0}
	Let $G$ be an $\iskfour$-free graph. If $G$ contains a prism, then either $G$ is the line graph of a graph with maximum degree three or a rich square, or $G$ has a clique cutset of size at most $3$, or $G$ has a proper $2$-cutset.
\end{lemma}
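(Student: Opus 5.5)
We plan to follow the strategy of L\'ev\^eque et al.: grow the given prism into a maximal "line-graph-like" induced subgraph and then analyse how the rest of $G$ attaches to it. Concretely, among all induced subgraphs $H$ of $G$ that contain a prism and are isomorphic to the line graph of a bipartite-free subdivision of a $2$-connected graph of maximum degree three (equivalently, $H=L(R)$ where $R$ is obtained by subdividing edges of a $2$-connected graph $R_0$ with $\Delta(R_0)\le 3$, every branch vertex of degree exactly three), choose $H$ with $|V(H)|$ maximum. The prism itself is $L$ of a subdivided theta, so such $H$ exists. In $H$, each branch vertex of $R$ gives a triangle of $H$, and each branch edge-path of $R$ gives an induced path of $H$ joining two triangles. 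If $H=G$, then $G$ is the line graph of a graph with maximum degree three and we are done.

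Otherwise we study a component $C$ of $G-V(H)$ together with its attachment set $N(C)\cap V(H)$. The key step is an attachment lemma: using ISK$_4$-freeness (and Lemma~\ref{lem:nolink}, applied to suitable holes of $H$ passing through one or two triangles), we show that $N(C)\cap V(H)$ is contained in one of a few restricted shapes: (a) a single triangle of $H$ or a subset of it; (b) a single edge or vertex of $H$; (c) a set contained in one branch path of $H$ together with the end triangles, lying within two consecutive vertices or an interval whose removal separates; or (d) two "opposite" edges of a square of $H$. For a single vertex $v$ outside $H$ the analysis goes by counting neighbours on each hole of $H$: three neighbours on a hole spread out would let us route three paths and build an ISK$_4$ or link $v$ to the hole; so neighbours of $v$ are confined to a triangle, an edge, or consist of exactly two vertices forming the ends of a path that would let us enlarge $H$ by adding a new branch path (replacing $R$ by a larger subdivided cubic graph), contradicting maximality. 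Extending from single vertices to whole components is done by taking a minimal path in $C$ between two attachments that violate the shapes above and repeating the same enlargement/ISK$_4$ argument.

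Given the attachment lemma, the conclusion follows by cases. Attachments of type (a) or (b) yield a clique cutset of size at most $3$ (the triangle, edge or vertex separates $C$ from the rest of $H$, which is nonempty since $H$ contains a prism). Type (c) yields a $2$-cutset $\{x,y\}$ on a branch path; it is proper because one side contains $C$ and the other contains the rest of the prism, neither being a single path, and $xy\notin E(G)$ after choosing $x,y$ suitably (if $x,y$ adjacent we get a clique cutset instead). Type (d) can only occur when $H$ has a square formed by two triangles' edges connected by branch paths of length one, i.e. when $H$ is the prism with a central square; iterating over all components of type (d) shows $G$ is a rich square with that central square, unless some component also breaks the square, in which case we again find a clique cutset or a proper $2$-cutset.

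The main obstacle is the attachment lemma, specifically ruling out components whose attachments are spread over two different branch paths without giving an enlargement of $H$. I would first handle a single vertex exhaustively by the hole-based argument, then for a component pick a minimal induced path $P$ in $C$ with ends attached to two different "pieces" of $H$, and show that $H\cup P$ either contains an ISK$_4$ (by choosing three internally disjoint routes in $R$ from the attachment points, using $2$-connectivity and $\Delta\le 3$ of $R_0$) or is itself a larger line graph of the same type, contradicting maximality of $H$.
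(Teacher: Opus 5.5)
\textbf{There is a genuine gap.} The paper does not prove this lemma; it quotes it from L\'ev\^eque et al. Their proof is the one reflected in Lemmas~\ref{lem:cyclically-3-connected}, \ref{lem:-substantial-graph} and~\ref{lem:no-substantial-graph}: they take a maximal line graph over \emph{substantial} roots only, treat rich squares separately, and handle the residual prism case directly. Your outline has the same shape, but it maximizes over all subdivisions of $2$-connected graphs with $\Delta\le 3$. For that family your attachment lemma is false.

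\textbf{A counterexample to your attachment lemma.}
Let $R_0$ consist of two disjoint copies of $K_4$ minus an edge, on $\{p_i,q_i,x_i,y_i\}$ with $x_iy_i$ missing ($i=1,2$), plus the edges $x_1x_2$ and $y_1y_2$.
Then $R_0$ is cubic and $2$-connected, so $H=L(R_0)$ is in your family. It contains a prism, namely the line graph of the theta $x_1p_1y_1$, $x_1q_1y_1$, $x_1x_2p_2y_2y_1$.
Let $G$ be $H$ plus a vertex $v$ adjacent exactly to $e=x_1x_2$ and $f=y_1y_2$.

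$G$ is $\mathrm{ISK}_4$-free:
\begin{itemize}
\item $\{e,f\}$ separates the two copies in $L(R_0)$, so any $\mathrm{ISK}_4$ of $G$ must use $v$ as an interior vertex of a branch $e\,v\,f$, with the rest of it on one side of $\{e,f\}$.
\item Rerouting through the other side then gives an $\mathrm{ISK}_4$ in $L(R_0)$.
\item That is impossible, because line graphs of graphs of maximum degree three are $\mathrm{ISK}_4$-free: claw-freeness forces the corners to span a diamond, and the remaining branch cannot then be induced.
\end{itemize}

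Neither $G$ nor any $G-u$ with $u\neq v$ is a line graph, so $H$ is a maximum member of your family. Yet $N(v)\cap V(H)=\{e,f\}$ is none of your shapes (a)--(d). Adding $v$ creates no $\mathrm{ISK}_4$ and no larger line graph.

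The three internally disjoint routes you planned to extract from $2$-connectivity do not exist here: $e$ and $f$ are exactly the two edges crossing a cyclic $2$-separation of $R_0$. The lemma's conclusion does hold in this example ($\{e,f\}$ is a proper $2$-cutset), but your argument as designed does not produce it.

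\textbf{The missing idea.} Restrict the root to be cyclically $3$-connected, i.e.\ a theta or a subdivision of a $3$-connected graph (Lemma~\ref{lem:cyclically-3-connected}). In that setting, attachments spread across branches really do force an $\mathrm{ISK}_4$. The alternative is to introduce and analyse an extra attachment type coming from $2$-separations of the root.

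\textbf{Type (d) is also mishandled.}
\begin{itemize}
\item $H$ has an induced square whenever the root has a $4$-cycle, not only when $H$ is a prism.
\item In particular, your family contains the line graph of a square subdivision of $K_4$. That graph is a rich square with links of both types and is not a prism, so your claim that type (d) forces $H$ to be the prism with a central square is unfounded.
\item To conclude that $G$ is a rich square, you must show that every component attaching to the central square is a link: an induced path with the prescribed end attachments, or a single vertex complete to the square. You do not address this.
\end{itemize}
This is why L\'ev\^eque et al.\ exclude square thetas and square subdivisions of $K_4$ from the substantial family, whose line graphs are exactly the small rich squares, and run a separate maximality argument for rich squares.
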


By Lemma~\ref{lemma:rich-square-chen}, a rich square contains a diamond or a bowtie. 
Together with Lemma~\ref{lemma: prism0}, this yields the following result.

\begin{lemma}\label{lemma: prism}
Let $G$ be an $\iskdb$-free graph. If $G$ contains a prism, then either $G$ is the line graph of a sparse graph with maximum degree $3$, or $G$ has a clique cutset of size at most $3$, or $G$ has a proper $2$-cutset.
\end{lemma}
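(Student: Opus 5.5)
The statement follows by combining Lemma~\ref{lemma: prism0} with Lemma~\ref{lemma:rich-square-chen}, plus a short argument that the underlying graph is sparse. Let $G$ be $\iskdb$-free and contain a prism. Since $G$ is in particular $\iskfour$-free, Lemma~\ref{lemma: prism0} applies. So $G$ is the line graph of a graph $H$ with maximum degree three, or a rich square, or $G$ has a clique cutset of size at most $3$, or a proper $2$-cutset. The last two outcomes are exactly among the conclusions we want, so only the first two need attention.

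If $G$ is a rich square, Lemma~\ref{lemma:rich-square-chen} says $G$ contains a diamond or a bowtie. This contradicts the hypothesis, so this outcome cannot occur.

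Now suppose $G=L(H)$ with $\Delta(H)\le 3$. It remains to show that $H$ can be taken sparse, which is the remark stated before Lemma~\ref{lemma: prism0}. We may assume $H$ has no isolated vertices, since deleting them does not change $L(H)$. Suppose some edge $e=uv$ of $H$ has $\deg_H(u)\ge 3$ and $\deg_H(v)\ge 3$. Let $e_1,e_2$ be two further edges at $u$, and $f_1,f_2$ two further edges at $v$.
\begin{itemize}
\item If $\{e_1,e_2\}$ and $\{f_1,f_2\}$ are disjoint as edge sets, then in $L(H)$ the sets $\{e,e_1,e_2\}$ and $\{e,f_1,f_2\}$ are two triangles sharing only the vertex $e$.
\item No $e_i$ can be adjacent to an $f_j$ in $L(H)$ unless they share an endpoint other than $u,v$. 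If they do, $e_i=uw$ and $f_j=vw$ form a triangle $uvw$ in $H$.
\end{itemize}
If no such extra adjacency occurs, the triangles $\{e,e_1,e_2\}$ and $\{e,f_1,f_2\}$ induce a bowtie in $L(H)$. If $e_1=uw$ and $f_1=vw$, then $e,e_1,f_1$ are pairwise adjacent, and together with $e_2$ they induce a diamond. Here $e_2=uz$ with $z\neq w$ is adjacent to $e$ and $e_1$ but not to $f_1=vw$; $z\neq v$ holds since the graph is simple. Either way $G$ contains a diamond or a bowtie, a contradiction. Hence every edge of $H$ has at most one end of degree greater than two, i.e.\ $H$ is sparse.

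The only step needing care is the last case split: confirming that when $\{e,e_1,e_2\}$ and $\{e,f_1,f_2\}$ are not anticomplete apart from $e$, the required diamond really is induced, with the degree bound $\Delta(H)\le3$ used to pin down the edges. Otherwise the result is a direct combination of the two cited lemmas.
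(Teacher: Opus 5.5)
Your proof is correct and matches the paper's route: apply Lemma~\ref{lemma: prism0}, exclude the rich-square outcome with Lemma~\ref{lemma:rich-square-chen}, and use the fact that a \{diamond, bowtie\}-free line graph $L(H)$ forces $H$ to be sparse. The paper only asserts that last fact in a one-line remark, and your bowtie/diamond case split proves it correctly. Note also that the disjointness of $\{e_1,e_2\}$ and $\{f_1,f_2\}$ holds automatically, and the argument does not actually need $\Delta(H)\le 3$.
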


A \emph{separation} of a graph $H$ is a pair $(A, B)$ of subsets of
$V(H)$ such that $A \cup B = V(H)$ and there are no edges between $A
\setminus B$ and $B \setminus A$.  It is \emph{proper} if both $A
\setminus B$ and $B\setminus A$ are non-empty.  The \emph{order} of
the separation is $|A\cap B|$.  A \emph{$k$-separation} is a
separation $(A, B)$ such that $|A \cap B| \leq k$.  A separation $(A,
B)$ is \emph{cyclic} if both $H[A]$ and $H[B]$ has cycles.  A graph
$H$ is \emph{cyclically $3$-connected} if it is $2$-connected, not a
cycle, and there is no cyclic $2$-separation. A \emph{theta} is a graph consisting of two distinct vertices joined by three internally disjoint paths, each of length at least $2$. The structure of cyclically $3$-connected graphs was characterized by L\'ev\^eque et al. \cite{Leveque2012}.

\begin{lemma}[L\'ev\^eque et al. \cite{Leveque2012}]\label{lem:cyclically-3-connected}
	A graph $H$ is cyclically $3$-connected if and only if it is either
	a theta or a subdivision of a $3$-connected graph.
\end{lemma}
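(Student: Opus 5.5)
The statement to prove is Lemma~\ref{lem:cyclically-3-connected}: $H$ is cyclically $3$-connected if and only if $H$ is a theta or a subdivision of a $3$-connected graph. I would handle the two directions separately, arguing throughout with the branch vertices of $H$ (vertices of degree at least three) and the ``branches'' between them (maximal paths whose interior vertices have degree two).

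\emph{Sufficiency.} Suppose first that $H$ is a theta with ends $u,v$. Then $H$ is $2$-connected and is not a cycle. Let $(A,B)$ be a proper separation of order at most $2$. The only $2$-cuts of a theta are
\begin{itemize}
\item $\{u,v\}$, and
\item pairs of vertices lying on one path $P$, chosen so that one side is a subpath of $P^*$ (or of $P$).
\end{itemize}
In each case one side induces a forest, so no cyclic $2$-separation exists.

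Now suppose $H$ is a subdivision of a $3$-connected graph $K$. Subdivisions preserve $2$-connectivity, and $H$ is not a cycle because $K$ has minimum degree at least three. Let $(A,B)$ be a proper separation with $|A\cap B|\le 2$ such that both $H[A]$ and $H[B]$ contain cycles. Each cycle of $H$ passes through at least three branch vertices, since cycles of $K$ have length at least three. So each of $A$ and $B$ contains a branch vertex of $H$ outside $A\cap B$.

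Map $A\cap B$ to a set $S$ of at most two vertices of $K$: keep each branch vertex, and send each subdivision vertex to one end of the branch containing it. Two branch vertices on opposite sides are then separated in $K$ by $S$. The reason is that any path in $K$ between them corresponds to a path in $H$, which must use a vertex of $A\cap B$; such a vertex lies on some branch, and an end of that branch lies in $S$. This contradicts the $3$-connectivity of $K$.

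The delicate point is the case where both vertices of $A\cap B$ lie on the same branch. The lifted path may avoid that branch altogether, and then $S$ must be chosen more carefully: in that situation one side of the separation consists only of interior vertices of the branch, which is acyclic.

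\emph{Necessity.} Let $H$ be cyclically $3$-connected. Since $H$ is $2$-connected and not a cycle, it has at least two branch vertices, and suppressing the degree-two vertices yields a multigraph $K$ of minimum degree at least three. The suppression is well defined because $H$ is not a cycle.
\begin{itemize}
\item If $K$ has exactly two vertices, then $K$ consists of parallel edges, so $H$ consists of at least three internally disjoint $u$--$v$ paths. If there were four or more such paths, $\{u,v\}$ would split them into two groups of at least two paths each, each group containing a cycle, giving a cyclic $2$-separation. So there are exactly three paths. At most one of them can have length one, since $H$ is simple, and a path of length one must be excluded too: otherwise the separation (path 1 plus path 2, path 1 plus path 3) would have two cyclic sides. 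Hence $H$ is a theta.
\item If $K$ has at least three vertices, I would show that $K$ is simple and $3$-connected.
  \begin{itemize}
  \item If $K$ has parallel edges between $u$ and $v$, then $\{u,v\}$ separates those branches (which form a cycle) from the rest. The rest contains a cycle, because it contains a third branch vertex and has minimum degree conditions around it. This is a cyclic $2$-separation, a contradiction.
  \item If $K$ has a cut of size at most $2$, it lifts to a $2$-separation $(A,B)$ of $H$. Each side contains a vertex of $K$ outside the cut, and that vertex has degree at least three in $K$; counting degrees shows each side contains a cycle. This again gives a cyclic $2$-separation, a contradiction.
  \end{itemize}
\end{itemize}

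I expect the main obstacle to be the careful translation between separations of $H$ and separations of $K$. On the sufficiency side, this is the handling of cut vertices lying inside a single branch. On the necessity side, it is the argument that each side of a lifted separation indeed contains a cycle, which rests on minimum degree at least three among the branch vertices on that side.
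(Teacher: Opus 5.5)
The paper only quotes this lemma from L\'ev\^eque et al.\ and gives no proof, so your argument has to stand on its own. The necessity direction is sound. The two places where you are vague, ``the rest contains a cycle'' and ``counting degrees shows each side contains a cycle'', both follow from one fact: a forest containing a vertex of degree at least three has at least three leaves, while on each side only the at most two cut vertices can have degree less than three. You should also say why suppressing the degree-two vertices creates no loops: a loop at $w$ would make $w$ a cut vertex of $H$.

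The gap is in the sufficiency direction, for subdivisions of a $3$-connected $K$, at the step ``two branch vertices on opposite sides are then separated in $K$ by $S$.'' This gives a contradiction only if the chosen vertices $x\in A\setminus B$ and $y\in B\setminus A$ lie outside $S$, and your construction does not ensure this. Suppose $b\in A\cap B$ is a subdivision vertex on a branch with ends $p,q$, and you send $b$ to $p$. Then $p$ may be exactly the branch vertex you picked on its side, since you only showed that each side contains \emph{one} branch vertex outside $A\cap B$.

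The repair is a count. Let $t$ be the number of vertices of $A\cap B$ that are branch vertices.
\begin{itemize}
\item A cycle in $H[A]$ has at least three branch vertices, since $K$ is simple. Hence at least $3-t$ of them lie in $A\setminus B$, and likewise for $B$.
\item $S$ has at most $2-t$ vertices outside $A\cap B$.
\item So you can choose branch vertices $x,y\notin S$ on opposite sides.
\end{itemize}
Since $K$ is $3$-connected and $|S|\le 2$, there is an $x$--$y$ path in $K-S$. It uses no branch with an end in $S$, so its lift to $H$ avoids $A\cap B$. That is impossible, because the lift joins $A\setminus B$ to $B\setminus A$.

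The case you singled out as delicate is in fact not. A lifted path that avoided the branch carrying both vertices of $A\cap B$ would avoid $A\cap B$ entirely, which cannot happen. So every lifted $x$--$y$ path traverses that branch and passes through both of its ends. With the counting fix, that case needs no separate treatment; your remark that one side is then a path is true but unnecessary.
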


A \emph{square theta} is a theta that contains a square.
An induced subdivision $H$ of $K_4$ has four vertices of degree three,
which we call the \emph{corners} of $H$.
A \emph{square subdivision of $K_4$} is a subdivision of $K_4$ whose
corners form a (possibly non-induced) square.
A graph is said to be \emph{substantial} if it is cyclically
$3$-connected and not a square theta or a square subdivision of $K_4$.

\begin{lemma}[L\'ev\^eque et al. \cite{Leveque2012}]
	\label{lem:-substantial-graph}
	Let $G$ be an $\iskfour$-free graph.  Let $H$ be a
	substantial graph such that $L(H)$ is an induced subgraph of $G$ and
	is inclusion-wise maximum with respect to that property.  Then
	either $G = L(H)$, or $G$ has a clique cutset of size at most $3$,
	or $G$ has a proper $2$-cutset.
\end{lemma}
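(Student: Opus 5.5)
The plan is to argue by maximality. Suppose $G\neq L(H)$, and let $C$ be a connected component of $G\setminus V(L(H))$. Write $N(C)$ for the set of vertices of $L(H)$ that have a neighbor in $C$. If $N(C)=\emptyset$, then $G$ is disconnected and the empty set is trivially separating, so I assume $N(C)\neq\emptyset$. First I record that $H$ has maximum degree at most $3$: a vertex of degree $4$ in $H$ gives a $K_4$ in $L(H)$, and $K_4$ is itself an $\iskfour$. So every clique of $L(H)$ corresponds either to the edges at one vertex of $H$ (size at most $3$) or to a triangle of $H$. The goal is to show that $N(C)$ is either a clique of $L(H)$, giving a clique cutset of size at most $3$, or a pair of nonadjacent vertices of $L(H)$ that separates $G$ properly, giving a proper $2$-cutset.

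\textbf{Single attaching vertex.} I would first handle the case $C=\{v\}$, or more generally study one vertex $v\notin L(H)$ with neighbors in $L(H)$. The neighbors of $v$ are edges of $H$. I would show the following.
\begin{enumerate}
\item[(a)] If $N(v)\cap V(L(H))$ is exactly the set of edges at some vertex $x$ of $H$ with $\deg_H(x)\le 2$, then $L(H)+v=L(H')$, where $H'$ is obtained from $H$ by adding a pendant edge at $x$. Since $H'$ is still substantial, this contradicts the maximality of $L(H)$. Symmetric adjoining moves handle the other cases where $N(v)$ forms a clique that is the full neighborhood of a would-be edge.
\item[(b)] Otherwise, $v$ has two nonadjacent neighbors $e,f$ in $L(H)$.
\end{enumerate}
In case (b) I use that $H$ is cyclically $3$-connected, hence, by Lemma~\ref{lem:cyclically-3-connected}, a theta or a subdivision of a $3$-connected graph. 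This lets me route, inside $H$, three internally disjoint paths and a cycle through the edges $e,f$. Their line graph together with $v$ yields either an $\iskfour$ or a vertex linked to a hole, contradicting Lemma~\ref{lem:nolink}. The only escape is when $e$ and $f$ lie on the same branch path of $H$ (a maximal path whose internal vertices have degree $2$) and $v$ is adjacent to a subpath of it. In that situation, either $v$ can again be absorbed into $H$, or it forms a ``bypass'' of the branch.

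\textbf{General components.} Next I extend this to a whole component $C$ by taking a minimal connected subgraph of $C$ that attaches to two far-apart parts of $L(H)$. Using the same path-routing in $H$, any attachment that is not confined to one vertex-star of $H$, and not confined to the two ends of a single branch path, produces an $\iskfour$. In the confined-star case, $N(C)$ is a clique of size at most $3$, so $G$ has a clique cutset. In the branch case, $N(C)$ consists of two nonadjacent vertices $a,b$ on the line graph of a branch path of $H$. Then $\{a,b\}$ is a $2$-cutset, and it remains to check that it is proper, meaning that neither side is merely an $a$--$b$ path. The side containing the rest of $L(H)$ contains cycles because $H$ is not a cycle. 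On the $C$ side, if it were only a path, I would reroute the branch of $H$ through it and obtain a larger line graph, again contradicting maximality.

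\textbf{Main obstacle.} I expect the hardest step to be the exhaustive case analysis showing that non-confined attachments always yield an $\iskfour$. This is exactly where substantiality is needed. In a square theta or a square subdivision of $K_4$, a vertex adjacent to the four edges around the square produces a link of the square and hence a rich square, which is $\iskfour$-free, so no contradiction arises there. To exclude this, I would first try to show that whenever an attachment pattern survives the $\iskfour$ test, the four attached edges form a square in $H$ whose removal disconnects $H$ into the pieces of a square theta or square $K_4$-subdivision. Substantiality rules this out, and every remaining pattern then gives an explicit induced $K_4$-subdivision.
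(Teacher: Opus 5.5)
\noindent Your overall plan is the right shape: take a component $C$ of $G\setminus V(L(H))$; if its attachments lie in a clique of $L(H)$, use $\Delta(H)\le 3$ to get a clique cutset of size at most $3$; if they lie in the line graph of one branch, get a proper $2$-cutset. (The paper gives no proof of this lemma; it imports it from L\'ev\^eque, Maffray and Trotignon.)

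\textbf{The main gap.} The step you call the heart of the proof is false. It is not true that every attachment lying neither in a clique nor in one branch yields an $\iskfour$ unless $H$ is a square theta or a square $K_4$-subdivision. Here is a counterexample.
\begin{itemize}
\item[$\bullet$] Let $H'$ be a subdivision of $K_{3,3}$ in which one edge $xy$ is left unsubdivided and every other edge is subdivided, and let $H=H'-xy$.
\item[$\bullet$] Then $H$ is a subdivision of $K_4$ whose corners are pairwise nonadjacent, so $H$ is substantial. The vertices $x,y$ are interior vertices of two distinct branches of $H$.
\item[$\bullet$] In $G=L(H')$, the vertex $v=xy$ is adjacent exactly to the two edges at $x$ and the two edges at $y$. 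So $v$ has nonadjacent neighbours on different branches, which is your case (b).
\item[$\bullet$] Yet $G$ is $\iskfour$-free, since line graphs of graphs of maximum degree at most $3$ are $\iskfour$-free (a standard fact). Hence neither an $\iskfour$ nor a vertex linked to a hole can be found.
\item[$\bullet$] Moreover $G\ne L(H)$, and $G$ has no clique cutset and no proper $2$-cutset. So for this non-maximal $H$ the conclusion of the lemma fails.
\end{itemize}
Maximality must therefore be used at exactly this point, and your plan never uses it validly.

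What is missing is an \emph{augmentation} outcome in the attachment analysis. In that outcome, $C$ contains an induced path $Q$ such that
\begin{itemize}
\item[$\bullet$] the ends of $Q$ are adjacent precisely to the two edges at degree-$2$ vertices $x,y$ of $H$ lying on distinct branches, and
\item[$\bullet$] $Q^*$ is anticomplete to $L(H)$.
\end{itemize}
Then $G[V(L(H))\cup V(Q)]=L(H+R)$ for an $x$--$y$ path $R$. To exclude this by maximality you must prove that $H+R$ is substantial:
\begin{itemize}
\item[$\bullet$] $H+R$ is a subdivision of a $3$-connected graph, because subdividing two distinct edges of a $3$-connected graph and joining the new vertices preserves $3$-connectivity, and a theta becomes a $K_4$-subdivision;
\item[$\bullet$] $H+R$ is a square $K_4$-subdivision only if $H$ is a square theta.
\end{itemize}
Only after this case is split off can the remaining patterns be driven to an $\iskfour$ or to a square with a link.

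\textbf{Your appeals to maximality are invalid.} In (a), $\deg_H(x)=2$ because $H$ is $2$-connected. Adding a pendant edge destroys $2$-connectivity, so $H'$ is not substantial and no contradiction arises. None is needed, though: $N(v)$ is then a clique of size $2$ and falls under the clique-cutset outcome.

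Rerouting a branch through $C$ gives a line graph that no longer contains $L(H)$, so it says nothing about inclusion-wise maximality. It is also unnecessary. Deleting the two extreme attachments $a,b$ on $L(P)$ also cuts off the interior of the segment of $L(P)$ between $a$ and $b$. That interior lies on $C$'s side, so that side is never a path.

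\textbf{Your branch case needs repair.}
\begin{itemize}
\item[$\bullet$] $N(C)$ need not consist of two vertices. A vertex seeing $e_i,e_{i+1},e_j,e_{j+1}$ (edges numbered along $P$) corresponds to adding a chord of $P$. That gives an $\iskfour$-free graph whose root is not substantial, so neither contradiction is available.
\item[$\bullet$] Another component may attach both strictly inside and strictly outside the segment between $a$ and $b$ without creating an $\iskfour$. For example, two crossing chords of $P$ still give the line graph of a graph of maximum degree $3$. In that case $\{a,b\}$ is not a cutset at all.
\end{itemize}
The $2$-cutset should instead be the extreme pair of the union of a maximal chain of pairwise overlapping segments.
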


\begin{lemma}[L\'ev\^eque et al. \cite{Leveque2012}]
	\label{lem:no-substantial-graph}
	Let $G$ be an $\iskfour$-free graph that does not contain the line graph
	of a substantial graph or a rich square as an induced subgraph.  Let
	$K$ be a prism that is an induced subgraph of $G$.  Then either
	$G=K$, or $G$ has a clique cutset of size at most $3$, or $G$ has a proper $2$-cutset.
\end{lemma}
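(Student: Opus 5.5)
The statement to prove is Lemma~\ref{lem:no-substantial-graph}, attributed to L\'ev\^eque et al.; I would reconstruct their argument by growing a maximal structure around the prism $K$. The plan is to work with \emph{line graphs of cyclically $3$-connected graphs}. The prism is $L(H_0)$ where $H_0$ is a theta, and a theta is cyclically $3$-connected by Lemma~\ref{lem:cyclically-3-connected}. Among all induced subgraphs of $G$ of the form $L(H)$ with $H$ cyclically $3$-connected and containing $K$, choose one, say $K'=L(H)$, with $|V(K')|$ maximum. By hypothesis $H$ is not substantial, so $H$ is a theta, a square theta, or a square subdivision of $K_4$. A square subdivision of $K_4$ has a line graph containing a rich square or a diamond-like configuration; I would check that its line graph contains a rich square, which is excluded. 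So $H$ is a theta, and $K'$ is a prism, possibly a square theta. I would then argue that, by maximality, we may assume $K'=K$, or else simply replace $K$ by $K'$, since the conclusion concerns only $G$.

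Next I would study attachments of $V(G)\setminus V(K)$ to the prism $K$, with paths $P_1,P_2,P_3$ and triangles $a_1a_2a_3$, $b_1b_2b_3$. First, for a vertex $v\notin K$ I would classify $N(v)\cap V(K)$ using $\iskfour$-freeness. The allowed types are: no neighbours; a single vertex; an edge of some $P_i$ or a triangle edge; or, in the square case, a full square of the prism (which gives a link, hence a rich square component). Any other pattern yields either an $\iskfour$ directly or a larger line graph of a cyclically $3$-connected graph (an extra branch), contradicting maximality. Second, and this is the core step, for a connected component $C$ of $G-V(K)$ I would take a minimal path $P$ in $C$ whose ends have attachments to $K$. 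I would show that the whole attachment set $N(C)\cap V(K)$ is one of the following:
\begin{itemize}
\item[(a)] contained in a clique of size at most $3$, namely a triangle or an edge;
\item[(b)] contained in a single path $P_i$ together with its ends, with $N(C)$ meeting $P_i$ in at most two nonadjacent vertices, or inside one edge.
\end{itemize}
Otherwise a minimal connecting path creates an induced subdivision of $K_4$, or extends $H$ by an ear (contradicting maximality), or forms a link of a square, and hence a rich square.

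The last step turns this into cutsets. If some component $C$ exists, then in case (a) the attachment clique is a clique cutset of size at most $3$ separating $C$ from the rest of $K$. In case (b) the two attachment vertices $x,y$ on $P_i$ form a $2$-cutset. It is proper because on one side lies $C$ together with a subpath of $P_i$, and on the other side lies the rest of the prism, which is not a path. If the $C$-side were merely an $x$–$y$ path, I would instead pick a different cut, or note that $G$ then contains a larger prism, against maximality. If no component exists, then $G=K$.

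The main obstacle is the component-attachment analysis in the core step. There one must rule out components that attach to two different paths $P_i$ and $P_j$, or to both triangles, without producing an $\iskfour$. The delicate subcases are when the attachments are at triangle vertices $a_i,b_j$. I would first try to reroute through the prism to build a subdivision of $K_4$ with corners among $a_i,a_j,b_i,b_j$. If that fails, I would recognise the new structure as the line graph of a subdivided $K_4$ or $K_{3,3}$ (a substantial graph), which is excluded by hypothesis, or as a square theta extension, handled by maximality.
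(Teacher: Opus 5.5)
The paper does not prove this lemma; it quotes it from L\'ev\^eque et al. So I am judging your argument on its own terms, and it has a genuine gap.

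First, a misreading that costs you the key structure. A theta that is not a square theta is cyclically $3$-connected and hence \emph{substantial}. So the hypothesis forces $K$ itself to be the line graph of a square theta. Two of its paths, say $P_1=a_1b_1$ and $P_2=a_2b_2$, are single edges, and $K$ is the induced square $a_1a_2b_2b_1$ plus the single link $P_3$. You conclude only that $H$ is ``a theta'' and never use this. Your maximality device is also empty. A prism contains no other prism as a proper induced subgraph. Any larger $L(H')$ with $H'$ cyclically $3$-connected is excluded outright: $H'$ is either substantial, or a square subdivision of $K_4$, whose line graph is a rich square. The subcases you flag as delicate reduce to a few short checks on the square.

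The real problem is that your attachment classification is false, so the core step cannot be proved as stated. Take two disjoint edges $x_1x_2$ and $x_3x_4$ of a long $P_3$, far apart, with all four vertices in $P_3^*$. Add a vertex $v$ with $N(v)=\{x_1,x_2,x_3,x_4\}$. Then $K+v$ is the line graph of the square theta plus an edge $st$ joining two interior vertices of its long branch.
\begin{itemize}
\item The root graph has maximum degree $3$, so $K+v$ is $\iskfour$-free. In a line graph every neighbourhood is covered by two cliques, which forces an $\iskfour$ to be $K_4$ or $K_4$ with one subdivided edge, and neither occurs.
\item $\{s,t\}$ is a cyclic $2$-separation of the root graph, so its only cyclically $3$-connected subgraphs are square thetas.
\item In $K+v$ the only induced square $a_1a_2b_2b_1$ has no two disjoint links, so there is no rich square.
\end{itemize}
So $K+v$ satisfies all the hypotheses. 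Yet $N(v)\cap V(K)$ is none of your allowed types and fits neither (a) nor (b). It is ruled out neither by an $\iskfour$, nor by maximality (the enlarged root graph is not cyclically $3$-connected), nor by a rich square. The lemma holds here through the proper $2$-cutset $\{x_1,x_4\}$, which your final step never produces.

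In (b) you also never check that no other component attaches across $\{x,y\}$. Without that, $\{x,y\}$ need not separate $G$. Crossing attachments must be excluded by an $\iskfour$ that uses the rest of the prism as the sixth branch.

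All of this is avoided once you use the square-theta structure. Show that every component of $G-V(K)$ with a neighbour in $\{a_1,a_2,b_1,b_2\}$ has its attachments in a clique of $K$. Otherwise you get an $\iskfour$, or a second link and hence a rich square, or the line graph of a substantial graph. Then, if $G\neq K$, one of two things happens.
\begin{itemize}
\item Some component meets the square, and its attachment clique is a clique cutset of size at most $3$.
\item Every component attaches inside $V(P_3)$. Then $\{a_3,b_3\}$ separates $P_3^*$ together with all components from the square. It is a proper $2$-cutset when $a_3b_3\notin E(G)$ (the $P_3$-side strictly contains the induced path $P_3$, so it is not an $a_3b_3$-path), and a clique cutset otherwise.
\end{itemize}
No analysis of attachments inside $P_3$ is needed.
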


Let $ G = (V, E) $ be a graph. A \emph{proper $ k $-edge coloring} of $ G $ is a mapping $ \varphi \colon E \to [k]$ such that $ \varphi(e) \ne \varphi(f) $ for any two adjacent edges $ e $ and $ f $. If such a coloring exists, we say that $ G $ is \emph{$ k $-edge-colorable}. The minimum integer $ k $ for which $ G $ admits a proper $ k $-edge coloring is called the \emph{edge chromatic number} of $ G $, denoted by $ \chi'(G)$.

\begin{lemma}\label{lemma:cubic-edge-color}
	Let $G$ be a graph obtained from a cubic graph by  subdividing only one edge twice and each of the remaining edges once.
	For every $k\ge 3$,
	if $G$ is $k$-edge-colorable, then $G$ has proper $k$-edge coloring $c_i$ for $i\in [2]$ such that $c_1(e_1)=c_1(e_2)$ and $c_2(e_1)\neq c_2(e_2)$, where $e_1$, $e_2$ are two non-adjacent edges obtained by subdividing a single edge twice. 
\end{lemma}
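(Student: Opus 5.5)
The proof will be by Kempe-chain recolouring, with a parity argument coming from the near-bipartite structure of $G$. Let the edge that was subdivided twice be $uv$, so it becomes the path $uabv$. Then $e_1=ua$ and $e_2=bv$, and the middle edge is $ab$.

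\textbf{Structural fact.} Let $V_0$ be the set of original (cubic) vertices and $S$ the set of subdivision vertices. Every edge of $G$ other than $ab$ joins $V_0$ to $S$, so $G-ab$ is bipartite with parts $V_0$ and $S$. Since $a,b\in S$, every $a$--$b$ path in $G-ab$ has even length. In addition, $\Delta(G)=3$ and every vertex of $S$ has degree $2$.

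\textbf{Turning equal colours into different ones.} Start from a proper $k$-edge colouring $c$ with $c(ua)=c(bv)=\alpha$, and put $\gamma=c(ab)\neq\alpha$. Choose a third colour $\delta\notin\{\alpha,\gamma\}$, which exists because $k\ge 3$. Let $P$ be the $\{\alpha,\delta\}$-Kempe chain containing $bv$.
\begin{itemize}
\item At $b$ the colours present are $\gamma$ and $\alpha$, so $\delta$ is missing and $b$ is an end of $P$. Likewise $a$ sees only $\alpha$ and $\gamma$, so if $P$ contains $a$ then $a$ is the other end of $P$.
\item In that case $P$ avoids $ab$ (colour $\gamma$), so it is an $a$--$b$ path in $G-ab$, and its first and last edges $bv$ and $ua$ both have colour $\alpha$. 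An alternating path whose first and last edges have the same colour has odd length, which contradicts the structural fact.
\item Hence $P$ does not contain $ua$. Swapping $\alpha$ and $\delta$ on $P$ gives a proper colouring $c_2$ with $c_2(bv)=\delta\neq\alpha=c_2(ua)$.
\end{itemize}

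\textbf{Turning different colours into equal ones.} Now suppose $c(ua)=\alpha\neq\beta=c(bv)$, and let $\gamma=c(ab)$.
\begin{itemize}
\item Take the $\{\alpha,\beta\}$-chain $Q$ through $bv$. The vertex $b$ misses $\alpha$, so $b$ is an end of $Q$.
\item If $Q$ does not reach $a$, swapping on $Q$ gives $bv$ colour $\alpha$, and we obtain $c_1$ with $c_1(ua)=c_1(bv)$.
\item If $Q$ does reach $a$, then $Q$ is an even alternating $a$--$b$ path in $G-ab$, which is consistent with parity, so a direct swap does not help.
\end{itemize}
This last case is the main obstacle. For it I would first try a two-step recolouring.
\begin{itemize}
\item If $k\ge 4$, recolour $ab$ with a fourth colour. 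Then the $\{\alpha,\gamma\}$-chain through $bv$, or a suitable chain with $\gamma$ now free at $a$ and $b$, has endpoint parities that forbid it from returning to $a$, and the previous step applies.
\item If $k=3$, first swap the $\{\beta,\gamma\}$-chain starting at $a$. The vertex $a$ misses $\beta$, so this chain starts with $ab$ and continues through $bv$, and it cannot return to $a$. After the swap, $c(ab)=\beta$ and $c(bv)=\gamma$.
\item Then examine the $\{\alpha,\gamma\}$-chain through $bv$. If it also closes up at $a$, the three chains together with the odd cycle $Q+ab$ should contradict the bipartiteness of $G-ab$, by the same length count as in the first part.
\end{itemize}
If this parity bookkeeping does not close, the fallback is to recolour the whole odd cycle $Q+ab$ directly. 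Its edges are coloured $\alpha,\beta$ alternately with one $\gamma$, and every third edge at its internal degree-$3$ vertices has colour $\gamma$. I would rotate colours along the cycle so that $ua$ and $bv$ receive the same colour while properness is preserved at the degree-$3$ vertices.
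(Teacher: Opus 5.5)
Your first half (equal $\to$ different) is correct and is the paper's argument. You also correctly reduce the second half to the case where the $\{\alpha,\beta\}$-chain $Q$ runs from $b$ to $a$. The gap is exactly at the step you flag as the main obstacle.

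\textbf{The missing step.} Your first move is right: swap the $\{\beta,\gamma\}$-chain $R$ that starts at $a$, giving $c(ab)=\beta$ and $c(bv)=\gamma$. It mirrors the paper's swap of the $\{1,2\}$-chain through $e$ and $e_1$. But your reason why the next $\{\alpha,\gamma\}$-chain from $b$ cannot end at $a$ fails. After the swap, $b$ misses $\alpha$ and $a$ misses $\gamma$. Such a chain would therefore start with a $\gamma$-edge at $b$ and end with an $\alpha$-edge at $a$, so it has even length. That is exactly what bipartiteness of $G-ab$ allows, so a length count gives no contradiction.

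What you need is a blocking argument that uses $Q$ near $v$.
\begin{itemize}
\item Let $vv'$ be the $\alpha$-edge of $Q$ at $v$, and $v'v''$ the following $\beta$-edge of $Q$. The degree-$2$ vertex $v'$ sees only $\alpha$ and $\beta$.
\item Suppose $R$ used $v'v''$. Since $v'$ misses $\gamma$, it would be the far end of $R$. Then $R-a$ would be a $b$--$v'$ path in $G-ab$ whose first edge $bv$ and last edge $v''v'$ are both coloured $\beta$. It would have odd length, although $b,v'\in S$.
\item Hence the swap leaves $vv'$ and $v'v''$ coloured $\alpha$ and $\beta$. The new $\{\alpha,\gamma\}$-chain through $bv$ is just $b\,v\,v'$, which misses $a$. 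Swapping it gives $c(bv)=\alpha=c(ua)$.
\end{itemize}
This is the paper's step in which $P$ survives the swap of $H_3$, and the degree-$2$ vertex $x_1$ with $\varphi'(x_1x_2)=1$ stops $H_4$.

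\textbf{The fallback fails.} Take $k=3$ and keep the off-cycle colours fixed. Every vertex of $Q$ in $V_0$ has its third edge coloured $\gamma$, so its two $Q$-edges must receive $\alpha$ and $\beta$ in some order. The degree-$2$ vertices between consecutive such vertices then force the colours along $Q$ to alternate. Since $Q$ has even length, $ua$ and $bv$ always get different colours. Edges off $Q+ab$ must be recoloured, which is precisely what the swap of $R$ does.

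\textbf{The $k\ge 4$ step.} As written it is not an argument. After recolouring $ab$ with a fourth colour, $b$ carries no edge of colour $\alpha$ or $\gamma$, so there is no $\{\alpha,\gamma\}$-chain through $bv$. The step is also unnecessary: the three-colour argument above uses only $\alpha,\beta,\gamma$ and works verbatim for every $k\ge 3$, as in the paper.
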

\begin{proof}
Since $G$ is $k$-edge-colorable, fix a proper edge-coloring $\varphi~\colon E(G)\to[k]$.  
Let the three consecutive edges on the twice-subdivided original edge be $e_1=xy$, $e=yz$, $e_2=zw$, where $x,w$ are vertices of degree $3$ and $y,z$ are vertices of degree $2$.

If $\varphi(e_1)=\varphi(e_2)$, then we may assume that $\varphi(e_1)=\varphi(e_2)=1$ and $\varphi(e)=2$.  
Let $H_1$ be the connected component of the subgraph induced by edges of colors $1$ and $3$ that contains $e_1$.  
Since $\varphi(e)=2$, $e\notin E(H_1)$.  
Suppose that $e_2\in E(H_1)$, then there is a path $P$ in $H_1$ between  $x$ and $w$.  
Edges of $P$ alternate colors $1$ and $3$, and since both $e_1$ and $e_2$ have color $1$, the length of $P$ is odd.  
Since $H_1$ is sparse,  the degrees of vertices on $P$ alternate $3,2,3,2,\dots$.
Thus the length of $P$ must be even. 
Therefore $e_2\notin E(H_1)$.  
Thus swapping colors $1$ and $3$ on $H_1$ yields a proper coloring $\varphi'$ with $\varphi'(e_1)\neq\varphi'(e_2)$.  
So in this case we may take $c_1=\varphi$ and $c_2=\varphi'$.

Now suppose $\varphi(e_1)\neq\varphi(e_2)$. For a contradiction, assume that in every proper $k$-edge-coloring of $G$ the edges $e_1$ and $e_2$ receive distinct colors.  
We may assume $\varphi(e_1)=1$, $\varphi(e)=2$, and $\varphi(e_2)=3$.  
Let $H_2$ be the connected component of the subgraph induced by colors $1$ and $3$ that contains $e_1$.  
If $e_2\notin E(H_2)$, then swapping colors $1$ and $3$ on $H_2$ gives a proper coloring in which $e_1$ and $e_2$ have the same color, a contradiction to the assumption.
Hence $e_2\in E(H_2)$, so there is a path $P$ in $H_2$ between $x$ and $w$ whose edges alternate colors $1$ and $3$. 
Since $x,w$ have degree $3$ and $H_2$ is sparse, the vertices along $P$ alternate degrees $3$ and $2$, so $P$ has even length.

Let $H_3$ be the connected component of the subgraph induced by colors $1$ and $2$ that contains $e_1$, and form $\varphi'$ by swapping colors $1$ and $2$ on $H_3$. We claim this swap does not affect the coloring of the path $P$. 
Indeed, if some edge $e_3\in E(P)$ of color $1$ were recolored by this swap, then $e_3\in E(H_3)$ and there would exist in $H_3$ a path $Q$ from $x$ to an end $s$ of $e_3$ whose edges alternate $1$ and $2$. Note that $s$ has degree $3$. 
Since both $e_1$ and $e_3$ have color $1$, the length of $Q$ is odd.  
Since $H_3$ is sparse, the degrees of vertices on $Q$ alternate $3,2,3,2,\dots,3$. 
Thus the length of $Q$ must be even, a contradiction. 
Thus $P$ remains with the same colors under the swap. Without loss of generality, we may assume that $P=xx_1x_2\ldots w$. 
Thus the edge adjacent to $e_1$ on $P$ is $xx_1$.

After this swap, considering the component $H_4$ of colors $2$ and $3$ containing $e_2$ in the new coloring $\varphi'$. 
We claim that $e_1\notin E(H_4)$.
For suppose not, 
since $\varphi'(e_4)=3$, there must be a path colored $2$ and $3$ from $e_2$ to $e_1$ through $e_4$.
But $x_1$ has degree of $2$ and $\varphi'(x_1x_2)=1$, a contradiction.
Thus $e_1\notin E(H_4)$, now swapping colors $2$ and $3$ on $H_4$ yields a proper coloring $\varphi''$ in which $\varphi''(e_1)=\varphi''(e_2)$,
a contradiction to the assumption.

Therefore, there exists a proper edge-coloring assigning the same color to $e_1$ and $e_2$.
Combined with the first part, this gives the two desired colorings $c_1,c_2$ and completes the proof of Lemma~\ref{lemma:cubic-edge-color}.
\end{proof}
\section{Proper wheels}
In this section, we study proper wheels and establish several structural
properties of them.
Let $W=(C,x)$ be a wheel. We call a vertex $v$ \emph{proper} for $W$ if either
$v\in V(W)$, or all neighbors of $v$ in $V(C)$ lie in one sector of $W$, and moreover,
if $v$ has more than two neighbors in $V(C)$, then $v$ is adjacent to $x$.
The wheel $W$ is said to be \emph{proper} if every vertex of $G$ is proper for $W$.
\begin{lemma}\label{lem:proper-wheel-exist}
Let $G$ be an $\iskdbp$-free graph that contains a wheel. Then there is a proper wheel in $G$.
\end{lemma}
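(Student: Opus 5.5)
Among all wheels of $G$ we choose an extremal one and show it is proper. Concretely, choose a wheel $W=(C,x)$ such that $|V(C)|$ is minimum. Subject to that, $x$ has the maximum number of neighbours on $C$. The sectors of $W$ are the subpaths of $C$ between consecutive neighbours of $x$. Since $G$ is diamond-free, two consecutive neighbours of $x$ on $C$ can be adjacent only if the third vertex on $C$ adjacent to both is absent. Since $G$ is bowtie-free, $x$ lies in at most one triangle with an edge of $C$. So at most one sector of $W$ has length $1$, and every other sector has length at least $2$.

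We then suppose some vertex $v\notin V(W)$ is not proper for $W$ and derive a contradiction. There are two cases: either (a) $N(v)\cap V(C)$ is not contained in one sector, or (b) $v$ has at least three neighbours on $C$ inside one sector but $v$ is not adjacent to $x$.

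In case (b), let $P$ be the sector containing $N(v)\cap V(C)$. Let $a,b$ be the first and last neighbours of $v$ along $P$. The shortcut $C'=(C\setminus aPb^*)\cup\{v\}$ is a hole strictly shorter than $C$ unless $aPb$ has length $2$, which is impossible since $v$ has at least three neighbours on $aPb$. Also $x$ still has at least three neighbours on $C'$: the neighbours of $x$ outside the interior of $P$ remain, and a wheel center has at least three of them. Hence $(C',x)$ is a wheel with a shorter rim, contradicting the minimality of $|V(C)|$. We still have to check that the neighbours of $x$ on $C$ outside the interior of $P$ are preserved. If $x$ has exactly three neighbours, then $(C',x)$ might not be a wheel, and in that case we instead use $(aPb\cup v, \cdot)$-type holes.

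In case (b) there is also a subcase where the neighbours of $v$ form at most a few vertices. The cleaner route there is to look at the hole $v\,aPb$ or the pieces between consecutive neighbours of $v$: $v$ with three neighbours on a short hole inside $P\cup\{v\}$ gives a smaller wheel centered at $v$. This is again a contradiction.

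In case (a), $v$ has neighbours in two different sectors, and $v$ may or may not be adjacent to $x$.

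If $vx\in E(G)$ and $v$ has neighbours $c,c'$ not in a common sector, we take a neighbour $y$ of $x$ on $C$ separating them. The hole through $x,v$ and the part of $C$ avoiding $y$ yields either a shorter hole with center $x$ or $v$, or a diamond or bowtie when $v$ is adjacent to a neighbour of $x$. Triangles $xvc$ together with a triangle of $x$ on $C$ form a bowtie, and two common neighbours of $x,v$ create a diamond.

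If $vx\notin E(G)$, we consider how many neighbours $v$ has on $C$. If $v$ has exactly two nonadjacent neighbours in distinct sectors, then $x$, $v$, and $C$ give a theta-like configuration. Combined with $x$'s three rim paths, this yields an $\isk$, or a prism when the relevant neighbours are adjacent pairs. If $v$ has at least three neighbours, $(C,v)$ is itself a wheel, and we compare it with $(C,x)$. The natural tool is Lemma~\ref{lem:nolink}. We show $x$ (or $v$) is linked to a suitable hole through the path $x$–$C$–$v$, or build an $\isk$ directly from two sectors together with $v$.

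I expect the main obstacle to be case (a) with $v$ nonadjacent to $x$ and having exactly two neighbours on $C$, which are possibly adjacent, in distinct sectors. Here one must carefully produce an induced subdivision of $K_4$ or a prism, distinguishing whether those neighbours are adjacent (prism/diamond) or not ($\isk$ via $x$'s three branches). The choice of extremal wheel must be tuned so that every bad configuration shrinks the rim.
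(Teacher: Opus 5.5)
\textbf{Verdict: there is a genuine gap, in case (a).}

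Your extremal choice is the same as the paper's: a wheel with $|V(C)|$ minimum, taken over all centres. Your case (b) is essentially right, because shortcutting the sector through $v$ removes no spoke, so $(C',x)$ is a wheel with a shorter rim. Two small corrections there. The length-$2$ case of $aPb$ is excluded by diamond-freeness, not by the neighbour count. Your worry about $x$ having exactly three spokes is moot, since no spoke lies in $(aPb)^*$.

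The problem is case (a), which is where the whole lemma lives. There you only list outcomes you hope for (``a shorter hole with center $x$ or $v$'', ``a theta-like configuration'', ``compare $(C,v)$ with $(C,x)$'') without showing that any of them occurs. Several of these hints fail as stated:
\begin{itemize}
\item The hole you build through $x$ and $v$ contains both of them, so neither can be its centre, and you do not say which vertex would be.
\item $(C,v)$ has the same rim as $(C,x)$, so minimality of $|V(C)|$ says nothing against it.
\item The configuration you flag as the main obstacle, two adjacent neighbours of $v$ in distinct sectors, cannot occur, because every edge of the hole $C$ lies in some sector.
\end{itemize}

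What is missing is the sequence of reductions that makes case (a) tractable.
\begin{enumerate}
\item If $v$ has at least three neighbours on the cycle $xSx$ of some sector $S$, then $(xSx,v)$ is either a $K_4$ or a wheel \emph{centred at $v$} whose rim is shorter than $C$. The rim is shorter because every wheel has at least four spokes, by Lemma~\ref{lem:nolink}. Hence $v$ has at most two neighbours in each sector, and at most one if $vx\in E(G)$.
\item From rim-minimality, diamond-freeness and Lemma~\ref{lem:nolink}, every subpath of $C$ between consecutive neighbours of $v$ contains at most two spokes.
\item If $v$ has no neighbour in $S^*$, then it has at most one neighbour in $S$.
\end{enumerate}

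Only with these bounds does a case split close the argument. The split is on whether the two sectors containing private neighbours of $v$ are consecutive. In most subcases $x$ or $v$ is linked to an explicit hole. The remaining subcases (with $vx\notin E(G)$) produce a forbidden subgraph:
\begin{itemize}
\item a bowtie, when the sectors are non-consecutive and $v$ is adjacent to both ends of an edge in each;
\item a prism, when the sectors $S_1,S_2$ are consecutive with common spoke $b_1$, $v$ is adjacent to both ends of an edge of $S_1$, and $v$ is adjacent to the other end $b_2$ of a one-edge sector $S_2=b_1b_2$.
\end{itemize}
Without the spoke bound, the holes formed by $v$ and arcs of $C$ can carry many spokes. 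Then neither a shorter rim nor an $\mathrm{ISK}_4$ follows from what you have written.
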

\begin{proof}
Let $W=(C,x)$ be a wheel in $G$ with $|V(C)|$ minimum.
We assert that $W$ is a proper wheel. Suppose there exists a vertex $v\in V(G)\setminus V(W)$ that violates the definition of a proper wheel. If $v$ has at least three neighbors on the cycle $xSx$ corresponding to some
sector $S$ of $W$, then $(xSx,v)$ induces either a wheel with a rim shorter than that of $W$, or a $K_4$, a contradiction.  Therefore, 
$v$ has at most two neighbors in every sector of $W$ (and at most one if $v$ is adjacent to $x$). 
\begin{claim}\label{2spokes}
Every path of $C$ whose ends lie in $N(v)$ and whose interior is disjoint from $N(v)$ contains at most two spokes of $W$. 
\end{claim}

Otherwise, assume that there exists a subpath $P$ of $C$ with ends $v_1,v_2\in N(v)$ and with interior disjoint from $N(v)$, such that $P$ contains at least three spokes of $W$. 
Let $P'$ denote the subpath of $C$ obtained from $C$ by deleting the interior of $P$. By the minimality of $W$,  we have that the path $P'$ must have exactly three vertices, say $P'=v_1wv_2$, and we must have $wx\in E(G)$ since $v$ is not proper for $W$. Since $G$ is diamond-free, $wv\notin E(G)$ and $|N(x)\cap \{v_1,v_2\}|\leq 1$. 
By symmetry, we may assume that $xv_1\notin E(G)$. Let $x_1$ be the neighbor of $x$ in $P$ closest to $v_1$, and let $x_2$ be the neighbor of $x$ in $P$ closest to $v_2$. If $xv\in E(G)$, then $x$ can be linked to the hole $vv_1wv_2v$ by $xv$, $xw$ and $xx_2Pv_2$, a contradiction. 
Therefore, $xv\notin E(G)$. However, it follows that $w$ has exactly three neighbors  in the hole $vv_1Px_1xx_2Pv_2v$, a contradiction. This proves Claim~\ref{2spokes}.
\begin{claim}\label{claim:ends}
For each sector $S$ of $W$, either $v$ has a neighbor in $S^*$, 
or $v$ has at most one neighbor in $S$.
\end{claim}

Otherwise, assume that there exists a sector $S$ of $W$ with ends $a_1,b_1$ such
that $v$ has no neighbor in $S^*$ and $v$ has at least two neighbors in $S$.
This implies that $N(v)\cap V(S)=\{a_1,b_1\}$ and $vx\notin E(G)$. Since $G$ is
diamond-free, $S^*\neq \emptyset$. Let $Q=C-S^*$. Then $Q$ is a path with ends
$a_1$ and $b_1$. Since $v$ is not proper for $W$, $v$ has a neighbor in $Q^*$.
Let $v^*$ be the neighbor of $v$ in $Q^*$ closest to $b_1$. Let $S_1$ be a sector
of $W$ with end $b_1$ and such that $V(S_1)\subseteq V(Q)$, and let $a_2$ be the
other end of $S_1$. Let $S_2$ be a sector of $W$ with end $a_2$ and such that
$V(S_2)\subseteq V(Q)$, and let $b_2$ be the other end of $S_2$. Since the path
$b_1Qv^*$ contains at most two spokes of $W$, $v^*\in V(b_1Qb_2)\setminus
\{b_1,b_2\}$. If $v^*\in V(S_2)$, then $x$ can be linked to the hole $vb_1Qv^*v$
by $xb_1$, $xa_2$, and $xa_1v$, a contradiction. Therefore, $v^*\in S_1^*$.
This implies that $v$ can be linked to the hole $xS_1x$ by $vb_1$, $vv^*$, and
$va_1x$, a contradiction. This proves Claim~\ref{claim:ends}.

Note that $v$ is not proper for $W$, and $v$ has at most two neighbors in every
sector of $W$. Thus, there exist two sectors $S_1,S_2$ of $W$ such that $v$ has
a neighbor in $V(S_1)\setminus V(S_2)$ and a neighbor in
$V(S_2)\setminus V(S_1)$. We now consider the following two cases.
\begin{case}\label{case:no-consecutive-sectors}
There are no two consecutive sectors $S_1$ and $S_2$ of $W$ such that $v$ has a
neighbor in $V(S_1)\setminus V(S_2)$ and another neighbor in
$V(S_2)\setminus V(S_1)$.
\end{case}

Now we can choose $S_1,S_2$ and, for $i=1,2$, label the ends of $S_i$ as $a_i,b_i$
such that $b_1\neq a_2$. By Claim~\ref{2spokes}, we may assume that $b_1,a_2$ are
the ends of a sector $S_3$ of $W$, where $v$ has no neighbor in $S_3^*$.
And by Claim~\ref{claim:ends}, we have $N(v) \cap \{a_1,b_1,a_2,b_2\} = \emptyset$. Let $s$ and $t$ be the neighbors of $v$ in $S_1$ closest to $a_1$ and $b_1$, respectively. 
Similarly, let $z$ and $w$ be the neighbors of $v$ in $S_2$ closest to $a_2$ and $b_2$, respectively. By Lemma~\ref{lem:nolink}, $x$  cannot  be linked to the hole $vtS_1b_1S_3a_2S_2zv$, and it follows that $xv\notin E(G)$. If $s = t$ or $st \notin E(G)$, then $x$ can be linked to the hole 
$vtS_1b_1S_3a_2S_2zv$ by $xb_1$, $xa_2$, and either $xa_1S_1t$ (if $s=t$) or $xa_1S_1sv$ ($s \ne t$ and $st \not\in E(G)$), a contradiction. 
Hence $s \neq t$ and $st \in E(G)$. 
By a symmetric argument, $z \neq w$ and $zw \in E(G)$. This implies that $G[\{v,s,t,z,w\}]$ is a bowtie, a contradiction. This completes the proof of Case~\ref{case:no-consecutive-sectors}.

\begin{case}
There exist two consecutive sectors $S_1$ and $S_2$ of $W$ such that $v$ has a neighbor in $V(S_1)\setminus V(S_2)$ and a neighbor in $V(S_2)\setminus V(S_1)$.
\end{case}

Therefore, we may choose $S_1, S_2$ and, for $i=1,2$, label the ends of $S_i$ as
$a_i, b_i$ such that $b_1 = a_2$. Let $s$ and $t$ be the neighbors of $v$ in $S_1$
closest to $a_1$ and $b_1$, respectively, and let $z$ and $w$ be the neighbors of
$v$ in $S_2$ closest to $a_2$ and $b_2$, respectively.

Suppose first that $t=b_1$. By Claim~\ref{claim:ends}, we have that $s\in S_1^*$
and $w\in S_2^*$. Note that $v$ has at most two neighbors in every sector of $W$
(and at most one if $v$ is adjacent to $x$). Then $xv\notin E(G)$. Since $G$ is
diamond-free, either $sb_1\notin E(G)$ or $b_1w\notin E(G)$. By symmetry, we may
assume that $sb_1\notin E(G)$. Now $v$ can be linked to the hole $xS_2x$ by
$vb_1$, $vw$, and $vsS_1a_1x$, a contradiction. Therefore, $t\neq b_1$. By
symmetry, $z\neq b_1$.

Suppose that $vx\in E(G)$. Then $s=t$ and $z=w$. Note that $W$ has at least four
spokes, so $b_2\neq a_1$ and $b_2a_1\notin E(G)$. Since $G$ is diamond-free,
either $s\neq a_1$ or $w\neq b_2$. By symmetry, we may assume that $s\neq a_1$.
Now $v$ can be linked to the hole $xS_2x$ by $vx$, $vz$, and $vsS_1b_1$, a
contradiction. Therefore, $vx\notin E(G)$.

Suppose that $s\neq t$. If $z\neq b_2$ or
$b_1b_2\notin E(G)$, then $v$ can be linked to the hole $xS_1x$ by $vs$, $vt$, and $vzS_2b_1x$ (if $z\neq b_2$) or $vzx$ ($z=b_2$ and $b_1b_2\notin E(G)$), a contradiction. Hence $z=b_2$ and
$b_1b_2\in E(G)$. If $st\notin E(G)$, then $b_2$ can be linked to the hole $xS_1svtS_1b_1x$ by $b_2b_1$,
$b_2v$, and $b_2x$, a contradiction. Therefore $st\in E(G)$ and $G[\{v, x\} \cup V(S_1) \cup V(S_2)]$ is a prism, a contradiction. Thus $s=t$. By symmetry, $z=w$. Then $x$ can be linked to the hole $vtS_1b_1S_2z$ by
$xb_1$, $xa_1S_1t$, and $xb_2S_2z$, a contradiction. This completes the proof of
Lemma~\ref{lem:proper-wheel-exist}.
\end{proof}
\begin{lemma}\label{lem:triangle-wheel-4}
Let $G$ be an $\iskdbp$-free graph, and let $W = (C, x)$ be a proper wheel in $G$. 
Let $u, v \in N(x)$ with $uv \in E(G)$. 
Let $W' = (C', x)$ be a wheel in $G$ containing $u$ and $v$ with exactly four spokes.
Let $S$ be a sector of $W'$ that contains $u$ but not $v$. 
If $V(C') \setminus V(S) \subseteq V(C)$, then there exists a proper wheel in $G$ centered at $x$ with exactly four spokes.
\end{lemma}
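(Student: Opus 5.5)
The natural candidate is $W'$ itself: it is centered at $x$ and has exactly four spokes, so we only need to show that every vertex $y$ of $G$ is proper for $W'$. We argue by contradiction, supposing some $y\notin V(W')$ is not proper for $W'$. Label the spokes of $W'$ cyclically as $u=s_1,s_2,s_3,s_4=v$. Since $uv\in E(G)$ and both are spokes, $uv$ is itself a sector of $W'$. Let $S$ be the sector $us_2$ (the one containing $u$ but not $v$). By hypothesis the sectors $s_2s_3$, $s_3v$ and $vu$ all lie in $C$.

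The plan is to transfer properness from $W$ to $W'$ by following the same pattern as in the proof of Lemma~\ref{lem:proper-wheel-exist}. Because $y$ is proper for $W$, its neighbours on $C$ lie in one sector of $W$. If $y$ has more than two neighbours on $C$, then it is also adjacent to $x$.

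We first bound the neighbours of $y$ on each cycle $xTx$, where $T$ is a sector of $W'$. If $y$ has three or more neighbours on some $xTx$, then $(xTx,y)$ is a wheel, or we obtain a diamond or $K_4$.
\begin{itemize}
\item For $T\subseteq C$, the sector $T$ is a subpath of $C$ containing at most the spokes of $W$ at its ends. A wheel with rim $xTx$ centered at $y$ would then have to be examined against the properness of $W$. Here we use that $y$ is proper for $W$, so its neighbours in $T$ lie in one $W$-sector; if $y$ has more than two such neighbours, then $yx\in E(G)$, and diamond-freeness limits this.
\item For $T=S$, we cannot rely on $C$. Instead, we reuse the linking arguments of Claims~\ref{2spokes} and~\ref{claim:ends} directly on $W'$, applying Lemma~\ref{lem:nolink} together with the diamond-, bowtie- and prism-freeness.
\end{itemize}

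Having bounded the neighbours of $y$ on each sector of $W'$, we next repeat the two cases from the proof of Lemma~\ref{lem:proper-wheel-exist} for $W'$. Those cases used minimality of $|V(C)|$ only inside Claim~\ref{2spokes}. Everything else used only Lemma~\ref{lem:nolink} and the forbidden subgraphs, so it transfers verbatim to $W'$.

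The remaining task is to replace the minimality step in Claim~\ref{2spokes}, which is where we expect the main difficulty. Suppose a subpath $P$ of $C'$ has both ends in $N(y)$, interior avoiding $N(y)$, and contains three spokes of $W'$. With only four spokes available, the complementary path $P'$ contains at most one spoke of $W'$ in its interior. When $P'\subseteq C$, the neighbours of $y$ at the ends of $P'$ lie in one sector of $W$ by properness. This forces $P'$ to be short, and the argument from Claim~\ref{2spokes} then applies, using the hole $yv_1Px_1xx_2Pv_2y$ or a link of $x$.

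When $P'$ meets $S^*$, we plan to use the edge $uv$ instead. Since $y$ is not proper for $W'$ and $y$'s neighbours on $C$ are confined to one sector of $W$, $y$ must have neighbours in $S^*$ and in $C\setminus S$. We would then build a hole through $y$, part of $S$, and the edge $uv$, and link $x$ or $u$ to it via spokes, contradicting Lemma~\ref{lem:nolink}. The configurations with $uv$ inside a triangle with $x$ that survive this argument look like a prism $xuv$ plus a triangle through $y$, which is excluded.

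We expect this final case analysis, where $y$ sees $S^*$ and $C$ in a configuration not covered by Claim~\ref{claim:ends}, to be the main obstacle. If $W'$ itself fails to be proper, the fallback is to exchange $S$ for a shorter $u$–$s_2$ path through $y$. This yields a new four-spoke wheel still satisfying the hypothesis $V(C'')\setminus V(S'')\subseteq V(C)$, and we iterate on $|V(S)|$ to conclude.
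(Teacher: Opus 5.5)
There is a genuine gap, and it comes before the step you flag as the main obstacle.

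\textbf{The per-sector bound is not available.} You say the proof of Lemma~\ref{lem:proper-wheel-exist} used minimality of $|V(C)|$ only inside Claim~\ref{2spokes}. But its opening bound (at most two neighbours in every sector, at most one if adjacent to the centre) also comes from minimality of the rim, and both of its cases rely on it (e.g.\ ``$vx\in E(G)$, hence $s=t$ and $z=w$''). Your substitute for this bound does not work:
\begin{itemize}
\item If $yx\in E(G)$, properness of $W$ allows $y$ arbitrarily many neighbours inside one $W$-sector. Diamond/bowtie-freeness only yields $N(x)\cap N(y)=\emptyset$, which says nothing about neighbours in $T^*$.
\item For $T=S$, Claims~\ref{2spokes} and~\ref{claim:ends} give no bound on the number of neighbours.
\end{itemize}
In fact the bound is false for the given $W'$, and $W'$ need not be proper. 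A vertex $y$ with $yx\notin E(G)$ whose only neighbours are four pairwise non-adjacent vertices of $S^*$ is compatible with all the hypotheses. So your fallback is not optional: the proof must be organised around an extremal choice of $W'$.

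\textbf{The fallback covers only one bad configuration.} Rerouting $S$ through $y$ while keeping $s_2$ as a spoke handles the case $yx\notin E(G)$ with at least three neighbours, all in $S$. It cannot handle a vertex $y$ with $yx\in E(G)$ that has neighbours in $S$ and at least two neighbours in the interior of the sector $T_3$ from $s_2$ to $s_3$. (A single such neighbour gives a link of $x$, and a neighbour in the interior of the sector $T_2$ from $s_3$ to $v$ gives $x$ exactly three neighbours on a hole; but two or more neighbours in $T_3^*$ give neither.) In this case any $u$--$s_2$ path through $y$ would contain a fifth spoke. Your planned contradiction via a hole through $y$, part of $S$ and $uv$ also fails, because $x$ simply has four neighbours on that hole.

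\textbf{How the paper closes it.} The paper takes $W'$ with $|V(C')|$ minimum among all four-spoke wheels satisfying the hypothesis, and in this case lets $y$ replace $s_2$ as a spoke. Let $y_1$ be the neighbour of $y$ in $S$ closest to $u$, and $y_1'$ its neighbour in $T_3$ closest to $s_3$. Then:
\begin{itemize}
\item $D=u\,v\,T_2\,s_3\,T_3\,y_1'\,y\,y_1\,S\,u$ is a strictly shorter hole;
\item $(D,x)$ has spokes $u,v,s_3,y$;
\item its sector $uSy_1y$ satisfies $V(D)\setminus V(uSy_1y)\subseteq V(C)$.
\end{itemize}
With this second rerouting, and with $|V(C')|$ rather than $|V(S)|$ as the quantity you iterate on, your strategy becomes the paper's proof.
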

\begin{proof}
Let $W' = (C', x)$ be a wheel in $G$ with four spokes containing vertices $u$ and $v$. Let $S$ be a sector of $W'$ containing $u$ but not $v$ such that $V(C') \setminus V(S) \subseteq V(C)$. Choose $W'$ so that $|V(C')|$ is minimum.  For convenience, let $T_1$ and $T_2$ be two sectors of $W'$ such that $V(T_1) \cap \{u, v\} = \{u\}$ and $V(T_2) \cap \{u, v\} = \{v\}$.  
Furthermore, let $w_1$ and $w_2$ denote the ends of $T_1$ and $T_2$ distinct from $u$ and $v$, respectively. That is, $S=T_1$. Let $T_3$ be the sector of $W'$ with ends $w_1$ and $w_2$. We may assume that $w_1, u, v, w_2$ appear in $C'$ in this order. We assert that $W'$ is a proper wheel. Suppose that $y\in V(G)\setminus V(W')$ violates the definition of a proper wheel. 

Since $y$ is proper for $W$, we have $N(y)\cap V(T_1)\neq \emptyset$. 
Let $y_1$ and $y_2$ be the neighbors of $y$ in $T_1$ closest to $u$ and $w_1$, respectively. 
Let $Q=C'-uv$, and let $y_1'$ be the neighbor of $y$ in $V(Q)$ closest to $v$.

Suppose $xy \in E(G)$. Since $G$ is $\db$-free, $N(x)\cap N(y)=\emptyset$. Therefore, $N(y)\cap V(C')\subseteq T_1^*\cup T_2^*\cup T_3^*$. Since $y$ is not proper for $W'$, $y_1'\in T_2^*\cup T_3^*$. If $y_1'\in T_2^*$, then $x$ has exactly three neighbors in  the hole $yy_1T_1uvT_2y_1'y$, a contradiction. Therefore,  $y$ has no neighbor in $V(T_2)$. Suppose $y_1'\in T_3^*$. Let $y_2'$ be the neighbor of $y$ in $T_3$ closest to $w_1$. If $y_1'=y_2'$, then $x$ can be linked to the hole $yy_2T_1w_1T_3y_1'y$ by $xy$, $xw_1$ and $xw_2T_3y_1'$, a contradiction. Then $y_1'\neq y_2'$. Now,  let  $D$ be a hole $uvT_2w_2T_3y_1'yy_1T_1u$, and let $S'$ be the  sector of $(D,x)$ such that $V(S')\cap \{u,v\}=\{u\}$, i.e. $S' = uT_1y_1y$. Now, $(D,x)$ is a wheel in $G$ containing $u$ and $v$ with four spokes such that $V(D)\setminus V(S')\subseteq V(C)$. Since $y_1'\neq y_2'$, $|V(D)|< |V(C')|$. However, this contradicts the choice of $W'$. Therefore, $xy\notin E(G)$.

Suppose that $y_1=w_1$. If $y_1'\in V(T_2)\setminus \{w_2\}$, then $x$  has exactly three neighbors in the hole $yw_1T_1uvT_2y_1'y$,  a contradiction. Hence, $y_1'\in V(T_3)$. Since $y$ is proper for $W$  and $yx\notin E(G)$, $|N(y)\cap ( V(T_3)\setminus \{w_1\})|\leq 2$. By Lemma~\ref{lem:nolink}, we have that $|N(y)\cap  V(T_3)|\leq 2$. This implies that $y$ is proper for $W'$, a contradiction. Therefore, $y_1\neq w_1$.

Suppose that  $y_1'\in V(T_2)$.  Since $y$ is proper for $W$, we have $|N(y) \cap V(T_2)| \leq 2$. Let $y_2'$ be the neighbor of $y$ in $T_2$ closest to $w_2$. If $y_1' = y_2'$ or $y_1'y_2'\notin E(G)$, then $x$ can be linked to the hole $yy_1T_1uvT_2y_1'y$ by $xu$, $xv$, and $xw_2T_2y_1'$ (if $y_1' = y_2'$) or $xw_2T_2y_2'y$ ($y_1'\neq  y_2'$ and $y_1'y_2'\notin E(G)$), a contradiction.  Therefore, $y_1'\neq  y_2'$ and $y_1'y_2'\in E(G)$. Since $G$ is $\db$-free, $y_1'\neq v$. However, it follows that $G[\{y,u,v\} \cup V(y_1T_1u) \cup V(T_2)]$ is a prism, a contradiction. Therefore, $y_1'\notin V(T_2)$.

Suppose that  $y_1'\in V(T_3)\setminus \{w_1\}$. Then $x$ can be linked to the hole $yy_1T_1uvT_2w_2T_3y_1'y$ by $xu$, $xv$, and $xw_2$. Therefore, $N(y) \cap V(C') \subseteq V(T_1)$. Since $yx\notin E(G)$ and $y$ is not proper for $W'$, $|N(y)\cap V(T_1)|\geq 3$. Now,  let  $D$ be a hole $uvT_2w_2T_3w_1T_1y_1'yy_1T_1u$, and let $S'$ be the  sector of $(D,x)$ such that $V(S')\cap \{u,v\}=\{u\}$. Now, $(D,x)$ is a wheel in $G$ containing $u$ and $v$ with four spokes such that $V(D)\setminus V(S')\subseteq V(C)$. Since $G$ is $\db$-free,  $|V(y_1'T_1y_1)|>|V(y_1'yy_1)|$. Then $|V(D)|< |V(C')|$. However, this contradicts the choice of $W'$. This completes the proof of Lemma~\ref{lem:triangle-wheel-4}.
\end{proof}

\begin{lemma}\label{lem:triangle-wheel-4+}
Let $G$ be an  $\iskdbp$-free graph, and let $W=(C,x)$ be a proper wheel in $G$. Let $u,v\in N(x)$ such that $uv\in E(G)$. If there exists a wheel $W'=(C',x)$ of $G$ containing $u,v$ such that $V(C') \setminus \{u,v\} \subseteq V(C)$, then there exists a proper wheel in $G$ with center $x$ and at most the same number of spokes as $W'$. 
\end{lemma}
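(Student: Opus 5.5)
We argue by minimal counterexample, in the same style as the proof of Lemma~\ref{lem:triangle-wheel-4}, now letting the number of spokes vary. Among all wheels $W'=(C',x)$ containing $u,v$ with $V(C')\setminus\{u,v\}\subseteq V(C)$ and with at most $k$ spokes, where $k$ is the number of spokes of the given wheel, choose one with the minimum number of spokes and, subject to that, with $|V(C')|$ minimum. We aim to show that this $W'$ is proper.

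Since $G$ is diamond-free and $x$ is adjacent to both $u$ and $v$, neither $u$ nor $v$ has a further neighbour of $x$ adjacent to it on $C'$. So $uv$ is a sector of $W'$ of length one. If $W'$ has exactly four spokes, take $S$ to be the sector of $W'$ containing $u$ but not $v$. Then $V(C')\setminus V(S)\subseteq V(C')\setminus\{u\}\cup\{v\}$. This set lies in $V(C)$ except for $v$, so we must check that the hypothesis of Lemma~\ref{lem:triangle-wheel-4} holds.

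That lemma needs $V(C')\setminus V(S)\subseteq V(C)$. Here $u\in S$, but $v$ need not lie in $C$. If $v\notin V(C)$, we swap the roles of $u$ and $v$ when $u\in V(C)$. If neither $u$ nor $v$ lies in $C$, we instead run the argument of Lemma~\ref{lem:triangle-wheel-4} directly with the weaker containment. That argument only used properness of $W$ for vertices $y$ outside $W'$, together with the requirement that the new holes $D$ built there keep the containment property. I expect those constructions to adapt verbatim, because the holes $D$ replace only part of a sector by vertices of $C$ or by $y$. Either way the four-spoke case gives a proper wheel centred at $x$ with four spokes.

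For five or more spokes, we suppose some $y\notin V(W')$ is not proper for $W'$. Properness of $W$ constrains $N(y)\cap V(C)$ to one sector of $W$, or at most two vertices there if $yx\notin E(G)$. Hence the neighbours of $y$ on $C'$ lie in the sectors of $W'$ containing $u$ or $v$, together with at most one other sector of $W'$ coming from a single sector of $W$. We then repeat the case analysis of Lemma~\ref{lem:proper-wheel-exist}, using Claims~\ref{2spokes} and~\ref{claim:ends} transported to $W'$. Each violating configuration either yields a link to a hole, contradicting Lemma~\ref{lem:nolink}, or yields a diamond, bowtie or prism. Failing that, it produces a new hole $D$ through $y$ with $(D,x)$ a wheel containing $u,v$, with $V(D)\setminus\{u,v\}\subseteq V(C)\cup\{y\}$ and either fewer spokes or a shorter rim.

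The main obstacle is the extra vertex $y$: the rerouted hole need not satisfy the containment in $V(C)$. To handle this, I would relax the invariant in the minimal choice to ``$V(C')\setminus\{u,v\}\subseteq V(C)$ up to vertices proper for $W$'', which is harmless because every vertex is proper for $W$. Alternatively, one can observe that when $y$ has all its neighbours in one sector of $W$, the subpath of $C$ through that sector can be used to shortcut instead of $y$. Finally, when the spoke count drops to four, Lemma~\ref{lem:triangle-wheel-4} finishes the proof. The spoke count never increases, which gives the bound ``at most the same number of spokes as $W'$''.
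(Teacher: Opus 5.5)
There is a genuine gap, located exactly at what you call the main obstacle. Your relaxed invariant (``$V(C')\setminus\{u,v\}\subseteq V(C)$ up to vertices proper for $W$'') is vacuous, because every vertex of $G$ is proper for $W$. It only says that $C'$ is some hole through $uv$ on which $x$ has at least three neighbours.

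The containment $V(C')\setminus\{u,v\}\subseteq V(C)$ is the only thing that lets properness of $W$ control $N(y)\cap V(C')$ for a violating vertex $y$. Once a vertex outside $C\cup\{u,v\}$ (your $y$, then later violators) sits on the rim, vertices adjacent to it are unconstrained, and iterating your shortcut accumulates such vertices. For the same reason, your final appeal to Lemma~\ref{lem:triangle-wheel-4} once the spoke count drops to four is unjustified.

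Nor can you ``repeat the case analysis of Lemma~\ref{lem:proper-wheel-exist}'' for $W'$. That proof uses that $W$ has a shortest rim among \emph{all} wheels of $G$: a vertex with three neighbours in some $xSx$ gives a wheel with a different centre, and the shortcut in its first claim gives a wheel centred at $x$ that need not pass through $u,v$ and whose rim contains the violator. No such minimality is available in your restricted family.

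The four-spoke step also has a hole. Use the notation of that proof: $T_1,T_2$ are the sectors of $W'$ at $u,v$ other than $uv$, with other ends $w_1,w_2$, and $T_3$ is the sector at $w_1$ beyond $T_1$. Suppose neither $u$ nor $v$ lies on $C$, which is precisely how the lemma is used in Lemma~\ref{lem:edge-N(x)}. Then Lemma~\ref{lem:triangle-wheel-4} does not apply, and its proof does not carry over verbatim:
\begin{itemize}
\item its minimal family is defined by $V(C')\setminus V(S)\subseteq V(C)$, which forces $v\in V(C)$;
\item the justification of its first step, $N(y)\cap V(T_1)\neq\emptyset$, breaks down for a vertex adjacent to $v$ and to an interior vertex of $T_3$.
\end{itemize}

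The missing idea is that the smaller wheel need not contain $v$. Then a single shortcut suffices, and no invariant has to survive an iteration. The argument runs as follows.
\begin{enumerate}
\item Let $y$ be a vertex not proper for $W'$. Since $y$ is proper for $W$, it is adjacent to $u$ or $v$, say $u$.
\item Diamond-freeness gives $yv,yx\notin E(G)$.
\item Properness of $W$ and Lemma~\ref{lem:nolink} give exactly one further neighbour $y'$ of $y$ on $C'$.
\item $y'\notin V(T_1)$, since $y$ is not proper for $W'$. Linking $x$ to the holes $yuvT_2y'y$ and $yuT_1w_1T_3y'y$ rules out $y'\in V(T_2)\cup V(T_3)$.
\item Hence, with $w_3$ the other end of $T_3$, $y'$ lies in the interior of the path $Q$ of $C'$ from $w_3$ to $w_2$ avoiding $u$. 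With four spokes $Q$ is a single vertex, so $W'$ itself is already proper.
\item Choose $y$ with $y'$ closest to $u$ along $C'-v$. The hole $D=yuT_1w_1T_3w_3Qy'y$ gives a wheel $(D,x)$ with strictly fewer spokes (it loses $v$ and $w_2$). Its rim meets $V(G)\setminus V(C)$ only in $u$ and $y$.
\item One then checks directly that every vertex $z$ is proper for $(D,x)$. For $zu\in E(G)$ this follows from the minimal choice of $y$. For $zy\in E(G)$ it follows from links to $D$, to $C'$, or to $C'$ rerouted through $z$.
\end{enumerate}
This is the paper's argument.
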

\begin{proof}
To the contrary, assume that $W' = (C', x)$ is a wheel centered at $x$ containing $u$ and $v$, having the minimum number of spokes among those with $V(C') \setminus \{u,v\} \subseteq V(C)$.  Let $T_1$ and $T_2$ be two sectors of $W'$ such that $V(T_1) \cap \{u, v\} = \{u\}$ and $V(T_2) \cap \{u, v\} = \{v\}$. For convenience, let $w_1$ be the end of $T_1$ distinct from $u$, and let $w_2$ be the end of $T_2$ distinct from $v$.  We may assume that $w_1,u,v,w_2$ appear in $C'$ in this order. If $W'$ is proper in $G$, then $W'$ is the desired proper wheel.
Therefore, assume that there exists $y\in V(G)\setminus V(W')$ that violates the definition of a proper wheel.

Since $y$ is proper for $W$  and is not proper for $W'$, $N(y)\cap \{u,v\}\neq \emptyset$. Without loss of generality, we may assume that $yu\in E(G)$. Since $G$ is $\db$-free, $N(y)\cap \{v,x\}=\emptyset$. Since $y$ is proper for $W$, $|N(y)\cap (V(C')\setminus \{u\})|\leq 2$.  In fact,  $|N(y)\cap (V(C')\setminus \{u\})|=1$. Indeed, if $|N(y)\cap (V(C')\setminus \{u\})|=2$, then $y$ has  exactly three neighbors in $V(C')$, a contradiction.  Since $y$ is not proper for $W'$, it has a neighbor in $V(C')\setminus \{u\}$, and hence has exactly one such neighbor. Let $y'$ be the neighbor of $y$ in $V(C')\setminus \{u\}$, and let $P_{u,y}$ be the path on $C'$ from $u$ to $y$ such that $v\notin P_{u,y}^*$. Furthermore, we choose $y$ such that $|V(P_{u,y})|$ is as small as possible.

Suppose that $y'\in V(T_2)$. Then $x$ can be linked to the hole $yuvT_2y'y$ by $xv$, $xu$ and $xw_2T_2y'$, a contradiction.  Therefore, $y'\notin V(T_2)$. Let $T_3$ be a sector of $W'$ with $V(T_3) \cap V(T_1) = \{w_1\}$, and let $w_3$ (possibly $w_3 = w_2$) be the other end of $T_3$ distinct from $w_1$. Suppose $y'\in V(T_3)$. Then $x$ can be linked to the hole $yuT_1w_1T_3y'y$ by $xu$, $xw_1$ and $xw_3T_3y'$, a contradiction. Now, let $Q$ be a path on $C'$ from $w_3$ to $w_2$ such that $u\notin Q^*$. This implies that $y'\in Q^*$.

Now, let $D$ be the hole $yuT_1w_1T_3w_3Qy'$.
Then $W_1=(D,x)$ is a wheel with fewer spokes than $W'$. 
If $W_1$ is proper, then $W_1$ is the desired wheel.
Therefore,  there exists $z\in V(G)\setminus V(W_1) $ such that $z$ is not proper for $W_1$. Clearly, $z\notin V(W')\setminus V(W_1).$ 

Suppose that $zu\in E(G)$. Since $G$ is $\db$-free, $N(z)\cap \{y,x\}=\emptyset$. Similarly, since $z$ is proper for $W$, $|N(z)\cap (V(T_1)\setminus \{u\})|\leq 2$. By Lemma~\ref{lem:nolink}, we have that $|N(z)\cap V(T_1)|\leq 2$. Then $z$ has a neighbor in $V(D)\setminus V(T_1)$. This implies that $z$ is not proper for $W'$. By  the choice of $y$, we have that $z$ has  exactly  one neighbor $y'$ in $V(D)\setminus \{u\}$. This implies that  $z$ is proper for $W_1$, a contradiction. 

Therefore, $zu\notin E(G)$. This implies that $zy\in E(G)$.  Let $S_y$ be a sector of $W_1$ containing $y'$. Suppose $zv\in E(G)$. Since $G$ is $\db$-free, $zx\notin E(G)$.  Since $z$ is proper for $W$, $|N(z)\cap (V(C')\setminus \{v\})|\leq 2$.  In fact,  $|N(z)\cap (V(C')\setminus \{v\})|=1$. Indeed, if $|N(z)\cap (V(C')\setminus \{v\})|=2$, then $z$ has  exactly three neighbors in $V(C')$, a contradiction.  Now, let $z'$ be the neighbor of $z$ in  $V(C')\setminus \{v\}$. Then  $z'\in V(D)$. Now, $z$ can be linked to the hole $D$ by $zy$, $zz'$ and $zvu$, a contradiction. Therefore, $zv\notin E(G)$.

Since $z$ is proper for $W$ and $z$ is not proper for $W_1$, $z$ has no neighbor in $(V(C)\setminus V(D))\cup V(S_y)$.  If $z$ has exactly  one neighbor $z'$ in $V(D)\setminus \{y\}$, then $y$ can be linked to the hole $V(C')$ by $yu$, $yy'$ and $yzz'$, a contradiction. If $z$ has exactly two neighbors in $V(D)\setminus \{y\}$, then $z$ has exactly three neighbors in $V(D)$, a contradiction. Therefore, $z$ has two non-adjacent neighbors $z_1,z_2$ in $V(D)\setminus \{y\}$. However, 
it follows that $y$ can be linked to the hole obtained from $C'$ by rerouting $C'$ through $z$ by $yz$, $yu$ and $yy'$, a contradiction. This completes the proof of Lemma~\ref{lem:triangle-wheel-4+}.
\end{proof}
\begin{lemma}\label{lem:edge-N(x)}
Let $G$ be an $\iskdbp$-free graph, and let $W = (C, x)$ be a proper wheel in $G$ with the minimum number of spokes subject to having center $x$. 
Let $u, v \in N(x) \setminus V(C)$ with $uv \in E(G)$. 
If $W$ has at least five spokes, then at most one of the vertices $u$ and $v$ has a neighbor in $V(C)$. 
\end{lemma}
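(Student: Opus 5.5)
Suppose for contradiction that both $u$ and $v$ have neighbors in $V(C)$. The plan is to build from $u$, $v$ and part of $C$ a wheel $W'=(C',x)$ with $V(C')\setminus\{u,v\}\subseteq V(C)$ and fewer spokes than $W$. Then Lemma~\ref{lem:triangle-wheel-4+} gives a proper wheel centered at $x$ with fewer spokes than $W$, contradicting the minimality of $W$.

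First we pin down how $u$ and $v$ see $C$. Since $x$ is adjacent to both, and $G$ is diamond- and bowtie-free, the triangle $xuv$ forbids $u$ or $v$ from having a neighbor in $N(x)$, apart from each other and $x$. Indeed, a common neighbor of $x$ and $u$ other than $v$ would create a diamond (if it is adjacent to $v$) or a bowtie-type configuration with another triangle at $x$. More directly, a neighbor of $u$ on $C$ that is a spoke $s$ gives a triangle $xus$ sharing the edge $xu$ with $xuv$, hence a diamond unless $sv\in E$, and then $\{x,u,v,s\}$ is a $K_4$. So $N(u)\cap V(C)$ and $N(v)\cap V(C)$ avoid all spokes. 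Since $u$ and $v$ are proper for $W$ and adjacent to $x$, each of them has at most two neighbors on $C$. All of these lie in one sector: $u$'s in a sector $S_u$, $v$'s in a sector $S_v$, and in the interiors. Moreover, if $u$ had two neighbors in $S_u^*$, then $(xS_ux,u)$ would have $x$ plus two more neighbors. This is either a shorter wheel, which contradicts nothing directly, or we apply Lemma~\ref{lem:nolink}: $u$ is linked to the hole $xS_ux$ unless its two neighbors are adjacent, and adjacent neighbors give a triangle $u a b$ together with triangle $xuv$ at $u$, a bowtie. Hence $u$ has exactly one neighbor $a\in S_u^*$, and likewise $v$ has exactly one neighbor $b\in S_v^*$. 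Also $a\ne b$, $ab$ is not an edge, and $a$ is not adjacent to $v$ (otherwise there is a diamond).

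Next we form the hole. Take the path $uv$ together with one of the two subpaths of $C$ from $a$ to $b$, call them $P_1$ and $P_2$. Each of $C_i=u\,a\,P_i\,b\,v\,u$ is a hole, since $u$ and $v$ have no other neighbors on $C$. In $C_i$, the center $x$ is adjacent to $u$, $v$ and the spokes lying in $P_i^*$. The spokes of $W$ split between $P_1$ and $P_2$, with the ends of $S_u$ and $S_v$ distributed, and $S_u$ may equal $S_v$. If $S_u=S_v$, one side $P_1\subseteq S_u^*$ gives $x$ exactly two neighbors $u,v$ in $C_1$, which is adjacent and not a wheel. The other side gives $x$ adjacent to $u$, $v$ and all spokes. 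In that case we would instead exhibit $x$ linked to the hole $C_1$, via $xu$, $xv$, and a path through a spoke to $C_1$, or else an ISK$_4$ or prism. I expect this case to be where a prism $\{x,u,v\}$ with triangles arises, contradicting prism-freeness. If $S_u\ne S_v$, both $C_1$ and $C_2$ contain at least one spoke in their interiors. The spoke counts $k_1+k_2$ equal the total $k\ge5$, so each $C_i$ has $k_i+2$ spokes, and the one with fewer has at most $\lfloor k/2\rfloor+2<k$ spokes when $k\ge5$. Moreover, if $k_i\ge1$, then $x$ has at least three neighbors in $C_i$, so $(C_i,x)$ is a wheel. This is exactly where the hypothesis of at least five spokes is used.

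Finally, apply Lemma~\ref{lem:triangle-wheel-4+} to the smaller wheel $(C_i,x)$. It contains $u$ and $v$, and $V(C_i)\setminus\{u,v\}\subseteq V(C)$. This yields a proper wheel centered at $x$ with fewer spokes than $W$, a contradiction. The main obstacle is the degenerate configurations: $S_u=S_v$, or $a$ and $b$ being spoke-adjacent so that some $k_i=0$, possibly with $k_i=0$ on a side. There I would fall back on Lemma~\ref{lem:nolink}, linking $x$ or $u$ to a suitable hole, or find a prism or bowtie. The first attempt is to show that some $k_i=0$ forces $x$ to be linked to the hole through the other side.
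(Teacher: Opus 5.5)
\textbf{There is a genuine gap, and it sits exactly where the statement is false.}

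The failing step is your claim that $u$ and $v$, being proper and adjacent to $x$, have at most two neighbours on $C$. The definition of a proper vertex says the reverse. The bound of two applies to vertices \emph{non}-adjacent to $x$; a vertex of $N(x)$ may have any number of neighbours on $C$, provided they lie in one sector. Your bowtie/linkage argument excludes exactly two neighbours of $u$ in $S_u^*$, and exactly three are excluded because $u$ together with $C$ would give an $\mathrm{ISK}_4$. But four or more pairwise non-adjacent neighbours remain possible, so ``$u$ has exactly one neighbour $a$'' is unproved.

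When $S_u\neq S_v$ this is harmless. Build each of your two holes using the neighbour of $u$ (resp.\ $v$) closest to the end of its sector that faces that hole. Then your count $\lfloor k/2\rfloor+2<k$ together with Lemma~\ref{lem:triangle-wheel-4+} works as you say; this is essentially the paper's treatment of that case. When $S_u=S_v$ it is fatal. The third path $xa_1S_ua$ of your linkage must avoid $N(u)\cup N(v)$. This fails whenever, reading $S_u$ from either end, the first two neighbours of $\{u,v\}$ belong to the same vertex.

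This case cannot be repaired, because the lemma fails there. Let $C=p_1p_2\cdots p_{13}q_1q_2s_3q_3q_4s_4q_5q_6s_5q_7q_8p_1$ be a hole, and set
$N(x)=\{p_1,p_{13},s_3,s_4,s_5,u,v\}$, $N(u)=\{x,v,p_3,p_5,p_9,p_{11}\}$, $N(v)=\{x,u,p_7\}$.
\begin{itemize}
\item The only triangle is $xuv$, so $G$ is diamond-, bowtie- and prism-free.
\item $W=(C,x)$ is proper with five spokes.
\item No wheel centred at $x$ has fewer spokes. The path $p_{13}q_1\cdots q_8p_1$ consists of vertices of degree two in $G-x$, so every hole of $G-x$ either contains all five spokes or meets $N(x)$ only in $\{u,v\}$.
\item $G$ is $\mathrm{ISK}_4$-free. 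First, $G-x$ and $G-u$ are each a hole plus one vertex joined to at least four of its vertices (one of them through $v$), and such a graph contains no $\mathrm{ISK}_4$. Second, in $G-v$ an $\mathrm{ISK}_4$ would have to use $x$, $u$ and one subpath of $C$ carrying exactly two neighbours of each, with the two attachments nearest each end belonging to different vertices. This is impossible, since the neighbours of $x$ and of $u$ on $C$ occupy two disjoint arcs. Third, in an $\mathrm{ISK}_4$ through the triangle $xuv$, $u$ keeps only one rim neighbour. So one of $p_3,p_5$ and one of $p_9,p_{11}$ are absent, cutting $p_7$ off along $C$ from every spoke, whereas the fourth corner must reach $v$ through $p_7$ and $x$ through a spoke.
\end{itemize}
Yet both $u$ and $v$ have neighbours on $C$.

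The paper's own proof of the $S_1=S_2$ case breaks at the same point. Here $u'=p_5$, $v'=p_7$, $w=p_3$ and $wu'\notin E(G)$. Its claimed linkage of $x$ to $uu'S_1v'vu$ by $xu$, $xv$ and $xa_1S_1wu$ has two paths ending at $u$, which violates condition (iii) of the definition of being linked.
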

\begin{proof}
To the contrary, we may assume that both $u$ and $v$ have neighbors in $V(C)$.  Since $W$ is proper in $G$, there exist two sectors $S_1$ and $S_2$ of $W$ (possibly $S_1 = S_2$) such that $N(u) \cap V(C) \subseteq V(S_1)$ and $N(v) \cap V(C) \subseteq V(S_2)$. For $i = 1, 2$, let $a_i$ and $b_i$ be the ends of $S_i$. 
If $S_1 \ne S_2$, then we may assume that $a_1, b_1, a_2, b_2$ appear on $C$ in this order and that $a_1 \ne b_2$. 
If $S_1 = S_2$, then we may assume that $a_1 = a_2$ and $b_1 = b_2$. Since $G$ is diamond-free, $\{u,v\} $ is anticomplete to $N(x)\setminus \{u,v\}$ and $N(u)\cap N(v)=\{x\}$. 

Suppose that $S_1\neq S_2$.  Let $u_1$ be the neighbor of $u$ in $S_1$ closest to  $b_1$, and let $v_1$ the neighbor
of $v$ in $S_2$ closest to $a_2$.
Since $W $ is a proper wheel in $G$ with the minimum number of spokes subject to having center $x$ and $W$ has at least five spokes,
by Lemmas~\ref{lem:triangle-wheel-4} and \ref{lem:triangle-wheel-4+}, we have that $b_1=a_2$. Now, $x$ has exactly three neighbors in the hole $uvv_1S_2a_2S_1u_1u$, a contradiction. Therefore, $S_1=S_2$.

Since $u$ and $v$ have no common neighbor in $S_1$, there exist vertices $u', v' \in V(S_1^*)$ such that $uu', vv' \in E(G)$, and there are no edges between the interior of $u'S_1v'$ and $\{u, v\}$. We may assume that $a_1, u',v', b_1$ appear on $S_1$ in this order. If there are no edges between the interior of $a_1S_1u'$ and $\{u, v\}$, then $x$ can be linked to the hole $uu'S_1v'vu$ by $xu$, $xv$ and $xa_1S_1u'$, a contradiction. Therefore,   there are edges between the interior of $a_1S_1u'$ and $\{u, v\}$. Let $w \in (N(u) \cup N(v)) \cap V(a_1S_1u')$ be the vertex closest to $a_1$ along $S_1$. Then $w\neq u'$. If $wu' \notin E(G)$, then $x$ can be linked to the hole  $uu'S_1v'vu$ by $xu$, $xv$ and $xa_1S_1wu$ (or $xa_1S_1wv$), a contradiction. Therefore, $wu'\in E(G)$. Since $G$ is bowtie-free, $N(w)\cap \{u,v\}=\{v\}$. However, it follows that $x$ can be linked to the hole $uvwu'u$  by $xu$, $xv$ and $xa_1S_1w$,  a contradiction. This completes the proof of Lemma~\ref{lem:edge-N(x)}.
\end{proof}
\begin{lemma}\label{lem:K13}
Let $G$ be an $\iskdbp$-free graph, and let $C=v_1v_2\ldots v_nv_1$ be a hole.
Suppose that $H$ is a subdivision of $K_{1,3}$ with leaves $a,b,c$ and center $x$ such that
$V(H)\setminus \{a,b,c\}$ is anticomplete to $V(C)$, and the neighborhoods of $a,b,c$ on $C$
are arranged as depicted in Figure~\ref{Fig:abc}.
Moreover, assume that for any two distinct vertices $u,v\in\{a,b,c\}$, the following hold:
\begin{itemize}
\item[$(\romannumeral1)$] $N(u)\cap V(C)\neq N(v)\cap V(C)$, and
\item[$(\romannumeral2)$] $|N(u)\cap N(v)\cap V(C)|\leq 1$.
\end{itemize}
Then at least one of $a,b,c$ has no neighbor in $V(C)$.
\end{lemma}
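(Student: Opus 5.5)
We argue by contradiction and assume that each of $a,b,c$ has at least one neighbor in $V(C)$. By conditions $(\romannumeral1)$ and $(\romannumeral2)$, the sets $N(a)\cap V(C)$, $N(b)\cap V(C)$, $N(c)\cap V(C)$ are pairwise distinct and pairwise share at most one vertex. Following the arrangement of Figure~\ref{Fig:abc}, we will say that these neighborhoods sit in an order around $C$ such that $C$ splits into three arcs: one containing all of $N(a)\cap V(C)$, one containing $N(b)\cap V(C)$, and one containing $N(c)\cap V(C)$, with consecutive arcs meeting in at most one vertex. For each $u\in\{a,b,c\}$, let $u^-$ and $u^+$ be the extreme neighbors of $u$ along its arc. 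The aim is to produce either an $\isk$, a vertex linked to a hole (forbidden by Lemma~\ref{lem:nolink}), a diamond, a bowtie, or a prism.

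The first step reduces to the simplest configuration. If some $u\in\{a,b,c\}$ has two non-adjacent neighbors on $C$, or three or more neighbors, then we reroute $C$ through $u$: we replace the subpath $u^-Cu^+$ that contains no neighbors of the other two leaves by $u^-uu^+$. Using the arrangement and condition $(\romannumeral2)$, this gives a new hole $C'$. On $C'$, the paths of $H$ from $x$ to the other two leaves, extended by one edge onto $C'$, together with the path from $x$ to $u$, link $x$ to $C'$. This contradicts Lemma~\ref{lem:nolink}. The details require checking that the extension edges attach at single vertices, or that a further shortening handles the case where they do not. Hence every leaf has either exactly one neighbor on $C$ or exactly two adjacent neighbors.

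The second step handles the remaining cases. If all three leaves have exactly one neighbor on $C$, these neighbors are distinct by $(\romannumeral1)$, and $H\cup C$ is an $\isk$ with corners $x$ and the three attachment vertices, a contradiction. If $u$ is a leaf with two adjacent neighbors $p,q$ on $C$, then $upq$ is a triangle. Two such leaves sharing a vertex on $C$ would give a bowtie (or a diamond if further edges exist), which condition $(\romannumeral2)$ together with diamond- and bowtie-freeness excludes; so their triangles are disjoint. Then:
\begin{itemize}
\item If exactly one leaf, say $a$, has a triangle, then $b$ is linked to the hole obtained by rerouting $C$ through $a$, or alternatively $x$ is linked to $C$ via paths ending at $b$, $c$ and at one of $p,q$. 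This is a contradiction by Lemma~\ref{lem:nolink}.
\item If two leaves $a,b$ have triangles, then the paths from $a$ to $b$ through $x$, together with the two arcs of $C$ between the triangles, form a prism. This contradicts prism-freeness.
\item If all three leaves have triangles, we take two of them and extract a prism in the same way.
\end{itemize}

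The main obstacle is the first step. Showing that rerouting produces a hole, and that the resulting linkage meets condition $(\romannumeral4)$ of the definition of "linked", depends on the exact arrangement in Figure~\ref{Fig:abc}, and especially on the case where two leaves share a single vertex of $C$. I would first handle that shared-vertex case separately, using diamond-freeness to rule out extra edges, and only then apply the rerouting argument.
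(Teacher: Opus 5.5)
There is a genuine gap, located where your two steps interact. Step~1 links the center $x$ to the hole rerouted through $u$. That only works if each of the other two leaves either has a unique neighbor on the hole or can itself be rerouted through.

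Suppose another leaf $b$ has exactly two \emph{adjacent} neighbors $b_1,b_2$. Then the path $xP_bbb_1$ has the extra edge $bb_2$ into the hole, which violates condition $(\romannumeral2)$ of ``linked''. Rerouting through $b$ is also impossible, since $b_1b_2$ becomes a chord. No ``further shortening'' repairs this. So the conclusion of Step~1 (every leaf has one neighbor or two adjacent ones) is not established, and the mixed case is never treated: one leaf with non-adjacent neighbors, another forming a triangle with $C$.

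Your single-triangle bullet in Step~2 has the same defect. If $N(a)\cap V(C)=\{p,q\}$ with $pq\in E(G)$, ``rerouting $C$ through $a$'' gives a cycle with chord $pq$, not a hole. Linking $x$ by a path ending in $\ldots ap$ fails because $a$ also sees $q$. You also never treat the sub-case where the unique neighbors of $b$ and $c$ lie inside $\{p,q\}$.

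The missing idea is to link the \emph{triangle leaf itself}, using its two adjacent neighbors as two one-edge paths. This has to be done \emph{before} the rerouting step, which is how the paper proceeds (its Claim in the proof). Suppose $a$ has exactly two adjacent neighbors $a_1,a_2$.
\begin{itemize}
\item If $b$ and $c$ only see vertices of $\{a_1,a_2\}$, then by $(\romannumeral1)$--$(\romannumeral2)$ you may take $ba_1,ca_2\in E(G)$. Now $a$ is linked to the hole $a_1a_2cP_cxP_bba_1$ by $aa_1$, $aa_2$, $aP_ax$.
\item Otherwise some leaf, say $b$, has a neighbor outside $\{a_1,a_2\}$.
\begin{itemize}
\item If $b$ has a unique neighbor $b_1$, link $a$ to $C$ by $aa_1$, $aa_2$, $aP_axP_bbb_1$.
\item If $b$ has two non-adjacent neighbors, link $a$ to the hole rerouted through $b$ by $aa_1$, $aa_2$, $aP_axP_bb$.
\item If $b$ has two adjacent neighbors, bowtie-freeness makes the two triangles disjoint, and $G[V(P_a)\cup V(P_b)\cup V(C)]$ is a prism. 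Your prism argument is correct here.
\end{itemize}
\end{itemize}
Once triangle leaves are excluded, every leaf has either a unique neighbor or non-adjacent neighbors on $C$. Your linkage of $x$ to the hole rerouted through all multi-neighbor leaves (attachments distinct by $(\romannumeral1)$) then finishes the proof via Lemma~\ref{lem:nolink}.
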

\begin{figure}[ht]
\centering
\begin{tikzpicture}[scale=1.3,
    every node/.style={font=\small},
    dot/.style={circle,fill,inner sep=1.2pt}
]

\draw[dashed, thick] (0,0) circle (1.6);

\node[dot,label=right:$x$] (x) at (0,0) {};

\node[dot] (v1) at ({1.6*cos(-90)},{1.6*sin(-90)}) {};
\node[dot] (v2) at ({1.6*cos(30)},{1.6*sin(30)}) {};
\node[dot] (v3) at ({1.6*cos(150)},{1.6*sin(150)}) {};

\coordinate (v4) at ({1.6*cos(90)},{1.6*sin(90)});
\coordinate (v5) at ({1.6*cos(60)},{1.6*sin(60)});
\coordinate (v6) at ({1.6*cos(120)},{1.6*sin(120)});

\coordinate (v7) at ({1.6*cos(210)},{1.6*sin(210)});
\coordinate (v8) at ({1.6*cos(180)},{1.6*sin(180)});
\coordinate (v9) at ({1.6*cos(240)},{1.6*sin(240)});

\coordinate (v10) at ({1.6*cos(330)},{1.6*sin(330)});
\coordinate (v11) at ({1.6*cos(360)},{1.6*sin(360)});
\coordinate (v12) at ({1.6*cos(300)},{1.6*sin(300)});

\node[dot,label=right:$a$] (a) at (0,0.9) {};
\node[dot,label=below left:$b$]  (b) at ({0.9*cos(210)},{0.9*sin(210)}) {};
\node[dot,label=below right:$c$] (c) at ({0.9*cos(330)},{0.9*sin(330)}) {};

\draw[dashed] (x)--(a);
\draw[dashed] (x)--(b);
\draw[dashed] (x)--(c);

\draw[densely dotted] (a)--(v4);
\draw[densely dotted] (a)--(v5);
\draw[densely dotted] (a)--(v6);

\draw[densely dotted] (b)--(v7);
\draw[densely dotted] (b)--(v8);
\draw[densely dotted] (b)--(v9);

\draw[densely dotted] (c)--(v10);
\draw[densely dotted] (c)--(v11);
\draw[densely dotted] (c)--(v12);

\end{tikzpicture}
\caption{The neighbors of $a$, $b$, and $c$ on the hole $C$ and their locations on $C$.}
\label{Fig:abc}
\end{figure}
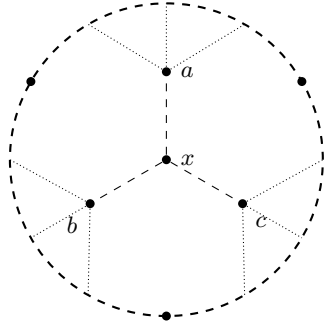
\begin{proof}
To the contrary, assume that each of $a$, $b$, and $c$ has a neighbor on $C$. Let $P_a$ be the path in $H$ with ends $a$ and $x$, 
$P_b$ the path with ends $b$ and $x$, and 
$P_c$ the path with ends $c$ and $x$. 
\begin{claim}\label{claim:u-neighbor}
For any $u\in \{a,b,c\}$. If $u$ has two neighbors in $V(C)$, then $u$ has two non-adjacent neighbors in $V(C)$.
\end{claim}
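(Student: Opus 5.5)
The claim is local to a single leaf, so I would fix $u\in\{a,b,c\}$ and argue by contradiction. Suppose $u$ has at least two neighbors on $C$ but every two of them are adjacent. Since $C$ is a hole of length at least four, a set of pairwise adjacent vertices of $C$ has at most two elements. Hence $N(u)\cap V(C)=\{v_i,v_{i+1}\}$ for some consecutive pair, and $u v_i v_{i+1}$ is a triangle. The plan is to use this triangle, together with the other two leaves and the tripod $H$, to build a prism, an $\isk$, a diamond or a bowtie, contradicting the hypothesis that $G$ is $\iskdbp$-free.

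By symmetry, assume $u=a$. By the standing assumption, $b$ and $c$ each have a neighbor on $C$, and by Figure~\ref{Fig:abc} their neighborhoods lie in the two arcs of $C$ other than the one containing the neighbors of $a$. The key step is to look at the two leaves $b$ and $c$ and see how they attach to $C$.

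First I would take the case where both $b$ and $c$ also attach by an edge $v_jv_{j+1}$ and $v_kv_{k+1}$ respectively. Then the three paths $P_a,P_b,P_c$ join at $x$, and the triangles $av_iv_{i+1}$, $bv_jv_{j+1}$, $cv_kv_{k+1}$ together with the arcs of $C$ give a subdivision of $K_4$. Here I need to check that this structure is induced. This uses that $V(H)\setminus\{a,b,c\}$ is anticomplete to $C$, together with conditions (i) and (ii), which prevent extra edges and shared neighbors between the leaves. The resulting graph would yield either an $\isk$ or a prism-like configuration, and both are forbidden.

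Otherwise, some leaf, say $b$, has either exactly one neighbor on $C$ or two non-adjacent neighbors on $C$. In that situation the cleanest tool is Lemma~\ref{lem:nolink}. I would pick the hole $D$ formed by $a$, $v_{i+1}$, the arc of $C$ from $v_{i+1}$ to the closest neighbor of $b$ (or of $c$), and back through $P_b$, $x$ and $P_a$ to $a$. I would then show that $v_i$ or $x$ is linked to a suitable hole, and choose the three paths so that the conditions of linkage hold. As an alternative to the linkage, the vertex $v_i$ together with the triangle at $a$ may give three neighbors of a vertex on a hole, which is a wheel-type configuration that again produces an $\isk$.

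The main obstacle is the bookkeeping of which arcs are anticomplete. Conditions (i) and (ii) only control pairs of leaves, so I expect to need them exactly where the neighborhoods of two leaves could meet at the boundary vertices of their arcs. A fallback in that situation is to shorten $C$ to a smaller hole and reapply Lemma~\ref{lem:nolink}.
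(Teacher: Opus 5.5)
Your overall plan matches the paper's. You reduce to $N(a)\cap V(C)=\{v_i,v_{i+1}\}$ and then obtain either a prism or a linkage contradicting Lemma~\ref{lem:nolink}. In your Case B, linking $v_i$ to $D$ yields the same $\isk$ (corners $a,v_i,v_{i+1}$ and either a neighbor of $b$ on $C$ or $b$ itself) as the paper's linkage of $a$ to $C$, or to $C$ rerouted through $b$.

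Case A, however, is wrong as stated. The union of $H$, $C$ and the three triangles has ten vertices of degree three ($x,a,b,c$ and six vertices of $C$), so it is neither a subdivision of $K_4$ nor a prism. You only need one further leaf. If $N(b)\cap V(C)=\{v_j,v_{j+1}\}$, then $G[V(C)\cup V(P_a)\cup V(P_b)]$ is a prism with triangles $av_iv_{i+1}$ and $bv_jv_{j+1}$, provided these two triangles are vertex-disjoint. That disjointness does not follow from (i) or (ii), since condition (ii) allows one common neighbor. It follows from bowtie-freeness, which is exactly how the paper argues.

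The second gap is the case you remove by reading Figure~\ref{Fig:abc} as placing $N(b)\cap V(C)$ and $N(c)\cap V(C)$ away from $\{v_i,v_{i+1}\}$. If that were a hypothesis, (i) and (ii) would hold automatically. They are in the lemma precisely because the neighborhoods may meet.

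When $N(b)\cap V(C)$ and $N(c)\cap V(C)$ both lie in $\{v_i,v_{i+1}\}$, your hole $D$ breaks. For example, if $N(b)\cap V(C)=\{v_i\}$, the arc from $v_{i+1}$ to the nearest neighbor of $b$ runs all the way round to $v_i$, and $av_i$ becomes a chord. The paper handles this case as follows. Conditions (i) and (ii) force, up to symmetry, $N(b)\cap V(C)=\{v_i\}$ and $N(c)\cap V(C)=\{v_{i+1}\}$. Then $a$ is linked to the hole $v_iv_{i+1}cP_cxP_bbv_i$ by $av_i$, $av_{i+1}$ and $aP_ax$.

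So the right first split is whether some other leaf has a neighbor outside $\{v_i,v_{i+1}\}$. Only for such a leaf $b$ do you then split into three cases: one neighbor or two non-adjacent neighbors (linkage), or exactly two adjacent neighbors (prism, via bowtie-freeness).
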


Otherwise, assume that $a$ has exactly two adjacent neighbors $a_1$ and $a_2$ in $V(C)$. 
First consider the case where $N(b)\cap V(C)\subseteq \{a_1,a_2\}$ and $N(c)\cap V(C)\subseteq \{a_1,a_2\}$. By $(\romannumeral1)$ and $(\romannumeral2)$, we may assume that $a_1b\in E(G)$ and $a_2c\in E(G)$. This implies that  $a$ can be linked to the hole $a_1a_2cP_cxP_bba_1$ by $aa_1$, $aa_2$ and $aP_ax$, a contradiction. Therefore, either $N(b)\cap V(C)\not\subseteq \{a_1,a_2\}$ or $N(c)\cap V(C)\not\subseteq \{a_1,a_2\}$. Without loss of generality, assume that $N(b)\cap V(C)\not\subseteq \{a_1,a_2\}$. That is, $b$ has a neighbor in $V(C)\setminus \{a_1,a_2\}$.  If $b$ has exactly one neighbor $b_1$ in $V(C)$ and hence $b_1\notin \{a_1,a_2\}$, then $a$ can be linked to the hole $C$ by $aa_1$, $aa_2$, and $aP_axP_bbb_1$, a contradiction.  If $b$ has two  non-adjacent neighbors in $V(C)$, then $a$ can be linked to the hole obtained from  $C$ by rerouting $C$ through $b$ by $aa_1$, $aa_2$, and $aP_axP_bb$, a contradiction. Therefore, $b$ has exactly two adjacent  neighbors  in $V(C)$. Since $G$ is bowtie-free, $N(b)\cap N(a)\cap V(C)=\emptyset$. But now, $G[V(P_a)\cup V(P_b)\cup V(C)]$ is a prism, a contradiction. This proves Claim~\ref{claim:u-neighbor}.

If $a$, $b$, and $c$ each has a unique neighbor $a'$, $b'$, and $c'$ in $C$, 
then $x$ can be linked to $C$ by $xP_a a a'$, $xP_b b b'$, and $xP_c c c'$, 
contradicting Lemma~\ref{lem:nolink}. 
If some of $a$, $b$, or $c$ has several neighbors in $C$, 
then similar linkages work for the holes obtained from $C$ by rerouting the cycle through 
$a$, $b$, and $c$, respectively. 
This completes the proof of Lemma~\ref{lem:K13}.
\end{proof}
\begin{lemma}\label{lem:neighbor-consecuitive-sector}
Let $G$ be an $\iskdbp$-free graph.
Let $W=(C,x)$ be a wheel in $G$.
Let $S_1,S_2$ be consecutive sectors of $W$,  and let $v\in N(x)\setminus V(C)$  such that $N(v)\cap V(C)\subseteq V(S_1)\cup  V(S_2)$. Suppose  $v$ has neighbors in $V(S_1)\setminus V(S_2)$ and $V(S_2)\setminus V(S_1)$.
Then, $|N(v)\cap (V(S_1)\setminus V(S_2))|\geq 2$ and $|N(v)\cap (V(S_2)\setminus V(S_1))|\geq 2$. Furthermore, if $|N(v)\cap (V(S_i)\setminus V(S_{3-i}))|=2$ for $i \in \{1,2\}$, then $V(S_1)\cap V(S_2)\subseteq N(v)$.
\end{lemma}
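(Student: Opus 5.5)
The plan is to argue by contradiction, using only diamond-freeness, bowtie-freeness and Lemma~\ref{lem:nolink}, in the same way as the proof of Lemma~\ref{lem:proper-wheel-exist}. Let the ends of $S_1$ be $a,b$ and the ends of $S_2$ be $b,c$, so that $V(S_1)\cap V(S_2)=\{b\}$. Since $vx\in E(G)$ and $x$ is adjacent to $a,b,c$, diamond-freeness gives restrictions at once. For example, $v$ cannot be adjacent to both $a$ and $c$, since otherwise $\{x,v,a,c\}$ induces a diamond. Likewise, whenever $v$ is adjacent to two spokes, they must be non-adjacent. These observations will be used to rule out degenerate positions.

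For the first assertion, suppose by symmetry that $v$ has exactly one neighbor $p$ in $V(S_1)\setminus\{b\}$. Let $q$ be the neighbor of $v$ in $V(S_2)\setminus\{b\}$ closest to $b$. I distinguish whether $vb\in E(G)$.
\begin{itemize}
\item If $vb\notin E(G)$, consider the hole $D=vpS_1bS_2qv$. The neighbors of $x$ on $D$ are $v$ and $b$, together with $p$ if $p=a$ and $q$ if $q=c$.
\begin{itemize}
\item If $p\neq a$, link $x$ to $D$ by $xv$, $xb$ and $xaS_1p$. The interior of $aS_1p$ misses $N(v)$ because $p$ is the unique neighbor of $v$ in $S_1\setminus\{b\}$. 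This contradicts Lemma~\ref{lem:nolink}.
\item If $p=a$ and $q\neq c$, then $x$ has exactly three neighbors on $D$, which again gives an $\isk$ (a trivial linkage).
\item If $p=a$ and $q=c$, then $\{x,v,a,c\}$ is a diamond.
\end{itemize}
\item If $vb\in E(G)$, then $p\neq a$ by diamond-freeness. Consider the hole $xaS_1pvx$; its chord-freeness follows since $v$'s only neighbors in $S_1$ are $p$ and $b$, and $p\ne a$. The vertex $b$ is adjacent to $x$, to $v$, and to its neighbor on $S_1$, or is linked to this hole through $bS_1$. I would either link $b$ to this hole or, if needed, use the hole $vbS_1pv$ with $x$ linked via $xv$, $xb$ and $xaS_1p$. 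Here I must check that no unwanted chords appear, and that $p$ is not the neighbor of $b$ on $S_1$, which would make $\{v,p,b\}$ a triangle and bring diamond-freeness to bear against $x$.
\end{itemize}

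For the second assertion, assume that $v$ has exactly two neighbors $p_1,p_2$ in $S_1\setminus\{b\}$, with $p_1$ closer to $a$, and exactly two neighbors $q_1,q_2$ in $S_2\setminus\{b\}$, with $q_1$ closer to $b$, and that $vb\notin E(G)$. If $p_1p_2\in E(G)$ and $q_1q_2\in E(G)$, then $\{v,p_1,p_2,q_1,q_2\}$ is a bowtie. So by symmetry $p_1p_2\notin E(G)$. Now take the hole $D=vp_2S_1bS_2q_1v$. On $D$ the neighbors of $x$ are $v$ and $b$ (plus possibly $q_1$ if $q_1=c$, which would force $q_2=c$ to be impossible). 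I would link $x$ to $D$ by $xv$, $xb$ and a third path through $aS_1p_1$ that reaches $D$. Since that path would end at $v$, I instead reroute. The natural choice is the hole $D'=vp_1S_1bS_2q_1v$ after verifying that $p_2$ gives no chord, or linking $v$ to the hole $xaS_1bx$ by $vx$, $vp_1$, $vp_2$-type paths. The goal is to find a hole on which $x$ (or $v$) has three attachments via internally disjoint, chord-free paths, contradicting Lemma~\ref{lem:nolink}. If instead some pair is adjacent, a prism among $v$, $x$ and the two sectors is the other available contradiction.

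The main obstacle is this last part, namely choosing the right hole in the configuration with two non-adjacent neighbors in $S_1$, so that the linkage paths avoid $N(v)$ and produce no extra edges. I would resolve it by case analysis on which of $p_1p_2$ and $q_1q_2$ are edges and on whether $p_1=a$ or $q_2=c$, trying in each case first the hole through $v$ and $b$ with $x$ as the linked vertex, and only then the holes $xS_ix$ with $v$ as the linked vertex.
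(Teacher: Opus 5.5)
\textbf{The gap is in the second assertion, which your proposal does not prove.} It ends with several candidate holes and a deferred case analysis, and one candidate cannot work: $D'=vp_1S_1bS_2q_1v$ passes through $p_2$, so it has the chord $vp_2$.

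The missing step is the option you mention only in passing, and it settles the matter at once. Suppose $v$ has exactly two neighbors in $V(S_i)\setminus V(S_{3-i})$. Then $S_i$ has at least three vertices, so $xS_ix$ is a hole. Its vertex set is $\{x\}\cup V(S_i)$, and $vx\in E(G)$. Hence if $vb\notin E(G)$, then $v$ has exactly three neighbors on $xS_ix$, and $G[V(S_i)\cup\{x,v\}]$ is an $\mathrm{ISK}_4$. You need no bowtie, no condition on $p_1p_2$, and no case analysis.

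This argument works for each $i$ separately. That is how the hypothesis is meant, and it is how the lemma is applied later: in Claim~\ref{S_0T_0} of Lemma~\ref{lem:proper-wheel-path-x-neighbor}, exactly two neighbors of $v$ in $V(T_0)\setminus V(S_0)$ must by themselves force $vp_i\in E(G)$. Your reading, which assumes both counts equal $2$, is strictly weaker, and your bowtie step is only available under that weaker reading.

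For the first assertion, you link $x$ to a hole through $v$ and $b$, whereas the paper links $v$ to $xS_1x$ or $xS_2x$. Your route is fine apart from one slip. When $vb\notin E(G)$, $p\neq a$ and $q=c$, the paths $xv$, $xb$, $xaS_1p$ do not form a linkage, because $c$ is a neighbor of $x$ on $D$ that is not an end of any path. In that case $x$ has exactly the three neighbors $v,b,c$ on $D$, so the contradiction still holds.

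The case $vb\in E(G)$ needs none of your hedging. The neighbors of $v$ in $\{x\}\cup V(S_1)$ are exactly $x$, $b$ and $p$. This already gives an $\mathrm{ISK}_4$: a $K_4$ if $S_1$ is a single edge, and otherwise a vertex with exactly three neighbors on the hole $xS_1x$.
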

\begin{proof}
For convenience, we may assume that $V(S_1)\cap V(S_2)=\{y\}$. Suppose for a contradiction that $|N(v) \cap (V(S_1) \setminus V(S_2))|=1$. If $vy \in E(G)$, then $v$ has exactly three neighbors in the hole $xS_1x$, a contradiction. Therefore, $vy\notin E(G)$. Let $z$ denote the neighbor of $v$ in $V(S_1)$, and let $w$ denote the neighbor of $v$ in $V(S_2)$ closest to $y$. For $i=1,2$, denote the end of $S_i$ different from $y$ by $a_i$. If $w\neq a_2$, then $v$ can be linked to the hole $xS_1x$ by $vz$, $vx$, and $vwS_2y$, a contradiction. Therefore, $w=a_2$.  Since $G$ is diamond-free, $z\neq a_1$. However, it follows that $v$ can be linked to the hole $xS_2x$ by $va_2$, $vx$, and $vzS_1y$, contradicting Lemma~\ref{lem:nolink}.  Furthermore, if $|N(v)\cap (V(S_i)\setminus V(S_{3-i}))|=2$ and $yv\notin E(G)$, then $v$ has exactly three neighbors in the hole $xS_ix$, a contradiction. This completes the proof of Lemma~\ref{lem:neighbor-consecuitive-sector}.
\end{proof}
\begin{definition}[Chudnovsky et al. \cite{Chudnovsky2019}] 
\label{def:skip}
A vertex $v$ is a \textit{skip} for a wheel $W=(C,x)$ if there exist two consecutive sectors $S_1,S_2$ of $W$ such that 
\begin{itemize}
    \item[$(\romannumeral1)$] $vx\in E(G)$;
    \item[$(\romannumeral2)$] $v$ has neighbors in $V(S_1)\setminus V(S_2)$ and $V(S_2)\setminus V(S_1)$;
    \item[$(\romannumeral3)$] $N(v)\cap V(C)\subseteq V(S_1)\cup V(S_2)$; and
    \item[$(\romannumeral4)$] if $u\in V(G)\setminus V(W)$ is adjacent to $v$, then $N(u)\cap V(C)\subseteq V(S_1)\cup V(S_2)$.
\end{itemize}
If $V(S_1)\cap V(S_2)=\{a\}$, we also say that $v$ is an $a$-skip.
\end{definition}
\begin{definition}[Chudnovsky et al. \cite{Chudnovsky2019}]\label{def:k-almost}
Let $k$ be a positive integer.
We say that wheel $W=(C,x)$ is \emph{$k$-almost proper} if there are spokes $x_1, \dots, x_k$ of $W$ and a set $X \subseteq V(G) \setminus V(W)$ such that
\begin{itemize}
\item[$(\romannumeral1)$] no two spokes in $\{x_1, \dots, x_k\}$ are consecutive;
\item[$(\romannumeral2)$] $W$ is proper in $G\setminus X$; and
\item[$(\romannumeral3)$] for every $v$ in $X$, there exists $i\in[k]$ such that $v$ is an $x_i$-skip.
\end{itemize}
\end{definition}
\begin{lemma}\label{lem:1-almost}
Let $G$ be an $\iskdbp$-free graph, and let $W=(C,x)$ be a $1$-almost proper wheel in $G$ with $x_1$ and $X$ as in  Definition~$\ref{def:k-almost}$. Let $S_1$ and $S_2$ be the sectors of $W$ containing $x_1$.
Then, there exists a proper wheel $W'$ in $G$ with center $x$ and the same number of spokes as $W$. Moreover, either $W = W'$ or $V(W')\setminus V(W)=\{ v^*\}$, where $v^*$ is a skip for $W$, and $V(W)\setminus V(W')\subseteq V(S_1^*)\cup V(S_2^*)\cup\{x_1\}$.
\end{lemma}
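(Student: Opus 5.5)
If $X=\emptyset$, then $W$ is already proper and we take $W'=W$. So assume $X\neq\emptyset$. Let $a_1$ be the end of $S_1$ other than $x_1$, and $a_2$ the end of $S_2$ other than $x_1$. Every $v\in X$ is an $x_1$-skip, so $vx\in E(G)$ and $N(v)\cap V(C)\subseteq V(S_1)\cup V(S_2)$. By Lemma~\ref{lem:neighbor-consecuitive-sector}, $v$ has at least two neighbors in $V(S_1)\setminus\{x_1\}$ and at least two in $V(S_2)\setminus\{x_1\}$.

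\textbf{Choice of $v^*$ and construction of $W'$.} Order the vertices of $X$ by how far their neighborhoods reach toward $a_1$ and $a_2$, and pick $v^*\in X$ to be extremal. Concretely, let $s$ be the neighbor of $v^*$ in $S_1$ closest to $a_1$ and $t$ the neighbor in $S_2$ closest to $a_2$, and choose $v^*$ so that the path $sS_1x_1S_2t$ is inclusion-wise maximal. Define the hole $C'$ by replacing the subpath $sS_1x_1S_2t$ of $C$ with $sv^*t$.

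First I will check that $C'$ is a hole. This needs $st\notin E(G)$, and since every vertex of $C$ is adjacent only to its two neighbors on $C$, it also needs $v^*$ to have no neighbors on $C'$ other than $s,t$. Both follow from the choice of $s,t$ as the extreme neighbors.

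Next I will check that $W'=(C',x)$ has the same number of spokes as $W$. We lose the spoke $x_1$ and gain $v^*$. Diamond-freeness gives $N(x)\cap N(v^*)=\emptyset$, so $s$ and $t$ are not spokes unless $s=a_1$ or $t=a_2$. In that case $x,v^*,a_i$ would form a triangle, and I expect this to be excluded by diamond-freeness via $x_1$ or by the non-consecutiveness condition. Hence the spoke count is preserved, and $V(W)\setminus V(W')\subseteq S_1^*\cup S_2^*\cup\{x_1\}$ by construction.

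\textbf{Properness of $W'$.} It remains to show that every vertex $y$ is proper for $W'$. There are four kinds of $y$ to handle.

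\emph{Vertices proper for $W$ and not adjacent to $v^*$.} Their neighborhoods on $C'$ are the restriction of their old neighborhoods. A sector of $W$ other than $S_1,S_2$ is still a sector of $W'$, and $S_1\cup S_2$ gets replaced by the new sectors $a_1S_1sv^*$ and $v^*tS_2a_2$. So I only need to rule out a vertex whose old neighbors lie in $S_1$ (or $S_2$) but meet both sides of $s$ in a way that puts neighbors in two new sectors. Since $s$ now lies in the single sector $a_1\ldots s v^*$, this works out.

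\emph{Vertices adjacent to $v^*$.} Condition (iv) of a skip gives $N(y)\cap V(C)\subseteq V(S_1)\cup V(S_2)$. I will then use Lemma~\ref{lem:nolink}, diamond/bowtie-freeness (note $y\not\sim x$), and the maximality of $v^*$ to show that $y$ cannot have neighbors on both sides of $v^*$ on $C'$. Otherwise $y$, $v^*$, and $x$ produce a link to $C'$ or a prism.

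\emph{Other vertices of $X$.} Such a vertex $v$ has its neighbors in $s S_1 x_1 S_2 t$ together with $v\sim x$. I need its remaining neighbors on $C'$ to lie within one sector, and I need $v\not\sim v^*$, which holds since both are adjacent to $x$ and $G$ is diamond-free. By maximality, $v$'s extreme neighbors lie inside $sS_1x_1S_2t$, so $N(v)\cap V(C')\subseteq\{s,t\}$. If $v\sim s$ and $v\sim t$, then $v$ and $v^*$ would have common neighbors with $x$, giving a diamond or a $4$-hole structure that forces an ISK$_4$.

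\emph{Vertices of $V(W)\setminus V(W')$.} These are $x_1$ and the interiors of $S_1$ and $S_2$ (parts of $S_1^*,S_2^*$). Their neighbors on $C'$ are at most $s,t$ and possibly $x$, so they are proper.

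\textbf{Main obstacle.} I expect the hardest step to be the case of vertices adjacent to $v^*$. Here the choice of the extremal $v^*$ must be combined with linkage arguments, and the prism-free hypothesis is likely needed when $y$ has two adjacent neighbors on $C'$.
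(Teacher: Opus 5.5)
Your construction and case split are the paper's. Choosing $v^*$ with an inclusion-wise maximal span $sS_1x_1S_2t$ works as well as the paper's maximum-length path $P(v^*)$. However, several of the case arguments fail as written.

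\textbf{Spoke count.} The claim $N(x)\cap N(v^*)=\emptyset$ is false. By the last part of Lemma~\ref{lem:neighbor-consecutive-sector}, $v^*$ is forced to be adjacent to $x_1$ whenever it has exactly two neighbors on each side. You do not need the claim. The deleted set is the interior of $sS_1x_1S_2t$, whose only spoke is $x_1$, and $s,t$ stay on $C'$. So the spoke count is preserved whether or not $s=a_1$.

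\textbf{Vertices $v\in X\setminus\{v^*\}$.} Here maximality does \emph{not} place $v$'s extreme neighbors inside $sS_1x_1S_2t$, because the spans of two skips can be incomparable. So $N(v)\cap V(C')\subseteq\{s,t\}$ is unjustified. Likewise $v\not\sim v^*$ does not follow from diamond-freeness alone, and it is not needed. The correct argument is the one place the extremal choice is used. Put $S_1'=a_1S_1sv^*$ and $S_2'=v^*tS_2a_2$. Then $vx\in E(G)$ and $N(v)\cap V(C')\subseteq V(S_1')\cup V(S_2')$. If $v$ is not proper for $W'$, it has neighbors in both $V(a_1S_1s)$ and $V(tS_2a_2)$. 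Lemma~\ref{lem:neighbor-consecutive-sector} applied to $W'$ then gives two neighbors in each. Hence $v$ has a neighbor strictly beyond $s$ and one strictly beyond $t$, so its span strictly contains that of $v^*$, contradicting the choice of $v^*$.

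\textbf{Vertices adjacent to $v^*$.} You call this case the main obstacle and leave it unproved, but the observation that settles it is missing. A vertex $y\notin X\cup V(W)$ with $y\sim v^*$ is proper for $W$. By condition (iv) for the skip $v^*$, its neighbors on $C$ lie in $V(S_1)\cup V(S_2)$, hence in a single one of these, say $S_1$. Therefore $N(y)\cap V(C')\subseteq V(S_1')$ automatically. You may not assume $y\not\sim x$, but if $y\sim x$ then $y$ is proper at once. Otherwise $|N(y)\cap V(C)|\le 2$, so $|N(y)\cap V(C')|\le 3$, and exactly three is excluded by Lemma~\ref{lem:nolink}. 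No maximality, link-to-$C'$ or prism argument enters here.

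\textbf{Deleted vertices of $C$.} You overlooked that these may be adjacent to $v^*$; also $x$ is not on $C'$. Their neighbors on $C'$ lie in $\{s,t,v^*\}$. You must rule out a deleted vertex (e.g.\ $x_1$) adjacent to both $s$ and $t$. This holds because $v^*$ has at least two neighbors on each side, so at least three vertices are deleted. Each deleted vertex then sees at most two vertices, all in one of $S_1'$, $S_2'$.
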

\begin{proof}
We may assume that $X\neq\emptyset$, for otherwise $W$ is proper in $G$. 
For each $v\in X$, let $P(v)$ be a longest path in $G[V(S_1)\cup V(S_2)]$ whose ends are neighbors of $v$. 
Choose $v^*\in X$ such that $|V(P(v^*))|$ is maximum, and let $Y$ be the interior of $P(v^*)$. Let $C' = G[(V(C)\cup\{v^*\})\setminus Y]$. 
Then $W'=(C',x)$ is a wheel. Moreover,
\[
N(x)\cap V(C') = \biggl( (N(x)\cap V(C))\setminus\{x_1\} \biggr)\cup\{v^*\},
\]
and hence $W'$ has the same number of spokes as $W$.

If $W'$ is proper, then the conclusion follows. 
Thus we may assume that there exists a vertex 
$y\in V(G)\setminus V(W')$ that is not proper for $W'$. 
Let $S_1',S_2'$ be the sectors of $W'$ containing 
$S_1\setminus Y$ and $S_2\setminus Y$, respectively.

We first suppose that $y\in V(W)$, and hence $y\in Y$. 
By Lemma~\ref{lem:neighbor-consecuitive-sector}, 
the vertex $v^*$ has at least two neighbors in each of $S_1$ and $S_2$, 
and therefore $|V(P(v^*))|\ge 4$. 
It follows that either 
$N(y)\cap V(C')\subseteq V(S_1')$ 
or 
$N(y)\cap V(C')\subseteq V(S_2')$. 
Moreover, $N(y)\cap V(C')\subseteq N[v^*]$, and hence 
$|N(y)\cap V(C')|\le 2$. 
Thus $y$ is proper for $W'$, a contradiction. 
This shows that $y\notin V(W)$.

Next, suppose that $y\in X$, that is, $y$ is an $x_1$-skip for $W$. 
Hence $yx\in E(G)$ and 
$N(y)\cap V(C')\subseteq V(S_1')\cup V(S_2')$. 
It follows that
\[
N(y)\cap (V(S_1')\setminus V(S_2'))\neq\emptyset
\quad\text{and}\quad
N(y)\cap (V(S_2')\setminus V(S_1'))\neq\emptyset .
\]
By Lemma~\ref{lem:neighbor-consecuitive-sector}, 
$y$ has at least two neighbors in each of 
$V(S_1')\setminus V(S_2')$ and $V(S_2')\setminus V(S_1')$. 
But then $|V(P(y))|>|V(P(v^*))|$, contradicting the choice of $v^*$. 
Hence $y\notin X$.

Therefore, $y\in V(G)\setminus (X\cup V(W))$. 
If $yv^*\notin E(G)$, then 
$N(y)\cap V(C')\subseteq N(y)\cap V(C)$. 
Since $y\notin X$, it follows that $y$ is proper for $W$, 
and hence also proper for $W'$, a contradiction. 
Thus $yv^*\in E(G)$.

Since $v^*$ is a skip for $W$, we have
$N(y)\cap V(C)\subseteq V(S_1)\cup V(S_2)$. 
As $y$ is proper for $W$, we may assume by symmetry that
$N(y)\cap V(C)\subseteq V(S_1)$. 
Consequently,
$N(y)\cap V(C')\subseteq V(S_1')$.

If $yx\in E(G)$, then $y$ is proper for $W'$, a contradiction. 
Hence $yx\notin E(G)$, and thus $|N(y)\cap V(C)|\le 2$. 
It follows that $|N(y)\cap V(C')|\le 3$. 
By Lemma~\ref{lem:nolink}, $y$ cannot have exactly three neighbors in $C'$. 
Therefore, $|N(y)\cap V(C')|\le 2$, and $y$ is proper for $W'$, a contradiction. We conclude that $W'$ is proper in $G$, and hence $W'$ is a desired wheel. 
This completes the proof of Lemma~\ref{lem:1-almost}.
\end{proof}
\begin{lemma}\label{lem:2-almost}
Let $G$ be an  $\iskdbp$-free graph, and let $W=(C,x)$ be a $2$-almost proper wheel in $G$.
Then, there exists a proper wheel $W'$ in $G$ with center $x$ and at most the same number of spokes as $W$.
\end{lemma}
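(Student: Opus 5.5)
We reduce to Lemma~\ref{lem:1-almost}, applying it twice. Let $x_1,x_2$ and $X$ be as in Definition~\ref{def:k-almost} with $k=2$. Write $X_i$ for the set of vertices of $X$ that are $x_i$-skips, so that $X=X_1\cup X_2$. For $i=1,2$, let $S_1^i,S_2^i$ be the two sectors of $W$ containing $x_i$. Since $x_1$ and $x_2$ are not consecutive spokes, the sectors around $x_1$ and those around $x_2$ share at most an end spoke, and no sector contains both $x_1$ and $x_2$. Hence a vertex cannot be both an $x_1$-skip and an $x_2$-skip, by condition (iii) of Definition~\ref{def:skip} together with condition (ii). Thus $X_1\cap X_2=\emptyset$.

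\textbf{Step 1.} We show that $W$ is $1$-almost proper in $G\setminus X_2$, witnessed by $x_1$ and $X_1$. This needs two facts.
\begin{itemize}
\item $W$ is proper in $(G\setminus X_2)\setminus X_1=G\setminus X$.
\item Each $v\in X_1$ is still an $x_1$-skip in $G\setminus X_2$. This holds because condition (iv) of Definition~\ref{def:skip} only becomes weaker when vertices are deleted.
\end{itemize}
The graph $G\setminus X_2$ is still $\iskdbp$-free. So Lemma~\ref{lem:1-almost} gives a wheel $W_1=(C_1,x)$ that is proper in $G\setminus X_2$ and has the same number of spokes as $W$. Moreover, either $W_1=W$, or $V(W_1)\setminus V(W)=\{v^*\}$ with $v^*\in X_1$ and $V(W)\setminus V(W_1)\subseteq V((S^1_1)^*)\cup V((S^1_2)^*)\cup\{x_1\}$. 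In particular, $x_2$ is still a spoke of $W_1$. The two sectors of $W_1$ at $x_2$ are the sectors $S^2_1,S^2_2$, except that possibly one end (the spoke shared with a sector at $x_1$) is replaced by $v^*$, with the sector rerouted through $v^*$.

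\textbf{Step 2.} We show that $W_1$ is $1$-almost proper in $G$, witnessed by $x_2$ and $X_2$. Then one more application of Lemma~\ref{lem:1-almost} produces the desired proper wheel, with the same number of spokes. Two things must be checked.

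(a) Every $v\in X_2$ is an $x_2$-skip for $W_1$. We have $vx\in E(G)$. The neighbours of $v$ on $C_1$ lie in $(V(S^2_1)\cup V(S^2_2))\cap V(C_1)$, together with possibly $v^*$.
\begin{itemize}
\item If $vv^*\in E(G)$, then condition (iv) for the skip $v^*$ (an $x_1$-skip) forces $N(v)\cap V(C)\subseteq V(S^1_1)\cup V(S^1_2)$. Combined with $v$ being an $x_2$-skip, $v$ has neighbours in $V(C)$ on both sides of $x_2$, and hence in both sector pairs. We rule this out using the fact that $x_1,x_2$ are non-consecutive. Also, $vv^*$ together with $x$ gives a triangle, and $v^*,v$ are both adjacent to $x$; using $N(v)\cap N(v^*)$ and diamond-freeness we derive a contradiction.
\item So $vv^*\notin E(G)$, and conditions (ii)–(iii) transfer to $W_1$. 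For condition (iv), any $u\notin V(W_1)$ adjacent to $v$ with $u\ne x_1$ and $u\notin Y$ inherits the condition from $W$. If instead $u$ is a deleted vertex of $C$, then $u$ lies in a sector at $x_1$ while being adjacent to a skip at $x_2$; this forces the two sector pairs to share an end, and the neighbours of $u$ on $C_1$ still lie in the new sectors at $x_2$.
\end{itemize}

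(b) $W_1$ is proper in $G\setminus X_2$. This is exactly Step 1.

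The main obstacle is (a): controlling how the rerouting through $v^*$ interacts with the skips at $x_2$ when the sectors at $x_1$ and at $x_2$ share an end spoke $s$. I would first try to show that no $x_2$-skip is adjacent to $v^*$ or to any vertex of $Y$. Such an adjacency would let us link $x$ or $v$ to a hole through $s$, contradicting Lemma~\ref{lem:nolink}, or would produce a diamond or bowtie via the common neighbour $x$ of $v$ and $v^*$. Once that is shown, (a) follows directly.
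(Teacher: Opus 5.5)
\textbf{Gap in Step 2(a), condition (iv).} Your Step 1 and the case $vv^*\in E(G)$ are fine, and they match the paper. The paper also applies Lemma~\ref{lem:1-almost} in $G\setminus X_2$ and excludes $vv^*\in E(G)$ by diamond-freeness.

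The problem is your claim that a vertex $u\notin V(W_1)$ adjacent to $v\in X_2$ ``inherits the condition from $W$''. Condition (iv) for $W$ only gives $N(u)\cap V(C)\subseteq V(S^2_1)\cup V(S^2_2)$. It says nothing about whether $u$ is adjacent to the new rim vertex $v^*\in V(C_1)\setminus V(C)$. Since $x_1,x_2$ are non-consecutive, $v^*$ is not an end of either sector of $W_1$ at $x_2$.

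So suppose some $u\notin V(W)$ is a common neighbour of $v$ and $v^*$. Then $u$ has a neighbour on $C_1$ outside the sectors at $x_2$. Hence $v$ is not an $x_2$-skip of $W_1$, and $W_1$ is not $1$-almost proper with witnesses $x_2$ and $X_2$.

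Your proposed remedy, showing that no $x_2$-skip is adjacent to $v^*$ or to $Y$, does not address this, because the obstruction is at distance two. The forbidden subgraphs do not exclude it either. One only gets $ux\notin E(G)$ (else $\{x,u,v,v^*\}$ is a diamond) and, with some work, that $u$ has no neighbour on $C$. The paper derives no contradiction here.

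\textbf{What the paper does instead.} This is exactly the hard branch of the paper's proof. Your plan (apply Lemma~\ref{lem:1-almost} a second time) is only the easy branch, where $W_1$ happens to be $1$-almost proper in $G$. In the remaining case, the paper chooses the bad $v$ so that the path $s_3Cs_4$ through $x_2$ is maximal, where $s_3,s_4$ are the neighbours of $v$ furthest from $x_2$ in the two sectors at $x_2$. It then takes the hole $D$ formed by $v$, $u$, $v^*$ and the subpath of $C_1$ from $s_3$ to $v^*$ avoiding $x_2$. Non-consecutiveness of $x_1,x_2$ puts a further spoke on this subpath, so $(D,x)$ is a wheel with strictly fewer spokes than $W$. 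A separate case analysis, using linkage contradictions via Lemma~\ref{lem:nolink}, diamond-freeness and the maximal choice of $v$, then shows that $(D,x)$ is proper.

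This is why the lemma only promises \emph{at most} the same number of spokes. Your argument would always output a proper wheel with exactly as many spokes as $W$, and it has no mechanism for this case.
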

\begin{proof}
Let $x_1,x_2$ and $X$ be as in Definition~\ref{def:k-almost} of a $2$-almost proper wheel, 
and let $S_1,S_2$ be the sectors of $W$ containing $x_1$. 
Let $X_1$ be the set of $x_1$-skip vertices in $X$, and let $X_2 = X\setminus X_1$. 
We may assume that both $X_1$ and $X_2$ are non-empty; otherwise, the conclusion follows from Lemma~\ref{lem:1-almost}.  

Then $W$ is $1$-almost proper but not proper in $G-X_2$. 
Let $W'=(C',x)$ and $v^*$ be as in the conclusion of Lemma~\ref{lem:1-almost}. 
Thus $W'$ is a proper wheel in $G-X_2$.

If $W'$ is $1$-almost proper in $G$, then the conclusion of Lemma~\ref{lem:2-almost} follows from Lemma~\ref{lem:1-almost}. 
Hence we may assume that $W'$ is not $1$-almost proper in $G$. 
Since every vertex in $V(G)\setminus X_2$ is proper for $W'$, there exists a vertex $v\in X_2$ that is neither proper nor an $x_2$-skip for $W'$.

By the definition of $X_1$, and since $W$ is $2$-almost proper in $G$, we have
\[
N(v)\cap V(C)\subseteq V(S_3)\cup V(S_4),
\]
where $S_3$ and $S_4$ are the sectors of $W$ containing $x_2$.
Since $x_1$ and $x_2$ are not consecutive, we have $S_3,S_4\notin\{S_1,S_2\}$. 
Therefore, by Lemma~\ref{lem:1-almost}, $V(S_3)\cup V(S_4)\subseteq V(W')$. 
Consequently, $S_3$ and $S_4$ are sectors of $W'$.

Suppose that $vv^*\in E(G)$.
Since $v$ is an $x_2$-skip of $W$,
 $N(v^*)\cap V(C)\subseteq V(S_3)\cup V(S_4)$. Since $v^*$ is an $x_1$-skip of $W$,
 $N(v^*)\cap V(C)\subseteq V(S_1)\cup V(S_2)$. This implies that $$N(v^*)\cap V(C)\subseteq (V(S_1)\cup V(S_2))\cap (V(S_3)\cup V(S_4)).$$ Hence $N(v^*)\cap V(C)\subseteq N(x)$. However, it follows that $G$ contains a diamond, a contradiction. Therefore, $vv^*\notin E(G)$.

Since $v$ is an $x_2$-skip of $W$, $N(v)\cap V(C')\subseteq V(S_3)\cup V(S_4)$. Note that $v$ has a neighbor in both $V(S_3)\setminus V(S_4)$ and  $V(S_4)\setminus V(S_3)$, where $S_3,S_4$ are the sectors of $W'$ containing $x_2$. For $i \in \{3,4\}$, let $s_i$ denote the neighbor of $v$ in $S_i$ furthest from $x_2$. Among all vertices of $X_2$ that are neither proper nor $x_2$-skips for $W'$, choose $v$ so that the path $s_3Cs_4$ containing $x_2$ is maximal. Since $v$ is not an $x_2$-skip for $W'$,
there exists a vertex $u \in N(v) \setminus V(W')$ with a neighbor in $V(C') \setminus (V(S_3) \cup V(S_4))$ by Definition~\ref{def:skip}~$(\romannumeral4)$.
Since $v$ is an $x_2$-skip for $W$, $uv^*\in E(G)$. 
Now, since $v$ and $v^*$ are skips for $W$, it follows that 
$N(u) \cap V(C) \subseteq (V(S_1) \cup V(S_2)) \cap (V(S_3) \cup V(S_4))\subseteq N(x)$. Since $G$ is diamond-free, $ux\notin E(G)$, and therefore 
$u \notin X$. Consequently, $u$ is proper for $W$, and all the neighbors of 
$u$ in $C$ belong to one sector of $W$. It follows that $u$ has at most one neighbor in $V(C)$. Suppose that  $u$ has exactly one neighbor in $V(C)$. Then $u$ has three neighbors in the cycle arising from $C'$ by replacing $s_3S_3x_2S_4s_4$ by $s_3vs_4$, a contradiction. It follows that $u$ has no neighbors in $V(C)$. 

Let $P_1'$ be the subpath of $C'$ from $s_3$ to $v^*$ that does not contain $x_2$, and let $P_1 = vs_3P_1'v^*$. Similarly, let $P_2'$ be the subpath of $C'$ from $s_4$ to $v^*$ that does not contain $x_2$, and let $P_2 = vs_4P_2' v^*$. Define $D = G[V(P_1) \cup \{u\}]$.  

Since $x_1$ and $x_2$ are not consecutive, each of $P_1^*$ and $P_2^*$ contains at least one neighbor of $x$. Therefore, $W'' = (D, x)$ forms a wheel with fewer spokes than $W$. Let $S_3'$ denote the sector of $W''$ that contains $v$ but does not contain $u$.

If $W''$ is proper in $G$, then the conclusion follows. Therefore, we may assume that there is a vertex $y\in V(G) \setminus V(W'')$ that is not proper for $W''$. Since every vertex in
$V(W) \setminus V(W'')$ has at most two consecutive neighbors in $D$, it follows that $y \notin V(W)$.

Suppose that $yu\notin E(G)$. If $y \in X_2$, then $N(y) \cap V(D) \subseteq V(S_3')\cup \{v^*\}$ and $yx\in E(G)$. Clearly, $yv^*\notin E(G)$. Otherwise, $N(y)\cap V(C)\subseteq  (V(S_1) \cup V(S_2)) \cap (V(S_3) \cup V(S_4))\subseteq N(x)$, which contradicts the fact that $G$ is diamond-free.
 This implies that $N(y) \cap V(D) \subseteq V(S_3')$. So $y$ is proper for $W''$, a contradiction. Thus $y \notin X_2$, and so $y$ is proper for $W'$.  If $yv\notin E(G)$, then $N(y) \cap V(D) \subseteq N(y) \cap V(C')$, and again $y$ is proper for $W''$, a contradiction. Thus  $yv\in E(G)$. Since $v$ is an $x_2$-skip for $W$, $N(y) \cap V(C) \subseteq V(S_3) \cup V(S_4)$, and so $N(y) \cap V(D) \subseteq V(S_3')$.
Since $y$ is not proper for $W''$, $yx\notin E(G)$ and has at least three neighbors in $S_3'$.
But $y$ is proper for $W'$, and so $y$ has at most two neighbors in $S_3$; thus $y$ has exactly three neighbors in $S_3'$ and hence in $D$, a contradiction. This implies that $yu\in E(G)$. Since $G$ is diamond-free, $|N(y)\cap \{v^*,v,x\}|\leq 1$. 

Since $y$ is not proper for $W''$, it follows that $y$ has a neighbor in $P_1^*$. Suppose $yv^*\in E(G)$.  Since $v^*$ is an $x_1$-skip for $W$, $N(y)\cap V(C)\subseteq V(S_1)\cup V(S_2)$, and so $N(y)\cap V(D)\subseteq V(S_2)\cup \{v^*\}$. Let $a_1$ be the end of $S_2$ different from $x_1$. Let $y'$ denote the neighbor of $y$ in $V(S_2)\cap D$ closest to $a_1$. Since $G$ is diamond-free, $y'v^*\notin E(G)$. This implies that $y$ can be linked to the hole $xvuv^*x$ by $yv^*$, $yu$ and $yy'S_2a_1x$, a contradiction. Therefore, $yv^*\notin E(G)$. Similarly, we have that $yv\notin E(G)$.

Now we claim that $y$ has no neighbor in $P_2$. Otherwise, assume that $y$ has a neighbor in $P_2$. If $y \in X_2$, then, since $yu\in E(G)$ and $y$ has a neighbor in $P_1^*$, we deduce that $y$ is not an $x_2$-skip for $W'$. By the choice of $v$, we have $N(y)\cap V(S_3')=\{s_3\}$. Since $G$ is diamond-free, $N(s_3)\cap \{v^*,x,u\}=\emptyset$. Now, $y$ can be linked to the hole $uv^*xvu$ by three paths $yu$, $yx$, and $ys_3v$, a contradiction. Therefore, $y \notin X_2$. Consequently, $y$ is proper for $W'$. Note that $y$ has a neighbor in both $P_1^*$ and $P_2^*$, a contradiction. Therefore, $y$ has no neighbor in $P_2$.

Let $z_1$ be the neighbor of $y$ in $V(P_1)$ closest to $v$, and let $z_2$ be the neighbor of $y$ in $V(P_1)$ closest to $v^*$. Let $D'$ be the hole $v^*P_2vuv^*$. If $z_1 \neq z_2$ and $z_1z_2\notin E(G)$, we can link $y$ to $D'$ by $yz_1P_1v$, $yz_2P_1v^*$, and $yu$; if $z_1 \neq z_2$ and $z_1z_2\in E(G)$, we can link $y$ to $D$ by $yz_1$, $yz_2$, and $yu$; and if $z_1=z_2$, then we can link $z_1$ to $D'$ by $z_1P_1v$, $z_1P_1v^*$, and $z_1yu$, which leads to a contradiction in all cases. This completes the proof of Lemma~\ref{lem:2-almost}.
\end{proof}
\begin{lemma}\label{lem:unique}
 Let $G$ be an $\iskdbp$-free graph, and let $W=(C,x)$ be a proper wheel in $G$ of center $x$ with a minimum number of spokes. Let $P = p_1 \cdots p_k$ be a path with $V(P) \subseteq V(G) \setminus V(W)$, whose ends $p_1$ and $p_k$ have neighbors in $V(C)$, and whose internal vertices have no neighbors in $C$.
 Assume that no sector of $W$ contains all vertices in $(N(p_1)\cup N(p_k))\cap V(C)$. For $i\in \{1,k\}$, if $xp_i\notin E(G)$, then $p_i$ has a unique neighbor in $V(C)$.
\end{lemma}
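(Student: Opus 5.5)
The plan is a direct contradiction via Lemma~\ref{lem:nolink}. The center $x$ plays no role except through properness. By symmetry we treat $i=1$. Suppose $xp_1\notin E(G)$ and $p_1$ has at least two neighbors in $V(C)$.

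\emph{Step 1 (reductions).} We may assume $P$ is an induced path; otherwise we pass to a shortest subpath between the same ends, which keeps the hypotheses. If $k=1$, then $p_1=p_k$ is proper for $W$, so all its neighbors in $V(C)$ lie in one sector, contradicting the hypothesis. Hence $k\ge 2$.

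\emph{Step 2 (the neighbors of $p_1$).} Since $p_1$ is proper for $W$ and $xp_1\notin E(G)$, all neighbors of $p_1$ in $V(C)$ lie in one sector $S$, and $p_1$ has at most two of them. So $N(p_1)\cap V(C)=\{a,b\}\subseteq V(S)$. The pair $a,b$ may or may not be adjacent.

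\emph{Step 3 (where $p_k$ attaches).} Since no sector contains $(N(p_1)\cup N(p_k))\cap V(C)$, the vertex $p_k$ has a neighbor in $V(C)\setminus V(S)$. By properness, all neighbors of $p_k$ in $V(C)$ lie in a single sector $S'\neq S$. Let $c,d$ be the neighbors of $p_k$ on $S'$ that are extreme along $S'$ (possibly $c=d$). If $c\ne d$ and $cd\notin E(G)$, let $C'$ be the hole obtained from $C$ by replacing the subpath $cS'd$ with $cp_kd$. Otherwise, keep $C'=C$ and use a single edge from $p_k$ to a neighbor in $V(S')\setminus V(S)$. Since $S$ and $S'$ share at most one end, the replaced segment lies in $S'$ and avoids $\{a,b\}$ except possibly as an end. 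Hence $a,b\in V(C')$ and $N(p_1)\cap V(C')=\{a,b\}$.

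\emph{Step 4 (the linkage).} Now link $p_1$ to the hole $C'$ by three paths: $p_1a$, $p_1b$, and $p_1p_2\cdots p_k$ continued by $p_kc'$, where $c'$ is a neighbor of $p_k$ in $V(C)\setminus V(S)$ (if $C'=C$), or by $p_1\cdots p_{k-1}p_k$ with $p_k\in V(C')$ (if we rerouted). We then check conditions (i)--(v) of the definition of linked.
\begin{itemize}
\item The interior $\{p_2,\dots,p_{k-1}\}$ is anticomplete to $C$ by hypothesis, and it is anticomplete to $p_k$'s detour because $P$ is induced.
\item The only edges between different paths are at $p_1$ or inside $C'$. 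The edge $ab$, if present, is allowed, exactly as in the proof of Claim~\ref{claim:u-neighbor}.
\item Every neighbor of $p_1$ on $C'$ is used.
\end{itemize}
This contradicts Lemma~\ref{lem:nolink}, so $p_1$ has a unique neighbor in $V(C)$.

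\emph{Main obstacle.} The delicate step is the case where $xp_k\in E(G)$ and $p_k$ has many neighbors on $S'$. Then single-edge attachment fails condition (v), and the reroute in Step 3 must produce an honest hole with $p_k$ having exactly the two neighbors $c,d$ on it. If $c,d$ are adjacent, or if the extreme neighbors include an end of $S'$ shared with $S$, I would instead choose $c'$ as a neighbor of $p_k$ in $S'$ distinct from $a,b$. I would then verify that $p_k$ has no further neighbors on the relevant rerouted hole; diamond- and bowtie-freeness rule out $p_k$ being adjacent to both ends of an edge near $a$ or $b$. These are the configurations I would check first.
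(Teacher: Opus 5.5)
Your linkage step is the same as the paper's first step. It is correct when $p_k$ has a unique neighbor on $C$, or when its extreme neighbors $c,d$ on $S'$ are non-adjacent. In particular, what you call the main obstacle ($xp_k\in E(G)$ with three or more neighbors on $S'$) is harmless: there $c,d$ are automatically non-adjacent and your reroute works.

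The genuine gap is the case $c\neq d$, $cd\in E(G)$, where $p_k$ has exactly two adjacent neighbors on $C$. Your ``otherwise'' branch keeps $C'=C$ and ends the third path at one of $c,d$. But $p_k$ is still adjacent to the other one, so condition~(ii) of being linked fails. No reroute through $p_k$ exists, since $cp_kd$ is a triangle, and choosing a different $c'$ changes nothing. Diamond- and bowtie-freeness do not rescue this. They only prevent the triangle $p_kcd$ from sharing a vertex with the triangle $p_1ab$ (when $ab\in E(G)$). They do not forbid $p_k$ from seeing both ends of an edge of $S'$.

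This case cannot be closed by Lemma~\ref{lem:nolink} applied to holes inside $G[V(C)\cup V(P)]$ at all. If also $ab\in E(G)$, then $G[V(C)\cup V(P)]$ is a prism. A prism contains no $\isk$, so no vertex is linked to any of its holes. Your proposal never uses the prism-freeness hypothesis, and that is exactly what is needed here.

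The paper closes this case as follows.
\begin{enumerate}
\item If $p_k$ has two adjacent neighbors, then $xp_k\notin E(G)$; otherwise $p_k$ has exactly three neighbors on the hole $xS'x$.
\item By the symmetric argument, $p_1$ must also have two adjacent neighbors. If $ab\notin E(G)$, link $p_k$ to the hole obtained from $C$ by rerouting $S$ through $p_1$, via $p_kc$, $p_kd$ and $P$ reversed.
\item Diamond- and bowtie-freeness give $\{a,b\}\cap\{c,d\}=\emptyset$.
\item Then $G[V(C)\cup V(P)]$ is a prism, contradicting prism-freeness.
\end{enumerate}
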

\begin{proof}
Let $S_1,S_2$ be distinct sectors of $W$ such that $N(p_1)\cap V(C)\subseteq V(S_1)$ and $N(p_k)\cap V(C)\subseteq V(S_2)$. Without loss of generality, we may assume that $p_1x\notin E(G)$, so $p_1$ has at most two neighbors in $C$.  

Since no sector of $W$ contains $(N(p_1)\cup N(p_k))\cap V(C)$, $p_k$ has a neighbor in $V(S_2)\setminus V(S_1)$. Suppose $p_k$ has exactly one neighbor in $S_2$ or has two non-adjacent neighbors in $S_2$. Since $p_1$ cannot be linked to the hole $C$ (or the hole obtained from $C$ by rerouting $S_2$ through $p_k$) by two one-edge paths and $P$, it follows that $p_1$ has a unique neighbor in $C$. Thus, $p_k$ has two adjacent neighbors in $S_2$. Since $p_k$ cannot be linked to the hole $xS_2x$, $p_kx\notin E(G)$.  

Similarly, we have that $p_1$ has exactly two adjacent neighbors in $S_1$. Since $G$ is $\db$-free, $N(p_1)\cap N(p_k)\cap V(C)=\emptyset$. However, it follows that $G[V(C)\cup V(P)]$ is a prism, a contradiction. This completes the proof of Lemma~\ref{lem:unique}.
\end{proof}
\begin{lemma}\label{lem:proper-wheel-path-4-spokes}
Let $G$ be an   $\iskdbpk$-free graph, and let $W=(C,x)$ be a proper wheel in $G$ of center $x$ with four spokes.  Let $P=p_1\cdots p_k$ be a path with $V(P)\subseteq V(G)\setminus V(W)$ such that $x$ has at most one neighbor in $P$. 
\begin{itemize}
\item[$(\romannumeral1)$] If $P$ contains no neighbor of $x$, then there is a sector $S$ of
$W$ such that every edge from $P$ to $C$ has an end in $V(S)$.
\item[$(\romannumeral2)$] If $P$ contains exactly one neighbor of $x$, then there are two
sectors $S_1,S_2$ of $W$ such that $V(S_1) \cap V(S_2) \neq \emptyset$, and
 every edge from $P$ to $C$ has an end in $V(S_1) \cup V(S_2)$ (where possibly $S_1=S_2$).
\end{itemize}  
\end{lemma}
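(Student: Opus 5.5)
Both parts go by induction on $k$, following the analogous lemma for $\{\mathrm{ISK}_4,\text{triangle}\}$-free graphs of Chudnovsky et al.; the new cases come from triangles.

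\emph{Part (i), base case.} For $k=1$, properness of $W$ gives the claim directly: every vertex outside $W$ has all its neighbors on $C$ in one sector.

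\emph{Part (i), reduction.} For $k\ge 2$, take a counterexample $P$ with $k$ minimum. By minimality, every proper subpath satisfies (i). So $p_1$ and $p_k$ have neighbors on $C$, the interior of $P$ is anticomplete to $C$, and we may take $P$ to be an induced path. Let $S_1\ni N(p_1)\cap V(C)$ and $S_2\ni N(p_k)\cap V(C)$ be sectors, chosen so that no single sector contains $(N(p_1)\cup N(p_k))\cap V(C)$. Since $x$ has no neighbor in $P$, Lemma~\ref{lem:unique} applies; $W$ has minimum spokes because four spokes is the fewest possible. Hence $p_1$ has a unique neighbor $a\in V(S_1)$ and $p_k$ has a unique neighbor $b\in V(S_2)$, with $a,b$ not in a common sector.

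\emph{Part (i), contradiction.} Now $C\cup P$ contains a theta-like structure with $x$ outside it. Take the hole $D$ formed by $P$ and one of the two $a$--$b$ arcs of $C$. Since $W$ has exactly four spokes and $a,b$ lie in no common sector, each arc of $C$ between $a$ and $b$ contains at least one spoke, and one arc contains at least two.
\begin{itemize}
\item If some arc $Q$ contains exactly one spoke $s$, use the hole through the other arc. Then $x$ links to it by $xs$ together with subpaths of $Q$ toward $a$ and toward $b$, or it has exactly three neighbors on it. Either way Lemma~\ref{lem:nolink} or $\mathrm{ISK}_4$-freeness is contradicted.
\item If both arcs contain two spokes, then $x$ together with the four spokes and $P$ gives a subdivision of $K_{3,3}$, or an $\mathrm{ISK}_4$ with corners among the spokes. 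By Lemma~\ref{lem:ISK4-decomposition} this yields a $K_{3,3}$, which is excluded.
\end{itemize}
The degenerate cases, where $a$ or $b$ is itself a spoke, have to be checked against the diamond/bowtie hypotheses.

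\emph{Part (ii).} Let $p_j$ be the unique neighbor of $x$ on $P$. Apply (i) to $p_1\cdots p_{j-1}$ and to $p_{j+1}\cdots p_k$ to get sectors $S_1$ and $S_2$. Properness of $W$ puts $N(p_j)\cap V(C)$ in a single sector $S_0$, and since $p_jx$ is an edge, diamond-freeness forces this set to avoid spokes.

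It then remains to show all edges from $P$ to $C$ lie in two sectors sharing a vertex. Suppose not. Then choose a minimal subpath from $p_j$ to a vertex with neighbors in a far sector. Using the edge $p_jx$, build a hole through $p_j$ and part of $C$ to which $x$ (or $p_j$) is linked, contradicting Lemma~\ref{lem:nolink}. The same situation could instead give a new wheel centered at $x$ with fewer than four spokes, i.e.\ three spokes, which is an $\mathrm{ISK}_4$. When the offending vertex is adjacent to $p_j$ and forms triangles, the alternative is a prism, excluded as in Lemma~\ref{lem:unique}.

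\emph{Main obstacle.} I expect the main obstacle to be this case analysis in (ii), especially when $S_1$, $S_0$ and $S_2$ are three distinct sectors in a row, or when $p_j$ has two adjacent neighbors in $S_0$. There I would first try Lemma~\ref{lem:neighbor-consecuitive-sector} and Lemma~\ref{lem:K13}, taking the subdivided claw centered at $p_j$, to force one leaf to have no neighbors on $C$.
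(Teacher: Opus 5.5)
\paragraph{Part (i).}
Your plan for (i) follows the paper's route: a minimal counterexample, Lemma~\ref{lem:unique}, then a hole on which $x$ has exactly three neighbours or to which $x$ is linked. The reduction step, however, is wrong.

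Label the spokes $a_1,\dots,a_4$ cyclically and let $T_i$ be the sector with ends $a_i,a_{i+1}$. Minimality gives sectors $S\neq S'$ for $p_1\cdots p_{k-1}$ and $p_2\cdots p_k$. It only forces every edge from $P^*$ to $C$ to end in $V(S)\cap V(S')$, and this set is nonempty when $S,S'$ are consecutive. So with $S=T_1$, $S'=T_2$, interior vertices of $P$ may be adjacent to $a_2$. Then $P^*$ is not anticomplete to $C$, and Lemma~\ref{lem:unique} is unavailable.

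This case needs its own argument. Let $s$ be the neighbour of $p_1$ in $T_1$ closest to $a_1$, and $z$ the neighbour of $p_k$ in $T_2$ closest to $a_3$. Then $x$ has exactly the three neighbours $a_3,a_4,a_1$ on the hole $p_1Pp_kzT_2a_3T_3a_4T_4a_1T_1sp_1$; this is how the paper does it.

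Separately, in your ``two spokes on each arc'' case, $G[V(C)\cup V(P)\cup\{x\}]$ is not an induced subdivision of $K_{3,3}$, since $x$ has degree four. So Lemma~\ref{lem:ISK4-decomposition} is the wrong tool. For $a\in T_1^*$, $b\in T_3^*$, simply link $x$ to the hole $p_1Pp_kbT_3a_3T_2a_2T_1ap_1$ by $xa_2$, $xa_3$, $xa_1T_1a$.

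\paragraph{Part (ii).}
The serious gap is in (ii). Your opening simplification is false. From $p_jx\in E(G)$, diamond-freeness only forbids $p_j$ from seeing two spokes, or a spoke together with one of that spoke's neighbours on $C$. A lone triangle $xp_ja_1$ is compatible with all the hypotheses, already for $k=1$.

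This is one of the substantive cases in the paper. There $p_j$ is linked to $xT_4x$ by $p_jx$, $p_ja_1$, $p_jPp_1sT_3a_4$ unless $s=a_3$. Only after forcing $s=a_3$ do two consecutive sectors cover all edges.

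Beyond that, what you write for (ii) describes the flavour of the argument but does not carry it out. The configurations that actually have to be excluded are not identified:
\begin{itemize}
\item An end $p_1\in N(x)$ with $P^*$ anticomplete to $C$, where $p_k$ sees only $z\in T_1^*$ and $p_1$ reaches $T_3^*$. Let $u$ be the neighbour of $p_1$ in $T_3$ closest to $a_3$; then link $p_1$ to $xT_2x$ by $p_1x$, $p_1Pp_kzT_1a_2$, $p_1uT_3a_3$.
\item $p_j$ with a neighbour in $T_1^*$, while $p_1,p_k$ reach $V(T_4)\setminus V(T_1)$ and $V(T_2)\setminus V(T_1)$. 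Then $x$ has exactly the three neighbours $p_j,a_3,a_4$ on a hole through $P$.
\item An interior $p_i\neq p_j$ with neighbours on $C$. Here you need the induction hypothesis for (ii)-type subpaths such as $p_1\cdots p_{k-1}$, not only (i) on the two sides of $p_j$.
\end{itemize}
The tool you single out, Lemma~\ref{lem:K13}, is not what is needed. Each case falls to one explicit hole carrying exactly three neighbours of $x$, or one explicit link, but you have to find it.
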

\begin{proof}
Let $P$ be a path violating the assertions of the lemma and assume that $P$ is chosen with $k$ minimum. 
Since $W$ is proper, it follows that $k>1$. 
For convenience, we may assume that $N(x)\cap V(C)=\{a_1,a_2,a_3,a_4\}$ and $a_1,a_2,a_3,a_4$ appear on $C$ in this order. 
Let $T_i$ be the sector of $W$ with ends $a_i$ and $a_{i+1}$, where the subscripts are taken modulo $4$.

Suppose that $P^*$ is anticomplete to $V(C)$. 
By Lemma~\ref{lem:unique}, we may assume that $p_k$ has a unique neighbor  $z$ in $V(C)$. Without loss of generality, we may assume that $N(p_k)\cap V(C)\subseteq V(T_1)$. If $p_1x\notin E(G)$, then $p_1$ has a unique neighbor  $s$ in $V(C)$ by Lemma~\ref{lem:unique}.
Note that no sector of $W$ contains all vertices in $(N(p_1)\cup N(p_k))\cap V(C)$.  This implies that $s\notin V(T_1)$. 
If $s\in V(T_4)$, then  $z\neq a_1$ and $x$ has exactly three neighbors in  the hole $p_1Pp_kzT_1a_2T_2a_3T_3a_4T_4sp_1$, a contradiction.
Thus by symmetry, we may assume  that $s\in T_3^*$ and $z\in T_1^*$. However, it follows that $x$ can be linked to the hole $p_1Pp_kzT_1a_2T_2a_3T_3sp_1$ by $xa_2$, $xa_3$ and $xa_1T_1z$, a contradiction. 
Therefore, $xp_1\in E(G)$. This implies that $N(p_1)\cap V(C)\subseteq V(T_3)$. 
In particular, $z\in T_1^*$ and $p_1$ has a neighbor in $T_3^*$.
Let $u$ be the neighbor of $p_1$ in $T_3$ closest to $a_3$. 
Then $u\neq a_4$. Now, $p_1$ can be linked to the hole $xT_2x$ by $p_1x$, $p_1Pp_kzT_1a_2$ and $p_1uT_3a_3$, a contradiction. 
Therefore, $P^*$ is not anticomplete to $V(C)$; consequently, there exists $p_i\in V(P^*)$ with a neighbor in $V(C)$. 

Suppose that $x$ has no neighbor in $V(P)$. Then, by the minimality of $k$, we may assume that $p_1$ has a neighbor in $V(T_1)\setminus V(T_2)$, 
$p_k$ has a neighbor in $V(T_2)\setminus V(T_1)$, 
and every edge between $P^*$ and $V(C)$ has an end at $a_2$. Let $s$ be the neighbor of $p_1$ in $V(T_1)$ closest to $a_1$, and let $z$ be the neighbor of $p_k$ in $V(T_2)$ closest to $a_3$. Now, $x$ has exactly three neighbors in the hole $p_1Pp_kzT_2a_3T_3a_4T_4a_1T_1sp_1$, a contradiction.  Therefore, $x$ has a neighbor $p_j$ in $V(P)$ and $p_j$ is the unique neighbor of $x$ in $V(P)$.

Suppose that $i\neq j$. By symmetry, we may assume that $j<i$. 
By the minimality of $k$, we may assume that every edge between $\{p_1,\ldots,p_{k-1}\}$ and $V(C)$ has an end in $V(T_1)\cup V(T_2)$. 
Therefore, $p_k$ has a neighbor in $(V(T_3)\cup V(T_4))\setminus (V(T_1)\cup V(T_2))$. Since $p_k$ is proper for $W$, either $N(p_k)\cap V(C)\subseteq V(T_3)$ or $N(p_k)\cap V(C)\subseteq V(T_4)$. By the minimality of $k$ and symmetry, we may assume that  every edge between $\{p_{j+1},\ldots,p_{k}\}$ and $V(C)$ has an end  in $V(T_3)$. This implies that  every edge between $\{p_{j+1},\ldots,p_{k-1}\}$ and $V(C)$ has an end at $a_3$. If every edge between $\{p_1,\ldots,p_j\}$ and $V(C)$ has an end in $V(T_2)$, 
then we can choose $S_1=T_2$ and $S_2=T_3$, and we are done. Therefore, there exists $r\in [j]$ such that $p_r$ has a neighbor in $V(T_1)\setminus V(T_2)$. Choose $r$ as large as possible. Let $t$ be the neighbor of $p_r$ in $V(T_1)$ closest to $a_1$, and let $z$ be the neighbor of $p_k$ in $V(T_3)$ closest to $a_4$. However, it follows that $x$ has exactly  three neighbors in the hole $p_rPp_kzT_3a_4T_4a_1T_1tp_r$, a contradiction. Therefore, $i=j$. This implies that $V(P)\setminus \{p_1,p_i,p_k\}$ is anticomplete to $V(C)$.

Since $p_i$ is proper for $W$, we may assume that $N(p_i)\cap V(C)\subseteq V(T_1)$. Suppose that $p_i$ has a neighbor in $T_1^*$. By the minimality of $k$ and symmetry, we may assume that $N(p_1)\cap V(C)\subseteq V(T_4)$ and $N(p_k)\cap V(C)\subseteq V(T_2)$. In particular, $p_1$ has a neighbor in $V(T_4)\setminus V(T_1)$ and $p_k$ has a neighbor in $V(T_2)\setminus V(T_1)$. Let $s$ be the neighbor of $p_1$ in $V(T_4)$ closest to $a_4$, and let $z$ be the neighbor of $p_k$ in $V(T_2)$ closest to $a_3$.  Then $x$ has exactly neighbors in the hole $p_1Pp_kzT_2a_3T_3a_4T_4sp_1$, a contradiction. Therefore, $p_i$ has no neighbor in $T_1^*$. Without loss of generality, we may assume that $p_ia_1\in E(G)$. Since $G$ is diamond-free, $p_ia_2\notin E(G)$. Now, at least one of $p_1$ and $p_k$ has a neighbor in 
$(V(T_2)\cup V(T_3))\setminus (V(T_1)\cup V(T_4))$. 
Without loss of generality, assume it is $p_1$. Furthermore, we may assume that $N(p_1)\cap V(C)\subseteq V(T_3)$ by symmetry. Let $s$ be the neighbor of $p_1$ in $V(T_3)$ closest to $a_4$. If $s\neq a_3$, then $p_i$ can be linked to the hole $xT_4x$ by $p_ix$, $p_ia_1$ and $p_iPp_1sT_3a_4$, a contradiction. Therefore, $s=a_3$. Now, assume that $N(p_k)\cap V(C)\subseteq V(T_i)$. Then we can choose $S_1=T_i$ and $S_2=T_{i-1}$ or $T_{i+1}$, 
where the subscripts are taken modulo $4$. This completes the proof of Lemma~\ref{lem:proper-wheel-path-4-spokes}.
\end{proof}

\begin{lemma}\label{lem:proper-wheel-path-x-no-neighbor}
Let $G$ be an $\iskdbpk$-free graph, and let $W=(C,x)$ be a proper wheel in $G$ with minimum number of spokes among all proper wheels centered at $x$. Let $P=p_1\cdots p_k$ be a path with $V(P)\subseteq V(G)\setminus V(W)$ such that $P$ contains no neighbor of $x$. Suppose that $W$ has at least five spokes and every path $P'$ with $V(P')\subseteq V(G)\setminus V(W)$ and $|V(P')| < k$ satisfies:
\begin{itemize}
    \item[$(\romannumeral1)$] If $P'$ contains no neighbor of $x$, then there is a sector $S$ of $W$ such that every edge from $P'$ to $C$ has an end in $V(S)$.
    \item[$(\romannumeral2)$] If $P'$ contains exactly one neighbor of $x$, then there are two sectors $S_1,S_2$ of $W$ with $V(S_1)\cap V(S_2)\neq\emptyset$ such that every edge from $P'$ to $C$ has an end in $V(S_1)\cup V(S_2)$, possibly with $S_1=S_2$.
\end{itemize}
Then there is a sector $S$ of $W$ such that every edge from $P$ to $C$ has an end in $V(S)$.
\end{lemma}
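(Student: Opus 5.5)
We argue by contradiction, mirroring the first two paragraphs of the proof of Lemma~\ref{lem:proper-wheel-path-4-spokes}. Suppose $P$ violates the conclusion. By hypothesis (i) applied to the subpaths $p_1\cdots p_{k-1}$ and $p_2\cdots p_k$, there are sectors $S_1,S_2$ such that every edge from $\{p_1,\ldots,p_{k-1}\}$ to $C$ has an end in $V(S_1)$, and every edge from $\{p_2,\ldots,p_k\}$ to $C$ has an end in $V(S_2)$. Since $P$ itself fails, $S_1\neq S_2$. Moreover, $p_1$ has a neighbor in $V(C)\setminus V(S_2)$ and $p_k$ has a neighbor in $V(C)\setminus V(S_1)$. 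In particular $k>1$, because every vertex is proper for $W$. Every edge from $P^*$ to $C$ has an end in $V(S_1)\cap V(S_2)$, which is either empty or a single spoke $a$.

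\textbf{Case 1: $P^*$ is anticomplete to $V(C)$.} Then $p_1,p_k$ and $P$ satisfy the hypothesis of Lemma~\ref{lem:unique}. Since $x$ has no neighbor in $P$, both $p_1$ and $p_k$ have unique neighbors $s\in V(S_1)$ and $z\in V(S_2)$, with $s\notin V(S_2)$ and $z\notin V(S_1)$. Form the hole $D$ consisting of $P$ together with one of the two paths of $C$ between $s$ and $z$, and look at the spokes of $W$ on that path.
\begin{itemize}
\item If the path contains exactly three spokes, then $x$ has exactly three neighbors in $D$, a contradiction.
\item If it contains at least three spokes, we can instead choose the side so that $x$ is linked to $D$: use the two spokes nearest the ends together with a path through a sector interior, as in the $s\in T_3^*$ argument of Lemma~\ref{lem:proper-wheel-path-4-spokes}.
\item If it contains at least four spokes, then $(D,x)$ is a wheel with fewer spokes. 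We must show it yields a proper wheel, contradicting minimality.
\end{itemize}
Because $W$ has at least five spokes, one of the two sides of $C$ between $s$ and $z$ always contains at least three spokes. We would use the side with few spokes to get the exactly-three or linkage contradiction whenever it has one or two spokes. When $s$ and $z$ lie in consecutive sectors, the short side contains one spoke. Then the long side contains at least four spokes and we fall into the last situation.

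In that last situation, $(D,x)$ should be proper or $2$-almost proper in $G$, and Lemmas~\ref{lem:1-almost} and~\ref{lem:2-almost} then give a proper wheel centered at $x$ with fewer spokes than $W$, contradicting minimality. The only vertices that can misbehave for $(D,x)$ are neighbors of $P$ or skips near the two new "corners" $s,z$. We would control these using hypotheses (i) and (ii) on shorter paths, together with Lemma~\ref{lem:edge-N(x)}.

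\textbf{Case 2: some internal vertex $p_i$ has a neighbor on $C$.} Then $V(S_1)\cap V(S_2)=\{a\}$, so $S_1$ and $S_2$ are consecutive and every edge from $P^*$ to $C$ ends at $a$. Let $s$ be the neighbor of $p_1$ in $S_1$ farthest from $a$, and $z$ the neighbor of $p_k$ in $S_2$ farthest from $a$. The hole through $P$ and the long side of $C$ from $s$ to $z$ avoids $a$, so $x$ has at least three neighbors on it (there are at least five spokes). Replacing $P$ by the subpath from $p_1$ to the appropriate $p_i$ reduces this to Case 1 geometry, giving either exactly three neighbors or a smaller wheel.

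The main obstacle is the step where $(D,x)$ has at least four spokes. Unlike the four-spoke lemma, we cannot simply count neighbors of $x$; we must verify almost-properness of the new wheel so that the skip machinery applies. I would first try showing that any non-proper vertex for $(D,x)$ is adjacent to $x$ and sees only sectors around $s$ or $z$, hence is an $s'$-skip for the two spokes adjacent to the replaced segment.
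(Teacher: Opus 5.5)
\textbf{The proposal has a genuine gap: the main step is not proved.} Your setup agrees with the paper's. You get $S_1,S_2$ from hypothesis (i), apply Lemma~\ref{lem:unique} when $P^*$ is anticomplete to $C$, and reroute $C$ through $P$ along the long side to get a hole $D_1=sp_1Pp_kzS_2b_2Q_1a_1S_1s$, so that $(D_1,x)$ is a wheel with fewer spokes than $W$. (The paper needs a separate linkage argument only when $Q_1$ is a single sector, so your side-by-side spoke counting is unnecessary.)

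Every branch of your argument, including your Case~2, ends in the step you call ``the main obstacle''. That step is essentially the whole content of the lemma, and you give no argument for it. By minimality of $W$, $(D_1,x)$ is not proper, and one must show this is impossible. The paper does so as follows.
\begin{itemize}
\item It takes a non-proper vertex $v$ and shows one of two things. Either $v$ is non-adjacent to $x$, with at least three neighbors in the new sector $S_0=a_1S_1sp_1Pp_kzS_2b_2$ and no other neighbors on $D_1$. Or $v$ has a neighbor in a sector $S_3\subseteq Q_1$ and a neighbor in $S_0\setminus S_3$.
\item It proves $v$ has a unique neighbor on $P$, using rerouting of $P$ through $v$ together with the shorter-path hypotheses, or a linkage.
\item It proves $P^*$ must have an edge to $C$, so $b_1=a_2$.
\item It proves, in turn, that $v$ misses $p_1,p_k$, that $vx\in E(G)$, and that $v$ misses $a_1,b_2$. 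Lemma~\ref{lem:triangle-wheel-4} is used along the way to exclude a configuration that would give a proper four-spoke wheel.
\item Finally it links $x$ to the hole $vv^*S_3a_1S_1sp_1Pp_jv$, a contradiction.
\end{itemize}
None of this appears in your proposal.

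\textbf{Your suggested route for that step fails as stated.}
\begin{itemize}
\item Since $P$ contains no neighbor of $x$, the two corner spokes of $(D_1,x)$ bound the single sector $S_0$. They are therefore \emph{consecutive} spokes, so condition~(i) of Definition~\ref{def:k-almost} forbids handling skips at both corners with Lemma~\ref{lem:2-almost}. Contrast Lemma~\ref{lem:proper-wheel-path-x-neighbor}: there the spoke $p_i$ separates $a_1$ from $b_2$, which is exactly why the paper can invoke Lemma~\ref{lem:2-almost} in that lemma but not here.
\item More seriously, the problematic vertices are typically \emph{not} adjacent to $x$. Examples are a vertex with two neighbors on $P$ and two in $a_1S_1s$, or one neighbor on $P$ and two in a sector of $Q_1$. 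Such vertices are never skips, so no almost-properness lemma can absorb them. They must be excluded by direct contradiction, using the shorter-path hypotheses, Lemmas~\ref{lem:nolink} and~\ref{lem:unique}, and diamond/bowtie/prism-freeness.
\item Even for vertices adjacent to $x$, you would still have to verify condition~(iv) of Definition~\ref{def:skip}, which you do not address.
\end{itemize}
The correct target is that $(D_1,x)$ is genuinely proper, contradicting minimality. The case analysis establishing that is what your proposal is missing.
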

\begin{proof}
Let $P$ be a path violating the conclusion of the lemma. Since $W$ is proper, it follows that $k>1$. By assumption,  there exist two sectors $S_1,S_2$ of $W$ such that every edge from $\{p_1, \dots, p_{k-1}\}$ to $V(C)$  has an end in $V(S_1)$, and every edge from $\{p_2, \dots, p_k\}$ to $V(C)$  has an end in $V(S_2)$. It follows that $S_1 \neq S_2$.  In particular,  $p_1$ has a neighbor in $V(S_1) \setminus V(S_2)$, and $p_k$ has a neighbor in $V(S_2) \setminus V(S_1)$, and every edge from $P^*$ to $V(C)$ has an end in $V(S_1) \cap V(S_2)$. For $i=1,2$ let $a_i,b_i$  be the ends of $S_i$. We may assume  that $a_1,b_1,a_2,b_2$ appear on $C$ in this order and that $a_1 \neq b_2$.  Let $Q_1$ be the path of $C$ from $b_2$ to $a_1$ not containing $b_1$, and let $Q_2$ be the path of $C$ from $b_1$ to $a_2$ not containing $a_1$. We choose $S_1$ and $S_2$ such that $|V(Q_2)|$ is minimum (without changing $P$). Let $s$ be the neighbor of $p_1$ in $S_1$ closest to $a_1$, $t$ be the neighbor of $p_1$ in $S_1$ closest to $b_1$, $y$ be the neighbor of $p_k$ in $S_2$ closest to $a_2$, and $z$ be the neighbor of $p_k$ in $S_2$ closest to $b_2$. Then $s\ne b_1$ and $z\ne a_2$. It follows that $V(Q_2)\cap\{s,z\}=\emptyset$. Moreover, if $V(S_1) \cap V(S_2) \neq \emptyset$,
then $b_1=a_2$ and $V(Q_2)=\{b_1\}$, and in all cases 
$V(Q_1)$ is anticomplete to $P^*$. Now $D_1=sp_1Pp_kzS_2b_2Q_1a_1S_1s$ is a hole.
\begin{claim}\label{claim:W_1-wheel}
   $W_1=(D_1,x)$ is a wheel with fewer spokes than $W$.  
\end{claim}

Since $V(Q_2) \cap  V(D_1)=\emptyset$ and $V(Q_2)$ contains 
 at least a neighbor of $x$, it follows that  $x$ has fewer neighbors in $D_1$ than in $C$. It now suffices to 
show that $x$ has at least three neighbors in $Q_1$. Since $a_1,b_2 \in V(Q_1)$, we may assume that $x$ has no neighbor in $Q_1^*$, and 
$Q_1$ is a sector of $W$. By the choice of $P$, it follows that $t \neq a_1$ or $y \neq b_2$. By symmetry, we may assume that $t \neq a_1$. Since $x$ has at least five neighbors in $V(C)$,  $V(S_1) \cap V(S_2) = \emptyset$. Consequently, 
$P^*$ is anticomplete to $V(C)$. It follows from Lemma~\ref{lem:unique} that
$s=t$.
Now  we can link $s$ to the hole $a_1Q_1b_2xa_1$ by
$sS_1a_1$, $sp_1Pp_kzS_2b_2$ and $sS_1b_1x$, 
a contradiction. 
Hence, $W_1=(D_1,x)$ is a wheel with fewer spokes than $W$. This proves Claim~\ref{claim:W_1-wheel}. 

It follows from Lemma~\ref{lem:nolink} that $x$ has at least two neighbors in $Q_1^*$. By the choice of $W$, it follows  that  $W_1$ is not proper. Let $S_0$ be the sector $a_1S_1sp_1Pp_kzS_2b_2$ of $W_1$.

\begin{claim}\label{claim:type-v}
There exists $v \in  V(G) \setminus (V(W_1)\cup V(W))$ such that either 
\begin{itemize}
\item[$(\romannumeral1)$] $vx\notin E(G)$, $|N(v)\cap V(S_0)|\geq 3$ and $N(v) \cap V(D_1) \subseteq V(S_0)$, or
\item[$(\romannumeral2)$] there is a sector $S_3$ of $W$ with $V(S_3) \subseteq V(Q_1)$ such that $v$ has a neighbor in  $V(S_3) \setminus V(S_0)$ and a neighbor in $V(S_0) \setminus V(S_3)$, and $N(v) \cap V(C) \subseteq V(S_3)$.
\end{itemize}
\end{claim}

Since $W_1$ is not proper in $G$, there exists a vertex $v\in V(G)\setminus V(W_1)$ that is not proper for $W_1$. First, we show that $v\notin V(C)$. 
The only vertices of $C$ that may have more than two neighbors in $D_1$ are $b_1$ and $a_2$, and this can happen only if $b_1=a_2$. However, since $N(b_1)\cap V(D_1)\subseteq V(S_0)$ and $b_1x\in E(G)$, $b_1$ is proper for $W_1$. Therefore, $v\ne b_1$. Moreover, every other vertex of $C$ has at most two neighbors in $D_1$, which, if  they exist, are either $\{s,p_1\}$ or $\{z,p_k\}$
and are proper for $W_1$.
Hence, $v\notin V(C)$. Since $v$ is proper for $W$ but not proper for $W_1$, $v$ must have a neighbor in $V(P)$. Suppose that $(\romannumeral1)$ of Claim~\ref{claim:type-v} does not hold. Then there exists a sector $S_3$ of $W$ with $V(S_3)\subseteq V(Q_1)$ such that $v$ has a neighbor in $V(S_3)\setminus V(S_0)$. Since $v$ is proper for $W$,  $N(v)\cap V(C)\subseteq V(S_3)$. This implies that $(\romannumeral2)$ of Claim~\ref{claim:type-v} holds. This proves Claim~\ref{claim:type-v}.

Let $v$ be a vertex satisfying Claim~\ref{claim:type-v}; we will use this vertex in the following arguments to analyze the structure of $G$ relative to $W$ and $W_1$.
\begin{claim}\label{claim:v-neighbor-P-unique}
$v$ has a unique neighbor in $P$.
\end{claim}

Otherwise, suppose that $v$ has at least two neighbors in $P$. Let $v_1$ be the neighbor of $v$ in $P$ closest to $p_1$, and let $v_2$ be the neighbor of $v$ in $P$ closest to $p_k$.

First, assume that  $(\romannumeral1)$ of Claim~\ref{claim:type-v} holds for $v$.  Note that $vx\notin E(G)$. Since $W$ is a proper wheel, $v$ has at most two neighbors in $V(S_0)\setminus V(P)$. If $v$ has at least three neighbors in $P$, then $P' := p_1Pv_1vv_2Pp_k$ is a shorter path than $P$ as $G$ is diamond-free. By assumption, there is a sector $S$ of $W$ such that every edge from $P'$ to $C$ has an end in $V(S)$. Since $P'$ and $P$ have the same ends $p_1, p_k$, this leads to a contradiction. 
This implies that $v$ has exactly  two neighbors in $P$. By Lemma~\ref{lem:nolink}, we have that $v$ has  exactly two neighbors in $V(S_0)\setminus V(P)$.    Since $v$ is proper for $W$, by symmetry we may assume that $v$ has a neighbor in $V(S_0)\cap V(S_1)$. That is, $N(v)\cap V(C)\subseteq V(a_1S_1s)$.
 If $v_2\neq p_2$, then $vv_2Pp_k$ is a shorter path than $P$. By assumption,  there is a sector $S$ of $W$ such that every edge from $vv_2Pp_k$ to $C$ has an end in $V(S)$.  Note that $v$ has a neighbor in $S_1^*$. This implies that  $S_1=S_2$, a contradiction. Therefore, $v_2=p_2$ and $v_1=p_1$. Let $v_1'$ be the neighbor of $v$ in $S_1$ closest to $a_1$,  and  let $v_2'$ be the neighbor of $v$ in $S_1$ closest to $s$. Since $G$ is $\db$-free, we have $v_2'\ne s$, $v_1'v_2'\notin E(G)$, and $st\notin E(G)$. If $s=t$,  then  $v$ can be linked  to the hole $xS_1x$ by $vv_1'$, $vv_2'$ and $vp_1s$, a contradiction. Hence,  $s\neq t$. However, it follows that $v$ has exactly three neighbors  in the hole obtained from $xS_1x$ by rerouting $S_1$ through $p_1$, a contradiction.

Now, we assume that $(\romannumeral2)$ of Claim~\ref{claim:type-v} holds for $v$.
Note that no vertex of $V(Q_1)$ is contained both in a sector with end $a_1$ and in a
sector with end $b_2$.
Therefore, we may assume that $v$ has a neighbor in a sector $S_3$ where $S_3$ is the first sector from $b_2$ to $a_1$ on $Q_1$ and
does not have end $b_2$. 

Suppose that $vx\notin E(G)$. If $v_2\neq p_2$, then $vv_2Pp_k$ is a shorter path than $P$. By assumption,  there is a sector $S$ of $W$ such that every edge from $vv_2Pp_k$ to $C$ has an end in $V(S)$. This implies that either $b_2\in S$ or $S=S_2$, a contradiction. Hence,  $v_2=p_2$ and $v_1=p_1$. Moreover, since $vx\notin E(G)$, $v$ has exactly two neighbors in $S_3$, say $v_1'$ and $v_2'$; otherwise, $v$ would have only one neighbor in $S_3$ and hence exactly three neighbors in $D_1$, a contradiction.
 Since  $G$ is $\db$-free, $st\notin E(G)$ and $s\notin \{v_1',v_2'\}$. If $s=t$, then   $v$ can be linked  to the hole $C$ by $vv_1'$, $vv_2'$ and $vp_1s$, a contradiction. Hence $s\neq t$. However, it follows that $v$ can be linked to the hole obtained from $C$ by rerouting $S_1$ through $p_1$ by $vp_1$, $vv_1'$ and $vv_2'$,
 a contradiction. Thus, we have $vx\in E(G)$. 

Suppose that $v_2\neq p_2$. Let $S_4$ be the sector of $D_1$ with ends, say $\{b_2,a_3\}$ such that $V(S_4)\subseteq V(Q_1)$. Now, the path $vv_2Pp_k$ is a shorter path than $P$. By assumption, there are two sectors $S',T'$ of $W$ with $V(S')\cap V(T')\neq\emptyset$ such that every edge from $vv_2Pp_k$ to $C$ has an end in $V(S')\cup V(T')$.
Note that $S_3$ cannot be chosen so that $b_2\in V(S_3)$ and $y$ has a neighbor in $V(S_2)$. Then $V(S')\cup V(T')=V(S_4)\cup V(S_3)$ and $a_3\in V(S_3)$. 
 That is, $y=b_2$. 
 Let $v_1'$ be the neighbor of $v$ in $V(S_3)$ closest to $a_3$, and  let $b_3$ be the other end of $S_3$. If $v_1'\neq b_3$, then $x$ has exactly three neighbors in the hole $vv_2Pp_kb_2S_4a_3S_3v_1'v$, a contradiction. Therefore, $v_1'=b_3$. By Lemma~\ref{lem:triangle-wheel-4}, 
there exists a proper wheel with exactly four spokes, a contradiction.
Therefore, $v_2=p_2$ and $v_1=p_1$. Since $G$ is $\db$-free, $N(v)\cap N(x)=\emptyset$. Now, $p_1$ can be linked to the hole $p_2Pp_kzS_2b_2xvp_2$ by $p_1v_2$, $p_1v$ and $p_1sS_1a_1x$, a contradiction. This proves Claim~\ref{claim:v-neighbor-P-unique}.

By Claim~\ref{claim:v-neighbor-P-unique}, let $N(v)\cap V(P)=\{p_j\}$. Without loss of generality, we may assume that $j\neq k$.  Note that $(\romannumeral1)$ of Claim~\ref{claim:type-v} cannot happen; otherwise, $v$ has exactly three neighbors in $D_1$,  a contradiction. So $(\romannumeral2)$ of Claim~\ref{claim:type-v} holds. 
Since $v$ is proper for $W$ but not proper for $W_1$, it follows that $v$ has a neighbor in $Q_1^*$.

\begin{claim}\label{claim:P*-edge}
There are  edges between $P^*$ and $V(C)$.
\end{claim}

Otherwise, we have $s=t$ and $y=z$ by Lemma~\ref{lem:unique}. We assert that in this case $b_1 \neq a_2$. If $b_1 = a_2$, then 
$b_1$ can be linked to the hole $xa_1S_1sp_1Pp_kzS_2b_2x$
by $b_1x$, $b_1S_1s$ and $b_1S_2z$, a contradiction.  Therefore, $b_1\neq a_2$. If $v$ has exactly one neighbor $v'$ in $C$, then
$p_j$ can be linked to $C$ by $p_jPp_1s$, $p_jPp_kz$ and 
$p_jvv'$, a contradiction. So $v$ has at least two  neighbors in $C$. Recall that
$N(v) \cap V(C) \subseteq V(S_3)$. If $v$ has two non-adjacent neighbors in $V(S_3)$, then let $D_1'$ be the hole obtained from $C$ by rerouting $S_3$ through $v$.
It follows that $s,z \in V(D_1')$, and  $p_j$ can be linked to $D_1'$ by
$p_jPp_1s$, $p_jPp_kz$ and 
$p_jv$, a contradiction. Therefore, $v$ has two adjacent neighbors in $S_3$. Clearly, $vx\notin E(G)$.  If $p_j\notin \{p_1,p_k\}$,  then $v$ can be linked to the hole $C$ by two one-edge paths and $vp_jPp_1s$ (or $vp_jPp_kz$), a contradiction. Therefore, $p_j\in \{p_1,p_k\}$. By symmetry, we may assume that $p_j=p_1$. If $a_1v\notin E(G)$ or $z\neq a_1$, then $v$ can be linked to the hole $C$ by  two one-edge paths and $vp_1z$, a contradiction. Then $a_1v\in E(G)$ and $s=a_1$. Since $G$ is diamond-free, $v$ has two non-adjacent neighbors in $V(S_3)$. Now, $p_1$ can be linked to the hole  obtained  from $C$ by rerouting $S_3$  through $v$ by $p_1v$, $p_1a_1$ and $p_1Pp_kz$, a contradiction. This proves Claim~\ref{claim:P*-edge}.

It follows from Claim~\ref{claim:P*-edge} that $b_1=a_2$, $P$ has length at least $2$ and $b_1$ has neighbors in $P^*$. Let $p_i$ be the neighbor of $b_1$ in $P^*$ closest to $p_1$.

\begin{claim}\label{claim:v-1-k}
    $N(v)\cap \{p_1,p_k\}=\emptyset$.
\end{claim}

Without loss of generality, we may assume that $vp_1\in E(G)$. 
First, suppose that $vx\notin E(G)$. 
If $t\neq a_1$ or $i\neq k-1$, then by assumption we have 
$N(v)\cap V(C)\subseteq V(S_1)$, a contradiction.
 Therefore, $t=a_1$, $i=k-1$ and $a_1\in V(S_3)$. 
 If $y=z$, then $z$ can be linked to the hole $p_1Pp_{k-1}b_1S_1a_1p_1$ by $zp_kp_{k-1}$, $zS_2b_1$ and $zS_2b_2Q_1a_1$, a contradiction. Therefore, $y\neq z$. If $y\neq b_1$, then $p_k$ can be linked to the hole $xS_2x$ by $p_kz$, $p_ky$ and $p_kp_{k-1}b_1$, a contradiction. Therefore, $y=b_1$. Since $G$ is bowtie-free, $a_1b_1\notin E(G)$. This implies that $b_1$  can be linked to the hole $D_1$ by $b_1p_{k-1}$, $b_1p_k$ and $b_1S_2z$, a contradiction.

Therefore, $vx\in E(G)$.   Suppose that $va_1\in E(G)$. Since $v$ is proper for $W$ and $v$ has a neighbor in $Q_1^*$, $v$ has no neighbor in $S_1^*$. But $v$ can be linked to $xS_0x$ by $vx,va_1$ and $vp_1$, a contradiction. Similarly, $vb_2\notin E(G)$.
If $s\neq t$, then $p_1$ can be linked to the hole $xS_1x$ by $p_1s$, $p_1t$ and $p_1vx$.  Therefore, $s=t$. But now, $p_1$ can be linked to the hole $xS_1x$ by $p_1s$, $p_1Pp_ib_1$ and $p_1vx$. This proves Claim~\ref{claim:v-1-k}.
\begin{claim}\label{claim:vx-1}
$vx\in E(G)$.
\end{claim}

Otherwise, assume that $vx\notin E(G)$.  It is obvious that $|V(P)|\geq 3$. By Claim~\ref{claim:v-1-k}, there exists a path from a neighbor of $b_1$ in $P^*$ to  $v$ with interior in $P^*$. By assumption, we have that $N(v)\cap V(C)\subseteq V(S_1)\cup V(S_2)$, a contradiction. This proves Claim~\ref{claim:vx-1}.

\begin{claim}\label{claim:v-a_1-b_2}
    $N(v)\cap \{a_1,b_2\}=\emptyset$.
\end{claim}

Otherwise, assume that $va_1\in E(G)$.  Then $v$ has exactly three neighbors in  the hole $xS_0x$,   a contradiction. This proves Claim~\ref{claim:v-a_1-b_2}.

By Claim~\ref{claim:v-1-k},  $p_j\in P^*$. Now, by considering the shortest path $P'$ from a neighbor of $b_1$ in $P^*$ to $v$ with interior in $P^*$, then $P'$ is a shorter path than $P$.
 By assumption, there are two sectors $S',T'$ of $W$ with $V(S')\cap V(T')\neq\emptyset$ such that every edge from $P'$ to $C$ has an end in $V(S')\cup V(T')$. This implies that $S_3$ can be chosen such that one of $a_1,b_2$ belongs to $V(S_3)$. 
By symmetry, we may assume that $a_1\in V(S_3)$. Furthermore, let $v^*$ be the  neighbor of $v$ in $S_3$ closest to $a_1$. By Claim~\ref{claim:v-a_1-b_2}, we have that $v^*\neq a_1$.
If $v^*$ is the other end of $S_3$, then $x$ can be linked to the hole $vv^*S_3a_1S_1sp_1Pp_jv$ by  $xv^*$, $xv$ and $xa_1$, a contradiction. Thus, $v^*$ is not the other end of $S_3$. However, it follows that $x$ can be linked to the hole $vv^*S_3a_1S_1sp_1Pp_jv$ by $xv$,  $xa_1$ and $xb_2S_2zp_kPp_j$, a contradiction. This completes the proof of Lemma~\ref{lem:proper-wheel-path-x-no-neighbor}.
\end{proof}

\begin{lemma}\label{lem:proper-wheel-path-x-neighbor}
Let $G$ be an $\iskdbpk$-free graph, and let $W=(C,x)$ be a proper wheel in $G$ with minimum number of spokes among all proper wheels centered at $x$. Let $P=p_1\cdots p_k$ be a path with $V(P)\subseteq V(G)\setminus V(W)$ such that $P$ contains exactly  one neighbor of $x$. Suppose that $W$ has at least five spokes and every path $P'$ with $V(P')\subseteq V(G)\setminus V(W)$ and $|V(P')| < k$ satisfies:
\begin{itemize}
 \item[$(\romannumeral1)$] If $P'$ contains no neighbor of $x$, then there is a sector $S$ of $W$ such that every edge from $P'$ to $C$ has an end in $V(S)$.
    \item[$(\romannumeral2)$] If $P'$ contains exactly one neighbor of $x$, then there are two sectors $S_1,S_2$ of $W$ with $V(S_1)\cap V(S_2)\neq\emptyset$ such that every edge from $P'$ to $C$ has an end in $V(S_1)\cup V(S_2)$, possibly with $S_1=S_2$.
\end{itemize}
Then there are two sectors $S_1,S_2$ of $W$ with $V(S_1)\cap V(S_2)\neq\emptyset$ such that every edge from $P$ to $C$ has an end in $V(S_1)\cup V(S_2)$, possibly with $S_1=S_2$.
\end{lemma}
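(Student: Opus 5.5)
We argue by contradiction, in the style of Lemma~\ref{lem:proper-wheel-path-x-no-neighbor}. Let $P=p_1\cdots p_k$ violate the conclusion, and let $p_j$ be the unique neighbour of $x$ in $P$. Since $W$ is proper, $k>1$.

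\textbf{Step 1: the two short subpaths.} Apply the hypothesis to $P_1=p_1\cdots p_{k-1}$ and $P_2=p_2\cdots p_k$. Each contains at most one neighbour of $x$, so each has its edges to $C$ inside one sector or inside two sectors sharing an end.

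Consider first the case $j\in\{1,k\}$, say $j=1$. Then $P_2$ contains no neighbour of $x$, so $P_2$ sends edges to $C$ only into a single sector $S$. Since $p_1x\in E(G)$ and $p_1$ is proper for $W$, we have $N(p_1)\cap V(C)\subseteq V(S')$ for some sector $S'$. If $S'$ and $S$ share an end, we are done. Otherwise $p_1$ has a neighbour outside $V(S)$ and $p_k$ has one outside $V(S')$.

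\textbf{Step 2: case $j=1$, all edges from $P^*$ to $C$ at shared vertices.} Take the neighbour of $p_1$ in $S'$ and the neighbour of $p_k$ in $S$ that are extremal, i.e. farthest from each other along $C$. Reroute $C$ through $P$ to get a hole $D$; this removes an arc of $C$ containing at least one spoke. Then:
\begin{itemize}
\item if $x$ has exactly three neighbours in $D$, or $x$ can be linked to $D$, Lemma~\ref{lem:nolink} gives a contradiction;
\item otherwise $(D,x)$ is a wheel with fewer spokes than $W$ (note $x$ sees $p_1$ on $D$).
\end{itemize}
In the latter case I would argue as in Claims~\ref{claim:W_1-wheel}--\ref{claim:v-a_1-b_2} of the previous lemma. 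The wheel $(D,x)$ cannot be proper by minimality of $W$. A vertex $v$ that is not proper for it must have a neighbour in $P$. Then apply the inductive hypothesis to shortcut paths such as $vp_iPp_k$. Use Lemma~\ref{lem:unique} to force unique attachments, and Lemmas~\ref{lem:triangle-wheel-4} and~\ref{lem:triangle-wheel-4+} to exclude a proper wheel with fewer spokes. Diamond- and bowtie-freeness rule out triangles at the ends, and prism-freeness rules out two adjacent attachments.

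\textbf{Step 3: case $1<j<k$.} Now $P_1$ and $P_2$ each contain $p_j$. By the hypothesis, $P_1$ attaches into $S_1\cup S_2$ and $P_2$ into $S_3\cup S_4$, where $S_1,S_2$ are consecutive (or equal), and likewise $S_3,S_4$. Since $P$ violates the conclusion, some edge from $p_1$ and some edge from $p_k$ land far apart.

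Now apply the hypothesis to $p_1\cdots p_{j-1}$ and $p_{j+1}\cdots p_k$, which contain no neighbour of $x$: each attaches into a single sector. Also $p_j$ attaches into a single sector, by properness. So we have three sectors, consecutive in pairs along $P$. I would now copy the corresponding part of the proof of Lemma~\ref{lem:proper-wheel-path-4-spokes}. Take $r$ maximal with $p_r$ having a neighbour far from the sectors used by $p_{j+1},\dots,p_k$, and take the extreme neighbour of $p_k$. This produces a hole in which $x$ has exactly three neighbours, or to which $x$ is linked, or a hole giving a wheel with fewer spokes.

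If $x$ has no neighbours on the relevant arc, use Lemma~\ref{lem:edge-N(x)}: if $p_j$ and its neighbour on $P$ both lie in $N(x)$, they cannot both attach to $C$. This holds since $W$ has at least five spokes, and excludes the degenerate configurations.

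\textbf{Main obstacle.} The hardest step is the case where rerouting gives a wheel with fewer spokes that is not proper. There one needs a Claim~\ref{claim:type-v}-style analysis of the offending vertex $v$, which now has more subcases because $x$ has a neighbour on $P$. I would first try to show that $v$ has a unique neighbour on $P$ via the shortcut argument. Then I would split on $vx\in E(G)$: if so, diamond-freeness gives $vp_j\notin E(G)$ and Lemma~\ref{lem:neighbor-consecuitive-sector} applies; if not, the single-sector hypothesis applies to the shortcut path.
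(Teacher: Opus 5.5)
Your skeleton (reroute $C$ through $P$, take a vertex that is not proper for the smaller wheel, then shortcut) is the paper's. However, there is a genuine gap exactly at the step you call the main obstacle.

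Write $W_1=(D,x)$ for the rerouted wheel, $Q_1$ for the arc of $C$ kept in $D$, and $a_1,b_2$ for its end spokes. Let $S_1$ be the sector of $W$ carrying the neighbours of $p_1$ (so $a_1\in V(S_1)$), and $S_4\subseteq Q_1$ the sector of $W$ at $a_1$. The sectors of $W_1$ through $p_j$ are $S_0$ (from $a_1$ through $p_1$ to $p_j$) and $T_0$ (from $p_j$ through $p_k$ to $b_2$).

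Minimality of $W$ only says that $W_1$ is not proper. The witness $v$ may be a skip of $W_1$ at $a_1$ in the sense of Definition~\ref{def:skip}: $vx\in E(G)$, $N(v)\cap V(C)\subseteq V(S_4)$, and $v$ has at least two neighbours on $p_1Pp_j$. Nothing in your plan refutes such a $v$.
\begin{itemize}
\item Lemma~\ref{lem:neighbor-consecuitive-sector} only gives at least two neighbours on each side of $a_1$, which is exactly what a skip looks like.
\item The shortcut argument that closed the analogous case of Lemma~\ref{lem:proper-wheel-path-x-no-neighbor} fails, because $P$ now contains the neighbour $p_j$ of $x$. A shortcut through $v$ that stops before $p_j$ (e.g.\ $p_1Pp_{j'}v$) attaches only into $S_1\cup S_4$, which share $a_1$, so it satisfies the hypothesis.
\item Any shortcut continuing towards $p_k$ contains the two neighbours $v,p_j$ of $x$, so the hypothesis says nothing about it.
\end{itemize}

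The missing idea is the almost-proper machinery. Let $X$ be the set of skips of $W_1$.
\begin{itemize}
\item Show that no vertex outside $W_1$ has neighbours in both $V(S_0)\setminus V(T_0)$ and $V(T_0)\setminus V(S_0)$. Then every skip is an $a_1$-skip or a $b_2$-skip.
\item Since $x$ has a neighbour in $Q_1^*$ (Lemma~\ref{lem:nolink}), $a_1$ and $b_2$ are non-consecutive spokes of $W_1$.
\item If $W_1$ were proper in $G\setminus X$, it would therefore be $2$-almost proper. Lemma~\ref{lem:2-almost} would then give a proper wheel centred at $x$ with fewer spokes than $W$, a contradiction.
\item So the witness can be chosen with $v\notin X$. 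In the relevant subcase, conditions (i)--(iii) of Definition~\ref{def:skip} hold for $v$, so (iv) must fail. This produces a neighbour $u$ of $v$ with a neighbour in $V(D)\setminus(V(S_4)\cup V(S_0))$, and that $u$ is what kills the case $vx\in E(G)$, $|N(v)\cap V(P)|\ge 2$.
\end{itemize}

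Separately, your claim that diamond-freeness gives $vp_j\notin E(G)$ when $vx\in E(G)$ is wrong. The triangle $xvp_j$ is allowed; diamond-freeness then forbids $vp_{j\pm1}$, not $vp_j$. The configuration $vx\in E(G)$, $N(v)\cap V(P)=\{p_j\}$ must be ruled out explicitly. This uses Lemma~\ref{lem:edge-N(x)}, which is where the five-spoke assumption enters, forcing $p_j$ to be anticomplete to $C$. Lemma~\ref{lem:K13} then finishes it.
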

\begin{proof}
Let $P$ be a path violating the conclusion of the lemma. Since $W$ is proper, it follows that $k>1$. Let $xp_i\in E(G)$.
Then $p_i$ is the unique neighbor of $x$ in $V(P)$.
By assumption,  
there exist two distinct sectors $S_1,S_2$ of $W$ such that 
$p_1$ has a neighbor in $V(S_1) \setminus V(S_2)$, and $p_k$ has a neighbor in
$V(S_2) \setminus V(S_1)$. 
If $1<i<k$, then
every edge from $\{p_1, \dots, p_{i-1}\}$ to $V(C)$  has an end in $V(S_1)$,
and every  edge from $\{p_{i+1}, \dots, p_k\}$ to $V(C)$  has an end in 
$V(S_2)$; if $i=1$ then every edge from $V(P) \setminus \{p_1\}$ to $V(C)$
has an end in $V(S_2)$; and if $i=k$ then  every edge from 
$V(P) \setminus \{p_k\}$ to $V(C)$ has an end in $V(S_1)$. For $j=1,2$, let $a_j,b_j$  be the ends of $S_j$.

\begin{claim}\label{claim:S1,S2-select}
One of the following statements holds:
\begin{itemize}
\item[$(\romannumeral1)$] there is no edge between $V(C)$ and $P^*$, or
\item[$(\romannumeral2)$] we can choose $S_1,S_2$ such that   $a_1,b_1,a_2,b_2$ appear in $C$ in 
order and  there is a sector $S_3$ with ends $b_1,a_2$, and every edge between
$V(C)$ and $P^*$ is from $b_1$ to $\{p_2, \dots, p_{i-1}\}$
or from $p_i$ to $S_3$, or from $a_2 $ to $\{p_{i+1}, \dots, p_{k-1}\}$. 
\end{itemize}
\end{claim}

Otherwise, it follows that there exists an edge between $P^*$ and $V(C)$. 
Suppose that there is a sector $S_3$ of $W$ with an edge between $S_3^*$ and $P^*$. 
Without loss of generality, we may assume that some vertex $p_j\in P^*$ has a neighbor in $S_3^*$. We assert that $j=i$. Otherwise, by symmetry, assume that $i<j$. By assumption, we have that every  edge from $\{p_{i+1}, \dots, p_k\}$ to $V(C)$  has an end in 
$V(S_3)$, and there are two sectors $S_1',S_2'$ of $W$ with $V(S_1')\cap V(S_2')\neq\emptyset$ such that every edge from $\{p_1,\ldots,p_j\}$ to $V(C)$ has an end in $V(S_1')\cup V(S_2')$. Since $p_j$ has a neighbor in $S_3^*$, $S_3\in \{S_1',S_2'\}$. However, it follows that  every edge from $V(P)$ to $V(C)$ has an end in $V(S_1')\cup V(S_2')$, a contradiction. Therefore, $i=j$ and $P^*\setminus \{p_i\}$ is anticomplete to $V(C)\setminus N(x)$.

Let $b_1',a_2'$ be the ends of $S_3$. Let $S_1'$ and $S_2'$ be the sectors of $W$ satisfying
$V(S_1')\cap V(S_3)={b_1'}$ and $V(S_2')\cap V(S_3)={a_2'}$, respectively. By assumption,  there exist two sectors $T_1$, $T_2$  with $V(T_1)\cap V(T_2)\neq \emptyset$ such that  every edge from $\{p_1,\ldots,p_{k-1}\}$ to $V(C)$ has an end in $V(T_1)\cup V(T_2)$. Since $p_i\in P^*$ and $p_i$ has a neighbor in $S_3^*$, we may, by symmetry, assume that $T_1=S_1'$ and $T_2=S_3$.   On the one hand, since $P$ is a counterexample of Lemma~\ref{lem:proper-wheel-path-x-neighbor}, $p_k$ has a neighbor in $V(C)\setminus (V(S_1')\cup V(S_3))$. On the other hand, by assumption, we have that  every edge from $\{p_2,\ldots,p_{k}\}$ to $V(C)$ has an end in $V(S_3)\cup V(S_2')$. This implies that every edge from $\{p_2,\ldots,p_{k-1}\}$ to $V(C)$ has an end in $\{b_1',a_2'\}$.
Note that $p_1$ has a neighbor in $V(S_1')\setminus V(S_3)$ and $p_k$ has a neighbor in $V(S_2')\setminus V(S_3)$.
By assumption, every edge from $\{p_2,\ldots,p_{i-1}\}$ to $V(C)$ has an end in $\{b_1'\}$, and every edge from $\{p_{i+1},\ldots,p_{k-1}\}$ to $V(C)$ has an end in $\{a_2'\}$. Now we choose $S_1 = S_1'$ and $S_2 = S_2'$ such that $b_1 = b_1'$ and $a_2 = a_2'$.
Hence, $(\romannumeral2)$ of Claim~\ref{claim:S1,S2-select} holds. This implies that $P^*$ is anticomplete to $V(C)\setminus N(x)$.

Suppose that there exists an edge between $P^*\setminus \{p_i\}$ and $V(C)$. Without loss of generality, we may assume that some vertex $p_j\in P^*\setminus \{p_i\}$ has a neighbor $b_1$ in $V(C)\setminus N(x)$. By symmetry we may assume that $j<i$.
Let $S_1$ and $S_3$ be two sectors such that $V(S_1)\cap V(S_3)=\{b_1\}$. By assumption and symmetry, we may assume that every edge from $\{p_1,\ldots,p_{i-1}\}$ to $V(C)$ has an end in $V(S_1)$. Let $a_2$ be  the other end of $S_3$ distinct from $b_1$, and let $S_2$ be the sector of $W$ such that $V(S_2)\cap V(S_3)=\{a_2\}$. By assumption, there exist two sectors $T_1$, $T_2$  with $V(T_1)\cap V(T_2)\neq \emptyset$ such that  every edge from $\{p_2,\ldots,p_k\}$ to $V(C)$ has an end in $V(T_1)\cup V(T_2)$. Since $P$ is a counterexample of Lemma~\ref{lem:proper-wheel-path-x-neighbor}, $S_1\notin \{T_1,T_2\}$. Therefore, $\{T_1,T_2\}=\{S_2,S_3\}$. In particular, $p_1$ has a neighbor in $V(S_1)\setminus V(S_3)$, $p_k$ has a neighbor in $V(S_2)\setminus V(S_3)$ and $N(p_i)\cap V(C)\subseteq V(S_3)$.  This implies that every edge from $\{p_2,\ldots,p_{i-1}\}$ to $V(C)$ has an end in $\{b_1\}$, and every edge from $\{p_{i+1},\ldots,p_{k-1}\}$ to $V(C)$ has an end in $\{a_2\}$. Therefore,  $(\romannumeral2)$ of Claim~\ref{claim:S1,S2-select} holds. Therefore, $P^*\setminus \{p_i\}$ is anticomplete to $V(C)$. This implies that $p_i\in P^*$ and $p_i$ has a neighbor in $N(x)\cap V(C)$. Since $G$ is diamond-free, $p_i$ has exactly one neighbor in $N(x)\cap V(C)$, say $b_1$.

Let  $S_3$ and $S_4$ be two sector of $W$ such that $V(S_3)\cap V(S_4)=\{b_1\}$.
Let $a_1$ be the end of $S_3$ distinct from $b_1$, and let $a_2$ be the end of $S_4$ distinct from $b_1$. Let $S_3'$ be the sector of $W$ such that $V(S_3')\cap V(S_3)=\{a_1\}$, and let $S_4'$ be sector of $W$ such that $V(S_4')\cap V(S_4)=\{a_2\}$. We assert that $N(p_1)\subseteq V(S_3)\cup V(S_4)$. Otherwise, by assumption, we have that $N(p_1)\subseteq V(S_3')\cup V(S_4')$.     Since $p_1$ is proper for $W$, by symmetry, we may assume that $N(p_1)\subseteq V(S_3')$. Let $t$ be the neighbor of $p_1$ in $S_3'$ closest to $a_1$. By Lemma~\ref{lem:triangle-wheel-4} and $W$ has at least five spokes, we have that $t$ is not the end of $S_3'$ distinct from $a_1$. However, it follows that $p_i$ can be linked to the hole $xS_3x$ by $p_ib_1$, $p_ix$, and $p_iPp_1tS_3'a_1$, a contradiction. Therefore, $N(p_1)\subseteq V(S_3)\cup V(S_4)$. By symmetry, $N(p_k)\subseteq V(S_3')\cup V(S_4')$. Hence, $P$ is not a counterexample. This proves Claim~\ref{claim:S1,S2-select}.

If $ (\romannumeral2)$ of Claim~\ref{claim:S1,S2-select} holds, 
let $Q_1$ be the path of $C$ from $b_2$ to $a_1$ not containing $b_1$, 
and let $Q_2=S_3$. 
Otherwise, assume $ (\romannumeral1)$ of Claim~\ref{claim:S1,S2-select} holds.
We may assume that $a_1,b_1,a_2,b_2$  appear on $C$ in this order. Also,
$a_1,b_1,a_2,b_2$ are all distinct, since $P$ violates the 
conclusion of the lemma.
Let $Q_1$ be the path of $C$ from $b_2$ to $a_1$ not containing $b_1$, and let 
$Q_2$ be the path of $C$ from $b_1$ to $a_2$ not containing $a_1$. We may assume 
that $S_1$, $S_2$ are chosen with $|V(Q_2)|$ minimum (without changing $P$). 
We may assume that $P$ was chosen so that $V(Q_1)$ is (inclusion-wise) minimal.

Since $W$ is proper, it follows that $N(p_1) \cap V(C) \subseteq V(S_1)$ 
and $N(p_k) \cap V(C) \subseteq V(S_2)$. 
Let $s$ and $t$ be the neighbors of $p_1$ in $S_1$ closest to $a_1$ and $b_1$ respectively.
Let $y$ and $z$ be the neighbors of $p_k$ in $S_2$ closest to $a_2$ and $b_2$ respectively. Then $s \neq b_1$ and $z \neq a_2$.

Let $D_1$ be the hole $a_1S_1sp_1Pp_kzS_2b_2Q_1a_1$. Then $W_1=(D_1,x)$
is a wheel with fewer spokes than $W$. By Lemma~\ref{lem:nolink}, we have that  $x$ has a neighbor in $Q_1^*$. 
Let $S_0$ be the sector $a_1S_1sp_1Pp_i$, and let $T_0$ be the
sector $p_iPp_kzb_2$ of $W_1$.

\begin{claim}\label{S_0T_0}
No vertex $v \in V(G) \setminus V(W_1)$ has both a neighbor
in $V(S_0) \setminus V(T_0)$ and a neighbor in $V(T_0) \setminus V(S_0)$.
\end{claim}

Otherwise, suppose that there exists $v \in V(G) \setminus V(W_1)$ such that $v$ has a neighbor in $V(S_0) \setminus V(T_0)$ and a neighbor in $V(T_0) \setminus V(S_0)$. Suppose $xv\notin E(G)$. If $v$ has a neighbor in $V(a_1S_1s)$, then $v$ has no  neighbor in $V(zS_2b_2)$ since $v$ is proper for $W$. Consequently $v$ has a neighbor in $V(T_0) \setminus (V(S_2) \cup V(S_0))$.
Let $j$ be maximum such that $v$ is adjacent to $p_j$. Then $j>i$.
Since $|V(vp_jPp_k)|\le k$ and $x$ has no neighbor in $V(vp_jPp_k)$, by Lemma~\ref{lem:proper-wheel-path-x-no-neighbor},
we deduce that either $S_1=S_2$ or $V(S_1)\cap V(S_2)\neq\emptyset$,  a contradiction.
Thus we may assume that 
$N(v) \cap (V(S_0) \cup V(T_0)) \subseteq V(P)$. Let $j$ be minimum
index and $l$ maximum index such that $v$ is adjacent to $p_j$ and $p_l$. Then 
$j<i$ and $l>i$.  Since $x$ has no neighbor in $V(p_1Pp_jvp_lPp_k)$ and $|V(p_1Pp_jvp_lPp_k)|\leq k$, by Lemma~\ref{lem:proper-wheel-path-x-no-neighbor}, we deduce that either $S_1=S_2$ or $V(S_1)\cap V(S_2)\neq\emptyset$, a contradiction. Therefore, $xv\in E(G)$.

Suppose that $N(v)\cap V(D_1)\subseteq V(S_0)\cup V(T_0)$. By Lemma~\ref{lem:neighbor-consecuitive-sector}, we have that 
$v$ has at least three neighbors in  $V(T_0)$ and at least three  neighbors in $V(S_0)$. Suppose that $v$ has a neighbor  in $V(a_1S_1s)$. Then $N(v)\cap V(T_0)  \subseteq V(T_0) \setminus V(S_2)$ and $|N(v)\cap V(p_iPp_k)|\geq 3$. Let $j$ be maximum index such that $vp_j\in E(G)$. 
This implies that $vp_jPp_k$ is a path of order less than $k$, which is a contradiction to the assumption. Therefore, $N(v)\cap V(D_1)\subseteq V(P)$.  Note that $i\notin \{1,k\}$. 
But now, rerouting $P$ through $v$, we obtain a contradiction to the assumption. Therefore, there  is a sector $S_4$ of $W$ with $S_4\subseteq V(Q_1)$ such that $v$ has a neighbor in  $V(S_4) \setminus (V(S_0) \cup V(T_0))$.

Since $v$ is proper for $W$, $|N(v)\cap \{a_1,b_2\}|\leq 1$. Without loss of generality, we may assume that $vb_2\notin E(G)$. Suppose that $va_1\in E(G)$. Since $G$ is diamond-free, $vp_i\notin E(G)$.  Let $j$ be maximum such that $vp_j\in E(G)$. Then $j>i$. If $j> 2$, then $y=b_2$ and $Q_1^*$ has exactly one spoke  $a_3$   since the path $vp_jPp_k$ has order less than $k$. Note that $v$ has a neighbor in $V(S_4)\setminus \{a_1\}$. Let $v'$ be the neighbor of $v$ in $S_4$ closest to $a_3$, and let $S_5$ be a sector of $W$ with ends $a_3,b_2$. Then $x$ can be linked to the hole $vv'S_4a_3S_5b_2p_kPv_jv$ by $xv$, $xa_3$ and $xb_2$, a contradiction. Therefore, $j\leq 2$ and hence $j=2$ and $i=1$. Since $G$ is $\db$-free, $p_1v\notin E(G)$.  But now, $x$ can be linked to the hole $vp_2p_1sS_1a_1v$ by $xv,xa_1$ and $xp_1$,  a contradiction.
Therefore, $va_1\notin E(G)$. Similarly, $vb_2\notin E(G)$. Then $N(v)\cap (V(S_0)\cup V(T_0))\subseteq V(P)$. 

Let $j$ be minimum index
and $l$ be maximum index such that $v$ is adjacent to $p_j$ and $p_l$. Then 
$j<i$ and $l>i$.  Now, we consider the path $p_1Pp_jvp_lPp_k$. By assumption, we have that $j=i-1$ and $l=i+1$. Now, $x$ can be linked to the hole $vp_{i-1}p_ip_{i+1}v$ by $xv,xp_i$ and $xa_1S_1sp_1Pp_{i-1}$, a contradiction. This proves Claim~\ref{S_0T_0}.

\begin{claim}\label{claim:skip}
Every skip for $W_1$ is 
either $a_1$-skip or $b_2$-skip.
\end{claim}
Let $v$ be a skip for $W_1$. Since $W$ is proper, it follows 
that $N(v) \cap V(C)$ is included in a unique sector of $W$.  Consequently,
$v$ is $a_1$-skip, $b_2$-skip or $p_i$-skip. However, Claim~\ref{S_0T_0} implies that $v$ is not
$p_i$-skip. This proves Claim~\ref{claim:skip}.

Let $X$ be the set of all skips for $W_1$. It follows from
Lemma~\ref{lem:2-almost} that $W_1$ is not proper in $V(G) \setminus X$.

\begin{claim}\label{claim:cases}
There exists $v \in V(G) \setminus (V(W_1) \cup X)$ such that 
one of the following holds:
\end{claim}
\begin{itemize}
   \item[$(\romannumeral1)$] $v$ is non-adjacent to $x$, and $v$ has at least three neighbors in $S_0$, and $N(v) \cap V(D_1) \subseteq V(S_0)$;
\item[$(\romannumeral2)$] $v$ is non-adjacent to $x$, and $v$ has at least three neighbors in $T_0$, and $N(v) \cap V(D_1) \subseteq V(T_0)$;
\item[$(\romannumeral3)$] 
There is a sector  $S_4$ of $W$ with $V(S_4) \subseteq V(Q_1)$ 
 such that $v$ has a neighbor in  $V(S_4) \setminus (V(S_0) \cup V(T_0))$, 
$v$ has a neighbor in  $V(S_0) \setminus V(S_4)$,
$v$ does not have a neighbor in $V(T_0) \setminus (V(S_0) \cup V(S_4))$, 
and $N(v) \cap V(C) \subseteq V(S_4)$. (possibly with the roles of $S_0$ and $T_0$ exchanged) 
\end{itemize}
We may assume that $(\romannumeral1)-(\romannumeral2)$ of Claim~\ref{claim:cases} do not hold.  
Since $W$ is proper and $W_1$ is not proper, there exists $v \in V(G) \setminus V(W_1)$ and a sector  $S_4$ of 
$W$ with  $V(S_4) \subseteq V(Q_1)$, such 
that $v$ has a neighbor in  $V(S_4) \setminus (V(S_0)\cup V(T_0))$ and $v$ has a neighbor in 
$V(P)$. 
But now Claim~\ref{S_0T_0} implies that $(\romannumeral3)$ of Claim~\ref{claim:cases} holds. This proves Claim~\ref{claim:cases}. 

Let $v \in V(G)$ be as in Claim~\ref{claim:cases}. Next we show that:
\begin{claim}\label{claim:uniquenbr}
$v$ has a unique neighbor in $V(P)$.
\end{claim}

Otherwise, we may assume that $v$ has at least two neighbors in $P$. Clearly, if $vx \notin E(G)$, then $|N(v)\cap V(P)| \le 2$. Indeed, if $|N(v)\cap V(P)| \ge 3$, then rerouting $P$ through $v$ would contradict the assumption.

First suppose that $(\romannumeral1)$ of Claim~\ref{claim:cases} holds. 
Since $vx\notin E(G)$, $v$ has exactly two neighbors in $P$. This implies that $v$  has a neighbor in $V(sS_1a_1)$. 
Let $j$ be maximum index such that $vp_j\in E(G)$. 
Since $N(v)\cap V(D_1)\subseteq V(S_0)$, we have $j\leq i$.  If $j \ge 3$, then the path $vp_jPp_k$ contradicts the assumption. 
Therefore, $j=2$. 
This implies that $N(v)\cap V(P)=\{p_1,p_2\}$. 
Since $v$ is proper for $W$, by Lemma~\ref{lem:nolink}, $v$ has exactly two neighbors in $V(a_1S_1s)$, denoted by $v_1$ and $v_2$; Otherwise $v$ can be linked to $xS_0x$.
Since $G$ is $\db$-free,  $\{v_1,v_2\}\cap\{s,t\}=\emptyset$  and $v_1v_2, st\notin E(G)$. If $s=t$, then $v$ can be linked to the hole $xS_1x$ by $vv_1$, $vv_2$ and $vp_1s$, a contradiction. 
Therefore, $s\neq t$. But now, $v$ can be linked to the hole obtained from $xS_1x$ by rerouting $S_1$ through $p_1$, by $vp_1$, $vv_1$ and $vv_2$, again a contradiction. By symmetry, we may assume that $(\romannumeral3)$ of Claim~\ref{claim:cases} holds. 

We may assume that  $N(v)\cap V(P)\subseteq V(S_0)$. Let $j$ be minimum index such that $vp_j\in E(G)$, and let $l$ be maximum index such that $vp_l\in E(G)$. Since $N(v)\cap V(D_1)\subseteq V(S_0)$, we have $j<l\leq i$. 

Suppose that $xv\notin E(G)$. Then $v$ has exactly two neighbors in $V(P)$.
Since $v$ has a neighbor in 
$V(S_4) \setminus V(S_0)$,  by Lemma~\ref{lem:proper-wheel-path-x-no-neighbor} and assumption,
$t=a_1$ and $a_1 \in V(S_4)$. Let $v'$ be the  neighbor of $v$ in $V(S_4)$ closest to $a_1$. If $v'$ is not the other end of $S_4$ distinct from $a_1$, then $p_l$ can be linked to the hole $p_1Pp_jvv'S_4a_1p_1$ by $p_lPp_j$, $p_lv$ and $p_lPp_ixa_1$, a contradiction. Therefore, $v'$ is the end of $S_4$ distinct from $a_1$. If $p_jp_l\notin E(G)$, then $x$ can be linked to the hole $p_1Pp_jvv'S_4a_1p_1$ by $xa_1$, $xv'$ and $xp_iPp_lv$, a contradiction. Therefore, $p_jp_l\in E(G)$. Since $G$ is prism-free, $a_1v'\notin E(G)$. Now, $v$ can be linked to the hole $p_1Pp_ixa_1p_1$ by $vp_j$, $vp_l$ and $vv'x$, a contradiction. Therefore, $vx\in E(G)$.

Now suppose that $a_1 \notin V(S_4)$.
Let $S_5$ be the sector of $W$ 
with end $a_1$ such that $V(S_5) \subseteq V(Q_1)$, and let $b_3$ be the 
other end of $S_5$. Since $G$ is diamond-free, $j\leq i-2\leq k-2$. By assumption
$V(S_4) \cap V(S_5) = \{b_3\}$ and $t=a_1$. Denote the other end of $S_4$, distinct from $b_3$, as $a_3$. Let $v_1$ be a neighbor  of $v$ in $V(S_4)$ closest to $b_3$. By Lemma~\ref{lem:triangle-wheel-4} and $W$ has at least five spokes, we have that $v_1\neq a_3$. Now, $x$  has exactly three neighbors in the hole $vp_jPp_1a_1S_5b_3S_4v_1v$, a contradiction.  This proves that $a_1\in V(S_4)$. Let $b_3$ be the other end of $S_4$ distinct from $a_1$. If $v$ has no neighbors in $S_4^*$, then $vb_3\in E(G)$. 
Hence $v$ can be linked to the hole $xS_4x$ by $vx$, $vb_3$, 
and either $va_1$ or $vp_jPp_1sa_1$, a contradiction. Therefore, $v$ has a neighbor in $S_4^*$.  let $S_5$ be 
the  sector of $W$ such that $V(S_5)\cap V(S_4)=\{b_3\}$, and let $a_3$ be the other 
end of $S_5$. Since $v \not \in X$, it follows that $v$ has a neighbor 
$u \in V(G) \setminus V(W_1)$ such that $u$ has a neighbor in 
$V(D_1) \setminus (V(S_4) \cup V(S_0))$.  Suppose that $ux\in E(G)$.  By Lemma~\ref{lem:edge-N(x)} and $W$ has at least five spokes, we have that $N(u)\cap V(D_1)\subseteq V(p_iPp_k)$.  Let $r$ be minimum such that $p_ru\in E(G)$. Then $x$ can be linked to the hole $p_lPp_ruvp_l$ by $xu,xv$ and $xp_i$, a contradiction. Therefore, $ux\notin E(G)$.

Suppose that $u$ has a neighbor in $V(Q_1) \setminus V(S_4)$. Since $G$ is diamond-free  and $v$ has at least  two neighbors in $V(P)$, it follows that 
$i \geq 3$, and therefore $k \geq 3$.  Consequently,  the path $uv$ is 
shorter than $P$,
and by assumption
$N(u) \cap V(C) \subseteq V(S_5)$. Let $v'$ be the neighbor of $v$ in $S_4$ closest to $b_3$. Since $ux\notin E(G)$ and $u$ is proper for $W$, $u$ has at most two neighbors in $S_5$. If $u$ has exactly one neighbor $u'$ in $S_5$, then $v$ can be linked to the hole $xS_5x$ by $vx$, $vv'S_4b_3$ and $vuu'$, a contradiction. 
Therefore, $u$ has exactly two neighbors in $S_5$. If $u$ has two non-adjacent neighbors in $S_5$, then $v$ can be linked to the hole obtained from $xS_5x$  by rerouting $S_5$ through $u$ by $vx$, $vu$ and $vv'S_4b_3$, a contradiction. Therefore,  the two neighbors of $u$ in $S_5$ are adjacent. Since $G$ is $\db$-free,  $N(u)\cap N(v)=\emptyset $. If $v'= b_3$, then $G[\{u,v,x\}\cup V(S_5)]$ is a prism, a contradiction. Therefore, $v'\neq b_3$. But now, $u$ can be linked to the hole $xS_5x$ by two one-edge paths and $vux$, a contradiction. Therefore, $u$ has no neighbor in $V(Q_1) \setminus V(S_4)$. This implies that $u$ has a neighbor in $V(p_iPp_k)$. Since $ux\notin E(G)$, $u$ has at most two neighbors in $V(p_iPp_k)$. Let $u_1$ be the neighbor of $u$ in $V(p_iPp_k)$ closest to $p_i$, and let $u_2$ be the neighbor of $u$ in $V(p_iPp_k)$ closest to $p_k$. If $u_1\neq u_2$, then $u$ can be linked to the hole $xT_0x$ by $uu_1$, $uu_2$ and $uvx$, a contradiction. Therefore, $u_1=u_2$. However, it follows that $x$ can be linked to the hole $vuu_1Pp_lv$ by $xv$, $xp_i$ and $xb_2S_2zp_kPu_1$, a contradiction. This proves Claim~\ref{claim:uniquenbr}.

In view of Claim~\ref{claim:uniquenbr}, let $p_j$ be the  unique neighbor of $v$ in $V(P)$.

\begin{claim}\label{claim:case2}
  $(\romannumeral3)$ of Claim$~\ref{claim:cases}$ holds.
\end{claim}

Suppose that $(\romannumeral1)$ of Claim~\ref{claim:cases} holds. 
Then, by Lemma~\ref{lem:nolink}, $v$ has at least four neighbors in the hole $xS_0x$. 
Hence, by Claim~\ref{claim:uniquenbr}, $x$ has at least three neighbors in the path $a_1S_1s$, 
contradicting the fact that $W$ is proper. 
Similarly, $(\romannumeral2)$ of Claim~\ref{claim:cases} does not hold.
This proves Claim~\ref{claim:case2}.

In the next claim, we further determine the structure of $P$. 

\begin{claim}\label{structure}
One of the following statements holds:

\begin{itemize}
\item[$(\romannumeral1)$] there are  edges between $P^*$ and $V(C)$, or
\item[$(\romannumeral2)$] $j=1$ and we can choose $S_4$ so that $a_1 \in V(S_4)$, or
\item[$(\romannumeral3)$] $j=k$ and we can choose $S_4$ so that $b_2 \in V(S_4)$.
\end{itemize}
\end{claim}

Suppose that Claim~\ref{structure} is false. 
By Lemma~\ref{lem:K13}, we have that $j\in \{1,k\}$. By symmetry, we may assume that $j=1$.   Then $S_4$ cannot be chosen so that 
$a_1 \not \in V(S_4)$, since otherwise Claim~\ref{structure} holds. By Lemma~\ref{lem:edge-N(x)} and $W$ has at least five spokes, we have that $|N(x)\cap \{p_1,v\}|\leq 1$. 
Let $S_5$ be 
the sector of $W$ with $V(S_5)\cap V(S_1)=\{a_1\}$, and let $b_3$ be the other end of $S_5$ distinct from $a_1$. If $k\geq 3$, then by assumption $t=a_1$, $|N(x)\cap \{p_1,v\}|=1$ and $b_3\in V(S_4)$. 
Let $a_3$ be the other end of $S_4$ distinct from $b_3$, and let $v'$ be the neighbor of $v$ in $S_4$ closest to $b_3$. By Lemma~\ref{lem:triangle-wheel-4} and $W$ has at least five spokes, we have that $v'\neq a_3$. Now, $x$ has exactly three neighbors in the hole $p_1a_1S_5b_3S_4v'vp_1$, a contradiction. Therefore, $k=2$.   By Claim~\ref{S_0T_0}, we have $b_2v \notin E(G)$. Let $R$ be the path from $v$ to $a_1$ with $R^* \subseteq V(Q_1)\setminus\{b_2\}$, and let $Q_1'$ be the subpath of $R$ from an end of $S_4$ to $a_1$.  Then  $V(Q_1') \subsetneq V(Q_1)$. This implies that $p_1v$ is a counterexample of Lemma~\ref{lem:proper-wheel-path-x-neighbor}.
 However, this  contradicts the choice of $P$. This proves Claim~\ref{structure}.

\begin{claim}\label{claim:i=j}
    $i=j$.
\end{claim}

Otherwise, we may assume that $j<i$. First suppose that $xv\notin E(G)$. Then  Lemma~\ref{lem:proper-wheel-path-x-no-neighbor} holds for the path $p_1Pp_jv$; therefore, $a_1=t$ and $S_4$ can be chosen so that $a_1\in V(S_4)$.  Since $v$ is proper for $W$, it follows that $v$ has at most two neighbors in $V(S_4)$. Suppose $v$ has two neighbors in $V(S_4)$. If $va_1 \notin E(G)$ or $j \neq 1$, then $v$ can be linked to the hole $xS_4x$ by two one-edge paths and 
$vp_jPp_i x$ (if $j \neq 1$), or by $vp_1a_1$ (if $j=1$ and $va_1 \notin E(G)$), a contradiction.
 Therefore, $a_1v\in E(G)$ and $j=1$.  Let $v'$ be the neighbor of $v$ in $V(S_4)$ distinct from $a_1$. Since $G$ is diamond-free, $v'a_1\notin E(G)$. Now, $p_1$ can be linked to the hole obtained from $xS_4x$ by rerouting $S_4$ through $v$ by $p_1a_1$, $p_1v$ and $p_1Pp_ix$, a contradiction. Therefore, $v$ has a unique neighbor $v'$ in $S_4\setminus \{a_1\}$. Now, it follows from Claim~\ref{claim:S1,S2-select} that $p_j$ can be linked to the hole $xS_4x$ by $p_jvv'$, $p_jPp_1a_1$ and $p_jPp_ix$, a contradiction. 

Therefore, $vx\in E(G)$.
Suppose that $a_1\in V(S_4)$. Let $v'$ be the neighbor of $v$ in $V(S_4)$ closest to $a_1$. Then $x$ can be linked to the hole $p_1Pp_jvv'S_4a_1S_1sp_1$ by $xv$, $xa_1$ and $xp_iPp_j$ (if $xv'\notin E(G)$), or  by $xv'$ (if $xv'\in E(G)$), a contradiction.  Therefore, $a_1\notin V(S_4)$. Let $S_5$ be 
the sector of $W$ with $V(S_5)\cap V(S_1)=\{a_1\}$, and let $b_3$ be the other end of $S_5$ distinct from $a_1$. 

Let $v'$ be the neighbor of $v$ in $Q_1$ closest to $a_1$. 
Suppose that $V(S_4)\cap V(S_5)\neq\emptyset$; 
that is, $b_3\in V(S_4)$. 
By Lemma~\ref{lem:triangle-wheel-4} and $W$ has at least five spokes, $v'$ is not the end of $S_4$ distinct from $b_3$. However, it follows that $x$ has exactly three neighbors in the hole $p_1Pp_jvv'S_4b_3S_5a_1S_1sp_1$, a contradiction. Therefore,
$V(S_4)\cap V(S_5)=\emptyset$. This implies that there are no consecutive sectors $S_1',S_2'$ of $W$ such that $(N(p_1)\cup N(v))\cap V(C)\subseteq V(S_1')\cup V(S_2')$.
If $j\le k-2$, then the path $p_1Pp_jv$ has order at most $k-1$, contradicting the assumption.
 Thus $j=k-1$ and $i=k$. By Claim~\ref{claim:S1,S2-select}, we have that $\{a_2\}$ is anticomplete to  $V(P)\setminus \{p_k\}$. Since $j\neq k$ and $a_1\notin V(S_4)$, it follows from Claim~\ref{structure} that there are edges between $P^*$ and $V(C)$. Now by Claim~\ref{claim:S1,S2-select},
there is a sector $S_3$ of $W$ with 
ends $a_2,b_1$,  and $b_1$ has a neighbor in $P^*$. Let $p_{j'}$ be the neighbor of $b_1$ in $P^*$ closest to  $p_{k-1}$. By Claim~\ref{S_0T_0}, we have that $vb_2\notin E(G)$. In particular, $N(v)\cap V(C)\subseteq Q_1^*$. Note that $|V(p_{j'}Pp_jv)|\leq k-1$. However, it follows that $p_{j'}Pp_jv$ is a path violates the assumption of lemma, a contradiction. This proves Claim~\ref{claim:i=j}.

\begin{claim}\label{claim:v-nism-x}
    $vx\notin E(G)$.
\end{claim}

Otherwise, assume that $vx\in E(G)$. By Lemma~\ref{lem:edge-N(x)} and $W$ has at least five spokes, we have that $p_i$ is anticomplete to $V(C)$, and so $p_i\in P^*$.  Since $G$ is diamond-free, $N(v)\cap N(x)\cap V(C)=\emptyset$. Let $j_1\in [1,i]$ be maximum index such that $p_{j_1}$ has a neighbor in $V(C)$, and let $j_2\in [i,k]$ be minimum index such that $p_{j_2}$ has a neighbor in $V(C)$. Then $i\leq j_1<i<j_2\leq k$. By Lemma~\ref{lem:K13}, we have that $v$ has no neighbor in $V(C)$, a contradiction. This proves Claim~\ref{claim:v-nism-x}.

\begin{claim}\label{ends}
     $i\leq 2$ and $i\geq k-1$.
\end{claim}

To the contrary, we may assume that $k-i>1$. Consequently $k>2$. Suppose   that $S_4$ can be chosen so that $a_1 \in V(S_4)$. If $v$ has a unique
neighbor $v'$  in $V(S_4)$, then, since $s \neq b_1$,  $p_i$ can be linked to $xS_4x$ by $p_ivv'$, $p_ix$ and $p_iPp_1sS_1a_1$, a contradiction. Thus  $v$
has at least two neighbors in $V(S_4)$.  Since $v$ is proper for $W$, $v$ has exactly neighbors in $V(S_4)$. However, it follows that  $v$ can be linked to the hole $xS_4x$ by two one-edge paths and $vp_ix$, a contradiction.  Thus $S_4$ cannot be chosen so that $a_1 \in V(S_4)$.

Let $S_5$ be the sector of $W$ with end $a_1$ such that $V(S_5) \subseteq V(Q_1)$. Since $i \leq k-2$, by assumption
 $t=a_1$ and 
$V(S_4) \cap V(S_5) \neq \emptyset$.  
Let $b_3$ be the other end of $S_5$ distinct from $a_1$, and let $v'$ be the neighbor of $v$ in $V(S_4)$ closest to $b_3$.  If $v'$ is not the end of $S_4$ distinct from $b_3$, then $x$ has exactly three neighbors in the hole $p_1Pp_ivv'S_4b_3S_5a_1p_1$, a contradiction. Therefore, $v'$ is the end of $S_4$ distinct from $b_3$.

By Lemma~\ref{lem:triangle-wheel-4} and $W$ has at least five spokes,  we have that $i\neq 1$. It follows from Claim~\ref{structure} that there
is some edge between $P^*$ and $V(C)$, and by Claim~\ref{claim:S1,S2-select}, there
is a sector $S_3$ of $W$ with ends $b_1,a_2$ and every edge from 
$p_i$ to $V(C)$ have an end in $V(S_3)$. Note that $p_i$ is anticomplete to $V(C)$. Otherwise, by assumption (using the path
$p_iv$),  $p_i$ has  exactly one neighbor $a_2$ in $V(C)$ and $v$ has exactly one neighbor  $v'$ in $V(C)$.  This implies  that $S_4$ can be chosen so that $b_2 \in V(S_4)$. By Lemma~\ref{lem:triangle-wheel-4}, we have that $W$ has   exactly four spokes, a contradiction. Therefore, $p_i$ is anticomplete to $V(C)$. Since $v$ is not proper for $W_1$, $v$ has a neighbor in $Q_1^*$. Let $j_1\in [1,i]$ be maximum such that $p_{j_1}$ has a neighbor in $V(C)$, and let $j_2\in [i,k]$ be minimum such that $p_{j_2}$ has a neighbor in $V(C)$. 
Then $1\leq j_1<i<j_2\leq k$. By Lemma~\ref{lem:K13}, we have that $v$ has no neighbor in $V(C)$, a contradiction. This proves  Claim~\ref{ends}.

It follows  from Claims~\ref{claim:i=j} and~\ref{claim:ends} that either $k=3$ and $i=j=2$, or $k=2$. 

Suppose $k=3$ and $i=j=2$.  Note that $v$ has a neighbor in $Q_1^*$. 
By Claim~\ref{structure}, there is some edge between $P^*$ and $V(C)$. So by Claim~\ref{claim:S1,S2-select} there exists a sector $S_3$ with ends $a_2$ and $b_1$ 
such that $p_2$ has neighbors in $V(S_3)$. 
By assumption $N(p_2)\cap V(C)\subseteq \{b_1,a_2\}$, 
and either $a_1\in V(S_4)$ or $b_2\in V(S_4)$. Since $G$ is diamond-free, $|N(p_2)\cap  \{b_1,a_2\}|\leq1$.  By symmetry, we have that  $a_1\in V(S_4)$ and $N(p_2)\cap V(C)=\{b_1\}$. Let $v'$ be the neighbor of $v$ in $V(S_4)$ closest to $a_1$. By Lemma~\ref{lem:triangle-wheel-4} and $W$ has at least five spokes, 
$v'$ is not the end of $S_4$ distinct from $a_1$. Now, $p_2$ can be linked to the hole $xS_1x$ by $p_2x$, $p_2b_1$ and $p_2vv'S_4a_1$, a contradiction. Therefore,  $k=2$. By symmetry, we may assume that $i=1$. By Claim~\ref{structure},  we 
can choose $S_4$ with $a_1 \in V(S_4)$.  Since $W$ is proper, by Claim~\ref{claim:v-nism-x} it follows that $v$ has
at most two neighbors in $S_4$. If $v$ has a unique neighbor
$v'$ in $V(S_4)$, then $p_1$ can be linked to the hole $xS_4x$ by
$p_1vv'$, $p_1x$ and $p_1sS_1a_1$,  a contradiction. Therefore, $v$ has two neighbors in $V(S_4)$.  If $v$ has two non-adjacent neighbors in $V(S_4)$, then $p_1$ can be linked to the hole obtained from $xS_4x$ by rerouting $S_4$ through $v$ by
$p_1v$, $p_1x$ and $p_1sS_1a_1$, a contradiction. Therefore, $v$ has  exactly two adjacent neighbors in $V(S_4)$. If $s\neq a_1$, then $v$ can be linked to the hole $xS_4x$ by two one-edge paths and $vp_1x$,  a contradiction. Then $a_1=s$. Since $G$ is diamond-free, $a_1v\notin E(G)$. However, it follows that $G[\{p_1,x\}\cup V(S_4)]$ is a prism, a contradiction. This completes the proof of Lemma~\ref{lem:proper-wheel-path-x-neighbor}.
\end{proof}

Combining Lemmas~\ref{lem:proper-wheel-path-4-spokes}, 
\ref{lem:proper-wheel-path-x-no-neighbor} and 
\ref{lem:proper-wheel-path-x-neighbor}, we obtain the following theorem immediately.

\begin{theorem}\label{thm:proper-wheel-path}
Let $G$ be an   $\iskdbpk$-free graph, and let $W=(C,x)$ be a proper wheel in $G$ of center $x$ with minimum number of spokes.  Let $P=p_1\cdots p_k$ be a path with $V(P)\subseteq V(G)\setminus V(W)$ such that $x$ has at  most one neighbor in $P$. 
\begin{itemize}
\item[$(\romannumeral1)$] If $P$ contains no neighbor of $x$, then there is a sector $S$ of
$W$ such that every edge from $P$ to $C$ has an end in $V(S)$.
\item[$(\romannumeral2)$] If $P$ contains exactly one neighbor of $x$, then there are two
sectors $S_1,S_2$ of $W$ such that $V(S_1) \cap V(S_2) \neq \emptyset$, and
 every edge from $P$ to $C$ has an end in $V(S_1) \cup V(S_2)$ (where possibly $S_1=S_2$).
\end{itemize}
\end{theorem}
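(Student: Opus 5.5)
The plan is to split according to the number of spokes of $W$. The theorem then follows directly from the three preceding lemmas, once the four-spoke case is separated from the case of at least five spokes.

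\emph{Four spokes.} Suppose first that $W$ has exactly four spokes. Then $W$ is a proper wheel of center $x$ with four spokes, and $P$ is a path in $G-V(W)$ containing at most one neighbor of $x$. These are exactly the hypotheses of Lemma~\ref{lem:proper-wheel-path-4-spokes}, which gives both $(\romannumeral1)$ and $(\romannumeral2)$ at once. Minimality of the number of spokes is not needed here.

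\emph{At least five spokes.} Otherwise $W$ has at least five spokes; it cannot have three, since then it would be a square theta/prism-type configuration excluded in the wheel definition only up to the count. To be safe, we note that a wheel with three spokes has its center linked to the rim, which is impossible by Lemma~\ref{lem:nolink}. In this case we argue by induction on $k=|V(P)|$, proving the statement simultaneously for paths with no neighbor of $x$ and for paths with exactly one neighbor of $x$.

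For the inductive step, assume both assertions hold for every path $P'$ with $V(P')\subseteq V(G)\setminus V(W)$ and $|V(P')|<k$. This is precisely the common hypothesis of Lemmas~\ref{lem:proper-wheel-path-x-no-neighbor} and~\ref{lem:proper-wheel-path-x-neighbor}. If $P$ contains no neighbor of $x$, Lemma~\ref{lem:proper-wheel-path-x-no-neighbor} yields $(\romannumeral1)$. If $P$ contains exactly one neighbor of $x$, Lemma~\ref{lem:proper-wheel-path-x-neighbor} yields $(\romannumeral2)$.

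The base case $k=1$ is immediate from properness of $W$. If $p_1x\notin E(G)$, then all neighbors of $p_1$ on $C$ lie in one sector, giving $(\romannumeral1)$. If $p_1x\in E(G)$, the same one-sector conclusion gives $(\romannumeral2)$ with $S_1=S_2$.

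\emph{Main obstacle.} The only delicate point is organizing the induction correctly. Each lemma assumes both statements for all shorter paths, so the induction hypothesis must be the conjunction of $(\romannumeral1)$ and $(\romannumeral2)$ over all paths of order less than $k$, not each statement separately. A second check is that the minimality hypothesis of the lemmas (fewest spokes among proper wheels centered at $x$) matches the theorem's hypothesis; it does.
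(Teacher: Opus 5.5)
Your proof is correct and is essentially the paper's argument, which simply combines Lemma~\ref{lem:proper-wheel-path-4-spokes} (four spokes) with Lemmas~\ref{lem:proper-wheel-path-x-no-neighbor} and~\ref{lem:proper-wheel-path-x-neighbor} via induction on $k$, taking the conjunction of (i) and (ii) as the inductive hypothesis. Your first justification for excluding three spokes is garbled and should be dropped; the second one, that the center would be linked to the rim by three one-edge paths, contradicting Lemma~\ref{lem:nolink}, is correct and suffices on its own.
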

\begin{theorem}\label{thm:proper-wheel-property}
	Let $G$ be an $\iskdbpk$-free graph, and let $x$ be the center of a proper wheel in $G$. If $W=(C,x)$
    is a proper wheel in $G$ of center $x$ with minimum number of spokes, then 
	\begin{itemize}
		\item[$(\romannumeral1)$] every component of $G\setminus N[x]$  contains the interior of at most one sector of $W$, and
		
		\item[$(\romannumeral2)$] for every $u\in N(x)$, the component  $D$  of $G\setminus (N[x]\setminus \{u\})$ such that $u\in V(D)$ contains the interiors of at most two sectors of $W$, and if $S_1,S_2$ are sectors with $S_i^*\subseteq V(D)$ for $i=1,2$, then $V(S_1)\cap V(S_2)\neq \emptyset$.
	\end{itemize}
\end{theorem}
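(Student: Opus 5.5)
The plan is to reduce both parts directly to Theorem~\ref{thm:proper-wheel-path}, applied to a shortest connecting path inside the relevant component.

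For $(\romannumeral1)$, suppose a component $D$ of $G\setminus N[x]$ contains $S_1^*$ and $S_2^*$ for two distinct sectors $S_1,S_2$ of $W$. Since the interiors of sectors avoid $N(x)$, these interiors do lie in $G\setminus N[x]$. Choose vertices $a\in S_1^*$ and $b\in S_2^*$, and a path $Q$ in $D$ from $a$ to $b$. The path $Q$ meets $V(C)$, so we cannot apply Theorem~\ref{thm:proper-wheel-path} to $Q$ as it stands. We instead pass to a suitable subpath. Traverse $Q$ and list the vertices lying in $V(C)$; each of them lies in the interior of some sector, since $Q$ avoids $N(x)$. Because $a$ and $b$ lie in different sector interiors, there are two vertices $c,c'$ of $Q\cap V(C)$, in sector interiors $S^*\neq S'^*$, with the open subpath $Q'$ of $Q$ strictly between them disjoint from $V(C)$. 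If $c$ and $c'$ are adjacent this is impossible, because interiors of distinct sectors are anticomplete (no two consecutive vertices of $C$ lie in different sector interiors). Hence $Q'$ is nonempty, and $V(Q')\subseteq V(G)\setminus V(W)$ with no neighbor of $x$. Every edge from $Q'$ to $C$ then has an end in a single sector $S$ by Theorem~\ref{thm:proper-wheel-path}$(\romannumeral1)$. But $Q'$ has edges to $c\in S^*$ and to $c'\in S'^*$. A vertex of $S^*$ lies in $V(S)$ only if $S=S^*$'s sector, so we get $S=S=S'$, a contradiction.

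For $(\romannumeral2)$, the argument is the same with $D$ now the component of $G\setminus(N[x]\setminus\{u\})$ containing $u$. Every path inside $D$ contains at most one neighbor of $x$, namely $u$. Suppose $S_1^*$ and $S_2^*$ both lie in $D$ with $V(S_1)\cap V(S_2)=\emptyset$, or suppose three sector interiors lie in $D$. In the case of three interiors, some two of the sectors are disjoint when there are at least five spokes. With exactly four spokes, the union of two sector sets sharing an end still misses a third sector's interior, so we get a contradiction to the covering conclusion anyway. Take a path in $D$ between vertices of the two offending interiors and cut it at $C$ as above, obtaining subpaths $Q'$ avoiding $V(W)$. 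If $u\in V(C)$, then $u$ is a spoke, and the cut pieces avoid neighbors of $x$; part $(\romannumeral1)$ reasoning then forces consecutive pieces to stay within one sector. Otherwise $u\notin V(W)$, and among the pieces only the one through $u$ has a neighbor of $x$. For that piece, Theorem~\ref{thm:proper-wheel-path}$(\romannumeral2)$ gives two intersecting sectors containing all its attachments. The other pieces each attach within a single sector, by $(\romannumeral1)$.

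Chaining these observations along the path yields the conclusion. The chain of sector interiors visited changes only at the piece through $u$, and there at most to an intersecting sector. So the interiors met by the whole path lie in at most two sectors sharing an end. Applying this to paths between any pair of interiors in $D$ gives the conclusion of $(\romannumeral2)$.

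The main obstacle is the bookkeeping when $u\in V(C)$, or when a piece touches $C$ only at spoke-free interior vertices. In these situations we must check that attachments of a piece in a sector interior pin down that sector uniquely. This holds because an interior vertex belongs to exactly one sector. We must also handle the four-spoke case, where two sectors sharing an end might still cover three interiors' attachments; I expect this to need care but to follow from the same uniqueness.
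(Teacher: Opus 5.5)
Your proposal is correct and takes the same route as the paper. Both parts are reduced to Theorem~\ref{thm:proper-wheel-path} by exhibiting, inside the offending component, a path outside $V(W)$ whose attachments to $C$ cannot be covered as that theorem requires. You also make explicit the step the paper leaves implicit: cutting a (shortest, hence induced) connecting path at its vertices on $C$, and using that a sector-interior vertex lies in exactly one sector.

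The extra care you anticipate for four spokes is unnecessary. In any wheel (which has at least four spokes by Lemma~\ref{lem:nolink}) any three sectors include two disjoint ones, so your pairwise argument already yields ``at most two''.
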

\begin{proof}
To prove $(\romannumeral1)$ of Theorem~\ref{thm:proper-wheel-property}, we observe that if some  component of 
$V(G) \setminus N(x)$ contains the interiors of two sectors of $W$, then this 
component contains a path violating $(\romannumeral1)$ of Theorem~\ref{thm:proper-wheel-path}. 
For $(\romannumeral2)$ of Theorem~\ref{thm:proper-wheel-property}, suppose $D$ contains the interiors of two disjoint 
sectors $S_1,S_2$ of
$W$. Since $|D \cap N(x)|=1$, there exists a path in $D$ violating $(\romannumeral2)$ of Theorem~\ref{thm:proper-wheel-path}.
This completes the proof of Theorem~\ref{thm:proper-wheel-property}. 
\end{proof}

As an immediate corollary of Theorem~\ref{thm:proper-wheel-property}, we obtain the following.
\begin{corollary}\label{cor:component-number}
Let $G$ be an $\iskdbpk$-free graph and $x$ be the center of a proper wheel in $G$. Then $G- N[x]$ has at least three components.
\end{corollary}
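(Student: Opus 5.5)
Since $x$ is the center of a proper wheel, I will fix a proper wheel $W=(C,x)$ centered at $x$ with the minimum number of spokes among all proper wheels centered at $x$. Then Theorem~\ref{thm:proper-wheel-property}$(\romannumeral1)$ applies to $W$. Let $x_1,\dots,x_m$ ($m\ge 3$) be the spokes of $W$ in cyclic order on $C$, and let $S_1,\dots,S_m$ be the sectors, where $S_i$ has ends $x_i$ and $x_{i+1}$. The plan is to show that at least three of these sectors have nonempty interior. Each nonempty $S_i^*$ induces a connected subpath of $C$, and $S_i^*$ is disjoint from $N[x]$, because $x$ has no neighbours in the interior of a sector. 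Hence each nonempty $S_i^*$ lies inside a single component of $G- N[x]$. By Theorem~\ref{thm:proper-wheel-property}$(\romannumeral1)$, distinct sectors with nonempty interiors lie in distinct components. It therefore suffices to count sectors with nonempty interior.

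The key step is to show that if $S_i^*=\emptyset$, then $x_ix_{i+1}\in E(G)$, so $\{x,x_i,x_{i+1}\}$ is a triangle. Two consecutive empty sectors $S_{i-1},S_i$ would give two triangles $xx_{i-1}x_i$ and $xx_ix_{i+1}$ sharing the edge $xx_i$. Because $C$ is a hole, $x_{i-1}x_{i+1}\notin E(G)$ when $m\ge 4$, so $\{x,x_{i-1},x_i,x_{i+1}\}$ induces a diamond, which is a contradiction. When $m=3$, two empty sectors would force $C$ to be a triangle, which is impossible since a hole has length at least four. Thus no two cyclically consecutive sectors are empty.

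Next I need two more facts. Two empty sectors $S_i$ and $S_j$ with $j\ne i\pm1$ give triangles $xx_ix_{i+1}$ and $xx_jx_{j+1}$ meeting only at $x$. The sets $\{x_i,x_{i+1}\}$ and $\{x_j,x_{j+1}\}$ are anticomplete, because the four vertices are pairwise nonconsecutive on the hole $C$ apart from the two edges themselves. So these triangles form an induced bowtie, a contradiction. Hence at most one sector is empty. Since $m\ge 3$, at least $m-1\ge 2$ sectors are nonempty, and if $m\ge 4$ we get at least three.

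The remaining case, which I expect to be the main obstacle, is $m=3$ with exactly one empty sector. Here I would argue that three spokes with one consecutive pair $x_1x_2$ adjacent is impossible in an $\iskfour$-free graph. The hole $C$ together with $x$ adjacent to $x_1,x_2,x_3$ forms a subdivision of $K_4$ with corners $x,x_1,x_2,x_3$. Its paths are the three spokes and the three arcs of $C$, one of which is the edge $x_1x_2$. This subdivision is induced, because $x$ has exactly these three neighbours on $C$, so it is an $\iskfour$, which is a contradiction. In fact this shows that every wheel in $G$ has at least four spokes; this is just the "exactly three neighbours in a hole" argument used repeatedly earlier. So $m\ge 4$, and therefore $G- N[x]$ has at least three components.
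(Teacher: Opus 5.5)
Your proof is correct and is essentially the paper's argument. The paper states the corollary as immediate from Theorem~\ref{thm:proper-wheel-property}$(\romannumeral1)$, and you supply the implicit counting step: a wheel has at least four spokes because a three-spoke wheel is an $\iskfour$, and by diamond- and bowtie-freeness at most one sector has empty interior. One small slip: for $m=3$, two empty sectors do not force $C$ to be a triangle, since the third sector may be long. Your diamond argument still covers that case, and the case is moot once you show $m\ge 4$.
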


\section{Proper wheel centers}
In this section, we perform certain transformations on $\iskdbpk$-free graphs, ensuring that these operations preserve the $\iskdbpk$-free property and do not  create  any new centers of proper wheels.
\begin{lemma}\label{lem:component-center}
Let $G$ be an $\iskdbpk$-free graph with $ s \in V(G) $. Let $K $ be a component of $ G-N[s] $, and let $ N $ be the set of vertices in $N(s)$ that have a neighbor in $ K $. Define $ H = G[V(K) \cup N \cup \{s\}] $. Then $s$ is not the center of a proper wheel in $H$. Furthermore,  if $v$ is the center of a proper wheel in $ H $, then $v $ is also the center of a proper wheel in $G$.
\end{lemma}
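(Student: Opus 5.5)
The first assertion follows from Corollary~\ref{cor:component-number} applied to $H$. The second is proved by checking that every vertex of $V(G)\setminus V(H)$ is proper for a proper wheel of $H$; the main case split is whether $s$ lies on the wheel.

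\emph{First assertion.} Since $H$ is an induced subgraph of $G$, it is again $\iskdbpk$-free. By construction $N_H(s)=N$, so $H-N_H[s]=K$ is connected. If $s$ were the center of a proper wheel in $H$, then Corollary~\ref{cor:component-number}, applied to $H$, would give at least three components of $H-N_H[s]$. This is a contradiction, so $s$ is not the center of a proper wheel in $H$.

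\emph{Setup for the second assertion.} Let $W=(C,v)$ be a proper wheel in $H$. Since $H$ is induced, $W$ is also a wheel in $G$, so it suffices to show that every $y\in V(G)\setminus V(H)$ is proper for $W$. Such a $y$ lies either in $N(s)\setminus N$ or in a component $K'\neq K$ of $G-N[s]$. In both cases $y$ has no neighbour in $K$, hence
\[
N(y)\cap V(H)\subseteq N\cup\{s\},
\]
and $ys\in E(G)$ only when $y\in N(s)\setminus N$. By the first assertion $v\neq s$, so either $s\notin V(W)$ or $s\in V(C)$.

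\emph{Case $s\notin V(W)$.} Then $s$ is proper for $W$ in $H$, and $N(y)\cap V(C)\subseteq N(s)\cap V(C)$. Hence all neighbours of $y$ on $C$ lie in the single sector containing $N(s)\cap V(C)$. It remains to rule out $y$ having at least three neighbours on $C$ while $yv\notin E(G)$. In that situation $s$ has at least three neighbours on $C$, so $sv\in E(G)$ because $s$ is proper.
\begin{itemize}
\item I will use that $N(s)$ induces disjoint edges and isolated vertices; this follows from diamond-freeness together with $K_4$-freeness, which is implied by $\isk$-freeness.
\item With this, $y$ has three neighbours $c_1,c_2,c_3$ in one sector, all adjacent to $s$ and pairwise non-adjacent except possibly for one edge.
\item One then obtains either a diamond on $\{s,y,c_i,c_j\}$, or a link of $y$ (or of $s$) to the hole $vSv$ for that sector $S$, with $v$ reached through $s$. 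The latter contradicts Lemma~\ref{lem:nolink}.
\end{itemize}
The only vertices in play are $s$, $y$, $v$ and one sector, so this is a short local check.

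\emph{Case $s\in V(C)$.} This is the case I expect to be the main obstacle. Let $a,b$ be the neighbours of $s$ on $C$. Then
\[
N(y)\cap V(C)\subseteq\{a,s,b\}.
\]
If $\{a,s,b\}$ lies in one sector, then $y$ is proper, except for the three-neighbour condition. That condition is handled as follows.
\begin{itemize}
\item If $y\sim a,s,b$, then $\{y,s,a,b\}$ is a diamond, since $ab\notin E(G)$ on a hole.
\item If $y$ has exactly three neighbours otherwise, the same local argument as above applies.
\end{itemize}
If $\{a,s,b\}$ does not lie in one sector, then $s$ is a spoke. Since $y$ is not proper, it has a neighbour in $\{a,b\}$ on each side, so $y\sim a$ and $y\sim b$.
\begin{itemize}
\item If $ys\in E(G)$, then $\{y,s,a,b\}$ is again a diamond.
\item Otherwise $yasb$ is a square with $vs\in E(G)$. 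I would then show that $y$ (or $s$) can be linked, via Lemma~\ref{lem:nolink}, to the hole obtained from $C$ by rerouting through $y$ in place of $s$. The three paths would be $ya$, $yb$, and a path from $y$ to $v$ through $s$. Alternatively, if the adjacencies of $v$ with $a,b$ force it, I would extract a prism or a diamond from $\{v,s,y,a,b\}$.
\end{itemize}

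Carrying out this last subcase cleanly, in particular tracking whether $a$ or $b$ is also a spoke, is where the real case analysis lies. Once it is done, every vertex of $G$ is proper for $W$, so $v$ is the center of a proper wheel in $G$.
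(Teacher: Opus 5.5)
Your first assertion and your reduction are correct and match the paper: every $y\in V(G)\setminus V(H)$ satisfies $N(y)\cap V(H)\subseteq N\cup\{s\}$.

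\textbf{The gap.} It lies in the case $s\notin V(W)$. There you claim that a vertex $y$ with at least three neighbours in one sector $S$ and $yv\notin E(G)$ can be ruled out by a ``short local check'' on $s$, $y$, $v$ and $S$. This works only when $y$ has exactly three neighbours on $C$, since then it has exactly three on the hole $vSv$. With four or more it cannot work, because the local configuration need not contain any forbidden subgraph.

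For instance, let $C=u_0u_1\cdots u_{30}r_1\cdots r_9u_0$, and make $v$ adjacent to $u_0,u_{30},r_3,r_6$. Make $s$ adjacent to $v,u_2,u_4,\dots,u_{28}$, and $y$ adjacent to $u_6,u_{12},u_{18},u_{24}$. This graph is triangle-free and $K_{3,3}$-free. A direct check of its holes shows that no vertex is linked to any of them, so it is $\iskdbpk$-free. Here $s$ is proper for $(C,v)$ but $y$ is not, and there is neither a diamond nor a link to $vSv$ (or to any other hole).

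\textbf{The missing idea.} You must use the component $K$, which your argument never touches beyond noting that $y$ has no neighbour in it. Every common neighbour of $s$ and $y$ on $C$ lies in $N(s)\cap V(H)=N$, so it has a neighbour in $K$, while $s$ and $y$ are anticomplete to $K$.

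Suppose $y\notin N(s)$ has three such neighbours $c_1,c_2,c_3$. They are pairwise non-adjacent, since otherwise some $\{s,y,c_i,c_j\}$ is a diamond. Take a shortest path $P$ between two of them with interior in $K$.
\begin{itemize}
\item If the third is anticomplete to $P$, then $G[V(P)\cup\{s,y,c_3\}]$ is an $\isk$.
\item Otherwise minimality forces $P^*=\{z\}$ with $z$ adjacent to all three, and $\{s,y,z\}\cup\{c_1,c_2,c_3\}$ induces a $K_{3,3}$.
\end{itemize}
Hence $|N(y)\cap N|\le 2$, which settles the case $s\notin V(W)$. This is essentially the paper's argument.

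\textbf{The last subcase.} Your proposed link there is ill-formed: $y$ lies on the hole obtained by rerouting $C$ through $y$, so it cannot be linked to that hole. Instead, link $v$ to the square $yasby$ using three paths:
\begin{itemize}
\item the edge $vs$;
\item the subpath of the sector at $s$ containing $a$, running from its far spoke to $a$;
\item the corresponding subpath of the sector at $s$ containing $b$, running from its far spoke to $b$.
\end{itemize}
This requires $yv\notin E(G)$, which again follows from the bound above. If $y\sim v$, then either $\{s,y,v,a\}$ or $\{s,y,v,b\}$ is a diamond (when $a$ or $b$ is a spoke). Otherwise $v,a,b$ are three pairwise non-adjacent common neighbours of $s$ and $y$ in $N$.
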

\begin{proof}
Since $H-N[s]$ is connected, it follows that $s$ is not the center of a proper wheel in $H$ by Corollary~\ref{cor:component-number}. Let $v\in V(H)\setminus \{s\}$ be the center of a proper wheel $W=(C,\,v)$ in $H$. For all $w\in V(G)\setminus V(H)$, $N(w)\cap V(C)\subseteq N[s]$.
Suppose that $W$ is not proper in $G$. Then there exists a vertex $w$ such that either $w$ has more than two neighbors in some sector of $W$ but $wv\notin E(G)$, or $w$ has neighbors in at least two distinct sectors of $W$. It follows that $w$ has more than one neighbor in $V(C)$.  First, assume that $w\in N[s]$. Since $G$ is diamond-free, $w$ either has at most one neighbor in $V(C)$ or has exactly two consecutive neighbors on $C$ (only when $s \in V(C)$). Hence $w$ is proper for $W$, a contradiction.
This implies that $w\in V(G)\setminus (V(H)\cup N[s])$. Since $G$ is diamond-free, $N(w)\cap N(s)$ is an independent set.
Since $w$ is not proper for $W$, $|N(w)\cap N(s)|\geq 2$. 
Suppose that $w$ has three distinct neighbors $a,b,c$ in $V(C)\cap N(s)$. Let $P$ be a shortest path connecting two of $a,b,c$, say $a$ and $b$, with interior in $K$. Note that $s$ is anticomplete to $P^*$.  If $c$ is anticomplete to $P$, then $G[V(P)\cup \{w,s,a,b,c\}]$ is an $\iskfour$, a contradiction.
By the choice of $P$, we have that $P^*$ consists of a single vertex $x$ and $cx\in E(G)$. However, it follows that $G[\{w,s,x,a,b,c\}]$ is a $K_{3,3}$, a contradiction. 
So $w$ has exactly two neighbors $a$ and $b$ in different sectors of $W$, say $S,S'$. 
Since $a,b\in N(s)$ and $s$ is proper for $W$, $s\in V(S)\cap V(S')$.
Then, $v$ can be linked to the cycle $wasbw$ by $vs$ and the two paths with interiors in $S-\{s\}$ and $S'-\{s\}$, a contradiction.
This completes the proof of Lemma~\ref{lem:component-center}.
\end{proof}
\begin{lemma}\label{lem:series-parallel-stable}
Let $G$ be an $\iskdbpk$-free graph without clique cutsets, and let $s$ be the center of a proper wheel in $G$. Let $K$ be a component of $G - N[s]$, and let $N \subseteq N(s)$ be the set of neighbors of $s$ that have a neighbor in $K$. If the induced subgraph $G[V(K) \cup N \cup \{s\}]$ is series-parallel, then $N$ is an independent set in $G$.
\end{lemma}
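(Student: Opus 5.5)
We argue by contradiction. Suppose $u,v\in N$ with $uv\in E(G)$, and write $H=G[V(K)\cup N\cup\{s\}]$. The aim is to exhibit an induced subdivision of $K_4$ inside $H$; since series-parallel graphs contain no subdivision of $K_4$ at all, this contradicts the hypothesis. The triangle $suv$ will supply three of the four branch vertices, $s$, $u$ and $v$, and the fourth will come from $K$.

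First, by diamond-freeness, $u$ and $v$ have no common neighbour other than $s$. Also $\{u,v\}$ is anticomplete to $N(s)\setminus\{u,v\}$, since otherwise $s,u,v$ and that vertex would give a diamond.

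Since $K$ is connected and both $u$ and $v$ have neighbours in $K$, choose a shortest path $P$ from $u$ to $v$ with $P^*\subseteq V(K)$. Then $C_0=uPvu$ is a hole, because $u,v$ have no common neighbour in $K$ and $P$ is shortest. Moreover $s$ is adjacent to exactly $u$ and $v$ on $C_0$, since $s$ has no neighbours in $K$.

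To get the fourth branch vertex we use that $G$ has no clique cutset. The set $\{s,u,v\}$ is a clique, so it cannot separate $H$ from the rest of $G$. Since $s$ is a proper wheel center, Corollary~\ref{cor:component-number} shows that $G-N[s]$ has at least three components, so other components exist. The intended conclusion is that some vertex of $N\setminus\{u,v\}$, or a path in $K$, attaches to $C_0$ away from $\{u,v\}$, so that $P$ can be chosen with an extra branch.

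Concretely, $\{u,v\}$ (or $\{s,u,v\}$) is not a cutset separating $P^*$ from $N\setminus\{u,v\}$ within $H$. Hence there is a vertex $w\in N\setminus\{u,v\}$ together with a path $Q$ from $w$ into $K$ that reaches $C_0$. Then $s$, linked to the hole $C_0$ via $su$, $sv$ and $swQ$, violates Lemma~\ref{lem:nolink}, provided the attachment of $Q$ to $C_0$ is a single vertex of $P^*$ or an appropriate edge. The attachment cases are handled as usual: a single neighbour gives a link, two nonadjacent neighbours allow rerouting, and two adjacent neighbours give a prism or a bowtie. Each case contradicts either $\iskfour$-freeness of $H$ or the excluded subgraphs; note that any $\isk$ built this way lies entirely in $H$. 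If $N=\{u,v\}$, then $\{s,u,v\}$ is a clique cutset of $G$, since $G-N[s]$ has other components. That is a contradiction.

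The main obstacle is showing that $w$ exists with a path to $P^*$ avoiding $u$ and $v$. Every vertex of $N\setminus\{u,v\}$ has neighbours in $K$, but $K-P^*$ need not connect to $P^*$ nicely. I would first minimise over the choice of $P$, then use that $K$ is connected to take a shortest path from $N(w)\cap V(K)$ to $P^*\cup N(u)\cup N(v)$. I would then do the case analysis on its endpoint's neighbours in $C_0$, mirroring the rerouting arguments of Lemma~\ref{lem:unique}.
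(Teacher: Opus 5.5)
Your skeleton is the same as the paper's. The triangle $suv$ supplies three branch vertices. The absence of a clique cutset forces some $w\in N\setminus\{u,v\}$; your argument that $N=\{u,v\}$ would make $\{s,u,v\}$ a clique cutset, using Corollary~\ref{cor:component-number}, is fine. A path from $w$ through $K$ to $P^*$ then supplies the fourth branch vertex.

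As written, however, the proof is not finished, and the difficulty is self-imposed. You insist on an \emph{induced} subdivision of $K_4$. That commits you to a link/rerouting case analysis which you only sketch: interior vertices of $Q$ may be adjacent to $u$ or $v$, and $w$ or the last vertex of $Q$ may have several neighbours on $P^*$. It also leads you to treat the existence of $Q$ as the main obstacle, when it is not one. The missing observation is that, by the paper's definition, a series-parallel graph contains no subdivision of $K_4$ even as a non-induced subgraph. So none of that analysis is needed.

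With this, the argument closes in a few lines, exactly as in the paper:
\begin{itemize}
\item Take any path $P$ from $u$ to $v$ with $P^*\subseteq V(K)$. Such a path exists, with nonempty interior, because $u$ and $v$ each have a neighbour in the connected graph $K$.
\item Since $w$ has a neighbour in $K$ and $K$ is connected, there is a path $Q$ from $w$ with interior in $K$ ending at the first vertex $t$ of $P^*$ it reaches.
\item Because $s,u,v,w\notin V(K)$, the six paths $su$, $sv$, $uv$, $swQt$, $uPt$ and $vPt$ are internally disjoint.
\item Hence $H$ contains a subdivision of $K_4$ with branch vertices $s,u,v,t$, contradicting that $H$ is series-parallel.
\end{itemize}
This needs no hole, no minimality of $P$, and no use of Lemma~\ref{lem:nolink} or of prism-, bowtie- or diamond-freeness.
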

\begin{proof}
Since $G$ is \{diamond, bowtie\}-free, $e(G[N]) \le 1$. Suppose to the contrary that $G[N]$ has an edge $ab$. 
Since $G $ has no clique cutset, $|N|\geq 3$.  
Let $c\in N\setminus\{a,b\}$, and let $P$ be a path connecting $a$ and $b$ with interior in $K$.  
Since $c$ has a neighbor in $K$, there exists a path that starts at $c$, terminates in $P^*$, and has its interior contained in $K$.
This implies that $G[\{s,a,b,c\}\cup V(P)\cup V(P')]$ is a subdivision of $K_4$, and hence $G[V(K)\cup N\cup\{s\}]$ is not series-parallel, a contradiction. This completes the proof of Lemma~\ref{lem:series-parallel-stable}.
\end{proof}
A {\it tripod graph} is a triangle with an induced path attached to each vertex (see Figure~\ref{Figure-tripod}).
\begin{figure}[ht]
	\begin{center}
		\begin{tikzpicture}
			
			\tikzset{std node fill/.style={draw=black, circle,fill=black, line width=1pt, inner sep=2pt}}

		\def\dx{0.8}  
		
		\node[std node fill] (t1) at ({8+\dx},-3) {};
		\node[std node fill] (t2) at ({9+\dx},-3) {};
		\node[std node fill] (t3) at ({8.5+\dx},-2) {};
		
		\foreach \i/\j in {t1/t2,t2/t3,t3/t1}{\draw[blue,line width=1.5pt] (\i)--(\j);}
		
		\node[std node fill] (ta1) at ({7.6+\dx},-3.4) {};
		\node[std node fill] (ta2) at ({7.2+\dx},-3.8) {};
		\draw[blue,line width=1.5pt] (t1) -- (ta1) -- (ta2);
		
		\node[std node fill] (tb1) at ({9.4+\dx},-3.4) {};
		\node[std node fill] (tb2) at ({9.8+\dx},-3.8) {};
		\draw[blue,line width=1.5pt] (t2) -- (tb1) -- (tb2);
		
		\node[std node fill] (tc1) at ({8.5+\dx},-1.5) {};
		\node[std node fill] (tc2) at ({8.5+\dx},-1) {};
		\draw[blue,line width=1.5pt] (t3) -- (tc1) -- (tc2);
		\end{tikzpicture}
\caption{\small Illustrations of the configuration: a tripod graph.}

		\label{Figure-tripod}
	\end{center}
\end{figure}
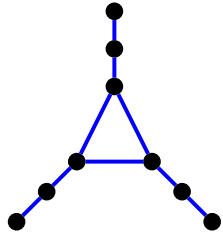
 \begin{lemma}[Chudnovsky et al. \cite{Chudnovsky2019}]\label{lem:degree-1}
	Let $G$ be a connected graph, let $a,b,c\in V(G)$ with $\deg(a)=\deg(b)=\deg(c)=1$, and let $H$ be a connected induced subgraph of $G$ containing $a,b,c$ with $V(H)$  minimum. Then, either $H$ is a subdivision of $K_{1,3}$ with $a,b,c$ being the vertices of degree one, or $H$ is a tripod graph with $a,b,c$ being the vertices of degree one.
\end{lemma}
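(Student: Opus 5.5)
Since $H$ is connected and contains $a,b,c$, it contains a minimal connected subgraph containing them, namely a subtree $T$ of $H$ whose leaves lie in $\{a,b,c\}$. The plan is to analyse the shape of $T$ and then use the minimality of $|V(H)|$ to pin down the remaining edges of $H$. First, every vertex of $H$ lies in $T$: $G[V(T)]$ is a connected induced subgraph containing $a,b,c$, so minimality forces $V(H)=V(T)$. Next, since $a,b,c$ have degree one in $G$, they have degree one in $H$ and cannot be internal vertices of $T$ (an internal vertex of a path through $a$ would need degree two). Hence $T$ is either a path with two of $a,b,c$ as ends and the third in its interior, which is impossible by the degree argument, or a subdivision of $K_{1,3}$ with leaves exactly $a,b,c$. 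So $T$ consists of a center $x$ and three paths $P_a,P_b,P_c$ from $x$ to $a,b,c$.

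It remains to control the chords of $T$ in $H$. Minimality gives that no vertex of $H$ can be deleted while keeping $a,b,c$ connected. Each $P_u$ is an induced path: a chord inside $P_u$ would let us shortcut and delete an interior vertex. Consider an edge $pq$ between $P_a\setminus\{x\}$ and $P_b\setminus\{x\}$. Then $a$ and $b$ are connected through $pq$, and $c$ can be joined to this connected set along $P_c$ into $x$. Removing $x$ would then still leave $a,b,c$ connected unless $x$ is needed. More carefully, I would choose such chord edges extremal and argue as follows. If chords exist, let $p\in P_a$ and $q\in P_b$ be the ends of a chord with $p,q$ as close to $x$ as possible (or take the chord nearest to $x$ in a suitable order). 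Then the vertices of $P_a$ and $P_b$ strictly between $x$ and $p,q$ can be deleted, contradicting minimality unless those segments are empty. This forces $p$ and $q$ to be the neighbours of $x$ and leaves $x,p,q$ forming a triangle. In that case $x$ itself, together with its other branch, shows we could have rerooted. I would redo the argument with the alternative minimal structure: three disjoint induced paths from the triangle vertices.

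A cleaner route is a second, symmetric step. If there is any edge between two different branches of $T$, take $H'$ to be the union of the three leaf-to-triangle paths. Concretely, I would choose a triangle or a center so that the three vertex-disjoint paths to $a,b,c$, with pairwise "attachments" only at one end, are anticomplete except at the ends. Minimality then shows that any additional edge between branches gives a strictly smaller connected induced subgraph containing $a,b,c$. The conclusion is that either there are no edges between branches, so $H$ is an induced subdivided claw, or the only edges between branches join the three vertices adjacent to a common point. In the latter case minimality also deletes $x$, leaving a triangle with three induced paths, that is, a tripod. The main obstacle is this chord analysis. One must show that any two cross-edges force everything down to a single triangle, and that a lone cross-edge between only two branches contradicts minimality, since deleting the segment of $T$ cut off by it keeps connectivity. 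I would first handle the case of a single cross-edge by this deletion argument, and then show that the cross-edges must pairwise share ends, which produces exactly one triangle.
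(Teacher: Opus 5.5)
Your first half is sound. Pruning a spanning tree of $H$ gives a subtree $T$ whose leaves are exactly $a,b,c$. Minimality forces $V(H)=V(T)$, so $H$ is a subdivided claw with centre $x$ and branches $P_a,P_b,P_c$, plus extra edges. Each $P_u$ is induced by your shortcut argument. (The paper quotes this lemma from Chudnovsky et al.\ without proof, so I am judging the argument on its own.)

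The gap is in the cross-edge analysis. The claims you plan to prove there are false, and the final structure you aim for contradicts $V(H)=V(T)$:
\begin{itemize}
\item You assert that a lone cross-edge between two branches contradicts minimality. Let $p,q$ be the neighbours of $x$ on $P_a,P_b$ and suppose $pq$ is the only cross-edge. Then $H$ is precisely a tripod with triangle $xpq$ and attached paths $P_c$, $pP_aa$, $qP_bb$. This case genuinely occurs: take $G$ to be a tripod.
\item Your plan to have the cross-edges form a triangle on the three neighbours of $x$ and then ``delete $x$'' cannot be the outcome. You already have $x\in V(T)=V(H)$, so $x$ is never removable from a minimal $H$.
\item Deleting the segments of $P_a$ and $P_b$ between $x$ and $p,q$ \emph{simultaneously} can disconnect $x$ and $P_c$ from the rest. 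As written, that step does not preserve connectivity.
\end{itemize}

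The correct finish is short, and no extremal choice of chord is needed.
\begin{itemize}
\item Let $pq$ be any edge with $p\in V(P_a)\setminus\{x\}$ and $q\in V(P_b)\setminus\{x\}$.
\item Delete only the interior of $xP_bq$. What remains is $P_a\cup P_c\cup qP_bb$, which is still connected via $pq$ and still contains $a,b,c$. By minimality that interior is empty, so $q$ is adjacent to $x$. Symmetrically, $p$ is adjacent to $x$.
\item Hence every cross-edge joins two of the three neighbours $p,q,r$ of $x$.
\item Two distinct cross-edges would share an end, say $pq$ and $pr$. Then $H-x$ is connected and contains $a,b,c$, contradicting minimality.
\end{itemize}
So there are either no cross-edges, giving a subdivided claw, or exactly one, giving a tripod whose triangle contains $x$. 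As written, your plan would instead try to prove the wrong dichotomy.
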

\begin{lemma}\label{lem:series-parallel-contracting}
Let $G$ be an $\iskdbpk$-free graph without clique cutsets, and let $s$ be the center of a proper wheel in $G$. Let $K$ be a component of $G - N[s]$, and let $N$ denote the set of vertices in $N(s)$ having a neighbor in $K$. Let $G'$ be the graph obtained from $G$ by contracting $V(K)$ into a new vertex $z$. If $G[V(K)\cup N\cup \{s\}]$ is series-parallel, then $G'$ is $\iskdbpk$-free graph without clique cutsets.
\end{lemma}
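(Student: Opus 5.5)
By Lemma~\ref{lem:series-parallel-stable}, $N$ is an independent set. Since $G$ has no clique cutset, $|N|\ge 2$. In $G'$ the new vertex $z$ is adjacent exactly to the vertices of $N$, and $G'-z=G-V(K)$ is an induced subgraph of $G$. Every forbidden induced subgraph of $G'$ must therefore use $z$. The plan is to replace $z$ by a suitable connected piece of $K$, producing a forbidden induced subgraph of $G$ of the same type. Clique cutsets will be handled separately at the end.

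\textbf{Diamond, bowtie, prism and $K_{3,3}$.} For a diamond, bowtie or prism $F$ containing $z$, the vertex $z$ lies in a triangle. Its two neighbours in that triangle are then adjacent vertices of $N$, contradicting the independence of $N$. So $z$ lies in no triangle, and none of these three graphs can contain it. For $K_{3,3}$, the vertex $z$ together with $s$ lies on the same side, and both are complete to the three vertices $a,b,c\in N$ on the other side. This gives an $\iskfour$ in $G$, namely $s$, $a$, $b$, $c$ together with a minimal connected induced subgraph of $K$ attached to $a,b,c$. That attachment is a subdivided claw or a tripod by Lemma~\ref{lem:degree-1}; in the tripod case we get a prism or an $\iskfour$ with $s$. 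So $G$ would be non-free, a contradiction.

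\textbf{$\iskfour$.} Let $H$ be an induced subdivision of $K_4$ in $G'$ containing $z$. If $z$ has degree two in $H$, with neighbours $a,b\in N$, replace $z$ by a shortest $a$--$b$ path through $K$. Since $K$ is anticomplete to $V(G)\setminus(N\cup V(K))$, and $H$ contains no vertex of $N$ other than $a,b$ adjacent to $z$, the only possible extra adjacencies come from other vertices of $N$ on $H$ having neighbours on the path. These must be handled.

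\emph{Main obstacle.} The replacement path may see further vertices of $N\cap V(H)$. The remedy is to use the series-parallel structure of $G[V(K)\cup N\cup\{s\}]$: in that graph, $s$ is adjacent to all of $N$. If some $c\in N\cap V(H)$ were adjacent to the interior of every $a$--$b$ path in $K$, then $s,a,b,c$ together with the path and a $c$-attachment would yield a $K_4$-subdivision (as in the proof of Lemma~\ref{lem:series-parallel-stable}). This contradicts series-parallelity, so a clean path can be chosen.

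If $z$ is a corner of $H$, with neighbours $a,b,c\in N$, replace $z$ by a minimal connected induced subgraph of $K\cup\{a,b,c\}$ as in Lemma~\ref{lem:degree-1}. A subdivided claw gives an $\iskfour$ in $G$ directly. A tripod gives, together with $s$, a subdivision of $K_4$ in $G[V(K)\cup N\cup\{s\}]$, again contradicting series-parallelity. The same series-parallel argument rules out attachments to a fourth vertex of $N$ lying on $H$.

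\textbf{Clique cutsets.} Suppose $X$ is a clique cutset of $G'$. If $z\notin X$, then $X\subseteq V(G)$ is a clique. Expanding $z$ back to $K$, which is connected, preserves the separation in $G$, a contradiction. If $z\in X$, then $X\subseteq\{z\}\cup N$ has size at most $2$, since $N$ is independent; say $X=\{z,a\}$ or $X=\{z\}$. Then $\{a\}\cup V(K)$ (or $V(K)$) separates $G$. The vertex $s$ and all of $N$ lie in one component, because they are joined through $s$. Some other component $L$ must then have all its attachments in $V(K)\cup\{a\}$. But $L$ is outside $N[s]$ and anticomplete to $K$, since $K$ is a full component. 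So $L$ can attach only to $a$, making $\{a\}$ a clique cutset of $G$, a contradiction.
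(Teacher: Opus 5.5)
Your overall plan is the paper's: replace $z$ by a piece of $K$, and use series-parallelity of $G[V(K)\cup N\cup\{s\}]$ to control the attachments. Your clique-cutset paragraph is essentially correct, and it covers something the paper's proof does not address. But two cases are mishandled.

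(i) In this paper a prism consists of three paths of length at least one joining two triangles, so the interior path vertices lie in no triangle. Independence of $N$ therefore does not prevent $z$ from being such a vertex. This case is missing, and it needs the same path replacement as your degree-two $\iskfour$ case.

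(ii) In the $K_{3,3}$ case, $s$ need not belong to $H$. Moreover, when the attachment is a subdivided claw, the graph formed by $s,a,b,c$ and the claw is a theta with branch vertices $s$ and the claw centre, not an $\iskfour$. You have to keep the other two vertices $u,v$ on $z$'s side of $H$. They are nonadjacent to $z$ in the induced subgraph $H$, hence not in $N$, hence anticomplete to $K$. So replacing $z$ by the claw gives an induced subdivision of $K_{3,3}$ in $G$, and Lemma~\ref{lem:ISK4-decomposition} then yields a $K_{3,3}$ in $G$, a contradiction.

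Your ``main obstacle'' does not arise, and the argument you give for it is false. Since $N_{G'}(z)=N$ and $H$ is induced, $N\cap V(H)=N_H(z)$. So no vertex of $N$ other than $a,b$ (or other than $a,b,c$ when $z$ is a corner) lies on $H$, and a shortest $a$--$b$ path through $K$ gives an induced subdivision at once. Your claimed implication also fails. Take $K$ to be a path and attach $c$ to a single interior vertex $p$: then $c$ sees the interior of every $a$--$b$ path through $K$, yet together with $s$ this gives only a theta with branch vertices $s,p$, which is series-parallel.

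Series-parallelity is genuinely needed at a point you skip in the corner case. Lemma~\ref{lem:degree-1} assumes $a,b,c$ have degree one, so each must attach to the chosen subgraph of $K$ by exactly one edge. Otherwise one of them can be an interior vertex of the minimal connected subgraph, and the replacement is not a subdivision of $H$. The paper secures this in three steps:
\begin{enumerate}
\item Take a shortest $a$--$b$ path $P$ with interior in $K$.
\item Observe that $c$ has at most one neighbour on $P$. Otherwise $c$ has at least three neighbours on the hole $saPbs$, giving a wheel inside the series-parallel graph.
\item Take a shortest path $Q$ from $c$ to $P^*$ through $K$, and apply Lemma~\ref{lem:degree-1} in $G[V(P)\cup V(Q)]$.
\end{enumerate}
The same step is needed in your $K_{3,3}$ case.
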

\begin{proof}
By Lemma~\ref{lem:series-parallel-stable}, we have that  $N$ is an independent set,  $z$ is not contained in any triangle in $G'$, and thus $G'$ is \{diamond, bowtie\}-free.  Suppose that $H$ is an induced subgraph of $G'$, which is a $K_{3,3}$,  or a prism, or a subdivision of $K_4$. Then $z\in V(H)$. If $z$ has degree two in $H$, let $a,b$ be its neighbors in $H$. 
Then we can replace the path $azb$ by a path $P$ connecting $a$ and $b$ with interior in $K$, obtaining a subdivision of $H$.
This yields an induced $\iskfour$ in $G$ or a prism, a contradiction. 
Therefore, $z$ has degree three in $H$; let $a,b,c$ denote the neighbors of $z$ in $H$. Let $P$ be a shortest path connected $a$ and $b$  with interior in $K$. Then, $c$ has at most one neighbor on $P$; for otherwise $G[V(P)\cup \{a,b,c,s\}]$ is a wheel, a contradiction to the fact that  $G[V(K)\cup N\cup\{s\}]$ is series-parallel. 
Let $Q$ be a shortest path that starts at $c$, terminates in $P^*$, and has its interior contained in $K$; then, by symmetry, each of $a, b, c$ has a unique neighbor in $V(Q) \cup V(P)$. Let $H'$ be a minimum connected induced subgraph of  $G[V(P)\cup V(Q)]$ containing $a,b,c$.  
By Lemma~\ref{lem:degree-1}, we have that either $H'$ is a subdivision of $K_{1,3}$ with $a,b,c$ being the vertices of degree one, or $H'$ is a tripod graph  with $a,b,c$ being the vertices of degree one. 
If $H'$ is a tripod graph  with $a,b,c$ as the vertices of degree one, then $G[V(H')\cup \{s,a,b,c\}]$  is an $\iskfour$, a contradiction. 
Therefore, $H'$ is a subdivision of $K_{1,3}$ with $a,b,c$ as the vertices of degree one. This implies that $G[V(H')\cup V(H-z)]$ is a subdivision of $H$.
Recall that $z$ is not contained in any triangle. So $H$ is not a prism.
However, it follows that $G$ contains an $\iskfour$ or by Lemma~\ref{lem:ISK4-decomposition} a $K_{3,3}$, a contradiction. This completes the proof of Lemma~\ref{lem:series-parallel-contracting}.
\end{proof}

\begin{lemma} \label{lem:non-centers} Let $G$ be an $\iskdbpk$-free graph without clique cutsets and let $s$ be the center of a proper wheel in $G$. Let $K$ be a component of $G \setminus N[s]$, and let $N$ be the set of vertices in $N(s)$ with a neighbor in $K$.
Suppose $H = G[(V(K) \cup N \cup \{s\}]$ is series-parallel (so $N$ is an independent set). Let $G'$ be obtained from $G$ by contracting $V(K)$ to a new vertex $z$. Then $z$ is not the center of a proper wheel in $G'$, and for $v \in V(G') \setminus \{s,z\}$, if $v$ is the center of a proper wheel in $G'$, then $v$ is the center of a proper wheel in $G$.
\end{lemma}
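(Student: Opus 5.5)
The argument has two parts: first rule out $z$ as a center, then transfer any proper wheel of $G'$ centered at some $v\neq s,z$ back to $G$. By Lemma~\ref{lem:series-parallel-contracting}, $G'$ is $\iskdbpk$-free without clique cutsets, so all results of Section~3 apply to $G'$.

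\emph{$z$ is not a center.} Every neighbor of $z$ in $G'$ lies in $N\subseteq N(s)$, so $s\notin N[z]$ and $s$ is adjacent to every vertex of $N(z)$. Since $N$ is independent, $G'-N[z]$ is connected: every vertex outside $N[z]\cup\{s\}$ is either adjacent to $s$ or reaches $N(s)\setminus N$ through its component of $G-N[s]$. This contradicts Corollary~\ref{cor:component-number}. Alternatively, a wheel $(C,z)$ has three spokes $a,b,c\in N$, all adjacent to $s$; if $s\notin C$, the vertex $s$ would violate properness, and if $s\in C$, $z$ would be linked to a short hole. The connectivity argument is cleaner, so I will use it.

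\emph{Transfer for $v\neq s,z$.} Let $W=(C,v)$ be a proper wheel in $G'$.

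If $z\notin V(C)$ and $z\neq v$, then $W$ is a wheel in $G$ as well. A vertex $w\in K$ sees on $C$ only vertices of $N$. If $w$ were improper for $W$ in $G$, then $z$ would see at least as much of $C$ in $G'$, since $N_{G'}(z)\supseteq N_G(w)\cap N$. The one subtle point is the clause requiring adjacency to $v$: $v\in N$ forces $vw$ to be handled separately, but $z$ is then adjacent to $v$ too, and having more neighbors only makes $z$ fail earlier. Hence $W$ is proper in $G$.

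If $z\in V(C)$, let $a,b$ be its two neighbors on $C$. Replace $azb$ by a shortest $a$–$b$ path $P$ with interior in $K$ to obtain a hole $C'$ in $G$. Since $v\notin K\cup\{s\}$, $v$ has no neighbor in $P^*$, so $(C',v)$ is a wheel with the same spokes and $P$ lies inside a single sector.

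Properness of $(C',v)$ in $G$ must then be checked in two cases.
\begin{enumerate}
\item Vertices outside $K$ are handled as in $G'$, since their neighbors on $C'$ outside $P^*$ are unchanged and they have none in $P^*$. The exception is $s$ and other vertices of $N$, whose neighbors on $C'$ can only be $a$, $b$, or their neighbors on $C$. For these I use the properness of $z$, or of $s$, in $G'$.
\item Vertices $w\in K\setminus V(P)$ see $C'$ only in $N\cup V(P)$. If $w$ has neighbors in $P$ and in $N\cap V(C)\setminus\{a,b\}$, then properness of $z$ in $G'$ puts those $N$-vertices in the sector containing $z$, so $w$ stays within one sector. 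If $w$ has three or more neighbors there without being adjacent to $v$, I rule this out with Lemma~\ref{lem:nolink}, or by rerouting $P$ through $w$ to obtain a shorter path, contradicting the choice of $P$.
\end{enumerate}

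\emph{Main obstacle.} The hardest case is $w\in K$ adjacent to several vertices of $N\cap V(C)$ lying in different sectors. Here I plan to use that $H$ is series-parallel: a vertex of $K$ together with $s$ and three vertices of $N$ joined through $K$ would give a $K_4$-subdivision in $H$, or a $K_{3,3}$ as in the proof of Lemma~\ref{lem:component-center}. This bounds $|N_G(w)\cap N\cap V(C)|\le 2$, and the two-neighbor case is then killed by linking $v$ to the cycle through $w$, $s$ and those two vertices, exactly as in Lemma~\ref{lem:component-center}.

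Finally, the case $z=v$ is excluded by the first part.
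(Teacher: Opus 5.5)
Your treatment of the case $z\notin V(C)$ has a genuine gap, and this is the case where the paper does most of its work. Suppose $zv\in E(G')$, i.e.\ $v\in N$. Then $z$ may have three or more neighbours in its sector $S$ and still be proper, because it is adjacent to $v$. But a single vertex $y\in V(K)$ can have three or more neighbours in $S$ (all in $N$) without being adjacent to $v$, since the edge $zv$ may come from a different vertex of $K$. Such a $y$ is improper for $W$ in $G$, so $W$ itself need not be proper in $G$. Your remark that ``having more neighbors only makes $z$ fail earlier'' does not address this, because $z$ does not fail.

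Your bound $|N_G(w)\cap N\cap V(C)|\le 2$ does not rescue the argument. Series-parallelness of $H$ gives nothing here: $\{s,w\}$ complete to three vertices of $N$ is only a $K_{2,3}$. The $K_{3,3}$ argument of Lemma~\ref{lem:component-center} needed $w$ outside $K$, so that $K$ supplied a third hub. The paper instead replaces the wheel. It takes such a $y$, with extreme neighbours $y_1,y_2$ on $S$ and $|V(y_1Sy_2)|$ maximum, and replaces $y_1Sy_2$ by $y_1yy_2$ to get $W'=(C',v)$. It then verifies that $W'$ is proper in $G$:
\begin{itemize}
\item vertices of $(y_1Sy_2)^*$, by diamond-freeness;
\item neighbours of $v$, using that $N$ is independent;
\item vertices with all neighbours in $S$, by Lemma~\ref{lem:nolink};
\item a neighbour $u$ of $y$ with neighbours in another sector $T$, by Theorem~\ref{thm:proper-wheel-path} (forcing $V(S)\cap V(T)\neq\emptyset$), followed by linkage and prism arguments.
\end{itemize}

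There are also several smaller problems.
\begin{itemize}
\item Your ``main obstacle'' cannot occur. If $z\in V(C)$, then $N\cap V(C)=\{a,b\}$, because $z$ is complete to $N$ in $G'$ and $C$ is a hole. If $z\notin V(C)$, properness of $z$ already puts all of $N\cap V(C)$ in one sector.
\item In the case $z\in V(C)$, the fact $v\notin K\cup\{s\}$ does not show that $v$ has no neighbour in $P^*$, since $v$ could lie in $N$. You first need $zv\notin E(G')$. This holds because otherwise $s$, adjacent to $a$ and $b$ in two different sectors and not on $C$, would be improper.
\item A vertex of $N$ with two or more neighbours in $P^*$ is excluded by series-parallelness of $H$, since it would give a wheel on the hole $saPbs$. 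It is not excluded by properness in $G'$.
\item $G'-N[z]$ need not be connected: a component of $G-N[s]$ other than $K$ may attach only to vertices of $N$. Use your alternative argument instead. The vertex $s$ is adjacent to every spoke of $(C,z)$, so it cannot lie on the hole $C$, and off $C$ it has neighbours in more than one sector.
\end{itemize}
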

\begin{proof}
Since $N_{G'}(z) \subseteq N_{G'}(s)$, it follows from Corollary~\ref{cor:component-number} that $z$ is not the center of a proper wheel in $G'$. Let $v \in V(G') \setminus \{s,z\}$ be the center of a proper wheel in $G'$, and let $W = (C, v)$ be such a wheel with minimum number of spokes. Since $G'$ is $\iskdbpk$-free by Lemma~\ref{lem:series-parallel-contracting}, it follows that $W$ satisfies the hypotheses of  Theorem~\ref{thm:proper-wheel-property}. We complete the proof of Lemma~\ref{lem:non-centers} by analyzing the following cases.
\begin{case}\label{case: z in C}
	$z\in V(C)$. 
\end{case}

Let $a, b$ denote the neighbors of $z$ in $V(C)$. Let $P$ be a shortest path connecting $a$ and $b$ with interior in $K$. Then every vertex in $V(K)$ has at most two neighbors in $V(P)$ since $G$ is diamond-free. Let $W' = (C',v)$ be the wheel in $G$ obtained from $W$ by replacing the subpath $azb$ of $C$ by $P$ to obtain $C'$. 

It remains to show that $W'$ is a proper wheel in $G$. Since  $s$ is proper for $W$ in $G'$, $zv\notin E(G)$. Therefore, $azb$ is a subpath of some sector of $W$. This  implies that $P$ is a subpath  of some sector of $W'$. 

Now suppose that there is a vertex $x$ which is not proper for $W'$ in $G$.  Clearly, $x\in V(H)$ and $N(x)\cap P^*\neq \emptyset$.  If $x \in K$, then $N_G(x) \cap V(C') \subseteq V(P)$, and so $|N_G(x) \cap V(C')| \leq 2$ by the minimality of $|V(P)|$, a contradiction; it follows that $x \notin K$. Therefore, $x\in N$. Suppose that  $x $ has two or more neighbors in $P^*$. Then $H[\{a,b,x,s\} \cup V(P)]$ induces a wheel in $H$, contradicting the fact that $H$ is series-parallel. Consequently, $x$ has exactly one neighbor in $P^*$. Note that $x$  is proper for $W$ and $xz\in E(G')$. This implies that  $x$ is proper for $W'$ in $G$, a contradiction. This concludes the proof of Case~\ref{case: z in C}.
\begin{case}\label{case:z notin C}
	$z\notin V(C)$
\end{case}

Clearly, $W$ is a wheel in $G$. Since $W$ is proper in $G'$, there exists a sector $S$ of $W$ that contains all neighbors of $z$ on $C$. Hence, for every $x \in K$, we have 
\begin{flalign}\label{align:x-neighour-1}
    N_G(x) \cap V(C) \subseteq N_{G'}(z) \cap V(C) \subseteq V(S).
\end{flalign}

Suppose that $zv\notin E(G)$. If $x \in G \setminus (V(C) \cup V(K))$, then $x$ is proper for $W$ is in $G$ as $x$ is proper for $W$ in $G'$. Now consider a vertex $x \in V(K)$. Since  $W$ is proper in $G'$, it follows that $|N_{G'}(z) \cap V(C)| \leq 2$. Then by~\eqref{align:x-neighour-1}, $|N_G(x) \cap V(C)| \leq 2$, and hence $x$ is proper for $W$ in $G$. 
Therefore, we may assume that $zv\in E(G)$,
and thus $v\in N$.
Let $a$ and $b$ be the ends of $S$. 
Let $z_1,z_2$ be the neighbors of $z$ such that $z_1$ is closest to $a$ and $z _2 $ is closest to $b$.
If $W$ is not proper in $G$, then there exists $y\in V(K)$ such that $y$ is not adjacent to $v$ and has at least three neighbors in $S$. Let $y_1$ (resp. $y_2$) be the neighbor of $y$ closest to $a$ (resp.  to $b$) on $S$. Furthermore, assume that $y$ is chosen such that  $|V(y_1Sy_2)|$ is as large as possible. 
Let $W'=(C',v)$ be the wheel obtained from $W$ by replacing the subpath $y_1Sy_2$ by $y_1yy_2$.  We show that $W'$ is a proper wheel in $G$. Now suppose that there is a vertex $u$ which is not proper for $W'$ in $G$. If $u\in (y_1Sy_2)^*$, then $uv\notin E(G)$ and $N_G(u)\cap V(C')\subseteq \{y_1,y,y_2\}$. 
Since $G$ is diamond-free, $|N_G(u)\cap V(C')|\leq 2$. Then $u$ is proper for $W'$, a contradiction. Therefore, $u\in V(G)\setminus V(W)$.

If $uv\in E(G)$, first we suppose that $u\in K$, then $u$ is proper for $W'$, a contradiction.
So $u\in V(G)\setminus (V(W)\cup V(K))$.
Since $v\in N$ and $N$ is an independent set,  $uz\notin E(G')$ and hence $uy\notin E(G)$.
Since $u$ is proper for $W$, 
 $u$ is proper for $W'$, a contradiction.
 So $uv\notin E(G)$ and $|N(u)\cap V(C)|\le 2$.

  If $N_{G'}(u)\cap V(C)\subseteq  V(S) $, then $N_{G}(u)\cap V(C')\subseteq  V(aSy_1yy_2Sb)$. By Lemma~\ref{lem:nolink}, we have that $|N_{G}(u)\cap V(C')|\leq 2$. This implies that $u$ is proper for $W'$, a contradiction. Therefore, there exists a sector $T$ of $W$ distinct from $S$  such that $N_{G'}(u)\cap V(C)\subseteq V(T)$. 
 
Since $u$ is not proper for $W'$, $uy\in E(G)$ and hence $uz\in E(G')$.
 By Theorem~\ref{thm:proper-wheel-path}, we have that  $V(S)\cap V(T)\neq \emptyset$. Without loss of generality, we may assume that $V(T)\cap V(S)=\{b\}$. 
Let $u_1$ be the neighbor of $u$ in $T$ closest to $b$.

If $u$ has exactly only neighbor in $V(T)$. That is, $u_1\neq b$. But now, 
$z$ can be linked to $vTv$ by $zv,zz_2Sb$ and $zuu_1$.
So $z $ has exactly two neighbors in $T$ say $u_1,u_2$.
If $u_1u_2\notin E(G)$, $z$ can be linked to  the hole obtained from $vTv$  by rerouting $T$ through $u$ by $zv$, $zu$ and $zz_2Sb$.
Therefore, $u_1u_2\in E(G)$, if $b=z_2$, then $b\neq u_1$ since $G'$ is diamond-free, but now $\{z,u,v,b,u_1,u_2\}\cup V(T)$ induces a prism, a contradiction.
Therefore, $b\neq z_2$, but now $u$ can be linked to the hole $vTv$ by $uu_1$, $uu_2$ and $uzv$, a contradiction.
This completes the proof of Lemma~\ref{lem:non-centers}.
\end{proof}

\section{\texorpdfstring{Properties of $\isktk$-free graphs}{Properties of $\isktk$-free graphs}}
In this section, we present techniques for our main theorem that serve to find either a vertex of degree one or a cycle with almost all vertices of degree two. A \emph{branch-vertex} in a graph $G$ is a vertex of degree at least $3$. A branch is a path of $G$ of length at least one whose ends are branch-vertices and whose internal vertices are not (so they all have degree $2$ in $G$). 
\begin{lemma}\label{lem:path}
Let $T$ be a tree of order at least two. Then one of the following three conditions holds:
\begin{itemize}
    \item[$(\romannumeral1)$] $T$ is a path;
    \item[$(\romannumeral2)$] there exist two vertex-disjoint paths $P_1$ and $P_2$ in $T$ whose ends are distinct leaves of $T$, and each of which contains one vertex of degree at least three in $T$;
    \item[$(\romannumeral3)$] $T$ is a subdivision of a star with at least three leaves.
\end{itemize}
\end{lemma}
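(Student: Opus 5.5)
We argue according to the number of vertices of degree at least three in $T$; call them branch vertices. If $T$ has no branch vertex, then $T$ is connected, acyclic and has maximum degree at most two, so $T$ is a path and $(\romannumeral1)$ holds. If $T$ has exactly one branch vertex $v$, then every other vertex has degree at most two. Deleting $v$ leaves $\deg(v)\ge 3$ components. Each component is a path, and exactly one of its ends is adjacent to $v$ (otherwise $T$ would contain a cycle). Hence $T$ is a subdivision of the star $K_{1,\deg(v)}$ whose leaves are the far ends of these paths, and $(\romannumeral3)$ holds.

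Now suppose $T$ has at least two branch vertices. Choose two branch vertices $u,w$ at maximum distance in $T$, and let $R$ be the $u$--$w$ path. Since $\deg(u)\ge 3$, the vertex $u$ has at least two neighbours $u_1,u_2$ not on $R$. Let $A$ be the component of $T-u$ containing $u_1$. We claim $A$ contains no branch vertex. Indeed, if some vertex $y\in V(A)$ had degree at least three in $T$, then the $y$--$w$ path would pass through $u$ and be longer than $R$, contradicting the maximality of $d(u,w)$. The same holds for the component containing $u_2$.

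Since a finite tree with no branch vertex is a path, starting at $u_1$ and walking away from $u$ inside its component we must stop at a leaf $\ell_1$ of $T$. The vertex $u_1$ has degree at most two, so this walk is forced. Likewise $u_2$ leads to a leaf $\ell_2$. Let $P_1=\ell_1\cdots u_1uu_2\cdots\ell_2$. This path contains the branch vertex $u$, and its ends are distinct leaves of $T$. Symmetrically, $w$ has two neighbours off $R$, giving a path $P_2$ through $w$ between two distinct leaves of $T$.

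It remains to show that $P_1$ and $P_2$ are vertex-disjoint. The branches of $P_1$ lie in components of $T-u$ that avoid $R$, and the branches of $P_2$ lie in components of $T-w$ that avoid $R$. These components are pairwise distinct subtrees hanging off different vertices of $R$, so $P_1$ meets $R$ only in $u$ and $P_2$ meets $R$ only in $w$. As $u\ne w$, the paths $P_1$ and $P_2$ are disjoint, and $(\romannumeral2)$ holds.

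The only delicate point is the maximal-distance argument showing that the branches hanging off the extremal branch vertices contain no further branch vertices, so that each one is a path ending at a leaf.
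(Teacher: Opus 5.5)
Your proof is correct, but it takes a different route from the paper.

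The paper argues by induction on $|V(T)|$. When $T$ has at least three leaves, it deletes a leaf $v$, applies the lemma to $T-v$, and checks how each outcome lifts to $T$ according to where $v$ is attached:
(i) for $T-v$ gives (iii) for $T$;
(iii) for $T-v$ gives (iii) if $v$ hangs at the centre or at a leaf, and (ii) otherwise;
(ii) for $T-v$ persists after extending or rerouting one of the paths through $v$, a step the paper only illustrates by a figure.

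You instead split according to the number of branch vertices. In the main case you take two branch vertices $u,w$ at maximum distance. Maximality forces each component of $T-u$ (resp.\ $T-w$) avoiding the $u$--$w$ path to be a pendant path ending in a leaf. So each $P_i$ consists of two such legs joined at a single branch vertex.

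Your argument is non-inductive and avoids the case analysis on where the deleted leaf attaches. It also yields a stronger conclusion: each $P_i$ contains exactly one vertex of degree at least three in $T$, and all its other vertices have degree at most two. That is the form actually needed when Lemma~\ref{lem:tree} closes $P_1,P_2$ into cycles through $x$ with at most one vertex of degree greater than two. In the paper's induction, by contrast, a rerouted path can pick up a second branch vertex, namely the attachment vertex of $v$.

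Two points you leave implicit are easily filled. In the one-branch-vertex case, the neighbour of $v$ in each component is an end of that path because it has degree at most two in $T$; acyclicity only gives its uniqueness. In the main case, $w\notin V(A)$ because $u_1$ and the neighbour of $u$ on $R$ lie in different components of $T-u$.
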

\begin{proof}
We proceed by induction on the order of $T$. If $T$ has exactly two leaves, then $(\romannumeral1)$ holds. Hence, we may assume that $T$ has at least three leaves. 
Let $v$ be a leaf of $T$. Now we consider the tree $T-v$. 

If $(\romannumeral1)$ holds for $T-v$, then $(\romannumeral3)$ holds for $T$.

Suppose that $(\romannumeral2)$ holds for $T-v$. Then there exist two vertex-disjoint paths $P_1$ and $P_2$ in $T-v$ whose ends are distinct leaves of $T-v$, and each of which contains one vertex of degree at least three in $T$. If $v$ has no neighbor of $V(P_1)\cup V(P_2)$, then $(\romannumeral2)$ holds for $T$. Therefore, $v$ has exactly one neighbor in $V(P_1)\cup V(P_2)$. Without loss of generality, we may assume that $v$ has exactly one neighbor in $V(P_1)$. This implies that $(\romannumeral2)$ holds for $T$ (see Figure~\ref{fig:two-path}).

\begin{figure}[htbp]
\centering

\begin{minipage}{0.48\textwidth}
\centering
\begin{tikzpicture}[
  scale=0.55,
  transform shape,
  vertex/.style={circle,draw,fill=black,minimum size=1.1em,inner sep=1.2pt},
  edge/.style={thick},
  dedge/.style={edge,dashed},
  highlight/.style={red, line width=3pt, opacity=0.45}
]

\node[vertex] (0) at (-5,5) {};
\node[vertex] (1) at (-5,3) {};
\node[vertex] (2) at (-5,1) {};
\node[vertex] (3) at (1,5) {};
\node[vertex] (4) at (1,3) {};
\node[vertex] (5) at (1,1) {};
\node[vertex,label=left:$v$] (6) at (-6,6) {};

\draw[highlight] (6)--(0);
\draw[highlight] (2)--(0);
\draw[highlight] (3)--(5);

\path
(0) edge (6)
(0) edge[dedge] (1)
(1) edge[dedge] (2)
(3) edge[dedge] (4)
(4) edge[dedge] (5)
(1) edge[dedge] (4);

\end{tikzpicture}
\end{minipage}
\hfill
\begin{minipage}{0.48\textwidth}
\centering
\begin{tikzpicture}[
  scale=0.55,
  transform shape,
  vertex/.style={circle,draw,fill=black,minimum size=1.1em,inner sep=1.2pt},
  edge/.style={thick},
  dedge/.style={edge,dashed},
  highlight/.style={red, line width=3pt, opacity=0.45}
]

\node[vertex] (0) at (-5,5) {};
\node[vertex] (1) at (-5,3) {};
\node[vertex] (2) at (-5,1) {};
\node[vertex] (3) at (1,5) {};
\node[vertex] (4) at (1,3) {};
\node[vertex] (5) at (1,1) {};
\node[vertex,label=left:$v$] (6) at (-6,6) {};
\node[vertex] (7) at (-5,4) {};

\draw[highlight] (6)--(7);
\draw[highlight] (2)--(7);
\draw[highlight] (3)--(5);

\path
(7) edge (6)
(0) edge[dedge] (7)
(1) edge[dedge] (7)
(1) edge[dedge] (2)
(3) edge[dedge] (4)
(4) edge[dedge] (5)
(1) edge[dedge] (4);

\end{tikzpicture}
\end{minipage}

\caption{Illustrations of the possible scenario of two vertex-disjoint paths $P_1$ and $P_2$ in $T$, highlighted in red.}
\label{fig:two-path}
\end{figure}
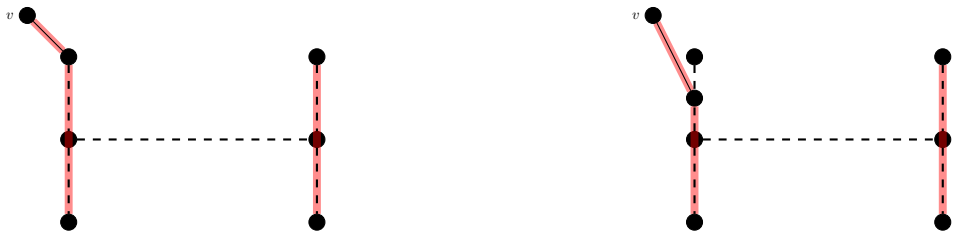

Therefore, $(\romannumeral3)$ holds for $T-v$. 
If $v$ is adjacent to the unique branch vertex or to a leaf of $T-v$, then $(\romannumeral3)$ also holds for $T$. 
Otherwise, $(\romannumeral2)$ holds for $T-v$ (see Figure~\ref{fig:star}).

\begin{figure}[htbp]
\centering

\begin{minipage}{0.32\textwidth}
\centering
\begin{tikzpicture}[
  scale=0.5,
  transform shape,
  main_node/.style={circle,draw,fill=blue,minimum size=1em,inner sep=3pt}
]
\node[main_node] (0) at (0,0) {};
\node[main_node] (1) at (-3,3) {};
\node[main_node] (2) at (0,3) {};
\node[main_node] (3) at (3,3) {};
\node[main_node,label=above:$v$] (4) at (-1.5,3) {};

\path[draw, thick] (1) edge (4); 
\path[draw, thick, dashed] 
  (3) edge (0)
  (2) edge (0)
  
  (1) edge (0);
\end{tikzpicture}
\end{minipage}
\hfill
\begin{minipage}{0.32\textwidth}
\centering
\begin{tikzpicture}[
  scale=0.5,
  transform shape,
  main_node/.style={circle,draw,fill=blue,minimum size=1em,inner sep=3pt}
]
\node[main_node] (0) at (0,0) {};
\node[main_node] (1) at (-3,3) {};
\node[main_node] (2) at (0,3) {};
\node[main_node] (3) at (3,3) {};
\node[main_node,label=above:$v$] (4) at (-1.5,3) {};

\path[draw, thick] (0) edge (4); 
\path[draw, thick, dashed] 
  (3) edge (0)
  (2) edge (0)
  (1) edge (0);
\end{tikzpicture}
\end{minipage}
\hfill
\begin{minipage}{0.32\textwidth}
\centering
\begin{tikzpicture}[
  scale=0.5,
  transform shape,
  main_node/.style={circle,draw,fill=blue,minimum size=1em,inner sep=3pt}
]
\node[main_node] (0) at (0,0) {};
\node[main_node] (1) at (-3,3) {};
\node[main_node] (2) at (0,3) {};
\node[main_node] (3) at (3,3) {};
\node[main_node] (4) at (-1.5,1.5) {};
\node[main_node,label=above:$v$] (5) at (-1.5,3) {};
 \draw[opacity=0.4,red, line width=4pt] (5) to (4);
  \draw[opacity=0.4,red, line width=4pt] (4) to (1);
  \draw[opacity=0.4,red, line width=4pt] (2) to (0);
  \draw[opacity=0.4,red, line width=4pt] (3) to (0);
\path[draw, thick] (5) edge (4); 
\path[draw, thick, dashed] 
  (3) edge (0)
  (2) edge (0)
  (1) edge (0);
\end{tikzpicture}
\end{minipage}

\caption{Illustration of three cases of $(\romannumeral3)$ for $T-v$.}
\label{fig:star}
\end{figure}
This completes the proof of Lemma~\ref{lem:path}.
\end{proof}
\begin{lemma}\label{lem:tree}
Let $G$ be a graph and let $x \in V(G)$ be such that $G-x$ is a forest. Then one of the following five conditions holds:
\begin{itemize}
 \item[$(\romannumeral1)$] $V(G) = N[x]$, and $N(x)$ is an independent set;
\item[$(\romannumeral2)$] $V(G) \setminus N[x]$ contains a vertex of degree at most one in $G$;
 \item[$(\romannumeral3)$] $G$ contains an induced cycle $C$ passing through $x$, such that $V(C) \setminus \{x\}$ contains at most one vertex of degree greater than two in $G$. Moreover, if such a vertex exists, it is a neighbor of $x$ on $C$;
 \item[$(\romannumeral4)$] $G$ contains two induced cycles $C_1$ and $C_2$ such that
$V(C_1)\cap V(C_2)=\{x\}$, and for each $i\in\{1,2\}$,
the set $V(C_i)\setminus\{x\}$ contains at most one vertex of degree greater than two in $G$. In particular, there exists a path in $G-x$ whose ends lie in $C_1\setminus\{x\}$ and $C_2\setminus\{x\}$, respectively.

\item[(\romannumeral5)] $G$ contains an induced theta subgraph $\theta$ such that $x$ is a branch vertex of $\theta$, and every vertex of $\theta$ other than the branch vertices has degree two in $G$.

\end{itemize}
\end{lemma}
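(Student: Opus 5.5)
We argue on the structure of the forest $F=G-x$, using Lemma~\ref{lem:path} on its components. Throughout we may assume that (\romannumeral2) fails. Then every vertex of $G$ outside $N[x]$ has degree at least two. In particular, every leaf of $F$, and every isolated vertex of $F$, is a neighbor of $x$. Likewise every vertex of $F$ of $F$-degree one that is not adjacent to $x$ would have $G$-degree one, so leaves of $F$ lie in $N(x)$.

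\emph{Trivial component case.} If every component of $F$ is a single vertex, then all vertices lie in $N[x]$ and $N(x)=V(F)$ is independent, so (\romannumeral1) holds. Hence we may fix a component $T$ of $F$ with at least two vertices. Every leaf of $T$ is adjacent to $x$.

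\emph{Path case.} Suppose first that $T$ is a path. Choose two consecutive neighbors $a,b$ of $x$ along $T$, that is, the subpath $aTb$ has no interior neighbor of $x$. This is possible since both ends of $T$ are neighbors of $x$.
\begin{itemize}
\item If $ab\notin E(G)$, then $C=x\,aTb\,x$ is an induced cycle. Its interior vertices have degree two in $G$, because they have $T$-degree at most two and no neighbor in $N[x]$ other than along $T$. Only $a$ and $b$ may have larger degree.
\item To meet the ``at most one vertex'' requirement of (\romannumeral3), I choose $a$ to be an end of $T$. An end has $T$-degree one, so $\deg_G(a)=2$, and then only $b$ can have degree greater than two.
\item If the ends are adjacent, giving a triangle, I instead take the nearest non-adjacent pair, or use the next consecutive pair. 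I expect this to need a small case check. If every consecutive pair is adjacent, then $T$ is a path all of whose vertices are in $N(x)$, and $x\,t_1t_2\,x$ is a triangle with $t_1$ of degree two. That is an induced cycle through $x$, so (\romannumeral3) holds.
\end{itemize}

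\emph{Subdivided star case.} If $T$ is a subdivision of a star with center $c$ and at least three leaves, consider two legs $L_1,L_2$ ending at leaves $\ell_1,\ell_2$. On each leg I take the neighbor of $x$ closest to $c$. Alternatively, I walk from $\ell_i$ toward $c$ to the next neighbor of $x$. If such a neighbor exists strictly inside the leg, the path case argument on that leg produces a cycle as in (\romannumeral3), where the vertex of possibly high degree is the neighbor of $x$. Otherwise, the legs have only their leaves adjacent to $x$, except possibly $c$. If $c\notin N(x)$, then three legs together with $x$ form a theta with branch vertices $x$ and $c$, all other vertices of degree two, giving (\romannumeral5). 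If $c\in N(x)$, then the legs give cycles $x\ell_iL_ic\,x$. Their only vertex of large degree is $c$, which is a neighbor of $x$ on the cycle, so (\romannumeral3) holds.

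\emph{Two disjoint paths case.} The remaining case is (\romannumeral2) of Lemma~\ref{lem:path}: two vertex-disjoint leaf-to-leaf paths $P_1,P_2$ in $T$. Applying the path-type argument to each $P_i$ from one of its leaf ends, I obtain induced cycles $C_i$ through $x$ with $V(C_i)\setminus\{x\}\subseteq V(P_i)$. Each $C_i$ has at most one vertex of high degree, adjacent to $x$. Since $P_1,P_2$ lie in one tree, a path of $T$ joins them, yielding (\romannumeral4).

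\emph{Main obstacle.} The main obstacle is the bookkeeping of degrees. Branch vertices of $T$ inside the chosen segment could have degree greater than two. Therefore segments must be chosen so that they avoid branch vertices, or have one only as a neighbor of $x$. Before running the argument, I would pass to a minimal leaf-to-leaf segment between consecutive neighbors of $x$ to enforce this.
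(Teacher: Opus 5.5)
\textbf{Verdict: the route is right, but the last case has a genuine gap.} Your overall route is the paper's: discard (ii), settle the edgeless case by (i), and apply Lemma~\ref{lem:path} to a nontrivial component $T$ of $G-x$. The path case and the subdivided-star case are fine. In the path case you never need to leave the end of $T$: with $a$ an end and $b$ the next neighbor of $x$, the cycle $xaTbx$ satisfies (iii) even when it is a triangle, whereas moving to an interior consecutive pair would give two vertices of degree three.

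The gap is in the last case, which is the only place (iv) can arise.
\begin{itemize}
\item Your restatement of part (ii) of Lemma~\ref{lem:path} drops the property that matters: each $P_i$ contains exactly one vertex $a_i$ of degree at least three in $T$. Without it, nothing bounds the number of high-degree vertices on $C_i$.
\item The ``path-type argument'' from an arbitrary leaf end $v$ of $P_i$ fails when $a_i$ lies strictly between $v$ and the first neighbor $v_1$ of $x$ met after $v$ (with $v_1\in P_i^*$). Then $xvP_iv_1x$ contains two vertices of degree greater than two.
\item The conclusion you state for $C_i$ (at most one high-degree vertex, and it is adjacent to $x$) is already (iii) for $C_1$ alone, so it cannot be what forces (iv). The situation that genuinely needs (iv) is when $x$ has no neighbor in $P_1^*\cup P_2^*$. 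There the cycle is $xP_ix$, and its high-degree vertex $a_i$ is \emph{not} adjacent to $x$.
\item Your remedy of passing to a minimal segment between consecutive neighbors of $x$ does not address this. If $x$ is adjacent only to leaves of $T$, every leaf-to-leaf segment of $T$ passes through a branch vertex not adjacent to $x$.
\end{itemize}

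\textbf{The missing step is the case split the paper makes.}

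\emph{Case 1: $x$ has no neighbor in $P_1^*\cup P_2^*$.} Then $C_i=xP_ix$ ($i=1,2$) are induced cycles meeting only in $x$. In each, $a_i$ is the only vertex other than $x$ of degree greater than two, since the ends are leaves of $T$ and hence have degree two in $G$. A path of $T$ joins $P_1$ to $P_2$, so this is (iv).

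\emph{Case 2: $x$ has a neighbor in $P_1^*$.} Let $v,v'$ be the ends of $P_1$. Let $v_1$ be the neighbor of $x$ on $P_1-v$ closest to $v$, and $v_2$ the neighbor of $x$ on $P_1-v'$ closest to $v'$. Since $v_2\in V(v_1P_1v')$, the interiors of $vP_1v_1$ and $v_2P_1v'$ are disjoint, so $a_1$ lies in the interior of at most one of them. The other one yields an induced cycle, $xvP_1v_1x$ or $xv_2P_1v'x$, in which every vertex except $x$ and $v_1$ (resp.\ $v_2$) has degree two. This is (iii).
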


\begin{proof}
If every component of $G-x$ has exactly one vertex, then either $V(G) = N[x]$, in which $(\romannumeral1)$ holds, or $V(G) \setminus N[x]$ contains a vertex of degree zero, in which $(\romannumeral2)$ holds. If $G-x$ has a leaf that is not adjacent to $x$, then $(\romannumeral2)$ holds.
Hence, we may assume that $G-x$ has a component $T$ with at least two vertices, and that every leaf of $G-x$ is adjacent to $x$. If $(\romannumeral1)$ of Lemma~\ref{lem:path}
 holds for $T$, then $(\romannumeral3)$ of 
 holds for $G$ and $x$.

Suppose that $(\romannumeral2)$ of Lemma~\ref{lem:path}
 holds for $T$. Therefore, there exist two vertex-disjoint paths $P_1$ and $P_2$ in $T$ whose ends are distinct leaves of $T$, each containing  one vertex of degree at least three in $T$. Since $T$ is a tree, there exists a path in $G-x$ whose ends lie in $P_1^*$ and $P_2^*$, respectively. If $x$ has no neighbor in $P_1^*\cup P_2^*$,  then $(\romannumeral4)$ 
 holds for $G$ and $x$.  Therefore, Without loss of generality, we may assume that $x$ has a neighbor in $P_1^*$. For convenience, we assume that the ends of $P_1$ are $v$ and $v'$. Since $T$ is connected, $P_1$ contains a vertex of degree greater than two in $T$, which we denote by $a$. Let $v_1$ be the neighbor of $x$ on $P-v$ closest to $v$, and let $v_2$ be the neighbor of $x$ on $P-v'$ closest to $v'$. Then set $C = x v P v_1 x$ or $C = x v_2 P v' x$, and observe that every vertex of $C$, except possibly $x$ and $v_1$ (or $v_2$), has degree two in $G$. This implies that  then $(\romannumeral3)$ 
 holds for $G$ and $x$.

Suppose that $(\romannumeral3)$ of Lemma~\ref{lem:path} holds for $T$. That is, $T$ is a subdivision of a star with at least three leaves.  
 For convenience, we may assume that $y$ is the branch vertex of $T$ and that $v_1,\ldots,v_t$ are leaves of $T$. 
For each $i$, let $P_{v_i}$ denote the branch in $T$ with ends $x$ and $v_i$. If $x$ has no neighbor in $V(T)\setminus \{v_1,\ldots,v_t\}$,  then $(\romannumeral5)$ 
 holds for $G$ and $x$. Therefore, Without loss of generality, we may assume that $x$ has a neighbor in $V(P_{v_1})\setminus \{v_1\}$. Let $x_1$ be the neighbor of $x$ in $V(P_{v_1})\setminus \{v_1\}$ closest to $v_1$. Now, $C = xx_1 P_{v_1} v_1 x$ is a cycle such that every vertex in $V(C) \setminus \{x,x_1\}$ has degree two in $G$.  Hence, $(\romannumeral3)$ 
 holds for $G$ and $x$. This completes the proof of Lemma~\ref{lem:tree}.
\end{proof}
\begin{lemma}[Chudnovsky et al. \cite{Chudnovsky2019}]\label{lem:farcycle}
	Let $G$  be a series-parallel graph, and let $x,y\in V(G)$ with $x=y$ or $xy\in E(G)$. 
	If  $G-\{x,y\}$ contains a cycle, then either
	\begin{itemize}
		\item[(\romannumeral1)] there is an induced cycle $C$ in $G$ such that $V(C)\cap \{x,y\}=\emptyset$ and all but at most two vertices of $C$ have degree two in $G$, or
		\item[(\romannumeral2)] $V(G)\setminus (N[x]\cup N[y])$ contains a vertex of degree at most one in $G$.
	\end{itemize}
\end{lemma}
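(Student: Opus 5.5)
We plan an induction on $|V(G)|$, using that series-parallel graphs are closed under induced subgraphs and minors and that every series-parallel graph has a vertex of degree at most two. Throughout, say $G$ is \emph{good} for $(x,y)$ if (i) or (ii) holds.

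\textbf{Reduction to 2-connectivity.} First we reduce to the case where $G$ is $2$-connected, or at least where the part containing the cycle is far from $\{x,y\}$. Take a block $B$ of $G$ that contains a cycle avoiding $\{x,y\}$; such a block exists because $G-\{x,y\}$ has a cycle. We may assume $B$ is an end-block relative to $\{x,y\}$: choose $B$ in the block–cutvertex tree as far from the blocks containing $x,y$ as possible among blocks with a cycle. If some other block hangs off $B$ on the far side, it is acyclic, i.e.\ a pendant tree, and that tree contains a leaf of $G$. Such a leaf has degree one and is not in $N[x]\cup N[y]$ unless the tree is adjacent to $x$ or $y$, which the choice of $B$ prevents. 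So we may assume that $B$ is attached to the rest of $G$ only through a single cutvertex $c$, or that $B$ contains $x,y$. In the first case, apply the induction hypothesis to $B$ with $x=y=c$. If $B-c$ is a cycle, that cycle already satisfies (i), since at most one of its vertices is adjacent to $c$ in a way that raises degree. Degrees in $B$ away from $c$ equal degrees in $G$, and a vertex of degree at most one in $B$ outside $N[c]$ does not exist by 2-connectivity, so outcome (i) for $B$ transfers to $G$.

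\textbf{The 2-connected case.} Now assume $G$ is $2$-connected and $\{x,y\}$ is a vertex or an edge. We use the standard ear/series-parallel structure. A $2$-connected series-parallel graph other than a cycle has a $2$-separation $(A,B)$ with $A\cap B=\{u,v\}$ such that $G[A]$ minus the edge $uv$ is a single path (a branch) or a ``parallel'' piece of paths. Concretely, we pick a minimal $2$-separation $\{u,v\}$ whose small side $A\setminus\{u,v\}$ avoids $\{x,y\}$; such a side exists because $\{x,y\}$ is a clique and hence lies on one side. Minimality forces $G[A]$ to be a union of internally disjoint $u$--$v$ paths with all interior vertices of degree two (otherwise a smaller separation exists inside).

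\begin{itemize}
\item If $A$ contains two such paths, or one path together with the edge $uv$, they form a cycle whose vertices other than $u,v$ have degree two. Making it induced is harmless: choose the two shortest paths, or one path plus the edge $uv$. If $\{u,v\}\cap\{x,y\}=\emptyset$, this gives (i).
\item If $A$ contains only one path, then $G$ has a long branch. In this case we contract or suppress it and apply induction: replace the path by a single edge or a length-two path, and check that a cycle of the smaller graph lifts to a cycle of $G$ with the same number of high-degree vertices.
\end{itemize}

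\textbf{Main obstacle.} The main difficulty is the situation where every such minimal piece has an end in $\{x,y\}$. Then we must find the cycle elsewhere. Our plan is to delete the pieces incident to $\{x,y\}$, or to pass to a minimal $2$-separation $\{u,v\}$ whose side avoids both $x$ and $y$ entirely. We would choose a separation with $x,y\in B\setminus A$ and $A$ inclusion-minimal, and argue via the SPQR-like tree of series-parallel graphs that $A\setminus\{u,v\}$ still contains a cycle. The sharp case is when $x$ or $y$ equals $u$ or $v$. Here we use the hypothesis that $G-\{x,y\}$ has a cycle together with the clique condition on $\{x,y\}$. We expect this case analysis to be the delicate part; we would lean on induction applied to $G[A]$ with the pair $\{u,v\}$ replaced by an edge, checking that this remains series-parallel, since adding $uv$ parallel to a path keeps it so.
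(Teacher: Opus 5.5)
\textbf{There is a genuine gap: the $2$-connected case, which is the whole content of the lemma, is left as a plan.}

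Your reduction to a single block is essentially workable, but two repairs should be made explicit.
\begin{itemize}
\item If the chosen block $B$ hangs off the rest of $G$ at a cutvertex $c\notin\{x,y\}$ and every cycle of $B$ passes through $c$ (e.g.\ $B$ a cycle, a theta, or a fan centered at $c$), then $B-c$ is a tree and the induction hypothesis cannot be invoked. You need a direct argument producing a cycle through $c$ with at most one further vertex of degree at least three.
\item When $B$ contains $x,y$, pieces hanging at $x$ or $y$ are not excluded by your choice of $B$. They can be discarded, since they only change $\deg(x)$ and $\deg(y)$.
\end{itemize}

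In the $2$-connected case, your minimality step does not produce what you need. For any degree-two vertex $w\notin\{x,y\}$, taking $\{u,v\}=N(w)$ already gives a side $\{w\}$. So inclusion-minimal sides are single degree-two vertices, and you only ever see the path $uwv$.
\begin{itemize}
\item Suppressing $w$ is harmless if $uv\notin E(G)$.
\item If $uv\in E(G)$ and $u\in\{x,y\}$, you cannot suppress $w$ (parallel edge). Deleting $w$ instead lowers $\deg(v)$, so a cycle through $v$ found by induction may acquire a third vertex of degree at least three when lifted back.
\end{itemize}
Your fallback target, a separation with $x,y\in B\setminus A$ such that $A\setminus\{u,v\}$ contains a cycle, may not exist. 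Let $G$ be a theta with branch vertices $a,b$ plus a vertex $x=y$ adjacent exactly to $a$ and $b$. No $2$-separation has a cycle strictly inside the side away from $x$. The required cycle is the union of two paths of the theta, and it passes through the separator $\{a,b\}$.

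The missing idea is to choose the parallel piece by its \emph{terminals}, not by minimality of a separation side. Every $2$-connected graph without a $K_4$-subdivision is two-terminal series--parallel with respect to the ends of any of its edges. So take the series--parallel decomposition of $G$ rooted at the edge $xy$ (at an edge $xz$ if $x=y$); then $x$ and $y$ are never interior vertices of any node.
\begin{itemize}
\item Let $D$ be a cycle of $G-\{x,y\}$. The lowest node whose graph contains $D$ cannot be a series node, since a cycle of a series composition lies in one factor. So it is a parallel node.
\item $D$ uses edges of at least two of that node's children, so $D$ passes through both of its terminals. Hence some parallel node has both terminals $u,v$ outside $\{x,y\}$.
\item Take a lowest such node $N$. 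Every vertex of $G_N$ avoids $\{x,y\}$, so $N$ has no parallel descendants.
\item Therefore each of its (at least two) children is a $u$--$v$ path whose interior vertices have all their neighbors in $G_N$, i.e.\ have degree two in $G$.
\item Take two children, or one child together with the edge $uv$ when $uv\in E(G)$ (the only possible chord). This gives an induced cycle avoiding $x,y$ in which only $u$ and $v$ can have degree larger than two.
\end{itemize}
This handles the case ``$x$ or $y$ lies in $\{u,v\}$'' automatically, which is exactly where your argument stops.
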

As a direct consequence, we obtain the following stronger statement for cliques:

\begin{lemma}~\label{lem:series-parallel-clique-cycle}
	Let $G$ be a series-parallel graph, and let $S\subseteq V(G)$ be a clique. 
	If $G-S$ contains a cycle, then either
	\begin{itemize}
		\item[(\romannumeral1)] there exists an induced cycle $C$ in $G$ such that $V(C)\cap S=\emptyset$ and all but at most two vertices of $C$ have degree two in $G$, or
		\item[(\romannumeral2)] $V(G)\setminus N[S]$ contains a vertex of degree at most one in $G$.
	\end{itemize}
\end{lemma}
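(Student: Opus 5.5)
We derive Lemma~\ref{lem:series-parallel-clique-cycle} from Lemma~\ref{lem:farcycle} by showing that a clique in a series-parallel graph has at most three vertices. We then split on $|S|$.

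\textbf{Step 1: $|S|\le 3$.} A series-parallel graph contains no subdivision of $K_4$, hence no $K_4$. So $|S|\le 3$. We may assume $S\neq\emptyset$: if $S=\emptyset$, apply Lemma~\ref{lem:farcycle} with any choice of $x=y$ such that $G-x$ still contains a cycle, or argue directly with the same method.

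\textbf{Step 2: $|S|\le 2$.} Write $S=\{x\}$ or $S=\{x,y\}$ with $xy\in E(G)$, so that $x=y$ or $xy\in E(G)$. Then $N[S]=N[x]\cup N[y]$ and $G-S=G-\{x,y\}$. Lemma~\ref{lem:farcycle} now gives exactly outcomes (i) and (ii).

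\textbf{Step 3: $S=\{x,y,w\}$ is a triangle.} This is the main obstacle, since Lemma~\ref{lem:farcycle} only removes two vertices. My plan is to apply Lemma~\ref{lem:farcycle} to the graph $G'=G-w$ with the edge $xy$. The graph $G'$ is series-parallel, being a subgraph of $G$, and $G'-\{x,y\}=G-S$ contains a cycle. The lemma then yields one of two outcomes in $G'$.

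\emph{First outcome.} We get an induced cycle $C$ in $G'$ avoiding $x,y$ with all but at most two vertices of degree two in $G'$. Since $w\notin V(C)$, $C$ is induced in $G$ and $V(C)\cap S=\emptyset$. Degrees increase only at neighbours of $w$. So the issue is when $w$ has a neighbour on $C$ that has degree two in $G'$. I would rule this out using the structure of series-parallel graphs: if $w$ had a neighbour on $C$, the connectivity from $\{x,y\}$ to $C$ together with the triangle $xyw$ would tend to produce a $K_4$-subdivision. This is not automatic, because $C$ could be attached only through $w$. Hence I would instead apply Lemma~\ref{lem:farcycle} to the component structure. Concretely, decompose along the block containing $C$. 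If $C$ lies in a part of $G-S$ that attaches to $S$ only through a single vertex, reapply Lemma~\ref{lem:farcycle} with $x=y$ equal to that attachment vertex. If it attaches through two vertices of $S$, apply the lemma with those two adjacent vertices, restricted to that piece.

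\emph{Second outcome.} We get a vertex $v\notin N[x]\cup N[y]$ with $\deg_{G'}(v)\le1$. If $v\notin N(w)$, then $v\notin N[S]$ and $\deg_G(v)\le 1$, as required. Otherwise the degree grows, and I would handle this case by the same restriction to the piece attached to at most two vertices of $S$.

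\textbf{The key observation for Step 3.} Series-parallel, $K_4$-subdivision-free, implies that no component $K$ of $G-S$ can have all three vertices of $S$ with neighbours in $K$ while also containing a cycle reachable appropriately. More simply, I would show that some component $K$ of $G-S$ containing a cycle has neighbours in at most two vertices of $S$. Applying Lemma~\ref{lem:farcycle} to $G[V(K)\cup (N(K)\cap S)]$ then gives the result, because in this subgraph the degrees of vertices of $K$ equal their degrees in $G$. The justification is that if all three vertices of $S$ see $K$, a minimal connected subgraph of $K$ meeting their neighbourhoods (via Lemma~\ref{lem:degree-1}) together with the triangle gives a $K_4$-subdivision. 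Verifying this last claim, in particular handling the case where the degree-one-style argument needs three distinct attachment vertices, is the part I expect to need the most care.
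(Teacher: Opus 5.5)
Your proof works once Step~3 is cut down to its final paragraph. For $|S|\le 2$, Step~2 is exactly the reduction to Lemma~\ref{lem:farcycle}. The paper itself gives no argument beyond calling the lemma a ``direct consequence'', so for a triangle $S=\{x,y,w\}$ your key observation is the real content. It is true, and it holds for \emph{every} component $K$ of $G-S$, with or without a cycle.

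To check it without Lemma~\ref{lem:degree-1} or any worry about distinct attachment vertices, argue as follows. Suppose $a,b,c\in V(K)$ are neighbours of $x,y,w$ respectively. Let $t$ be the median of $a,b,c$ in a spanning tree of $K$, so the tree paths from $t$ to $a,b,c$ pairwise meet only in $t$ (some may have length zero). Then $t,x,y,w$ are the branch vertices of a $K_4$-subdivision, using the paths $t\ldots a\,x$, $t\ldots b\,y$, $t\ldots c\,w$ and the three edges of $S$. If you do go through Lemma~\ref{lem:degree-1}, the tripod outcome gives a subdivided prism. This also contains a $K_4$-subdivision, with branch vertices $x$ and the three vertices of the tripod's triangle.

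Lemma~\ref{lem:farcycle} applied to $H=G[V(K)\cup(N(K)\cap S)]$ then finishes the proof. Vertices of $K$ have the same degree in $H$ as in $G$, and they have no neighbours in $S\setminus N(K)$.

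Two loose ends remain:
\begin{itemize}
\item If $N(K)\cap S=\emptyset$ (and likewise if $S=\emptyset$), Lemma~\ref{lem:farcycle} needs some vertex to play $x=y$. Add one vertex of $S$, or a new isolated vertex as in the proof of Corollary~\ref{cor:degree-2}.
\item The discussion of $G-w$ at the start of Step~3 is vague and unnecessary; delete it.
\end{itemize}

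A shorter route handles all sizes of $S$ at once: contract the clique $S$ to a single vertex $s$. The graph $G/S$ is still series-parallel; the paper uses the same fact for $G/\{x,y\}$ in the proof of Theorem~\ref{main-theorem:triangle}. Since $G/S-s=G-S$ contains a cycle, Lemma~\ref{lem:farcycle} applies with $x=y=s$. The degree bookkeeping is automatic:
\begin{itemize}
\item A vertex of the resulting cycle $C$ with degree two in $G/S$ has both neighbours on $C$. So it is non-adjacent to $S$ and has degree two in $G$, and $C$ is induced in $G$.
\item A vertex outside $N_{G/S}[s]$ lies outside $N[S]$ and has the same degree in $G$.
\end{itemize}
Your component argument is longer, but it stays within induced subgraphs. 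It also yields an extra structural fact: no component of $G-S$ attaches to all three vertices of a triangle.
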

The following structural theorem for $\isktk$-free graphs is mainly based on Theorem 23 in \cite{Chudnovsky2019}, 
with more refined analysis. The proof is similar to that of Theorem 23 in \cite{Chudnovsky2019}; for completeness, we provide it here.
\begin{theorem}\label{main-theorem:triangle}
Let $G$ be an $\isktk$-free graph, and let $x,y\in V(G)$ with $x=y$ or $xy\in E(G)$. Then one of the following holds:
\begin{itemize}
\item[$(\romannumeral1)$] $V(G) = N[x] \cup N[y]$;

\item[$(\romannumeral2)$] there exists a vertex in $V(G) \setminus (N[x] \cup N[y])$ of degree at most one in $G$;

\item[$(\romannumeral3)$] there exists an induced cycle $C$ containing at least one of $x$ or $y$ such that every vertex in $V(C)\setminus (N[x]\cup N[y])$ has degree two in $G$;

\item[$(\romannumeral4)$] there exists an induced cycle $C$ in $G$ with $V(C) \cap \{x,y\} = \emptyset$, and all but at most two vertices of $C$ have degree two in $G$;

\item[$(\romannumeral5)$] there exists an induced cycle $C$ in $G-\{x,y\}$ and a vertex $r \in V(C)$ such that every vertex in $V(C)\setminus N[r]$ has degree two in $G$;

\item[$(\romannumeral6)$] there exist two induced cycles $C_1$ and $C_2$ such that 
$V(C_1)\cap V(C_2)$ contains exactly one vertex from $\{x,y\}$, and for each $i\in\{1,2\}$, 
the set $V(C_i)\setminus(N[x]\cup N[y])$ contains at most one vertex of degree greater than two in $G$;

\item[$(\romannumeral7)$] there exist two vertex-disjoint induced cycles $C_1$ and $C_2$ such that $x \in V(C_1)$ and $V(C_1)\setminus\{x\}$ contains at most one vertex of degree greater than two in $G$, and $y \in V(C_2)$ and $V(C_2)\setminus\{y\}$ contains at most one vertex of degree greater than two in $G$;

\item[$(\romannumeral8)$] there exists an induced theta subgraph $\theta$ containing $x$ or $y$ as a branch vertex such that  $V(\theta) \setminus (N[x] \cup N[y])$ contains a unique vertex of degree greater than three in $G$.

\item[$(\romannumeral9)$] there exists an induced theta subgraph $\theta$ in $G-\{x,y\}$ and a vertex $r \in V(\theta)$  is a branch vertex of $\theta$ such that $V(\theta) \setminus N[r]$ contains a unique vertex of degree greater than three in $G$.

\item[$(\romannumeral10)$] there exist two induced cycles $C_1$ and $C_2$ in $G-\{x,y\}$ and a vertex $r $ such that $V(C_1)\cap V(C_2)=\{r\}$  and for each $i\in\{1,2\}$, 
the set $V(C_i)\setminus N_{C_i}[r]$ contains at most one vertex of degree greater than two in $G$.
\end{itemize}
\end{theorem}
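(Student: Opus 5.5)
Since $G$ is $\isktk$-free, Lemma~\ref{lem:ISK4-decomposition} tells us that every induced subgraph of $G$ that contains no wheel (triangle-free means no prism) is series-parallel. In particular, a triangle-free wheel $(C,v)$ would itself contain an $\iskfour$ unless $v$ has exactly four non-adjacent neighbours forming a square; combined with $K_{3,3}$-freeness one checks that this cannot occur, so $G$ itself is series-parallel. I would therefore argue by induction on $|V(G)|$, in the style of Theorem~23 of \cite{Chudnovsky2019}. The set $S=\{x,y\}$ is a clique, so Lemma~\ref{lem:series-parallel-clique-cycle} (equivalently Lemma~\ref{lem:farcycle}) applies directly. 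If $G-\{x,y\}$ contains a cycle, that lemma yields outcome (\romannumeral4) or (\romannumeral2) immediately. So the main case is when $G-\{x,y\}$ is a forest.

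\textbf{The forest case.} Suppose $G-\{x,y\}$ is a forest. If $x=y$, I would apply Lemma~\ref{lem:tree} to $G$ and $x$. Its five outcomes translate as follows:
\begin{itemize}
\item (\romannumeral1) gives (\romannumeral1);
\item (\romannumeral2) gives (\romannumeral2);
\item (\romannumeral3) gives (\romannumeral3), since the only possible vertex of degree greater than two is a neighbour of $x$;
\item (\romannumeral4) gives (\romannumeral6);
\item (\romannumeral5) gives (\romannumeral8).
\end{itemize}
If $xy\in E(G)$, I would look at $G-y$. It has $x$ with $G-y-x$ a forest, so Lemma~\ref{lem:tree} gives one of its structures, and likewise for $G-x$ with $y$. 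The work is to lift these outcomes back to $G$. A vertex that has degree at most one in $G-y$ and lies outside $N[x]$ either has degree at most one in $G$, giving (\romannumeral2), or is a neighbour of $y$. Cycles through $x$ whose vertices have degree two in $G-y$ may pick up degree from $y$, but only at vertices of $N[y]$, which outcomes (\romannumeral3), (\romannumeral6) and (\romannumeral8) exclude explicitly. Where both $x$ and $y$ lie on suitable cycles avoiding each other, we get (\romannumeral7). When all vertices outside $N[x]\cup N[y]$ are exhausted, we get (\romannumeral1).

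\textbf{The general (rooted) situation.} Outcomes (\romannumeral5), (\romannumeral9) and (\romannumeral10) are the $r$-rooted analogues of (\romannumeral3), (\romannumeral8) and (\romannumeral6). I expect these to arise in the inductive step: decompose $G-\{x,y\}$ at a cut-vertex or a block, choose a leaf block $B$ attached at a vertex $r$, and apply the theorem (or Lemma~\ref{lem:tree}) inside $B$ with root $r=x'=y'$. Vertices of $B$ other than near $r$ have the same degree in $G$ and in $B$, provided $B$ avoids $N[x]\cup N[y]$. Otherwise the structure found meets $N[x]\cup N[y]$, and we fall back on the unrooted outcomes. Triangle-freeness ensures that $N(x)\cap N(y)=\emptyset$ when $xy\in E(G)$, and that the neighbourhoods are stable; this is used so that a vertex in $N[x]$ cannot also gain degree from $y$ in an uncontrolled way.

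\textbf{Main obstacle.} I expect the hardest step to be the case $xy\in E(G)$ with $G-\{x,y\}$ a forest. Here the outcomes of Lemma~\ref{lem:tree} for $x$ in $G-y$ and for $y$ in $G-x$ must be combined so that the vertices of degree greater than two created by the other endpoint lie only in $N[x]\cup N[y]$. My first attempt would be a case analysis over the tree components of $G-\{x,y\}$ according to which of $x,y$ has neighbours in each component, applying Lemma~\ref{lem:path} to each component. A component seen only by $x$ (or only by $y$) behaves as in the case $x=y$. Components seen by both should yield (\romannumeral3) via an induced cycle $x\,y\,Q$ with $Q$ a path between a leaf adjacent to $x$ and a leaf adjacent to $y$, or else produce (\romannumeral7) or (\romannumeral6).
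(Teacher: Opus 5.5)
\textbf{The gap: $G$ need not be series-parallel.} Your proposal rests on the claim that an $\isktk$-free graph is series-parallel, and this is false. Triangle-free $\iskfour$-free graphs can contain wheels, whose spokes are then pairwise non-adjacent and at least four in number.

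Take the hole $c_0c_1\cdots c_7c_0$ and add a vertex $v$ adjacent to $c_0,c_2,c_4,c_6$. This graph is triangle-free, and it is $K_{3,3}$-free because only five vertices have degree at least three. It is also $\iskfour$-free. The corners of an induced $K_4$-subdivision $F$ would lie in $\{v,c_0,c_2,c_4,c_6\}$.
\begin{itemize}
\item If $v$ is a corner, exactly one $c_{2i}$ is missing from $F$, say $c_6$. Then $c_5,c_7\notin F$, so neither $c_0$ nor $c_4$ can be a corner, leaving at most two corners.
\item If $v$ is not a corner, then $v\in F$ (since $F$ is not inside the hole). All four $c_{2i}$ must then be corners, which gives $v$ degree four in $F$.
\end{itemize}
Yet the graph contains a $K_4$-subdivision as a subgraph (branch vertices $v,c_0,c_2,c_4$), so it is not series-parallel and Lemma~\ref{lem:farcycle} does not apply.

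Its conclusion genuinely fails here. Take $x=y=c_1$; then $G-\{x,y\}$ contains cycles.
\begin{itemize}
\item Outcome (iv) fails: the only induced cycles avoiding $c_1$ are the squares $vc_2c_3c_4v$, $vc_4c_5c_6v$, $vc_6c_7c_0v$, and each has three vertices of degree at least three.
\item Outcome (ii) fails, since the minimum degree is two.
\item Outcome (iii) fails: the induced cycles through $c_1$ are $vc_0c_1c_2v$ and the $8$-hole.
\end{itemize}
What holds is the rooted outcome (v), with $r=v$ and the square $vc_2c_3c_4v$. Moreover, $G-c_1$ has the Hamiltonian cycle $vc_2c_3\cdots c_7c_0v$, so it is $2$-connected and your leaf-block idea has nothing to work with.

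\textbf{The missing idea: the non-series-parallel case.} This case is exactly where outcomes (v), (ix) and (x) come from in the paper.
\begin{itemize}
\item If $G$ is not series-parallel, Lemma~\ref{lem:ISK4-decomposition} gives a wheel (there is no prism, since $G$ is triangle-free). Lemma~\ref{lem:proper-wheel-exist} then gives a proper wheel; fix one, $W=(C,s)$, with the minimum number of spokes.
\item Put $Z=\{x,y\}\cap N(s)$; then $|Z|\le 1$ because $N(s)$ is stable. By Theorem~\ref{thm:proper-wheel-property}, the component of $G-(N[s]\setminus Z)$ containing $\{x,y\}$ contains the interiors of at most two sectors, and every other sector interior lies in a different component.
\item Since $W$ has at least four sectors (Lemma~\ref{lem:nolink}), some component $K$ is disjoint from $N[x]\cup N[y]$.
\item Let $N$ be the set of neighbours of $s$ having a neighbour in $K$, and apply induction to $H=G[V(K)\cup N\cup\{s\}]$ with $x=y=s$. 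Vertices of $K$ have the same degree in $H$ as in $G$ and avoid $N[x]\cup N[y]$. So each outcome for $(H,s)$ transfers to $G$, with $s$ playing the role of $r$ when $s\notin\{x,y\}$.
\end{itemize}
In the example above, $s=v$, $K=\{c_3\}$, and $H$ is the square $vc_2c_3c_4v$.

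\textbf{A smaller point.} In the series-parallel case with $xy\in E(G)$, you leave open how to combine Lemma~\ref{lem:tree} for $(G-y,x)$ and $(G-x,y)$. The paper avoids this by contracting $xy$ to a single vertex $v$: series-parallelity is preserved and $N[v]$ corresponds to $N[x]\cup N[y]$. It then applies Lemma~\ref{lem:tree} or Lemma~\ref{lem:farcycle} once to $(G/xy,v)$.
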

\begin{proof}
We proceed by induction on the order of $G$. The case $|V(G)|\le 3$ is trivial, so we may assume that $|V(G)|\ge 4$. We complete the proof of Lemma~\ref{main-theorem:triangle} by analyzing the following cases.
\begin{case}
$G$ is series-parallel. 
\end{case}

If $xy \in E(G)$, then  define $H = G / \{x,y\}$ and $v$ be the vertex obtained by contracting $xy$; otherwise, define $H=G$ and $v=x$. Then $H$ is also series-parallel. Suppose that $H-v$ is a forest. We apply Lemma~\ref{lem:tree} for $H$ and $v$. 
If Lemma~\ref{lem:tree} $(\romannumeral1)$ holds, then $V(H) = N_H[v]$, and so $V(G) = N_G[x] \cup N_G[y]$. Then $(\romannumeral1)$ holds for $G$ and $x,y$. If Lemma~\ref{lem:tree} $(\romannumeral2)$ holds, then $V(H) \setminus N_H[v]$ contains a vertex of degree at most one in $H$, and thus $V(G) \setminus (N_G[x] \cup N_G[y])$ contains a vertex of degree at most one in $G$. Then $(\romannumeral2)$ holds for $G$ and $x,y$.
If Lemma~\ref{lem:tree} $(\romannumeral3)$ holds, then $H$ has an induced cycle $C$ that contains $v$, and $V(C)\setminus \{v\}$ has at most one vertex with degree greater than  two in $H$;
Moreover, if such vertex exists, it is a neighbor of $v$ on $C$. Note that $G$ is triangle-free. So there is an induced cycle $C'$ in $G$ containing at least one of $x, y$, and $V(C')\setminus \{x,y\}$ has at most one vertex with degree greater than  two in $G$; Moreover, if such vertex exists, it is a neighbor of $x$ or $y$ on $C$. Then $(\romannumeral3)$ holds for $G$ and $x,y$.

Suppose that  Lemma~\ref{lem:tree} $(\romannumeral4)$ holds. Then $H$ contains two induced cycles $C_1$ and $C_2$ such that
$V(C_1)\cap V(C_2)=\{v\}$, and for each $i\in\{1,2\}$,
the set $V(C_i)\setminus\{v\}$ contains at most one vertex of degree greater than two in $G$. In particular, there exists a path in $H-v$ whose ends lie in $C_1\setminus\{v\}$ and $C_2\setminus\{v\}$, respectively. For convenience, we may assume that $C_1=vv_1,\ldots,v_sv$ and $C_2=vu_1\ldots u_tv$. If $x=y$,  then $(\romannumeral4)$ holds for $G$ and $x,y$. Therefore, $x\neq y$. Suppose that $\{v_1,v_s\}\subseteq N_G(x)$. If $u_1,u_t\subseteq N_G(y)$, then $(\romannumeral7)$ holds for $G$ and $x,y$.  Otherwise, $(\romannumeral6)$ holds for $G$ and $x,y$. Therefore, $\{v_1,v_s\}\nsubseteq N_G(x)$. By symmetry, we may assume that $\{v_1,u_1\}\subseteq N_G(x)$ and $\{u_s,u_t\}\subseteq N_G(y)$. Note that $H$ is a series-parallel graph and there exists a path in $H-v$ whose ends lie in $C_1\setminus\{v\}$ and $C_2\setminus\{v\}$. This implies that $G$ contain a subdivision of $K_4$ as a subgraph, a contradiction.

Suppose that Lemma~\ref{lem:tree} $(\romannumeral5)$ holds. Then $H$ contains an induced theta subgraph $\theta$ such that $v$ is a branch vertex of $\theta$, and every vertex of $\theta$ other than the branch vertices has degree two in $H$. For convenience, we may assume that $w$ is a branch vertex of $\theta$ distinct from $v$, and that the three paths in $\theta$ with ends $v$ and $w$ are $w \ldots v_1 v$, $w \ldots v_2 v$, and $w \ldots v_3 v$. By the Pigeonhole Principle, we may assume that $\{v_1,v_2\} \subseteq N_G(x)$. Now, $(\romannumeral8)$ holds for $G$ and $x,y$.

Therefore, $H-v$ is not a forest. Now $H-v$ contains a cycle, and thus $G -\{x,y\}$ contains a cycle. By Lemma~\ref{lem:farcycle}, either $V(G) \setminus (N[x] \cup N[y])$ contains a vertex of degree at most one in $G$, or $G$ contains a cycle $C$ with $V(C) \cap \{x,y\} = \emptyset$ and such that all but at most two vertices in $V(C)$ have degree two in $G$. In the former case, $(\romannumeral2)$ holds for $G$ and $x,y$; in the latter case, $(\romannumeral4)$ holds for $G$ and $x,y$. 
\begin{case}
    $G$ is not series-parallel.
\end{case}

By Lemma~\ref{lem:ISK4-decomposition}, $G$ contains a wheel, and by Lemma~\ref{lem:proper-wheel-exist}, it contains a proper wheel $W$. Let $W=(C,s)$ be a proper wheel in $G$ of center $s$ (possibly $s \in \{x,y\}$) with minimum number of spokes. Let $Z = \{x,y\} \cap N(s)$. Since $x=y$ or $xy \in E(G)$, $x$ and $y$ are in the same component of $G \setminus (N[s] \setminus Z)$. Since $N(s)$ is stable, it follows that $|Z| \leq 1$. By Theorem~\ref{thm:proper-wheel-property}, the component of 
$G - (N[s]\setminus Z)$ containing $\{x,y\}$ includes the interiors of at most 
two sectors of $W$. Moreover, the interior of every other sector of $W$ lies in 
a distinct component of $G - (N[s]\setminus Z)$. Since $W$ has at least four sectors by Lemma~\ref{lem:nolink}, there is a component $K$ of $G - (N[s] \setminus Z)$ that does not intersect $N[x] \cup N[y]$. 
Let $N$ be the set of $N(s)$ with a neighbor in $K$ (see Figure~\ref{Figure-construction}).

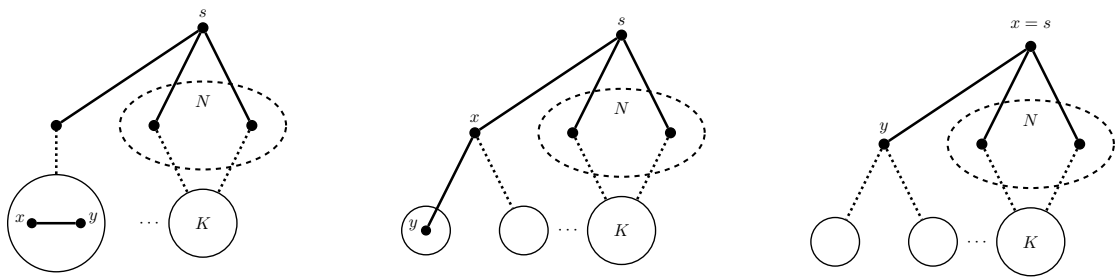
\begin{figure}[!ht]
\centering

\begin{minipage}{0.32\textwidth}
\centering
\scalebox{0.65}{
\begin{tikzpicture}[
    thick,
    every node/.style={circle, draw=black, fill=black, inner sep=2pt}
]

    \node[label={[above, yshift=-0.5mm]:{}}] (z) at (-3,5) {};
    \node[label={[above, yshift=-0.5mm]:{$s$}}] (s) at (0,7) {};
    \node (y1) at (-1,5) {};
    \node (w3) at (1,5) {};

    \node[draw=black, fill=white, inner sep=20pt] (H1) at (-3,3) {};
    \node[draw=black, fill=white, inner sep=10pt] (H3) at (0,3) {$K$};

    \draw[dashed, line width=1.2pt]
        (0,5) ellipse (1.7cm and 0.9cm);
    \node[draw=none, fill=none] at (0,5.5) {$N$};

    \node[circle, draw=black, fill=black, inner sep=1.8pt,
          label={[left]:{$x$}}] (x) at (-3.5,3) {};
          \node[circle, draw=black, fill=black, inner sep=1.8pt,
          label={[right]:{$y$}}] (y) at (-2.5,3) {};

    \node[draw=none, fill=none] at (-1.1,3) {$\cdots$};

    \draw[line width=1.5pt] (s) -- (z);
    \draw[line width=1.5pt] (x) -- (y);
   \draw[dotted,line width=1.5pt] (H1) -- (z);
    \draw[line width=1.5pt] (s) -- (w3);
     \draw[line width=1.5pt] (s) -- (y1);
    \draw[dotted,line width=1.5pt] (y1) -- (H3);
    \draw[dotted,line width=1.5pt] (w3) -- (H3);

\end{tikzpicture}}
\end{minipage}
\hfill
\begin{minipage}{0.32\textwidth}
\centering
\scalebox{0.65}{\begin{tikzpicture}[thick,every node/.style={circle,draw=black,fill=black,inner sep=2pt}]
\node[label={[above, yshift=-0.5mm]:{$x$}}] (x) at (-3,5) {};
\node[label={[above, yshift=-0.5mm]:{$s$}}] (s) at (0,7) {};
\node (y1) at (-1,5) {};
\node (w3) at (1,5) {};
\node[draw=black, fill=white, inner sep=10pt] (H1) at (-4,3) {};
\node[draw=black, fill=white, inner sep=10pt] (H2) at (-2,3) {};
\node[draw=black, fill=white, inner sep=10pt] (H3) at (0,3) {$K$};
\draw[dashed, line width=1.2pt] (0,5) ellipse (1.7cm and 0.9cm);
\node[draw=none, fill=none] at (0,5.5) {$N$};
\node[circle, draw=black, fill=black, inner sep=1.8pt,label={[left]:{$y$}}] (y) at (-4,3) {};
\node[draw=none, fill=none] at (-1.1,3) {$\cdots$};
\draw[line width=1.5pt] (s) -- (x);
\draw[line width=1.5pt] (x) -- (y);
\draw[line width=1.5pt] (s) -- (y1);
\draw[line width=1.5pt] (s) -- (w3);
\draw[dotted,line width=1.5pt] (y1) -- (H3);
\draw[dotted,line width=1.5pt] (w3) -- (H3);
\draw[dotted, line width=1.5pt] (x) -- (H2);
\end{tikzpicture}}
\end{minipage}
\hfill
\begin{minipage}{0.32\textwidth}
\centering
\scalebox{0.65}{\begin{tikzpicture}[thick,every node/.style={circle,draw=black,fill=black,inner sep=2pt}]
\node[label={[above, yshift=-0.5mm]:{$y$}}] (x) at (-3,5) {};
\node[label={[above, yshift=-1.8mm]:{$x=s$}}] (s) at (0,7) {};
\node (y1) at (-1,5) {};
\node (w3) at (1,5) {};
\node[draw=black, fill=white, inner sep=10pt] (H1) at (-4,3) {};
\node[draw=black, fill=white, inner sep=10pt] (H2) at (-2,3) {};
\node[draw=black, fill=white, inner sep=10pt] (H3) at (0,3) {$K$};
\draw[dashed, line width=1.2pt] (0,5) ellipse (1.7cm and 0.9cm);
\node[draw=none, fill=none] at (0,5.5) {$N$};
\node[draw=none, fill=none] at (-1.1,3) {$\cdots$};
\draw[line width=1.5pt] (s) -- (x);
\draw[dotted, line width=1.5pt] (x) -- (H1);
\draw[line width=1.5pt] (s) -- (y1);
\draw[line width=1.5pt] (s) -- (w3);
\draw[dotted, line width=1.5pt] (y1) -- (H3);
\draw[dotted, line width=1.5pt] (w3) -- (H3);
\draw[dotted, line width=1.5pt] (x) -- (H2);
\end{tikzpicture}}
\end{minipage}

\caption{Illustrations of possible positions of $x$ and $y$.}

\label{Figure-construction}
\end{figure}

 Now, we apply induction to $H=G[K \cup N \cup \{s\}]$ and $s$. 
By the choice of $H$ and $s$, $(\romannumeral 1)$ and $(\romannumeral 7)$ do not hold. Clearly, if $(\romannumeral2)$ holds for $H$ and $s$, then it holds for $G$ and $x,y$ as well. Note that  $(N[x]\cup N[y])\cap V(K)=\emptyset$.
If $(\romannumeral3)$ holds for $H$ and $s$, then $(\romannumeral3)$ (when $s\in \{x,y\}$) or $(\romannumeral5)$ (when $s\notin \{x,y\}$) holds for $G$ and $x,y$ (see Figure~\ref{Figure-H-z-3}).

\begin{figure}[!ht]
\centering

\resizebox{0.6\linewidth}{!}{%
\begin{tikzpicture}[scale=0.65,
  line cap=round,
  line join=round,
  edge/.style={line width=1.5, draw=black},
  bedge/.style={edge, draw=blue!85!black},
  dottededge/.style={edge, dotted},
  Ndash/.style={draw=black, line width=0.75, dash pattern=on 4.5pt off 4.5pt},
  Kcircle/.style={edge},
  bdotarc/.style={line width=1.5, draw=blue!85!black, dash pattern=on 1.69pt off 2.76pt},
  redpt/.style={draw=red, fill=red},
  blkpt/.style={draw=black, fill=black},
  hollow/.style={draw=black, fill=white, line width=1.5pt},
  lab/.style={} 
]

\def\PointR{4}

\begin{scope}
  \coordinate (S)  at (0,4.5);
  \coordinate (Y)  at (-5,2);
  \coordinate (Y1) at (-7,-2);
  \coordinate (Y2) at (-3,-2);

  \coordinate (Nc) at (0,2);
  \coordinate (Kc) at (0,-2);

  \coordinate (Lr) at (-1,2);
  \coordinate (Rr) at ( 1,2);

  \coordinate (Lb) at (-1,-1);
  \coordinate (Rb) at ( 1,-1);

  \draw[edge] (Y) -- (S);

  \draw[dottededge] (Y) -- (Y1);
  \draw[dottededge] (Y) -- (Y2);

  \path[hollow] (Y1) circle[radius=\PointR pt];
  \path[hollow] (Y2) circle[radius=\PointR pt];

  \draw[Ndash] (Nc) ellipse (3.0 and 1.05);

  \draw[Kcircle] (Kc) circle (2);

  \draw[bedge] (S) -- (Lr) -- (Lb);
  \draw[bedge] (S) -- (Rr) -- (Rb);

  \coordinate (M) at (0,-3);
  \draw[bdotarc]
    (Lb) .. controls (-1,0) and (-1,-3) .. (M)
    (M)  .. controls ( 1,-3) and ( 1,0) .. (Rb);

  \path[redpt] (S)  circle[radius=\PointR pt];
  \path[blkpt] (Y)  circle[radius=\PointR pt];
  \path[redpt] (Lr) circle[radius=\PointR pt];
  \path[redpt] (Rr) circle[radius=\PointR pt];
  \path[blkpt] (Lb) circle[radius=\PointR pt];
  \path[blkpt] (Rb) circle[radius=\PointR pt];

  \node[lab, above] at (S) {$x=s$};
  \node[lab, left]  at (Y) {$y$};
  \node[lab, right] at (1,2.45) {$N$};
  \node[lab] at (0,-2.6) {$K$};
\end{scope}

\begin{scope}[xshift=13cm]
  \coordinate (S)  at (0,4.5);
  \coordinate (X)  at (-5,2);
  \coordinate (Y)  at (-6,-2);
  \coordinate (H)  at (-3,-2);

  \coordinate (Nc) at (0,2);
  \coordinate (Kc) at (0,-2);

  \coordinate (Lr) at (-1,2);
  \coordinate (Rr) at ( 1,2);
  \coordinate (Lb) at (-1,-1);
  \coordinate (Rb) at ( 1,-1);

  \draw[edge] (X) -- (S);
  \draw[edge] (X) -- (Y);
  \draw[dottededge] (X) -- (H);

  \path[hollow] (H) circle[radius=\PointR pt];

  \draw[Kcircle] (Y) circle (1.1);

  \draw[Ndash] (Nc) ellipse (3.0 and 1.05);

  \draw[Kcircle] (Kc) circle (2);

  \draw[bedge] (S) -- (Lr) -- (Lb);
  \draw[bedge] (S) -- (Rr) -- (Rb);

  \coordinate (M) at (0,-3);
  \draw[bdotarc]
    (Lb) .. controls (-1,0) and (-1,-3) .. (M)
    (M)  .. controls ( 1,-3) and ( 1,0) .. (Rb);

  \path[redpt] (S)  circle[radius=\PointR pt];
  \path[blkpt] (X)  circle[radius=\PointR pt];
  \path[blkpt] (Y)  circle[radius=\PointR pt];
  \path[redpt] (Lr) circle[radius=\PointR pt];
  \path[redpt] (Rr) circle[radius=\PointR pt];
  \path[blkpt] (Lb) circle[radius=\PointR pt];
  \path[blkpt] (Rb) circle[radius=\PointR pt];

  \node[lab, above] at (S) {$s$};
  \node[lab, left]  at (X) {$x$};
  \node[lab, left]  at (Y) {$y$};
  \node[lab, right] at (1,2.45) {$N$};
  \node[lab] at (0,-2.6) {$K$};
\end{scope}

\end{tikzpicture}%
}
\caption{Illustrations of the case when $(\romannumeral 3)$ holds for $H$ and $s$.}
\label{Figure-H-z-3}
\end{figure}

If $(\romannumeral4)$ holds for $H$ and $s$, then $(\romannumeral4)$ holds for $G$ and $x,y$ (see Figure~\ref{Figure-H-z-4}). 

\begin{figure}[!ht]
\centering
\begingroup

\begin{tikzpicture}[x=0.75pt,y=0.75pt,yscale=-1,xscale=1,scale=0.7]
\tikzset{
  figY/edge/.style     ={line width=1.5, draw=black},
  figY/blueedge/.style ={figY/edge, draw=blue!85!black},
  figY/dash/.style     ={line width=1.5, draw=black, dash pattern=on 1.69pt off 2.76pt},
  figY/N/.style        ={draw=black, line width=0.75, dash pattern=on 4.5pt off 4.5pt},
  figY/arcblue/.style  ={line width=1.5, draw=blue!85!black, dash pattern=on 1.69pt off 2.76pt},
  figY/v/.style        ={draw=black, fill=black},
  figY/vred/.style     ={draw=red,  fill=red},
}

\def\PointR{4}
\def\KRad{60}

\begin{scope}
  \coordinate (s1)  at (111,46);
  \coordinate (p1)  at (95,87);
  \coordinate (u1)  at (63,166);
  \coordinate (v1)  at (156,162);

  \coordinate (a1)  at (85,223);
  \coordinate (b1)  at (136,224);

  \coordinate (c1)  at (78,263);
  \coordinate (d1)  at (149,264);

  \coordinate (N1) at (109,158);
  \coordinate (K1) at (113,257);

  \draw[figY/edge] (s1)--(p1)--(u1);
  \draw[figY/edge] (s1)--(v1);

  \draw[figY/N] (N1) ellipse (77 and 33);

  \draw[figY/edge] (K1) circle (\KRad);

  \draw[figY/blueedge] (a1)--(u1);
  \draw[figY/blueedge] (b1)--(v1);
  \draw[figY/blueedge] (c1)--(u1);
  \draw[figY/blueedge] (d1)--(v1);

  \draw[figY/arcblue]
    (136,224) .. controls (134,236) and (124,246) .. (111,246)
               .. controls (98,246)  and (88,236) .. (85,223);

  \draw[figY/arcblue]
    (149,264) .. controls (144,276) and (130,284) .. (113,284)
               .. controls (96,284)  and (82,275) .. (78,263);

  \path[figY/v] (s1) circle[radius=\PointR];
  \path[figY/v] (a1) circle[radius=\PointR];
  \path[figY/v] (b1) circle[radius=\PointR];
  \path[figY/v] (c1) circle[radius=\PointR];
  \path[figY/v] (d1) circle[radius=\PointR];

  \path[figY/vred] (u1) circle[radius=\PointR];
  \path[figY/vred] (v1) circle[radius=\PointR];

  \node at (126,164) {$N$};

  \node at (113,300) {$K$};

  \node[inner sep=0pt, label={[above, yshift=0.3mm]:{$s$}}] at (s1) {};
\end{scope}

\begin{scope}
  \coordinate (s2)  at (353,47);

  \coordinate (N2) at (351,159);
  \coordinate (K2) at (355,258);

  \coordinate (u2) at (310,157);
  \coordinate (v2) at (391,155);

  \coordinate (r2) at (351,159);
  \coordinate (L2) at (325,228);
  \coordinate (R2) at (386,228);
  \coordinate (t2) at (356,284);

  \draw[figY/N] (N2) ellipse (77 and 33);

  \draw[figY/edge] (K2) circle (\KRad);

  \draw[figY/dash] (s2)--(u2);
  \draw[figY/dash] (s2)--(v2);

  \draw[figY/edge] (s2)--(r2);

  \draw[figY/blueedge] (L2)--(r2);
  \draw[figY/blueedge] (R2)--(r2);

  \draw[figY/arcblue]
    (387,231) .. controls (390,236) and (391,242) .. (391,248)
               .. controls (391,268) and (375,284) .. (355,284)
               .. controls (336,284) and (320,268) .. (320,248)
               .. controls (320,241) and (321,235) .. (324,230);

  \path[figY/v] (s2) circle[radius=\PointR];
  \path[figY/v] (L2) circle[radius=\PointR];
  \path[figY/v] (R2) circle[radius=\PointR];

  \path[figY/vred] (r2) circle[radius=\PointR];
  \path[figY/vred] (t2) circle[radius=\PointR];

  \node at (373,163) {$N$};

  \node at (355,300) {$K$};

  \node[inner sep=0pt, label={[above, yshift=0.3mm]:{$s$}}] at (s2) {};
\end{scope}

\begin{scope}
  \coordinate (s3)  at (603,50);

  \coordinate (N3) at (601,162);
  \coordinate (K3) at (605,261);

  \coordinate (u3) at (601,162);
  \coordinate (l3) at (561,160);
  \coordinate (r3) at (641,159);

  \coordinate (E3)  at (607,257);
  \coordinate (eL)  at (572,257);
  \coordinate (eR)  at (642,257);

  \draw[figY/N] (N3) ellipse (77 and 33);

  \draw[figY/edge] (K3) circle (\KRad);

  \draw[figY/dash] (s3)--(u3);
  \draw[figY/dash] (s3)--(l3);
  \draw[figY/dash] (s3)--(r3);

  \draw[figY/arcblue] (E3) ellipse (35 and 20);

  \path[figY/v] (s3) circle[radius=\PointR];
  \path[figY/vred] (eL) circle[radius=\PointR];
  \path[figY/vred] (eR) circle[radius=\PointR];

  \node at (622,166) {$N$};

  \node at (605,300) {$K$};

  \node[inner sep=0pt, label={[above, yshift=0.3mm]:{$s$}}] at (s3) {};
\end{scope}

\end{tikzpicture}

\endgroup
\caption{Illustrations of the case when $(\romannumeral4)$ holds for $H$ and $s$.}
\label{Figure-H-z-4}
\end{figure}

If $(\romannumeral5)$ holds for $H$ and $s$, then $(\romannumeral5)$ holds for $G$ and $x,y$ (see Figure~\ref{Figure-H-z-5}).

\begin{figure}[!ht]
\centering
\begingroup

\begin{tikzpicture}[x=0.75pt,y=0.75pt,yscale=-1,xscale=1,scale=0.75]
\tikzset{
  figY/edge/.style     ={line width=1.5, draw=black},
  figY/blueedge/.style ={figY/edge, draw=blue!85!black},
  figY/dash/.style     ={line width=1.5, draw=black, dash pattern=on 1.69pt off 2.76pt},
  figY/N/.style        ={draw=black, line width=0.75, dash pattern=on 4.5pt off 4.5pt},
  figY/arcblue/.style  ={line width=1.5, draw=blue!85!black, dash pattern=on 1.69pt off 2.76pt},
  figY/v/.style        ={draw=black, fill=black},
  figY/vred/.style     ={draw=red,  fill=red},
}

\def\PointR{4}

\begin{scope}
  \coordinate (s1) at (111,46);

  \coordinate (p1) at (95,87);
  \coordinate (u1) at (63,166);     
  \coordinate (v1) at (157,162);    

  \coordinate (r1) at (112,227);    

  \coordinate (k1a) at (77,261);    
  \coordinate (k1b) at (149,264);   

  \coordinate (N1) at (109,158);
  \coordinate (K1) at (113,257);

  \draw[figY/edge] (s1)--(p1)--(u1);
  \draw[figY/edge] (s1)--(v1);

  \draw[figY/dash] (111,49)--(109,162);

  \draw[figY/N] (N1) ellipse (77 and 33);

  \draw[figY/edge] (K1) circle (51);

  \draw[figY/blueedge] (r1)--(u1);
  \draw[figY/blueedge] (r1)--(v1);
  \draw[figY/blueedge] (k1a)--(u1);
  \draw[figY/blueedge] (k1b)--(v1);

  \draw[figY/arcblue]
    (149,264) .. controls (144,276) and (130,284) .. (113,284)
               .. controls ( 96,284) and ( 82,275) .. ( 78,263);

  \path[figY/v]    (s1)  circle[radius=\PointR];
  \path[figY/vred] (u1)  circle[radius=\PointR];
  \path[figY/vred] (v1)  circle[radius=\PointR];
  \path[figY/vred] (r1)  circle[radius=\PointR];
  \path[figY/v]    (k1a) circle[radius=\PointR];
  \path[figY/v]    (k1b) circle[radius=\PointR];

  \node at (126,164) {$N$};

  \node at (113,295) {$K$};

  \node[inner sep=0pt, label={[right,xshift=0.4mm,yshift=-0.4mm]:{$r$}}] at (r1) {};

  \node[inner sep=0pt, label={[above, yshift=0.3mm]:{$s$}}] at (s1) {};
\end{scope}

\begin{scope}
  \coordinate (s2) at (277,46);

  \coordinate (r2) at (275,158);   
  \coordinate (a2) at (248,227);   
  \coordinate (b2) at (310,227);   
  \coordinate (c2) at (314,252);   

  \coordinate (N2) at (275,158);
  \coordinate (K2) at (279,258);

  \draw[figY/dash] (277,46)--(234,157);
  \draw[figY/dash] (279,47)--(315,154);

  \draw[figY/edge] (s2)--(r2);

  \draw[figY/N] (N2) ellipse (77 and 33);
  \draw[figY/edge] (K2) circle (51);

  \draw[figY/blueedge] (b2)--(r2);
  \draw[figY/blueedge] (a2)--(r2);

  \draw[figY/blueedge] (c2)--(b2);

  \draw[figY/arcblue]
    (314,254) .. controls (311,271) and (297,283) .. (279,283)
               .. controls (260,283) and (244,267) .. (244,247)
               .. controls (244,241) and (245,235) .. (248,229);

  \path[figY/v]    (s2) circle[radius=\PointR];
  \path[figY/vred] (r2) circle[radius=\PointR];
  \path[figY/v]    (a2) circle[radius=\PointR];
  \path[figY/vred] (b2) circle[radius=\PointR];
  \path[figY/vred] (c2) circle[radius=\PointR];

  \node at (297,163) {$N$};
  \node at (279,295) {$K$};
  \node[inner sep=0pt, label={[right,xshift=-4.0mm,yshift=-0.4mm]:{$r$}}] at (b2) {};
  \node[inner sep=0pt, label={[above, yshift=0.3mm]:{$s$}}] at (s2) {};
\end{scope}

\begin{scope}
  \coordinate (s3) at (446,48);

  \coordinate (r3) at (445,160);   
  \coordinate (a3) at (418,228);   
  \coordinate (b3) at (480,229);   

  \coordinate (N3) at (445,160);
  \coordinate (K3) at (448,259);

  \draw[figY/dash] (446,48)--(404,158);
  \draw[figY/dash] (448,49)--(484,156);

  \draw[figY/edge] (s3)--(r3);

  \draw[figY/N] (N3) ellipse (77 and 33);
  \draw[figY/edge] (K3) circle (51);

  \draw[figY/blueedge] (b3)--(r3);
  \draw[figY/blueedge] (a3)--(r3);

  \draw[figY/arcblue]
    (481,233) .. controls (483,238) and (484,243) .. (484,249)
               .. controls (484,269) and (468,285) .. (449,285)
               .. controls (429,285) and (413,269) .. (413,249)
               .. controls (413,242) and (415,236) .. (418,231);

  \path[figY/v]    (s3) circle[radius=\PointR];
  \path[figY/vred] (r3) circle[radius=\PointR];
  \path[figY/vred]    (a3) circle[radius=\PointR];
  \path[figY/vred] (b3) circle[radius=\PointR];

  \node at (473,163) {$N$};
  \node at (448,305) {$K$};
  \node at (446.5,180) {$r$};
  \node[inner sep=0pt, label={[above, yshift=0.3mm]:{$s$}}] at (s3) {};
\end{scope}

\begin{scope}
  \coordinate (s4) at (615,48);

  \coordinate (r4) at (615,216);   
  \coordinate (a4) at (581,239);   
  \coordinate (b4) at (652,239);   

  \coordinate (N4) at (613,160);
  \coordinate (K4) at (617,259);

  \draw[figY/N] (N4) ellipse (77 and 33);
  \draw[figY/edge] (K4) circle (51);

  \draw[figY/dash] (615,48)--(613,160);
  \draw[figY/dash] (615,48)--(573,158);
  \draw[figY/dash] (615,51)--(653,157);

  \draw[figY/blueedge] (a4)--(r4);
  \draw[figY/blueedge] (b4)--(r4);

  \draw[figY/arcblue]
    (652,240) .. controls (653,244) and (653,247) .. (653,251)
               .. controls (653,271) and (637,287) .. (616,287)
               .. controls (596,287) and (579,271) .. (579,251)
               .. controls (579,247) and (580,243) .. (581,239);

  \path[figY/v]    (s4) circle[radius=\PointR];
  \path[figY/vred] (r4) circle[radius=\PointR];
  \path[figY/vred] (a4) circle[radius=\PointR];
  \path[figY/vred] (b4) circle[radius=\PointR];

  \node at (644,164) {$N$};
  \node at (617,306) {$K$};
  \node[inner sep=0pt, label={[right,xshift=-2.0mm,yshift=-2.8mm]:{$r$}}] at (r4) {};
  \node[inner sep=0pt, label={[above, yshift=0.3mm]:{$s$}}] at (s4) {};
\end{scope}

\end{tikzpicture}

\endgroup
\caption{Illustrations of the case when $(\romannumeral5)$ holds for $H$ and $s$.}
\label{Figure-H-z-5}
\end{figure}

If $(\romannumeral6)$ holds for $H$ and $s$, then $(\romannumeral6)$ (when $s\in \{x,y\}$) holds for $G$ and $x,y$ or $(\romannumeral10)$ (when $s\notin \{x,y\}$) holds for $G$ and $x,y$ (see Figure~\ref{triangle-6}). 
\begin{figure}[!ht]
\centering
\begingroup

\begin{tikzpicture}[x=0.5pt,y=0.5pt,yscale=-1,xscale=1,scale=0.65]
\tikzset{
  fig/edge/.style     ={line width=1.5, draw=black},
  fig/blueedge/.style ={fig/edge, draw={rgb,255:red,0;green,23;blue,255}},
  fig/dashed/.style   ={fig/edge, dash pattern=on 1.69pt off 2.76pt},
  fig/Ndash/.style    ={line width=0.75, dash pattern=on 4.5pt off 4.5pt, draw=black},
  fig/redv/.style     ={draw={rgb,255:red,208;green,2;blue,27}, fill={rgb,255:red,208;green,2;blue,27}},
  fig/blackv/.style   ={draw=black, fill=black},
}

\def\PointR{3.6}

\draw[fig/edge] (195.33,34.58) -- (280.56,147.44);
\draw[fig/edge] (195.33,35.92) -- (91.89,150.99);
\draw[fig/edge] (195.33,35.92) -- (141.22,148.78);
\draw[fig/edge] (195.33,35.92) -- (186.56,148.78);
\draw[fig/edge] (195.33,34.58) -- (235.89,147.44);

\draw[fig/Ndash]
  (63.89,152.88) .. controls (63.89,134.44) and (118.76,119.5) .. (186.44,119.5)
  .. controls (254.13,119.5) and (309,134.44) .. (309,152.88)
  .. controls (309,171.31) and (254.13,186.25) .. (186.44,186.25)
  .. controls (118.76,186.25) and (63.89,171.31) .. (63.89,152.88) -- cycle;

\draw[fig/edge]
  (85.83,314.25) .. controls (85.83,255.39) and (133.55,207.67) .. (192.42,207.67)
  .. controls (251.28,207.67) and (299,255.39) .. (299,314.25)
  .. controls (299,373.11) and (251.28,420.83) .. (192.42,420.83)
  .. controls (133.55,420.83) and (85.83,373.11) .. (85.83,314.25) -- cycle;

\draw[fig/blueedge] (119,269.5) -- (91.89,150.99);
\draw[fig/blueedge] (163,255.5) -- (141.22,148.78);
\draw[fig/blueedge] (213,330.5) -- (186.56,148.78);
\draw[fig/blueedge] (286,336.5) -- (280.56,147.44);

\draw[fig/dashed] (235.89,147.44) -- (286,336.5);
\draw[fig/dashed] (213,330.5) -- (223.65,245.33) -- (235.89,147.44);
\draw[fig/dashed] (119,269.5) -- (154,336.5);
\draw[fig/dashed] (163,255.5) -- (154,336.5);

\path[fig/redv] (91.89,150.99) circle[radius=\PointR];
\path[fig/redv] (141.22,148.78) circle[radius=\PointR];
\path[fig/redv] (186.44,149.08) circle[radius=\PointR];
\path[fig/redv] (235.89,147.44) circle[radius=\PointR];
\path[fig/redv] (195.33,35.92) circle[radius=\PointR];
\path[fig/redv] (280.56,147.44) circle[radius=\PointR];
\path[fig/redv] (154,332.71) circle[radius=\PointR];

\path[fig/blackv] (119,269.5) circle[radius=\PointR];
\path[fig/blackv] (163,255.5) circle[radius=\PointR];
\path[fig/blackv] (213,330.5) circle[radius=\PointR];
\path[fig/blackv] (286,336.5) circle[radius=\PointR];

\node[inner sep=0pt, label={[above]:{$s=x$}}] at (195.33,35.92) {};
\node at (255,158.8) {$N$};
\node at (255,365) {$K$};

\draw[fig/edge] (513.33,32.75) -- (598.56,145.61);
\draw[fig/edge] (513.33,34.08) -- (409.89,149.15);
\draw[fig/edge] (513.33,34.08) -- (459.22,146.94);
\draw[fig/edge] (513.33,34.08) -- (504.56,146.94);
\draw[fig/edge] (513.33,32.75) -- (553.89,145.61);

\draw[fig/Ndash]
  (381.89,151.04) .. controls (381.89,132.61) and (436.76,117.67) .. (504.44,117.67)
  .. controls (572.13,117.67) and (627,132.61) .. (627,151.04)
  .. controls (627,169.47) and (572.13,184.42) .. (504.44,184.42)
  .. controls (436.76,184.42) and (381.89,169.47) .. (381.89,151.04) -- cycle;

\draw[fig/edge]
  (403.83,312.42) .. controls (403.83,253.55) and (451.55,205.83) .. (510.42,205.83)
  .. controls (569.28,205.83) and (617,253.55) .. (617,312.42)
  .. controls (617,371.28) and (569.28,419) .. (510.42,419)
  .. controls (451.55,419) and (403.83,371.28) .. (403.83,312.42) -- cycle;

\draw[fig/blueedge] (437,267.67) -- (409.89,149.15);
\draw[fig/blueedge] (481,253.67) -- (459.22,146.94);
\draw[fig/blueedge] (531,328.67) -- (504.56,146.94);
\draw[fig/blueedge] (604,334.67) -- (598.56,145.61);

\draw[fig/dashed] (553.89,145.61) -- (604,334.67);
\draw[fig/dashed] (531,328.67) -- (553.89,145.61);
\draw[fig/dashed] (437,267.67) -- (472,334.67);
\draw[fig/dashed] (481,253.67) -- (472,334.67);

\path[fig/redv] (409.89,149.15) circle[radius=\PointR];
\path[fig/redv] (459.22,146.94) circle[radius=\PointR];
\path[fig/redv] (504.44,147.25) circle[radius=\PointR];
\path[fig/redv] (553.89,145.61) circle[radius=\PointR];
\path[fig/redv] (513.33,34.08) circle[radius=\PointR];
\path[fig/redv] (598.56,145.61) circle[radius=\PointR];
\path[fig/redv] (472,330.87) circle[radius=\PointR];

\path[fig/blackv] (437,267.67) circle[radius=\PointR];
\path[fig/blackv] (481,253.67) circle[radius=\PointR];
\path[fig/blackv] (531,324.87) circle[radius=\PointR];
\path[fig/blackv] (604,334.67) circle[radius=\PointR];

\node[inner sep=0pt, label={[above]:{$s$}}] at (513.33,34.08) {};
\node at (575,156.97) {$N$};
\node at (575,365) {$K$};

\end{tikzpicture}

\endgroup
\caption{Illustrations of the case when $(\romannumeral6)$ holds for $H$ and $s$.}
\label{triangle-6}
\end{figure}

If $(\romannumeral8)$ holds for $H$ and $s$, then $(\romannumeral8)$ (when $s\in \{x,y\}$) holds for $G$ and $x,y$ or $(\romannumeral9)$ (when $s\notin \{x,y\}$) holds for $G$ and $x,y$ (see Figure~\ref{triangle-8}). 

\begin{figure}[!ht]
\centering

\begin{tikzpicture}[x=0.5pt,y=0.5pt,yscale=-1,xscale=1,scale=0.65]
\tikzset{
  fig/edge/.style   ={draw=black, line width=1.5},
  fig/bedge/.style  ={draw=blue!85!black, line width=1.5},
  fig/dash/.style   ={draw=black, line width=1.5, dash pattern=on 1.69pt off 2.76pt},
  fig/Ndash/.style  ={draw=black, line width=0.75, dash pattern=on 4.5pt off 4.5pt},
  fig/Kcircle/.style={draw=black, line width=1.5},
  fig/v/.style      ={circle, draw=black, fill=black, inner sep=1.6pt},
  fig/vred/.style   ={circle, draw=red,   fill=red,   inner sep=2.0pt},
  fig/lab/.style    ={font=\small},
}

\coordinate (sL)   at (185,38);
\coordinate (aL)   at (91.89,150.99);
\coordinate (bL)   at (185,147.33);
\coordinate (cL)   at (280.56,147.44);

\coordinate (pL)   at (119,269.5);
\coordinate (qL)   at (185,269.33);
\coordinate (rL)   at (255.67,274);

\coordinate (tL)   at (185,358.5); 

\coordinate (NcL)  at (186.44,152.88);
\coordinate (KcL)  at (192.42,314.25);

\coordinate (NL) at (249.4,158.8);
\coordinate (KL) at (243.5,345);

\draw[fig/Ndash]   (NcL) ellipse (122.56 and 33.38);
\draw[fig/Kcircle] (KcL) circle (106.59);

\draw[fig/edge] (sL)--(aL);
\draw[fig/edge] (sL)--(bL);
\draw[fig/edge] (sL)--(cL);

\draw[fig/bedge] (pL)--(aL);
\draw[fig/bedge] (qL)--(bL);
\draw[fig/bedge] (rL)--(cL);

\draw[fig/dash] (pL)--(tL);
\draw[fig/dash] (qL)--(tL);
\draw[fig/dash] (rL)--(tL);

\node[fig/vred] at (sL) {};
\node[fig/vred] at (aL) {};
\node[fig/vred] at (bL) {};
\node[fig/vred] at (cL) {};
\node[fig/vred] at (tL) {};

\node[fig/v]    at (pL) {};
\node[fig/v]    at (qL) {};
\node[fig/v]    at (rL) {};

\node[fig/lab] at (NL) {$N$};
\node[fig/lab] at (KL) {$K$};
\node[fig/lab, above] at (sL) {$s=x$};

\coordinate (sR)   at (489.67,36.83);
\coordinate (aR)   at (396.56,149.82);
\coordinate (bR)   at (489.67,146.17);
\coordinate (cR)   at (585.22,146.28);

\coordinate (pR)   at (423.67,268.33);
\coordinate (qR)   at (489.67,268.17);
\coordinate (rR)   at (560.33,272.83);

\coordinate (tR)   at (489.67,357.33); 

\coordinate (NcR)  at (491.11,151.71);
\coordinate (KcR)  at (497.08,313.08);

\coordinate (NR) at (554.07,157.63);
\coordinate (KR) at (553.17,337.83);

\draw[fig/Ndash]   (NcR) ellipse (122.56 and 33.38);
\draw[fig/Kcircle] (KcR) circle (106.59);

\draw[fig/edge] (sR)--(aR);
\draw[fig/edge] (sR)--(bR);
\draw[fig/edge] (sR)--(cR);

\draw[fig/bedge] (pR)--(aR);
\draw[fig/bedge] (qR)--(bR);
\draw[fig/bedge] (rR)--(cR);

\draw[fig/dash] (pR)--(tR);
\draw[fig/dash] (qR)--(tR);
\draw[fig/dash] (rR)--(tR);

\node[fig/vred] at (sR) {};
\node[fig/vred] at (aR) {};
\node[fig/vred] at (bR) {};
\node[fig/vred] at (cR) {};
\node[fig/vred] at (tR) {};

\node[fig/v]    at (pR) {};
\node[fig/v]    at (qR) {};
\node[fig/v]    at (rR) {};

\node[fig/lab] at (NR) {$N$};
\node[fig/lab] at (KR) {$K$};
\node[fig/lab, above] at (sR) {$s$};

\end{tikzpicture}

\caption{Illustrations of the case when $(\romannumeral8)$ holds for $H$ and $s$.}
\label{triangle-8}
\end{figure}

If $(\romannumeral9)$ holds for $H$ and $s$, then $(\romannumeral9)$ holds for $G$ and $x,y$ (see Figure~\ref{triangle-9}). 
\begin{figure}[!ht]
\centering

\begin{tikzpicture}[x=0.5pt,y=0.5pt,yscale=-1,xscale=1,scale=0.75]
\tikzset{
  fig/edge/.style   ={draw=black, line width=1.5},
  fig/bedge/.style  ={draw=blue!85!black, line width=1.5},
  fig/dash/.style   ={draw=black, line width=1.5, dash pattern=on 1.69pt off 2.76pt},
  fig/Ndash/.style  ={draw=black, line width=0.75, dash pattern=on 4.5pt off 4.5pt},
  fig/Kcircle/.style={draw=black, line width=1.5},
  fig/v/.style      ={circle, draw=black, fill=black, inner sep=1.6pt},
  fig/vred/.style   ={circle, draw=red,   fill=red,   inner sep=2.0pt},
  fig/lab/.style    ={font=\small},
}

\def\Krad{70}

\begin{scope}[xshift=0pt]
\coordinate (sL)   at (111,45.58);
\coordinate (NcL)  at (109.38,157.88);
\coordinate (KcL)  at (112.75,287.25); 

\coordinate (rL)   at (109.38,161.67);
\coordinate (rLb)  at (109.38,274.75); 
\coordinate (uL)   at (76.5,266.75);   
\coordinate (vL)   at (144,266.25);    
\coordinate (tL)   at (109.38,308.25); 

\coordinate (sLtoNL_left)  at (68.2,156.2);
\coordinate (sLtoNL_right) at (147,152.6);

\draw[fig/Ndash] (NcL) ellipse (76.63 and 33.38);
\draw[fig/Kcircle] (KcL) circle (\Krad);

\draw[fig/dash] (sL) -- (sLtoNL_left);
\draw[fig/dash] (sL) -- (NcL);
\draw[fig/dash] (sL) -- (sLtoNL_right);

\draw[fig/edge] (sL) -- (rL);

\draw[fig/bedge] (rLb) -- (rL);
\draw[fig/bedge] (uL)  -- (rL);
\draw[fig/bedge] (vL)  -- (rL);

\draw[fig/dash] (uL)  -- (tL);
\draw[fig/dash] (rLb) -- (tL);
\draw[fig/dash] (vL)  -- (tL);

\node[fig/v]    at (sL)  {};
\node[fig/vred] at (rL)  {};
\node[fig/vred] at (rLb) {};
\node[fig/vred] at (uL)  {};
\node[fig/vred] at (vL)  {};
\node[fig/v]    at (tL)  {};

\node[fig/lab] at (168,164) {$N$};
\node[fig/lab] at (125,330) {$K$};
\node[fig/lab] at (110,27) {$s$};
\node[fig/lab] at (90,163) {$r$};
\end{scope}

\begin{scope}[xshift=10pt] 
\coordinate (sM)   at (277,45.92);
\coordinate (NcM)  at (275.38,158.21);
\coordinate (KcM)  at (278.75,287.58); 

\coordinate (aM)   at (234.2,156.53);
\coordinate (bM)   at (314.6,154.13);
\coordinate (rM)   at (247.5,279.25);  
\coordinate (tM)   at (305,277.75);    
\coordinate (qM)   at (277,309.25);    

\draw[fig/Ndash] (NcM) ellipse (76.63 and 33.38);
\draw[fig/Kcircle] (KcM) circle (\Krad);

\draw[fig/edge] (sM) -- (aM);
\draw[fig/edge] (sM) -- (bM);

\draw[fig/dash] (sM) -- (NcM);

\draw[fig/bedge] (rM) -- (aM);
\draw[fig/bedge] (rM) -- (bM);
\draw[fig/bedge] (qM) -- (rM);

\draw[fig/dash] (aM) -- (tM);
\draw[fig/dash] (bM) -- (tM);
\draw[fig/dash] (qM) -- (tM);

\node[fig/v]    at (sM) {};
\node[fig/vred] at (aM) {};
\node[fig/vred] at (bM) {};
\node[fig/vred] at (rM) {};
\node[fig/vred] at (qM) {};
\node[fig/vred] at (tM) {};

\node[fig/lab] at (336,162.53) {$N$};
\node[fig/lab] at (292,330) {$K$};
\node[fig/lab] at (274,27) {$s$};
\node[fig/lab] at (230,282.5) {$r$}; 
\end{scope}

\begin{scope}[xshift=20pt]
\coordinate (sRA)   at (443.5,47.08);
\coordinate (NcRA)  at (441.88,159.38);
\coordinate (KcRA)  at (445.25,288.75); 

\coordinate (rRA)   at (441.88,163.17);
\coordinate (pRA)   at (441.88,276.25); 
\coordinate (uRA)   at (409,268.25);    
\coordinate (vRA)   at (476.5,267.75);  
\coordinate (tRA)   at (441.88,309.75); 

\coordinate (sRAtoN_left)  at (400.7,157.7);
\coordinate (sRAtoN_right) at (479.5,154.1);

\draw[fig/Ndash] (NcRA) ellipse (76.63 and 33.38);
\draw[fig/Kcircle] (KcRA) circle (\Krad);

\draw[fig/dash] (sRA) -- (sRAtoN_left);
\draw[fig/dash] (sRA) -- (NcRA);
\draw[fig/dash] (sRA) -- (sRAtoN_right);

\draw[fig/edge] (sRA) -- (rRA);

\draw[fig/dash] (pRA) -- (rRA);
\draw[fig/dash] (uRA) -- (rRA);
\draw[fig/dash] (vRA) -- (rRA);

\draw[fig/bedge] (uRA) -- (tRA);
\draw[fig/bedge] (pRA) -- (tRA);
\draw[fig/bedge] (vRA) -- (tRA);

\node[fig/v]    at (sRA) {};
\node[fig/vred] at (rRA) {};
\node[fig/vred] at (pRA) {};
\node[fig/vred] at (uRA) {};
\node[fig/vred] at (vRA) {};
\node[fig/vred] at (tRA) {};

\node[fig/lab] at (500,162.87) {$N$};
\node[fig/lab] at (458,330) {$K$};
\node[fig/lab] at (440.5,27) {$s$};
\node[fig/lab] at (422,310) {$r$}; 
\end{scope}

\begin{scope}[xshift=30pt]
\coordinate (sRB)   at (613.5,48.58);
\coordinate (NcRB)  at (611.88,160.88);
\coordinate (KcRB)  at (615.25,290.25); 

\coordinate (pRB)   at (617,253.42);    
\coordinate (uRB)   at (590.5,279.92);  
\coordinate (vRB)   at (619,279.92);    
\coordinate (wRB)   at (645,279.92);    
\coordinate (tRB)   at (617.33,306.75); 

\coordinate (sRBtoN_left)  at (570.7,159.2);
\coordinate (sRBtoN_right) at (651.1,156.8);

\draw[fig/Ndash] (NcRB) ellipse (76.63 and 33.38);
\draw[fig/Kcircle] (KcRB) circle (\Krad);

\draw[fig/dash] (sRB) -- (sRBtoN_left);
\draw[fig/dash] (sRB) -- (NcRB);
\draw[fig/dash] (sRB) -- (sRBtoN_right);

\draw[fig/bedge] (uRB) -- (pRB);
\draw[fig/bedge] (uRB) -- (vRB);
\draw[fig/bedge] (tRB) -- (uRB);

\draw[fig/dash] (vRB) -- (wRB);
\draw[fig/dash] (wRB) -- (tRB);
\draw[fig/dash] (wRB) -- (pRB);

\node[fig/v]    at (sRB) {};
\node[fig/vred] at (pRB) {};
\node[fig/vred] at (uRB) {};
\node[fig/vred] at (vRB) {};
\node[fig/vred] at (wRB) {};
\node[fig/vred] at (tRB) {};

\node[fig/lab] at (670,164.2) {$N$};
\node[fig/lab] at (628,330) {$K$};
\node[fig/lab] at (610.5,27) {$s$};
\node[fig/lab] at (575.04,280) {$r$}; 
\end{scope}

\end{tikzpicture}

\caption{Illustrations of the case when $(\romannumeral9)$ holds for $H$ and $s$.}
\label{triangle-9}
\end{figure}

If $(\romannumeral10)$ holds for $H$ and $s$, then $(\romannumeral10)$ holds for $G$ and $x,y$ (see Figure~\ref{triangle-x}).  This completes the proof of Theorem~\ref{main-theorem:triangle}. 
\begin{figure}[!ht]
\centering

\tikzset{every picture/.style={line width=0.75pt}}

\begin{tikzpicture}[x=0.5pt,y=0.5pt,yscale=-1,xscale=1,scale=0.85]
\tikzset{
  fig/edge/.style   ={draw=black, line width=1.5},
  fig/bedge/.style  ={draw=blue!85!black, line width=1.5},
  fig/dash/.style   ={draw=black, line width=1.5, dash pattern=on 1.69pt off 2.76pt},
  fig/Ndash/.style  ={draw=black, line width=0.75, dash pattern=on 4.5pt off 4.5pt},
  fig/v/.style      ={circle, draw=black, fill=black, inner sep=1.6pt},
  fig/vred/.style   ={circle, draw=red,   fill=red,   inner sep=2.0pt},
  fig/lab/.style    ={font=\small},
}

\def\Krad{84}   

\begin{scope}[xshift=0pt]
\coordinate (s1)   at (111,45.58);
\coordinate (Nc1)  at (109.38,157.88);
\coordinate (Kc1)  at (111.67,303);      

\coordinate (r1)   at (109.38,161.67);

\coordinate (a1)   at (59.33,260);
\coordinate (b1)   at (80.67,261.33);
\coordinate (c1)   at (140.67,258.67);
\coordinate (d1)   at (160.67,260);

\coordinate (p1)   at (78,311.33);
\coordinate (q1)   at (142,310.67);

\coordinate (s1L)  at (71.33,160);     
\coordinate (s1R)  at (152,163.33);    

\draw[fig/Ndash] (Nc1) ellipse (76.63 and 33.38);
\draw[fig/edge]  (Kc1) circle (\Krad);

\draw[fig/dash] (s1) -- (s1L);
\draw[fig/dash] (s1) -- (Nc1);
\draw[fig/dash] (s1) -- (s1R);

\draw[fig/edge] (s1) -- (r1);

\draw[fig/bedge] (a1) -- (r1);
\draw[fig/bedge] (b1) -- (r1);
\draw[fig/bedge] (c1) -- (r1);
\draw[fig/bedge] (d1) -- (r1);

\draw[fig/dash] (a1) -- (p1);
\draw[fig/dash] (b1) -- (p1);
\draw[fig/dash] (d1) -- (q1);
\draw[fig/dash] (c1) -- (q1);

\node[fig/v]    at (s1) {};
\node[fig/vred] at (r1) {};
\node[fig/vred] at (a1) {};
\node[fig/vred] at (b1) {};
\node[fig/vred] at (c1) {};
\node[fig/vred] at (d1) {};
\node[fig/vred] at (p1) {};
\node[fig/vred] at (q1) {};

\node[fig/lab] at (170,164) {$N$};
\node[fig/lab] at (125,352) {$K$};
\node[fig/lab] at (110,29) {$s$};
\node[fig/lab] at (93,160) {$r$};
\end{scope}

\begin{scope}[xshift=40pt] 
\coordinate (s2)   at (292.33,44.92);
\coordinate (Nc2)  at (290.71,157.21);
\coordinate (Kc2)  at (293,302.33);

\coordinate (u2)   at (252.67,159.33);
\coordinate (v2)   at (274,159.33);
\coordinate (r2)   at (281.33,239.33);
\coordinate (w2)   at (342,259.33);
\coordinate (x2)   at (322,270);

\coordinate (p2)   at (259.33,310.67);
\coordinate (q2)   at (323.33,310);

\coordinate (s2D)  at (333.33,162.67); 

\draw[fig/Ndash] (Nc2) ellipse (76.63 and 33.38);
\draw[fig/edge]  (Kc2) circle (\Krad);

\draw[fig/dash] (s2) -- (Nc2);
\draw[fig/dash] (s2) -- (s2D);

\draw[fig/edge] (u2) -- (s2);

\draw[fig/edge] (s2) -- (v2);

\draw[fig/bedge] (r2) -- (v2);
\draw[fig/bedge] (r2) -- (u2);
\draw[fig/bedge] (r2) -- (w2);
\draw[fig/bedge] (r2) -- (x2);

\draw[fig/dash] (u2) -- (p2);
\draw[fig/dash] (v2) -- (p2);
\draw[fig/dash] (w2) -- (q2);
\draw[fig/dash] (x2) -- (q2);

\node[fig/v]    at (s2) {};
\node[fig/vred] at (u2) {};
\node[fig/vred] at (v2) {};
\node[fig/vred] at (r2) {};
\node[fig/vred] at (w2) {};
\node[fig/vred] at (x2) {};
\node[fig/vred] at (p2) {};
\node[fig/vred] at (q2) {};

\node[fig/lab] at (350,163) {$N$};
\node[fig/lab] at (306,351.5) {$K$};
\node[fig/lab] at (289.33,29) {$s$};
\node[fig/lab] at (283,255) {$r$};
\end{scope}

\begin{scope}[xshift=80pt]
\coordinate (s3)   at (469.67,44.92);
\coordinate (Nc3)  at (468.04,157.21);
\coordinate (Kc3)  at (470.33,302.33);

\coordinate (u3)   at (430,159.33);
\coordinate (v3)   at (469.67,164);
\coordinate (w3)   at (510.67,162.67);
\coordinate (r3)   at (469.67,236);
\coordinate (p3)   at (436.67,310.67);
\coordinate (q3)   at (502.67,309.33);
\coordinate (t3)   at (532,310.67);

\draw[fig/Ndash] (Nc3) ellipse (76.63 and 33.38);
\draw[fig/edge]  (Kc3) circle (\Krad);

\coordinate (s3L) at (430,159.33);
\coordinate (s3R) at (510.67,162.67);
\draw[fig/dash] (s3) -- (s3L);
\draw[fig/dash] (s3) -- (Nc3);
\draw[fig/dash] (s3) -- (s3R);

\draw[fig/edge] (s3) -- (v3);

\draw[fig/edge] (s3) -- (w3);

\draw[fig/edge] (u3) -- (s3);

\draw[fig/bedge] (r3) -- (u3);
\draw[fig/bedge] (r3) -- (v3);
\draw[fig/bedge] (r3) -- (q3);
\draw[fig/bedge] (t3) -- (r3);

\draw[fig/dash] (u3) -- (p3);
\draw[fig/dash] (v3) -- (p3);
\draw[fig/dash] (w3) -- (t3);
\draw[fig/dash] (w3) -- (q3);

\node[fig/v]    at (s3) {};
\node[fig/vred] at (u3) {};
\node[fig/vred] at (v3) {};
\node[fig/vred] at (w3) {};
\node[fig/vred] at (r3) {};
\node[fig/vred] at (p3) {};
\node[fig/vred] at (q3) {};
\node[fig/vred] at (t3) {};

\node[fig/lab] at (530,163) {$N$};
\node[fig/lab] at (482,351.5) {$K$};
\node[fig/lab] at (466.67,29) {$s$};
\node[fig/lab] at (466,250) {$r$};
\end{scope}

\begin{scope}[xshift=120pt]
\coordinate (s4)   at (651.67,42.25);
\coordinate (Nc4)  at (650.04,154.54);
\coordinate (Kc4)  at (652.33,299.67);

\coordinate (r4)   at (651.67,233.33);
\coordinate (u4)   at (596.67,286.67);
\coordinate (v4)   at (628,287.33);
\coordinate (w4)   at (684,284.67);
\coordinate (x4)   at (716,285.33);
\coordinate (p4)   at (617.33,326);
\coordinate (q4)   at (702,323.33);

\draw[fig/Ndash] (Nc4) ellipse (76.63 and 33.38);
\draw[fig/edge]  (Kc4) circle (\Krad);

\coordinate (s4L) at (612,156.67);
\coordinate (s4R) at (692.67,160);
\draw[fig/dash] (s4) -- (s4L);
\draw[fig/dash] (s4) -- (Nc4);
\draw[fig/dash] (s4) -- (s4R);

\draw[fig/dash] (s4) -- (651.67,161.33);

\draw[fig/bedge] (r4) -- (u4);
\draw[fig/bedge] (r4) -- (v4);
\draw[fig/bedge] (r4) -- (w4);
\draw[fig/bedge] (x4) -- (r4);

\draw[fig/dash] (p4) -- (u4);
\draw[fig/dash] (p4) -- (v4);
\draw[fig/dash] (w4) -- (q4);
\draw[fig/dash] (x4) -- (q4);

\node[fig/v]    at (s4) {};
\node[fig/vred] at (r4) {};
\node[fig/vred] at (u4) {};
\node[fig/vred] at (v4) {};
\node[fig/vred] at (w4) {};
\node[fig/vred] at (x4) {};
\node[fig/vred] at (p4) {};
\node[fig/vred] at (q4) {};

\node[fig/lab] at (710,160.5) {$N$};
\node[fig/lab] at (665.5,349.5) {$K$};
\node[fig/lab] at (650,28.5) {$s$};
\node[fig/lab] at (652,250) {$r$};
\end{scope}

\end{tikzpicture}

\caption{Illustrations of the case when $(\romannumeral10)$ holds for $H$ and $s$.}
\label{triangle-x}
\end{figure}
\end{proof}

\section{Main result}
Let $G$ be a graph and let $x,y,z \in V(G)$ such that $\{x,y,z\}$ is a clique (possibly $x,y,z$ can be the same).
We say that $(x,y,z)$ is a \emph{noncentral triple} of $G$ if none of $x,y,z$ is the center of 
some proper wheel in $G$. 
We say that $(G,x,y,z)$ has property $\mathcal{P}$ if 
$V(G)\setminus (N[x]\cup N[y]\cup N[z])$ contains a vertex of degree at most two in $G$.
We prove the following.
\begin{theorem}\label{thm:property-p}
Let $G$ be an $\iskdbpk$-free graph without a clique cutset. Suppose that $G$ is not series-parallel, and let $(x,y,z)$ be a noncentral triple of $G$. Then $(G,x,y,z)$ has property $\mathcal{P}$.
\end{theorem}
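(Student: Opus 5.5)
We argue by induction on $|V(G)|$, following the scheme of the analogous statement for $\isktk$-free graphs in \cite{Chudnovsky2019}. Since $G$ is not series-parallel, Lemma~\ref{lem:ISK4-decomposition} gives a prism, a wheel, or a $K_{3,3}$. The first and last are excluded, so $G$ contains a wheel. By Lemma~\ref{lem:proper-wheel-exist} it then contains a proper wheel. Choose a center $s$ of a proper wheel, and let $W=(C,s)$ be a proper wheel with center $s$ and the minimum number of spokes. Because $(x,y,z)$ is noncentral, $s\notin\{x,y,z\}$. Since $\{x,y,z\}$ is a clique and $G$ is diamond-free, at most one of $x,y,z$ lies in $N(s)$; call that set $Z$. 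Then $x,y,z$ all lie in a single component of $G-(N[s]\setminus Z)$. By Theorem~\ref{thm:proper-wheel-property}, this component contains the interiors of at most two (consecutive) sectors, while every other sector interior lies in its own component of $G-N[s]$. Lemma~\ref{lem:nolink} gives at least four sectors, so there is a component $K$ of $G-N[s]$ that is disjoint from $N[x]\cup N[y]\cup N[z]$. Let $N$ be the set of vertices of $N(s)$ with a neighbour in $K$, and set $H=G[V(K)\cup N\cup\{s\}]$. Note that a vertex of $K$ has the same degree in $H$ as in $G$.

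There are two cases. First suppose $H$ is not series-parallel. By Lemma~\ref{lem:component-center}, $s$ is not the center of a proper wheel in $H$, so $(s,s,s)$ is a noncentral triple of $H$. To apply induction we need $H$ to have no clique cutset. I would argue that a clique cutset of $H$ would, together with $s$ or with the adjacency structure of $N$, give a clique cutset of $G$. If that fails, I would replace $H$ by a suitable clique-cutset-free block (an atom) containing $s$. Induction then yields a vertex of $V(H)\setminus N_H[s]\subseteq V(K)$ of degree at most two in $H$, hence in $G$. This vertex is not in $N[x]\cup N[y]\cup N[z]$, so property $\mathcal{P}$ holds.

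Now suppose $H$ is series-parallel. Then $N$ is independent by Lemma~\ref{lem:series-parallel-stable}, and we would like a low-degree vertex inside $K$ directly. Apply Theorem~\ref{main-theorem:triangle} (or Lemma~\ref{lem:farcycle}/Lemma~\ref{lem:tree}) to $H$ with $x=y=s$. Since $N$ is independent and $H$ is series-parallel with $K$ connected, $H$ is triangle-free, so the theorem applies. Outcome (i) is impossible because $K\neq\emptyset$. Outcome (ii) directly gives a vertex of $K$ of degree at most one. In outcomes (iii)--(x) we obtain cycles or thetas in which almost every vertex has degree two in $H$. The exceptional vertices are neighbours of $s$, the special vertex $r$, or at most one or two additional vertices. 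Each such configuration contains a vertex of $K$ of degree two in $H$, provided the cycle is not too short. If every candidate vertex were excluded, we would get a short cycle through $s$ with all its other vertices in $N$. That is impossible, since the cycle is induced and $N$ is independent, so any such cycle meets $K$.

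A degree-two vertex found in $H$ that lies in $K$ has degree two in $G$, since $N_G(v)\subseteq V(K)\cup N$ for $v\in K$. One alternative is to contract $K$ to a single vertex using Lemmas~\ref{lem:series-parallel-contracting} and~\ref{lem:non-centers} and apply induction to the smaller graph $G'$, but the direct argument above should suffice. I expect the main obstacle to be the first case: showing that $H$ (or a suitable piece of it) has no clique cutset, and making sure the vertex produced by induction really lies in $K$ rather than in $N$. For this I would use that $G$ itself has no clique cutset, which forces $|N|\ge 2$ and forces the relevant cutsets of $H$ to extend to cutsets of $G$.
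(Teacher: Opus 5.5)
Your first case is sound and is essentially the paper's Claim~\ref{claim:wheel-G_i}. The clique-cutset worry is easy. A clique cutset $S$ of $H$ cannot contain $s$, because $K$ is connected and every vertex of $N\setminus S$ has a neighbour in $K$. So some component of $H-S$ avoids $s$ and lies inside $K$, which makes $S$ a clique cutset of $G$; no atoms are needed.

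\textbf{The gap: the series-parallel case.} Your claim that $K$ then contains a vertex of degree at most two is false. Consider the following graph $G$.
\begin{itemize}
\item[(a)] Take the hole $C=a_1c_1a_2c_2a_3c_3a_4c_4a_1$ and a vertex $s$ adjacent to $a_1,\dots,a_4$.
\item[(b)] For each $i$, add a vertex $d_i$ adjacent exactly to $s$ and $c_i$.
\item[(c)] $G$ is $\isk$-free. Since $G-s$ has only one cycle, an induced subdivision $J$ of $K_4$ would need $s$ as a corner: if $s\notin J$ or $\deg_J(s)=2$, then $J-s$ has two independent cycles. But then $V(C)\subseteq V(J)$, so $s$ has four neighbours in $J$, which is impossible.
\item[(d)] One checks easily that $G$ is triangle-free, $K_{3,3}$-free and has no clique cutset, and that $(C,s)$ is a proper wheel.
\item[(e)] $c_3$ is not a wheel center, since $d_3$ has degree one in $G-c_3$.
\end{itemize}
With $x=y=z=c_3$, the components of $G-N[s]$ avoiding $N[c_3]$ are $\{c_1\},\{c_2\},\{c_4\}$. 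For each of them, $H\cong K_{2,3}$ is series-parallel, and the unique vertex of $K$ has degree three in $G$. The only vertices of degree two are the $d_i\in N(s)$, which your argument never examines.

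In the language of Theorem~\ref{main-theorem:triangle}, outcome (vi) holds for $H$, witnessed by the $4$-cycles $sa_1c_1a_2$ and $sa_2c_1d_1$. The one high-degree vertex it permits is exactly the vertex of $K$. So your step ``if every candidate were excluded we would get a short cycle through $s$ with all other vertices in $N$'' fails.

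Two further points are wrong as stated. First, $H$ need not be triangle-free, since $K$ itself may contain triangles, so Theorem~\ref{main-theorem:triangle} cannot be applied to $H$ directly. Second, two distinct entries of the triple can both lie in $N(s)$, because a triangle through $s$ is allowed. The existence of $K$ therefore needs more than Theorem~\ref{thm:proper-wheel-property}(ii).

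\textbf{What the paper does instead.} This situation is the core of the proof, and it cannot be resolved inside a single component. The paper works with a minimal counterexample.
\begin{itemize}
\item[(1)] Via Lemmas~\ref{lem:series-parallel-contracting} and~\ref{lem:non-centers}, every series-parallel $G_i$ has $|V(C_i)|=1$. So your contraction alternative only brings you to the singleton situation above.
\item[(2)] Via your first case, every other $G_i$ contains one of $x,y,z$.
\item[(3)] It then finds the degree-two vertex globally. It shows that $G-s$, minus the big component, has girth at least seven.
\item[(4)] It rules out the all-singleton case by applying Theorem~\ref{main-theorem:triangle} to $G-s$. This is where a vertex like $d_1\in N(s)$ is produced.
\item[(5)] It contracts $V(C_k)\cup N_k$ to obtain $G^*$ and proves that $G^*-x^*$ has girth at least twelve.
\item[(6)] It applies Theorem~\ref{main-theorem:triangle} to the graph $G''$.
\end{itemize}
Some global argument of this kind, which can locate the low-degree vertex in $N(s)$ or in a different singleton component, is what your proposal is missing.
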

\begin{proof}
Let $G$ be a counterexample to Theorem~\ref{thm:property-p} with $|V(G)|$ minimum.
Then $G$ is connected and every vertex in $V(G) \setminus (N[x] \cup N[y]\cup N[z])$ has degree at least three in $G$. 
Since $G$ is not series-parallel, and $G$ is $\iskdbpk$-free, it follows from Lemma~\ref{lem:ISK4-decomposition} that $G$ contains a wheel and hence by Lemma~\ref{lem:proper-wheel-exist} $G$ contains a proper wheel $W = (C,s)$. 
Let $C_1, \dots, C_k$ denote the components of $V(G) \setminus N[s]$. For $i\in[k]$, let $N_i = \{v \in N(s): N(v) \cap C_i \neq \emptyset \}$, and let  $G_i$ denote the induced subgraph of $G$ with vertex set $V(C_i) \cup N_i \cup \{s\}$.
\begin{claim}\label{claim:clique cutset}
 For $i\in [k]$,   $G_i$ has no clique cutset.
\end{claim}

Otherwise, let $i\in [k]$ such that $G_i$ has a clique cutset $S$. Since $G_i-(N_i\cup \{s\})$ is connected, $s\notin S$. However, it follows that $S$ is a clique cutset of $G$, a contradiction.  This proves Claim~\ref{claim:clique cutset}.
\begin{claim}\label{claim:series-parallel-1}
For $i\in [k],$ if $G_i$ is series-parallel and $G_i-(N[s]\cap \{x,y,z,s\})$ contains a cycle, then $\{x,y,z\}\cap V(C_i)\neq \emptyset$.
\end{claim}

Otherwise, let $i \in [k]$ such that $G_i$ is series-parallel, and suppose that $G_i- (N[s] \cap \{x,y,z,s\})$ contains a cycle. By Lemma~\ref{lem:series-parallel-stable}, we have that $N_i$ is an independent set.  This implies  that $1 \leq |N[s] \cap \{x,y,z,s\}| \leq 2$. Suppose that $V(C_i)\cap \{x,y,z\}=\emptyset$.  By Lemma~\ref{lem:series-parallel-clique-cycle} applied to $G_i$ and the vertices in $N[s] \cap \{x,y,z,s\}$, it follows that either there is a vertex in $V(G_i) \setminus N[s]$ of degree at most one in $G_i$ anticomplete to $\{x,y,z\} \cap N(s)$, or $G_i - (N[s] \cap \{x,y,z,s\})$ contains an induced cycle $C'$ with at least one vertex of degree two in $G_i$. In both cases, there is a vertex $t$ in $V(G_i) \setminus N[s]$ of degree at most two in $G_i$ and $t$ is anticomplete to $N[s] \cap \{x,y,z,s\}$, and hence its degree in $G$ is also at most two, a contradiction. This proves Claim~\ref{claim:series-parallel-1}.
\begin{claim}\label{claim:series-parallel-2}
   For $i \in [k]$, if $G_i$ is series-parallel, then $|V(C_i)| = 1$. 
\end{claim}

Otherwise, let $i \in [k]$ be such that $G_i$ is series-parallel and $|V(C_i)| >1$. Let $G'$ be the graph obtained from $G$ by contracting $V(C_i)$ to a new vertex $r$. We let $x' = r$ if $x \in V(C_i)$ and $x' = x$ otherwise;  we let $y' = r$ if $y \in V(C_i)$ and $y' = y$ otherwise; and we let $z' = r$ if $z \in V(C_i)$ and $z' = z$ otherwise.  By Lemma~\ref{lem:series-parallel-contracting}, $G'$ is $\iskdbpk$-free without clique cutsets. By Lemma~\ref{lem:non-centers}, $(x',y',z')$ is a noncentral triple for $G'$. By the minimality of $|V(G)|$, it follows that $(G', x', y',z')$ has the property $\mathcal{P}$. Let $v \in V(G') \setminus (N[x'] \cup N[y']\cup N[z'])$ be a vertex of degree at most two in $G'$. By the definition of $x'$,$y'$ and $z'$, it follows that $v \not\in N[x] \cup N[y]\cup N[z]$. It follows that either $v=r$, or  $v \in N(r)$.

First suppose that $v = r$. Then $r \not\in N[x'] \cup N[y']\cup N[z']$, and so $V(G_i) \cap \{x, y,z\} = \emptyset$. By Claim~\ref{claim:series-parallel-1}, it follows that $G_i- s$ is a tree. Since $v$ has degree at most two in $G'$, it follows that $|N_i| \leq 2$, and since $G_i- N[s]$ is connected, it follows that every vertex of $N_i$ is a leaf of $G_i - s$.  Thus, either $V(C_i)$ contains a leaf of $G_i - s$, or $G_i - s$ is a path with ends in $N_i$. In both cases $V(C_i)$ contains a vertex of degree at most two in $G$, a contradiction. 

Thus $v \neq r$ and $v \in N(r)$. That is, $v\in N(s)$. On the one hand, since $v\notin N[x'] \cup N[y']\cup N[z']$, $\{x,y,z\}\cap V(C_i)=\emptyset$. 
On the other hand, since $\deg_G(v) > 2$ and $\deg_{G'}(v) \leq 2$, $v$ has more than one neighbor in $V(C_i)$. Let $P$ be a path in $C_i$ between two neighbors of $v$, then $vPv$ is a cycle in $G_i-(N[s] \cap \{x,y,z,s\})$. 
By Claim~\ref{claim:series-parallel-1}, it follows that $V(C_i) \cap \{x, y,z\} \neq \emptyset$, a contradiction. This proves Claim~\ref{claim:series-parallel-2}.

\begin{claim}\label{claim:wheel-G_i}
For $i\in [k]$, if $G_i$ contains a wheel, then $ V(C_i)\cap \{x,y,z\}\neq \emptyset$.
\end{claim}

Otherwise, let $i \in [k]$ be such that $G_i$ contains a wheel and $V(C_i) \cap \{x,y,z\} = \emptyset$. Then $G_i$ is not a series-parallel graph.  Since $G$ is $K_4$-free, it follows that $|N_i \cap \{x,y,z\}| \leq 2$, and by symmetry, we may assume that $z\not\in N_i$. Let $z' = s$,  let $x' = x$ if $x \in N_i$ and $x' = s$ otherwise; and let $y' = y$ if $y \in N_i$ and $y' = s$ otherwise. By Lemma~\ref{lem:component-center}, $(x',y',z')$ is a noncentral triple for $G_i$. Since $G_i$ is an induced subgraph of $G$, it follows from Claim~\ref{claim:clique cutset} that $G_i$ is $\iskdbpk$-free without clique cutsets. Since $G$ is a minimum counterexample, it follows that $(G_i, x', y',z')$ has the property $\mathcal{P}$. Let $v$ be a vertex of degree at most two in $G_i$ with $v \not\in N[x'] \cup N[y']\cup N[z']$. Since $v \not\in N[s]$, it follows that $\deg_G(v) = \deg_{G_i}(v)$. This implies that $(G,x,y,z)$ has property $\mathcal{P}$, a contradiction. This proves Claim~\ref{claim:wheel-G_i}.

By Lemma~\ref{lem:ISK4-decomposition}, Claims~\ref{claim:series-parallel-2} and~\ref{claim:wheel-G_i}, there is at most one $i \in [k]$ with $|V(C_i)| > 1$. We may assume $|V(C_i)| = 1$ for all $i \in [k-1]$. If $|V(C_k)| > 1$, let $G' = G - (V(C_k) \cup \{s\})$; otherwise let $G' = G-s$.

\begin{claim}\label{claim:girth-7}
$G'$ has girth at least seven. 
\end{claim}
 
Observe that $V(G')\setminus N(s)$ is an independent set and $e(G'[N(s)]) \le 1$. Since $G$ is diamond-free, $G'$ is triangle-free.
We may assume that $z\notin V(G')$. 
Suppose that  $C=abcda$ is a cycle of length four in $G'$.  Without loss of generality, we may assume that $N(s)\cap V(C)=\{a,c\}$. 
If $\deg_G(b) \neq 2$, let $e$ be a neighbor of $b$ which is not $a,c$. Note that $e \in N(s)$. 
Then $\{a,b,c,d,e,s\}$ induces an $\iskfour$ or a $K_{3,3}$, a contradiction. 
It follows that $\deg_G(b) = \deg_G(d) = 2$, and moreover $\{x,y,z\} \cap \{a,c\} \neq \emptyset$. 
So, by symmetry, say $x = a$. Furthermore, if $y\in\{ b,d\}$, we may assume that $y=b$.
Since $N_G(d)=N_G(b)$ and $G$ contains a wheel, $G-d$  contains a wheel and hence $G-d$ is not series-parallel. Since $\{a,c\}\subseteq N(s)\cap N(b)$ and $sb\notin E(G)$, $G-d$ has no clique cutset. By the minimality of $|V(G)|$, it follows that there exists $v \in V(G) \setminus (N[x] \cup N[y]\cup N[z])$ with $\deg_{G-d}(v) \leq 2$. Since $\deg_G(v') = \deg_{G-d}(v')$ for all $v' \in V(G) \setminus \{a, d, c\}$, it follows that $v=c$, and so $N_G(c) = \{b,d,s\}$, and so $\{s, a\}$ is a  clique cutset in $G$, a contradiction.  This proves that $G'$ contains no $4$-cycle. 

Suppose $G'$ contains a $5$-cycle  or a $6$-cycle $C$. 
Since exactly three vertices in $V(C)$ are neighbors of $s$, $G[(V(C) \cup \{s\}]$ is an $\iskfour$, a contradiction. It follows that $G'$ has girth at least $7$. This proves  Claim~\ref{claim:girth-7}. 

\begin{claim}\label{claim:CK=1}
$|V(C_k)|> 1$.
\end{claim}

Suppose not, then $G' = G-s$ and $G'$ is triangle-free. So we may assume that $z=y$. Then $G'$ satisfies the hypotheses of Theorem~\ref{main-theorem:triangle}. Since $s$ is the center of a proper wheel in $G$ and $\deg_G(s)\geq 4$, it follows from Corollary~\ref{cor:component-number} that there exists a vertex $r$ in $G'$ that is not in $N_{G'}[x] \cup N_{G'}[y]$, and so $(\romannumeral1)$ of Theorem~\ref{main-theorem:triangle} does not hold (see Figure~\ref{fig:G'-1}). 

\begin{figure}[htbp]
\centering

\begin{minipage}{0.31\textwidth} 
\centering
\begin{tikzpicture}[
  scale=0.7,
  transform shape,
  thick,
  every node/.style={circle,draw=black,fill=black,inner sep=2.0pt}
]
\node[label={[above, yshift=-0.5mm]:{$s$}}] (0) at (-1.6,4) {};
\node[label={[above, yshift=-0.5mm]:{$x$}}] (1) at (-3.8,2.4) {};
\node[label={[above, yshift=-0.5mm]:{$y$}}] (2) at (-2.2,2.4) {};
\node[red] (3) at (-0.8,2.4) {};
\node (4) at (1.4,2.4) {};

\node[draw=none, fill=none] at (0.3,2.4) {$\cdots$};
\node[draw=none, fill=none] at (0.3,0.8) {$\cdots$};

\node (5) at (-3.8,0.8) {};
\node (6) at (-2.2,0.8) {};
\node (7) at (-0.8,0.8) {};
\node (8) at (1.4,0.8) {};

\path[draw, thick]
(0) edge[dashed] (1)
(0) edge[dashed] (2)
(0) edge[dashed] (3)
(0) edge[dashed] (4)
(1) edge (2);

\end{tikzpicture}
\end{minipage}
\hspace{-2mm} 
\begin{minipage}{0.31\textwidth}
\centering
\begin{tikzpicture}[
  scale=0.7,
  transform shape,
  thick,
  every node/.style={circle,draw=black,fill=black,inner sep=2.0pt}
]
\node[label={[above, yshift=-0.5mm]:{$s$}}] (0) at (-1.6,4) {};
\node[label={[above, yshift=-0.5mm]:{$x$}}] (1) at (-3.8,2.4) {};
\node (2) at (-2.2,2.4) {};
\node (3) at (-0.8,2.4) {};
\node (4) at (1.4,2.4) {};

\node[draw=none, fill=none] at (0.3,2.4) {$\cdots$};
\node[draw=none, fill=none] at (0.3,0.8) {$\cdots$};

\node[label={[below, yshift=-1.5mm]:{$y$}}] (5) at (-3.8,0.8) {};
\node (6) at (-2.2,0.8) {};
\node[red] (7) at (-0.8,0.8) {};
\node (8) at (1.4,0.8) {};

\path[draw, thick]
(0) edge[dashed] (1)
(0) edge[dashed] (2)
(0) edge[dashed] (3)
(0) edge[dashed] (4)
(1) edge (5)
(1) edge (6);

\end{tikzpicture}
\end{minipage}

\caption{Illustrations of the proof that $V(G') \neq N_{G'}[x] \cup N_{G'}[y]$.
}
\label{fig:G'-1}
\end{figure}
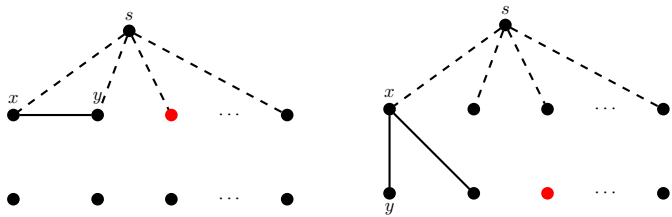

The  $(\romannumeral2)$  of Theorem~\ref{main-theorem:triangle} does not hold, because otherwise every vertex in $V(G') \setminus (N_{G'}[x] \cup N_{G'}[y])$ of degree one in $G'$ would have degree at most two in $G$, a contradiction. If $(\romannumeral 3)$, $(\romannumeral 4)$, or $(\romannumeral 5)$ of Theorem~\ref{main-theorem:triangle} holds, then there exists an induced cycle $C$ in $G'$ containing a vertex in $V(C) \setminus (N[x] \cup N[y] \cup N(s))$ of degree two in $G'$, and thus of degree two in $G$, a contradiction (see Figure~\ref{fig:10}). Therefore, $(\romannumeral6)$-$(\romannumeral10)$ of Theorem~\ref{main-theorem:triangle} holds. This implies that there exists an induced cycle $C$ with even length containing at least one of $x$, $y$ such that at most one vertex $v$ in $V(C)\setminus (N[x]\cup N[y])$ has $\deg_{G'}(v)>2$; or there exists an induced cycle $C$ with even length containing neither $x$ nor $y$ and a vertex $r\in V(C)$ such that at most one vertex $v$ in $V(C)\setminus N[r]$ has $\deg_{G'}(v)>2$. Since $G'$ has length seven, $C$ has length eight. However, it follows that $C$ contains a vertex in $V(C) \setminus (N[x] \cup N[y] \cup N(s))$ of degree two in $G'$, and thus of degree two in $G$, a contradiction (see Figure~\ref{fig:11}). Hence $|V(C_k)| > 1$, completing the proof of Claim~\ref{claim:CK=1}.

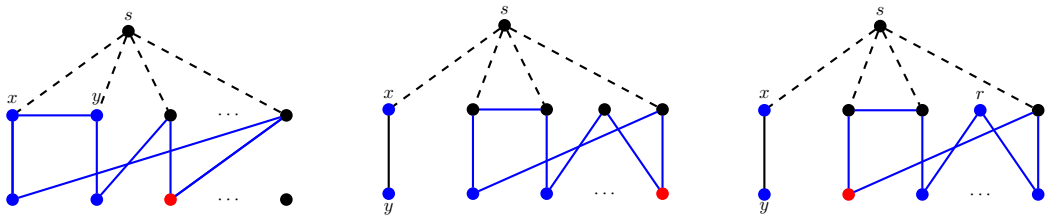
\begin{figure}[htbp]
\centering

\begin{minipage}{0.31\textwidth} 
\centering
\begin{tikzpicture}[
  scale=0.7,
  transform shape,
  thick,
  every node/.style={circle,draw=black,fill=black,inner sep=2.0pt}
]
\node[label={[above, yshift=-0.5mm]:{$s$}}] (0) at (-1.6,4) {};
\node[blue,label={[above, yshift=-0.5mm]:{$x$}}] (1) at (-3.8,2.4) {};
\node[blue,label={[above, yshift=-0.5mm]:{$y$}}] (2) at (-2.2,2.4) {};
\node (3) at (-0.8,2.4) {};
\node (4) at (1.4,2.4) {};

\node[draw=none, fill=none] at (0.3,2.4) {$\cdots$};
\node[draw=none, fill=none] at (0.3,0.8) {$\cdots$};

\node[blue] (5) at (-3.8,0.8) {};
\node[blue] (6) at (-2.2,0.8) {};
\node[red] (7) at (-0.8,0.8) {};
\node (8) at (1.4,0.8) {};

\path[draw, thick]
(0) edge[dashed] (1)
(0) edge[dashed] (2)
(0) edge[dashed] (3)
(0) edge[dashed] (4)
(1) edge[blue] (2)
(2) edge[blue] (6)
(3) edge[blue] (6)
(3) edge[blue] (7)
(4) edge[blue] (7)
(1) edge[blue] (5)
(7) edge[blue] (4)
(5) edge[blue] (4)
(1) edge[blue] (5);
\end{tikzpicture}
\end{minipage}
\hspace{-2mm}
\begin{minipage}{0.31\textwidth} 
\centering
\begin{tikzpicture}[
  scale=0.7,
  transform shape,
  thick,
  every node/.style={circle,draw=black,fill=black,inner sep=2.0pt}
]
\node[label={[above, yshift=-0.5mm]:{$s$}}] (0) at (-1.6,4) {};
\node[blue,label={[above, yshift=-0.5mm]:{$x$}}] (1) at (-3.8,2.4) {};
\node (2) at (-2.2,2.4) {};
\node (3) at (-0.8,2.4) {};
\node (4) at (1.4,2.4) {};

\node[label={[above, yshift=-0.5mm]:{}}]  (10) at (0.3,2.4) {};
\node[draw=none, fill=none] at (0.3,0.8) {$\cdots$};

\node[blue,label={[below, yshift=-1.5mm]:{$y$}}] (5) at (-3.8,0.8) {};
\node[blue] (6) at (-2.2,0.8) {};
\node[blue] (7) at (-0.8,0.8) {};
\node[red] (8) at (1.4,0.8) {};

\path[draw, thick]
(0) edge[dashed] (1)
(0) edge[dashed] (2)
(0) edge[dashed] (3)
(0) edge[dashed] (4)
(3) edge[blue] (2)
(3) edge[blue] (7)
(10) edge[blue] (7)
(10) edge[blue] (8)
(4) edge[blue] (8)
(1) edge (5)
(6) edge[blue] (2)
(4) edge[blue] (6);
\end{tikzpicture}
\end{minipage}
\hspace{-2mm}
\begin{minipage}{0.31\textwidth} 
\centering
\begin{tikzpicture}[
  scale=0.7,
  transform shape,
  thick,
  every node/.style={circle,draw=black,fill=black,inner sep=2.0pt}
]
\node[label={[above, yshift=-0.5mm]:{$s$}}] (0) at (-1.6,4) {};
\node[blue,label={[above, yshift=-0.5mm]:{$x$}}] (1) at (-3.8,2.4) {};
\node[label={[above, yshift=-0.5mm]:{}}] (2) at (-2.2,2.4) {};
\node (3) at (-0.8,2.4) {};
\node(4) at (1.4,2.4) {};

\node[blue,label={[above, yshift=-0.5mm]:{$r$}}]  (10) at (0.3,2.4) {};
\node[draw=none, fill=none] at (0.3,0.8) {$\cdots$};

\node[blue,label={[below, yshift=-1.2mm]:{$y$}}] (5) at (-3.8,0.8) {};
\node[red] (6) at (-2.2,0.8) {};
\node[blue] (7) at (-0.8,0.8) {};
\node[blue] (8) at (1.4,0.8) {};

\path[draw, thick]
(0) edge[dashed] (1)
(0) edge[dashed] (2)
(0) edge[dashed] (3)
(0) edge[dashed] (4)
(3) edge[blue] (2)
(3) edge[blue] (7)
(10) edge[blue] (7)
(10) edge[blue] (8)
(4) edge[blue] (8)
(1) edge (5)
(6) edge[blue] (2)
(4) edge[blue] (6);
\end{tikzpicture}
\end{minipage}

\caption{From left to right ($(\romannumeral3)$, $(\romannumeral4)$, $(\romannumeral5)$): the red vertex represents a degree-$2$ vertex, while the blue vertices represent possible degree-$3$ vertices and special vertices.}

\label{fig:10}
\end{figure}

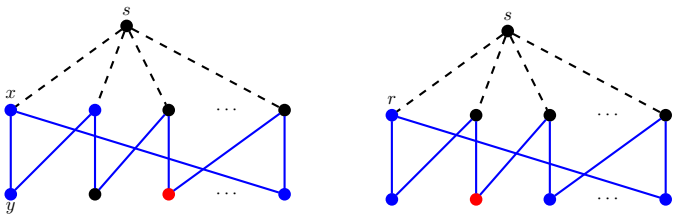
\begin{figure}[htbp]
\centering

\begin{minipage}{0.31\textwidth} 
\centering
\begin{tikzpicture}[
  scale=0.7,
  transform shape,
  thick,
  every node/.style={circle,draw=black,fill=black,inner sep=2.0pt}
]
\node[label={[above, yshift=-0.5mm]:{$s$}}] (0) at (-1.6,4) {};
\node[blue,label={[above, yshift=-0.5mm]:{$x$}}] (1) at (-3.8,2.4) {};
\node[blue,label={[above, yshift=-0.5mm]:{}}] (2) at (-2.2,2.4) {};
\node (3) at (-0.8,2.4) {};
\node (4) at (1.4,2.4) {};

\node[draw=none, fill=none] at (0.3,2.4) {$\cdots$};
\node[draw=none, fill=none] at (0.3,0.8) {$\cdots$};

\node[blue,label={[below, yshift=-1.2mm]:{$y$}}] (5) at (-3.8,0.8) {};
\node (6) at (-2.2,0.8) {};
\node[red] (7) at (-0.8,0.8) {};
\node[blue] (8) at (1.4,0.8) {};

\path[draw, thick]
(0) edge[dashed] (1)
(0) edge[dashed] (2)
(0) edge[dashed] (3)
(0) edge[dashed] (4)
(1) edge[blue] (5)
(5) edge[blue] (2)
(2) edge[blue] (6)
(3) edge[blue] (6)
(3) edge[blue] (7)
(7) edge[blue] (4)
(8) edge[blue] (4)
(8) edge[blue] (1)
;
\end{tikzpicture}
\end{minipage}
\begin{minipage}{0.31\textwidth} 
\centering
\begin{tikzpicture}[
  scale=0.7,
  transform shape,
  thick,
  every node/.style={circle,draw=black,fill=black,inner sep=2.0pt}
]
\node[label={[above, yshift=-0.5mm]:{$s$}}] (0) at (-1.6,4) {};
\node[blue,label={[above, yshift=-0.5mm]:{$r$}}] (1) at (-3.8,2.4) {};
\node[label={[above, yshift=-0.5mm]:{}}] (2) at (-2.2,2.4) {};
\node (3) at (-0.8,2.4) {};
\node(4) at (1.4,2.4) {};

\node[draw=none, fill=none]  (10) at (0.3,2.4) {$\cdots$};
\node[draw=none, fill=none] at (0.3,0.8) {$\cdots$};

\node[blue,label={[below, yshift=-1.2mm]:{}}] (5) at (-3.8,0.8) {};
\node[red] (6) at (-2.2,0.8) {};
\node[blue] (7) at (-0.8,0.8) {};
\node[blue] (8) at (1.4,0.8) {};

\path[draw, thick]
(0) edge[dashed] (1)
(0) edge[dashed] (2)
(0) edge[dashed] (3)
(0) edge[dashed] (4)
(1) edge[blue] (5)
(5) edge[blue] (2)
(2) edge[blue] (6)
(3) edge[blue] (6)
(3) edge[blue] (7)
(7) edge[blue] (4)
(8) edge[blue] (4)
(8) edge[blue] (1)
;
\end{tikzpicture}
\end{minipage}

\caption{Illustrations of $(\romannumeral 6)$--$(\romannumeral 10)$: the red vertex represents a degree-$2$ vertex, while the blue vertices represent possible degree-$3$ vertices and special vertices.}

\label{fig:11}
\end{figure}

 By Lemma~\ref{lem:ISK4-decomposition}, Claims~\ref{claim:series-parallel-2}, \ref{claim:wheel-G_i} and \ref{claim:CK=1}, we may assume that  $x\in V(C_k)$. 
 Since $G$ is diamond-free, $|N_k\cap \{y,z\}|\leq 1$. Without loss of generality, we may assume that $z\notin N_k$. 
 Now, let $G^*$ be obtained from $G$ by contracting $V(C_k) \cup N_k$ to a single vertex $x^*$, and by deleting $s$ and every vertex that is only adjacent to $x^*$. 
Note that $G^*-x^*$ has girth at least $7$ by Claim~\ref{claim:girth-7}. Therefore, if $G^*$ contains a triangle, then the triangle must contain $x^*$.

 \begin{claim}\label{claim:triangle-free-G*}
    $G^*$ is triangle-free.
 \end{claim}
 
For convenience, we denote this triangle by $x^*abx^*$, and we may further assume that $b \notin N(s)$. Since $G$ is diamond-free, $a$ and $b$ have no common neighbor in $N_k$. Then there exists an induced path $a'abb'$ with $a',b'\in N_k$.  Let $P$ be a shortest path between $a'$ and $b'$ with interior in $C_k$. Therefore, $G[\{s,a,b,a',b'\}\cup V(P)]$ is an $\iskfour$, a contradiction. This proves Claim~\ref{claim:triangle-free-G*}.

\begin{claim}\label{claim:no-4-cycle-z}
Every vertex in $V(G^*) \setminus \{x^*\}$ has at most one neighbor in $N_k$ in $G$. There is no $4$-cycle in $G^*$ containing $x^*$.
\end{claim}

First suppose that there is a vertex $v \in V(G^*) \setminus \{x^*\}$ with at least two neighbors $a, b \in N_k$ in $G$. Since $G$ is diamond-free, $v\notin N(s)$.
Since $v \in V(G^*)$ and there are no vertices of degree one adjacent to $x^*$ in $G^*$, it follows that $v$ has another neighbor $c \in N(s) \setminus N_k$.  Since $G$ is diamond-free, $\{a,b,c\}$ is an independent set. Let $P$ be a shortest  path connecting $a$ and $b$ with interior in $V(C_k)$. Such a path exists, since $a, b \in N_k$. It follows that $G[V(P) \cup \{a,b,c,v,s\}]$ induces an $\iskfour$ in $G$, a contradiction.

Suppose that $x^*$ is contained in a $4$-cycle with vertex set $\{a,b,c,x^*\}$ in $G^*$ such that $a, c \in N_{G^*}(x^*)$. Note that $a, c \not\in N(s)$ and $b \in N(s) \setminus N_k$. By Claim~\ref{claim:girth-7}, $G - (\{s\} \cup V(C_k))$ contains no 4-cycle, and thus $a$ and $c$ have no common neighbor in $N_k$. Let $a', c'$ be a neighbor of $a$ and $c$ in $N_k$, respectively; $a'$ and $c'$ exist since $a, c \in N_{G^*}(x^*)$. Let $P$ be a shortest path between $a'$ and $c'$ with interior in $C_k$. Since $b \not\in N_k$, it follows that  $b$ is anticomplete to $V(P)$. Therefore, $G[\{a,b,c, s\} \cup V(P)]$ is an $\iskfour$ in $G$, a contradiction. This proves Claim~\ref{claim:no-4-cycle-z}.

\begin{claim}\label{claim:G^*-isktk-free}
$G^*$ is $\isktk$-free.
\end{claim}

By Claim~\ref{claim:triangle-free-G*}, we have that $G^*$ is triangle-free. 
Suppose that $G^*$ contains an induced subgraph $H$ which is either a $K_{3,3}$ or an $\iskfour$. Since $G$ is $\iskk$-free, it follows that $x^*\in V(H)$. Suppose that $x^*$ has degree two in $H$. So $H$ is an $\iskfour$. Let $P$ be a  shortest path in $G$ connecting the neighbors of $x^*$ in $V(H)$ with interior in $V(C_k)\cup V(N_k)$. Then $G[V(H-x^*) \cup V(P)]$ is an $\iskfour$ in $G$, a contradiction. 

 It follows that $x^*$ has degree three in $H$. Let $a, b, c$ be the neighbors of $x^*$ in $H$.
 By Claim~\ref{claim:no-4-cycle-z}, each of $a,b,c$ has a unique neighbor in $N_k$. Let $a', b', c'$ be neighbors of $a,b,c$ in $N_k$. Let $H'$ be a minimal induced subgraph of $G[V(C_k) \cup \{a, b, c, a',b',c'\}]$ which is connected and contains $\{a,b,c\}$.  By Lemma~\ref{lem:degree-1}, $H'$ either is a subdivision of $K_{1,3}$ in which $a, b, c$ are the vertices of degree one or $H'$ is a tripod graph with $a,b,c$ being the vertices of degree one. Consequently, $G[(V(H) \setminus \{x^*\})   \cup V(H')]$  is an induced subgraph of $G$ which is either a $K_{3,3}$ or an $\iskfour$, a contradiction.  This proves Claim~\ref{claim:G^*-isktk-free}.

\begin{claim}\label{claim:center=x^*}
$G^*$ does not contain a proper wheel with center different from $x^*$.
\end{claim}

Otherwise, assume that  $v \neq x^*$ is the center of a proper wheel $G^*$. By Theorem~\ref{thm:proper-wheel-property}, there is a component $C$  of $G^*- N_{G^*}[v]$ that is disjoint from $N[x^*]$. Let $N$ denote the set of vertices in $N_{G^*}(v)$ with a neighbor in $C$. 

Let $H= G^*[(N \cup V(C) \cup \{v\}]$. Then $(H ,v)$ satisfies the hypotheses of Theorem~\ref{main-theorem:triangle}. Since $V(C) \neq \emptyset$, it follows that $(\romannumeral1)$ of Theorem~\ref{main-theorem:triangle} does not hold. Moreover, every vertex in $V(H) \setminus N[v]$ of degree one in $H$ has degree at most two in $G$, since such a vertex belongs to $C$ and the only additional neighbor that such a vertex may have in $G$ is $s$. Furthermore, such a vertex is in $V(G^*) \setminus N[x^*]$, and hence in $V(G) \setminus (N[x] \cup N[y]\cup N[z])$ as $x \in V(C_k)$. It follows that $(\romannumeral2)$ of Theorem~\ref{main-theorem:triangle} does not hold. 

Therefore, one of $(\romannumeral3),(\romannumeral4),(\romannumeral5)$ of Theorem~\ref{main-theorem:triangle} holds, and hence it follows from Claim~\ref{claim:girth-7} that  there exists an induced cycle $C'$  of length at least $7$ in $H$ such that there exists a vertex $v'$ in $V(C') \setminus (N_{G^*}[x^*] \cup N_G(s))$ of degree two in $G^*$.  
 Since $x^* \not\in V(H)$, $v'\notin N_G[x]\cup N_G[y]\cup N_G[z]$. Consequently, $v'$ has  degree two in $G^*$ and thus has degree two in $G$, a contradiction. This proves Claim~\ref{claim:center=x^*}.

\begin{claim}\label{claim:forest}
For every component $K$ of $G^*- N[x^*]$, $G^*[V(K) \cup N(x^*)]$ is a forest.
\end{claim}

Suppose not. Let $K$ be a component of $G^*- N[x^*]$ such that $G^*[V(K) \cup N(x^*)]$ is not a forest. First suppose that $H = G^*[V(K) \cup N[x^*]]$ is not series-parallel. Then $H$ contains a proper wheel by Lemma~\ref{lem:proper-wheel-exist}.
Let $v$ be the center of a proper wheel in $H$. Since $H -N[x^*]$ is connected, it follows from Corollary~\ref{cor:component-number} that $v \neq x^*$. 
By Lemma~\ref{lem:component-center}, it follows that $v$ is the center of a proper wheel in $G^*$, a contradiction to Claim~\ref{claim:center=x^*}.  

Therefore, $H$ is series-parallel. By assumption, $H -x^*$ contains a cycle. By applying Lemma~\ref{lem:farcycle} to $H$ and $x^*$, it follows that there is either a vertex in $V(H) \setminus N[x^*]$ of degree one, or a cycle $C$ not containing $x^*$, with all but at most two vertices of degree two in $H$. In the latter case, since $G^* -x^*$ has girth at least seven, $C$ contains one vertex in $V(H) \setminus (N[x^*]\cup N[s])$ of degree two in $H$, and thus of degree two in $G$. It follows that in both cases $G$ contains a vertex of degree at most two not in $N[x^*]$, and thus not in $N[x] \cup N[y]\cup N[z]$, a contradiction.
This proves Claim~\ref{claim:forest}.

\begin{claim}\label{claim:girth-12}
    The girth of $G^* - x^*$ is at least $12$.
\end{claim}

Suppose not. Let $C$ be an induced cycle in $G^* - x^*$ of length less than $12$. Since by Claim~\ref{claim:forest}, for every component $K$ of $G^*- N[x^*]$, we have that $G^*[V(K) \cup N(x^*)]$ is a forest, it follows that $C \setminus N[x^*]$ has at least two components. Since $x^*$ is not contained in a $4$-cycle in $G^*$ by~Claim~\ref{claim:no-4-cycle-z},  it follows that each component of $C \setminus N[x^*]$ has at least two vertices. If $C \setminus N[x^*]$ has at least four components, it follows that $C$ has length at least $12$. If $C \setminus N[x^*]$ has exactly three components, then $G[V(C) \cup \{x^*\}]$ is an $\iskfour$, a contradiction. So $C - N[x^*]$ has exactly two components. For every  component $K$ of $G^* \setminus N[x^*]$, by Claim~\ref{claim:forest}, we have that $G^*[(V(K) \cup N(x^*))$ is a forest.

Therefore, the two components of $C - N[x^*]$ are contained in two different components of $G^*- N[x^*]$; say $A$ and $B$. Let $N_A, N_B$ denote the set of vertices in $N(x^*)$ with a neighbor in $A$, $B$, respectively. Suppose that $|N_A| \geq 3$. Since $|V(C) \cap N(x^*)| = 2$, it follows that there is a path $P$ from a vertex $c$ in $N_A \setminus V(C)$ to $V(C)$ with interior in $V(A)$. Since $G^*[V(A) \cup N_A]$ and $G^*[V(B) \cup N_B]$ are trees, it follows that $c$ has at most one neighbor in each component $K$ of $C \setminus N[x^*]$. Therefore, $G^*[(V(P) \cup V(C) \cup \{x^*\})]$ contains an induced subgraph of $G^*$ which is either a subdivision of $K_4$ or of $K_{3,3}$, thus a subdivision of $K_4$ or $K_{3,3}$ by Lemma \ref{lem:ISK4-decomposition}, a contradiction.

So $|N_A| = 2$. Since $G^*[V(A) \cup N_A]$ is a tree, it follows that either $A$ contains a vertex of degree one in $G^*$, which is not adjacent to $x^*$, or $G^*[V(A) \cup N_A]$ is a path containing at least four vertices, and hence $A$ contains two adjacent vertices of degree two in $G^*$, which are not adjacent to $x^*$. If $A$ contains a vertex $v$ of degree one in $G^*$ that is not adjacent to $x^*$, then $v$ has degree at most two outside $N[x^*]$, and thus outside $N[x]\cup N[y]\cup N[z]$, a contradiction. This implies that $G^*[V(A)\cup N_A]$ is a path with at least four vertices, and $A$ contains two adjacent vertices of degree two in $G^*$, both not adjacent to $x^*$. Similarly, $G^*[V(B)\cup N_B]$ is a path with at least four vertices, and $B$ contains two adjacent vertices of degree two in $G^*$, both not adjacent to $x^*$.

Since $G^*-x^*$ has girth at least seven and $x^*$ is not contained in any $4$-cycle of $G^*$ by Claims~\ref{claim:girth-7} and~\ref{claim:no-4-cycle-z},
  at least one of the  $G^*[V(A)\cup N_A]$ and $G^*[V(B)\cup N_B]$ is a path of length at least $4$ (i.e. on at least $5$ vertices). However,
it follows that $G$ contains a vertex of degree at most two not in $N[x^*]$, and thus not in $N[x] \cup N[y]\cup N[z]$, a contradiction. This proves Claim~\ref{claim:girth-12}.

Recall that $\{x,y,z\} \cap V(C_k) \neq \emptyset$, and we may assume that $x \in V(C_k)$. 
Since $G$ is diamond-free, $|\{y,z\}\cap N_k|\leq 1$.
Without loss of generality, we may assume that $z\notin N_k$.  Let $G''$ be the graph obtained from $G$ by deleting $\{s\} \cup (V(C_k) \setminus \{x\}) \cup (N_k \setminus \{y\})$, and every vertex other than $x$ with neighbors only in $N_k$ (this last operation does not change the degree of any vertex in $V(G'')$ except for possibly $y$). Then $N_{G''}(x) \subseteq \{y\}$.
It follows from Claim~\ref{claim:girth-12} that $G'' \setminus \{y\}$ has girth at least $12$, and from Claim~\ref{claim:girth-7} that $G''$ has girth at least $7$. If $y \in V(G'')$, let $y' = y$; otherwise, let $y' = x$. It follows that if $y' = y$, then $y \in N_k$.

Note that $\deg_{G''}(x)\leq 1$ and $G''-x$ is an induced subgraph of $G^*$. Then $G''$ is triangle-free. 
Since $G''$ is an induced subgraph of $G$ and $G''$ is triangle-free, it follows that $G''$ and $x, y'$ satisfy the hypotheses of Theorem~\ref{main-theorem:triangle}. 

Since $s$ is the center of a proper wheel, it follows from Theorem~\ref{thm:proper-wheel-property} $(\romannumeral2)$ that there are at least one component of $G- N[s]$ in which $y'$ has no neighbors. On the one hand, by Claim~\ref{claim:no-4-cycle-z}, we have that every vertex in $V(G^*) \setminus \{x^*\}$ has at most one neighbor in $N_k$ in $G$.  On the other hand, since every vertex in $V(G) \setminus (N[x] \cup N[y]\cup N[z])$ has degree at least three in $G$,   $V(G'') \neq N[x] \cup N[y']$, and thus $(\romannumeral1)$ of Theorem~\ref{main-theorem:triangle} does not hold (see Figure~\ref{maintheoremfig1}).

\begin{figure}[H]
\centering
\begingroup

\begin{tikzpicture}[x=0.5pt,y=0.5pt,yscale=-1,xscale=1]
\tikzset{
  fig/edge/.style     ={line width=1.5, draw=black},
  fig/dedge/.style    ={fig/edge, dash pattern={on 1.69pt off 2.76pt}},
  fig/thickdedge/.style={line width=2.25, draw=black, dash pattern={on 2.53pt off 3.02pt}},
  fig/v/.style        ={draw=black, fill=black},
  fig/vred/.style     ={draw=red, fill=red},
}
\def\PointR{4}

\coordinate (C)      at (265.67,220); 
\def\Cr{45} 

\coordinate (s)      at (340,100);
\coordinate (yp)     at (271,150);

\coordinate (vL)     at (318,150);
\coordinate (vM)     at (362.67,150);
\coordinate (vR)     at (447,150);

\coordinate (x)      at (245.33,220);
\coordinate (z)      at (285.33,220);
\coordinate (zproj)  at (285.33,220);     
\coordinate (w)      at (331.33,220);

\coordinate (rred)   at (387.67,220);

\draw[draw=black] (C) circle[radius=\Cr];

\draw[fig/edge]  (yp)--(s);
\draw[fig/edge]  (yp)--(x);
\draw[fig/edge]  (s)--(vL);
\draw[fig/edge]  (s)--(vM);
\draw[fig/edge]  (s)--(vR);

\draw[fig/dedge] (yp)--(z);
\draw[fig/dedge] (x)--(zproj);
\draw[fig/dedge] (yp)--(w);

\draw[fig/thickdedge] (384,150)--(426,150);

\path[fig/v]    (s)    circle[radius=\PointR];
\path[fig/v]    (yp)   circle[radius=\PointR];
\path[fig/v]    (vL)   circle[radius=\PointR];
\path[fig/v]    (vM)   circle[radius=\PointR];
\path[fig/v]    (vR)   circle[radius=\PointR];
\path[fig/v]    (x)    circle[radius=\PointR];
\path[fig/v]    (z)    circle[radius=\PointR];
\path[fig/v]    (w)    circle[radius=\PointR];
\path[fig/vred] (rred) circle[radius=\PointR];

\node[inner sep=0pt, label={[above]:{$s$}}]        at (s)  {};
\node[inner sep=0pt, label={[left]:{$y'$}}]        at (yp) {};
\node[inner sep=0pt, label={[below left]:{$x$}}]   at (x)  {};
\node[inner sep=0pt, label={[below right]:{$z$}}]  at (z)  {};

\end{tikzpicture}

\endgroup
\caption{Illustration of the proof that $V(G'') \neq N[x] \cup N[y']$.}
\label{maintheoremfig1}
\end{figure}
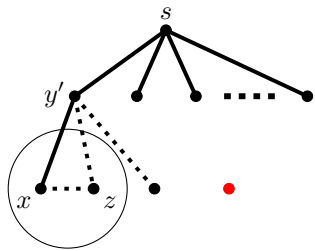

The case $(\romannumeral2)$ of Theorem~\ref{main-theorem:triangle} does not hold, because if $G''$ contains a vertex $v$ of degree one non-adjacent to $y'$, then $v$ has degree at most two in $G$, and $v \not\in N[x] \cup N[y]\cup N[z]$, a contradiction.

Suppose that $(\romannumeral3)$ of Theorem~\ref{main-theorem:triangle} holds. So $G''$ contains an induced cycle $C$ containing $y'$ (since $d_{G''}(x)=1$) such that each vertex in $V(C) \setminus N_{G''}[y]$ has degree  two. Since $G''$ has girth at least seven, $|V(C)| \geq 7$, and in particular $C$ contains a vertex $v\notin N(s)$ of distance three from $y'$ in $C$ and degree two in $G''$ (see Figure~\ref{maintheoremfig3}). 
Let $y'abv$ be the three-edge path from $y'$ to $v$ in $C$.  
Moreover, $v$ anticomplete to $N_k$, otherwise $x^*abvx^*$ is a 4-cycle in $G'$ using $x^*$, contradicting Claim~\ref{claim:no-4-cycle-z}. So $v$ has degree two in $G$ and is not in $N[x] \cup N[y]\cup N[z]$, a contradiction. 

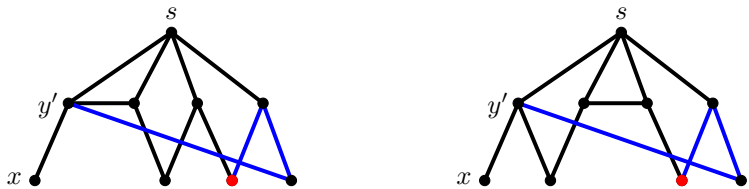
\begin{figure}[H]
\centering
\begingroup

\begin{tikzpicture}[x=0.75pt,y=0.75pt,yscale=-1,xscale=1,scale=0.65]
\tikzset{
  figY/edge/.style     ={line width=1.5, draw=black},
  figY/blueedge/.style ={figY/edge, draw=blue},
  figY/v/.style        ={draw=black, fill=black},
  figY/vred/.style     ={draw=red,  fill=red},
}

\def\PointR{4}

\begin{scope}
  \coordinate (yp) at (98,160);    
  \coordinate (a)  at (149,160);
  \coordinate (s)  at (178,105);    
  \coordinate (d)  at (198,160);
  \coordinate (e)  at (249,160);
  \coordinate (b)  at (173,220);
  \coordinate (r)  at (225,220);   
  \coordinate (h)  at (271,220);
  \coordinate (x)  at (72,220);    

  \draw[figY/edge] (a)--(yp);
  \draw[figY/edge] (a)--(b);
  \draw[figY/edge] (b)--(d);
  \draw[figY/edge] (a)--(s);
  \draw[figY/edge] (s)--(yp);
  \draw[figY/edge] (s)--(d);
  \draw[figY/edge] (s)--(e);
  \draw[figY/edge] (d)--(r);
  \draw[figY/edge] (x)--(yp);

  \draw[figY/blueedge] (yp)--(h);
  \draw[figY/blueedge] (e)--(h);
  \draw[figY/blueedge] (r)--(e);

  \path[figY/v] (yp) circle[radius=\PointR];
  \path[figY/v] (a)  circle[radius=\PointR];
  \path[figY/v] (s)  circle[radius=\PointR];
  \path[figY/v] (d)  circle[radius=\PointR];
  \path[figY/v] (e)  circle[radius=\PointR];
  \path[figY/v] (b)  circle[radius=\PointR];
  \path[figY/v] (r)  circle[radius=\PointR];
  \path[figY/v] (h)  circle[radius=\PointR];
  \path[figY/v] (x)  circle[radius=\PointR];

  \path[figY/vred] (r) circle[radius=\PointR];

  \node[inner sep=0pt, label={[left, yshift=-0.5mm]:{$y'$}}] at (yp) {};
  \node[inner sep=0pt, label={[left,  xshift=-0.5mm]:{$x$}}]  at (x)  {};
  \node[inner sep=0pt, label={[above, yshift=0.3mm]:{$s$}}]  at (s)  {};
\end{scope}

\begin{scope}
  \coordinate (yp) at (447,160);   
  \coordinate (b)  at (498,160);
  \coordinate (s)  at (527,105);    
  \coordinate (d)  at (547,160);
  \coordinate (e)  at (598,160);
  \coordinate (m)  at (472,220);
  \coordinate (x)  at (421,220);   
  \coordinate (h)  at (620,220);
  \coordinate (r)  at (574,220);   

  \draw[figY/edge] (m)--(yp);
  \draw[figY/edge] (m)--(b);
  \draw[figY/edge] (x)--(yp);

  \draw[figY/edge] (b)--(s);
  \draw[figY/edge] (s)--(yp);
  \draw[figY/edge] (s)--(d);
  \draw[figY/edge] (b)--(d);

  \draw[figY/edge] (s)--(e);
  \draw[figY/edge] (d)--(r);

  \draw[figY/blueedge] (yp)--(h);
  \draw[figY/blueedge] (e)--(h);
  \draw[figY/blueedge] (r)--(e);

  \path[figY/v] (yp) circle[radius=\PointR];
  \path[figY/v] (b)  circle[radius=\PointR];
  \path[figY/v] (s)  circle[radius=\PointR];
  \path[figY/v] (d)  circle[radius=\PointR];
  \path[figY/v] (e)  circle[radius=\PointR];
  \path[figY/v] (m)  circle[radius=\PointR];
  \path[figY/v] (x)  circle[radius=\PointR];
  \path[figY/v] (h)  circle[radius=\PointR];
  \path[figY/v] (r)  circle[radius=\PointR];

  \path[figY/vred] (r) circle[radius=\PointR];

  \node[inner sep=0pt, label={[left, yshift=-0.5mm]:{$y'$}}] at (yp) {};
  \node[inner sep=0pt, label={[left,  xshift=-0.5mm]:{$x$}}]  at (x)  {};
  \node[inner sep=0pt, label={[above, yshift=0.3mm]:{$s$}}]  at (s)  {};
\end{scope}

\end{tikzpicture}

\endgroup
\caption{Illustrations for $(\romannumeral3)$ showing the existence of a $3$-edge path:
blue edges indicate the position of the $3$-edge path
and red vertices indicate degree-$2$ vertices.}
\label{maintheoremfig3}
\end{figure}

Suppose that $(\romannumeral4)$ of Theorem~\ref{main-theorem:triangle} holds. So $G''$ contains an induced cycle $C$ not containing $x,y'$ and all but at most two vertices of $C$ have degree two in $G''$.  For convenience, assume that $C=v_1v_2\ldots v_nv_1$. Furthermore, by symmetry, we may assume that every vertex in $\{v_7,\ldots,v_n\}$ has degree two in $G''$. Since $|V(C)| \geq 12$, it follows that $C$ contains a path $P = p_1p_2p_3$ of three vertices with $N(s) \cap V(P)=\{p_2\}$, all of degree two in $G''$ and non-adjacent to $x,y'$ (see Figure~\ref{maintheoremfig4}). Since $x^*$ is not in a $4$-cycle in $G^*$ by Claim~\ref{claim:no-4-cycle-z}, not both $p_1$ and $p_3$ have a neighbor in $N_k$.  
Therefore,  $\{p_1,p_2,p_3\}$ is anticomplete to $\{x,y,z\}$. 
It follows that either $p_1$ or $p_3$ has degree two in $G$, a contradiction.

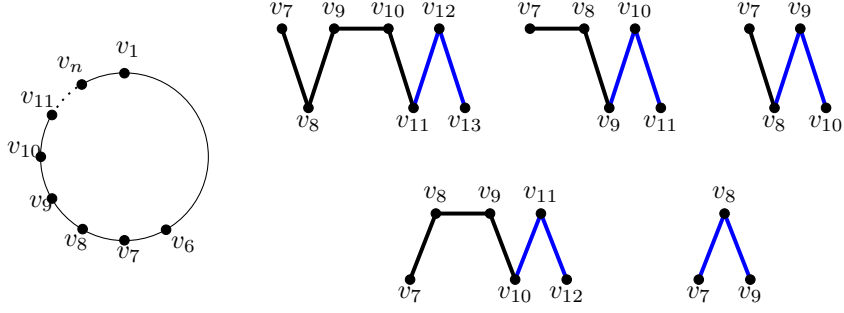
\begin{figure}[H]
\centering
\begingroup

\begin{tikzpicture}[x=0.5pt,y=0.5pt,yscale=-1,xscale=1]
\tikzset{
  fig/edge/.style     ={line width=1.5, draw=black},
  fig/blueedge/.style ={fig/edge, draw=blue},
  fig/v/.style        ={draw=black, fill=black},
}
\def\PointR{3.6}


\draw  [draw opacity=0] (65.47,70.77) .. controls (74.29,66.13) and (84.34,63.5) .. (95,63.5) .. controls (130.07,63.5) and (158.5,91.93) .. (158.5,127) .. controls (158.5,162.07) and (130.07,190.5) .. (95,190.5) .. controls (59.93,190.5) and (31.5,162.07) .. (31.5,127) .. controls (31.5,115.66) and (34.47,105.01) .. (39.68,95.79) -- (95,127) -- cycle ; \draw   (65.47,70.77) .. controls (74.29,66.13) and (84.34,63.5) .. (95,63.5) .. controls (130.07,63.5) and (158.5,91.93) .. (158.5,127) .. controls (158.5,162.07) and (130.07,190.5) .. (95,190.5) .. controls (59.93,190.5) and (31.5,162.07) .. (31.5,127) .. controls (31.5,115.66) and (34.47,105.01) .. (39.68,95.79) ;

\draw  [fill={rgb, 255:red, 0; green, 0; blue, 0 }  ,fill opacity=1 ] (122.9,182.25) .. controls (122.9,180.26) and (124.51,178.65) .. (126.5,178.65) .. controls (128.49,178.65) and (130.1,180.26) .. (130.1,182.25) .. controls (130.1,184.24) and (128.49,185.85) .. (126.5,185.85) .. controls (124.51,185.85) and (122.9,184.24) .. (122.9,182.25) -- cycle ;
\draw  [fill={rgb, 255:red, 0; green, 0; blue, 0 }  ,fill opacity=1 ] (59.13,72.29) .. controls (59.13,70.3) and (60.75,68.69) .. (62.74,68.69) .. controls (64.73,68.69) and (66.34,70.3) .. (66.34,72.29) .. controls (66.34,74.29) and (64.73,75.9) .. (62.74,75.9) .. controls (60.75,75.9) and (59.13,74.29) .. (59.13,72.29) -- cycle ;
\draw  [fill={rgb, 255:red, 0; green, 0; blue, 0 }  ,fill opacity=1 ] (36.62,95.38) .. controls (36.62,93.38) and (38.23,91.77) .. (40.22,91.77) .. controls (42.21,91.77) and (43.83,93.38) .. (43.83,95.38) .. controls (43.83,97.37) and (42.21,98.98) .. (40.22,98.98) .. controls (38.23,98.98) and (36.62,97.37) .. (36.62,95.38) -- cycle ;
\draw  [fill={rgb, 255:red, 0; green, 0; blue, 0 }  ,fill opacity=1 ] (28.15,127) .. controls (28.15,125.01) and (29.76,123.4) .. (31.75,123.4) .. controls (33.74,123.4) and (35.35,125.01) .. (35.35,127) .. controls (35.35,128.99) and (33.74,130.6) .. (31.75,130.6) .. controls (29.76,130.6) and (28.15,128.99) .. (28.15,127) -- cycle ;
\draw  [fill={rgb, 255:red, 0; green, 0; blue, 0 }  ,fill opacity=1 ] (36.62,158.63) .. controls (36.62,156.63) and (38.23,155.02) .. (40.22,155.02) .. controls (42.21,155.02) and (43.83,156.63) .. (43.83,158.63) .. controls (43.83,160.62) and (42.21,162.23) .. (40.22,162.23) .. controls (38.23,162.23) and (36.62,160.62) .. (36.62,158.63) -- cycle ;
\draw  [fill={rgb, 255:red, 0; green, 0; blue, 0 }  ,fill opacity=1 ] (59.77,181.78) .. controls (59.77,179.79) and (61.38,178.17) .. (63.38,178.17) .. controls (65.37,178.17) and (66.98,179.79) .. (66.98,181.78) .. controls (66.98,183.77) and (65.37,185.38) .. (63.38,185.38) .. controls (61.38,185.38) and (59.77,183.77) .. (59.77,181.78) -- cycle ;
\draw  [fill={rgb, 255:red, 0; green, 0; blue, 0 }  ,fill opacity=1 ] (91.4,190.25) .. controls (91.4,188.26) and (93.01,186.65) .. (95,186.65) .. controls (96.99,186.65) and (98.6,188.26) .. (98.6,190.25) .. controls (98.6,192.24) and (96.99,193.85) .. (95,193.85) .. controls (93.01,193.85) and (91.4,192.24) .. (91.4,190.25) -- cycle ;
\draw  [fill={rgb, 255:red, 0; green, 0; blue, 0 }  ,fill opacity=1 ] (91.4,63.75) .. controls (91.4,61.76) and (93.01,60.15) .. (95,60.15) .. controls (96.99,60.15) and (98.6,61.76) .. (98.6,63.75) .. controls (98.6,65.74) and (96.99,67.35) .. (95,67.35) .. controls (93.01,67.35) and (91.4,65.74) .. (91.4,63.75) -- cycle ;
\draw  [draw opacity=0][dash pattern={on 0.84pt off 2.51pt}][line width=0.75]  (40.22,95.38) .. controls (45.46,86.04) and (52.98,78.16) .. (62.03,72.49) -- (95.66,126.37) -- cycle ; \draw  [dash pattern={on 0.84pt off 2.51pt}][line width=0.75]  (40.22,95.38) .. controls (45.46,86.04) and (52.98,78.16) .. (62.03,72.49) ;

\draw (86.5,38) node [anchor=north west][inner sep=0.75pt]   [align=left] {$\displaystyle v_{1}$};
\draw (128.5,185.25) node [anchor=north west][inner sep=0.75pt]   [align=left] {$\displaystyle v_{6}$};
\draw (42,49) node [anchor=north west][inner sep=0.75pt]   [align=left] {$\displaystyle v_{n}$};
\draw (14.5,76) node [anchor=north west][inner sep=0.75pt]   [align=left] {$\displaystyle v_{11}$};
\draw (4.5,114) node [anchor=north west][inner sep=0.75pt]   [align=left] {$\displaystyle v_{10}$};
\draw (21,154.5) node [anchor=north west][inner sep=0.75pt]   [align=left] {$\displaystyle v_{9}$};
\draw (47.5,184) node [anchor=north west][inner sep=0.75pt]   [align=left] {$\displaystyle v_{8}$};
\draw (87,193.5) node [anchor=north west][inner sep=0.75pt]   [align=left] {$\displaystyle v_{7}$};


\begin{scope}
  \coordinate (B7)  at (214.2,30);
  \coordinate (B8)  at (234.2,90);
  \coordinate (B9)  at (253.7,30);
  \coordinate (B10) at (294.7,30);
  \coordinate (B11) at (313.7,90);
  \coordinate (B12) at (333.2,30);
  \coordinate (B13) at (352.7,90);

  \draw[fig/edge]     (B7)--(B8);
  \draw[fig/edge]     (B9)--(B8);
  \draw[fig/edge]     (B9)--(B10);
  \draw[fig/edge]     (B10)--(B11);
  \draw[fig/blueedge] (B12)--(B11);
  \draw[fig/blueedge] (B12)--(B13);

  \path[fig/v] (B7)  circle[radius=\PointR];
  \path[fig/v] (B8)  circle[radius=\PointR];
  \path[fig/v] (B9)  circle[radius=\PointR];
  \path[fig/v] (B10) circle[radius=\PointR];
  \path[fig/v] (B11) circle[radius=\PointR];
  \path[fig/v] (B12) circle[radius=\PointR];
  \path[fig/v] (B13) circle[radius=\PointR];

  \node[inner sep=0pt, label={[above]:{$v_{7}$}}]  at (B7)  {};
  \node[inner sep=0pt, label={[below]:{$v_{8}$}}]  at (B8)  {};
  \node[inner sep=0pt, label={[above]:{$v_{9}$}}]  at (B9)  {};
  \node[inner sep=0pt, label={[above]:{$v_{10}$}}] at (B10) {};
  \node[inner sep=0pt, label={[below]:{$v_{11}$}}] at (B11) {};
  \node[inner sep=0pt, label={[above]:{$v_{12}$}}] at (B12) {};
  \node[inner sep=0pt, label={[below]:{$v_{13}$}}] at (B13) {};
\end{scope}

\begin{scope}
  \coordinate (C7)  at (401.8,30);
  \coordinate (C8)  at (442.8,30);
  \coordinate (C9)  at (461.8,90);
  \coordinate (C10) at (481.3,30);
  \coordinate (C11) at (500.8,90);

  \draw[fig/edge]     (C7)--(C8);
  \draw[fig/edge]     (C8)--(C9);
  \draw[fig/blueedge] (C10)--(C9);
  \draw[fig/blueedge] (C10)--(C11);

  \path[fig/v] (C7)  circle[radius=\PointR];
  \path[fig/v] (C8)  circle[radius=\PointR];
  \path[fig/v] (C9)  circle[radius=\PointR];
  \path[fig/v] (C10) circle[radius=\PointR];
  \path[fig/v] (C11) circle[radius=\PointR];

  \node[inner sep=0pt, label={[above]:{$v_{7}$}}]  at (C7)  {};
  \node[inner sep=0pt, label={[above]:{$v_{8}$}}]  at (C8)  {};
  \node[inner sep=0pt, label={[below]:{$v_{9}$}}]  at (C9)  {};
  \node[inner sep=0pt, label={[above]:{$v_{10}$}}] at (C10) {};
  \node[inner sep=0pt, label={[below]:{$v_{11}$}}] at (C11) {};
\end{scope}

\begin{scope}
  \coordinate (D7)  at (311.1,220);
  \coordinate (D8)  at (330.6,170);
  \coordinate (D9)  at (371.6,170);
  \coordinate (D10) at (390.6,220);
  \coordinate (D11) at (410.1,170);
  \coordinate (D12) at (429.6,220);

  \draw[fig/edge]     (D8)--(D7);
  \draw[fig/edge]     (D8)--(D9);
  \draw[fig/edge]     (D9)--(D10);
  \draw[fig/blueedge] (D11)--(D10);
  \draw[fig/blueedge] (D11)--(D12);

  \path[fig/v] (D7)  circle[radius=\PointR];
  \path[fig/v] (D8)  circle[radius=\PointR];
  \path[fig/v] (D9)  circle[radius=\PointR];
  \path[fig/v] (D10) circle[radius=\PointR];
  \path[fig/v] (D11) circle[radius=\PointR];
  \path[fig/v] (D12) circle[radius=\PointR];

  \node[inner sep=0pt, label={[below]:{$v_{7}$}}]  at (D7)  {};
  \node[inner sep=0pt, label={[above]:{$v_{8}$}}]  at (D8)  {};
  \node[inner sep=0pt, label={[above]:{$v_{9}$}}]  at (D9)  {};
  \node[inner sep=0pt, label={[below]:{$v_{10}$}}] at (D10) {};
  \node[inner sep=0pt, label={[above]:{$v_{11}$}}] at (D11) {};
  \node[inner sep=0pt, label={[below]:{$v_{12}$}}] at (D12) {};
\end{scope}

\begin{scope}
  \coordinate (E7) at (529.5,220);
  \coordinate (E8) at (549,170);
  \coordinate (E9) at (568.5,220);

  \draw[fig/blueedge] (E8)--(E7);
  \draw[fig/blueedge] (E8)--(E9);

  \path[fig/v] (E7) circle[radius=\PointR];
  \path[fig/v] (E8) circle[radius=\PointR];
  \path[fig/v] (E9) circle[radius=\PointR];

  \node[inner sep=0pt, label={[below]:{$v_{7}$}}] at (E7) {};
  \node[inner sep=0pt, label={[above]:{$v_{8}$}}] at (E8) {};
  \node[inner sep=0pt, label={[below]:{$v_{9}$}}] at (E9) {};
\end{scope}

\begin{scope}
  \coordinate (F7)  at (567.8,30);
  \coordinate (F8)  at (586.8,90);
  \coordinate (F9)  at (606.3,30);
  \coordinate (F10) at (625.8,90);

  \draw[fig/edge]     (F7)--(F8);
  \draw[fig/blueedge] (F9)--(F8);
  \draw[fig/blueedge] (F9)--(F10);

  \path[fig/v] (F7)  circle[radius=\PointR];
  \path[fig/v] (F8)  circle[radius=\PointR];
  \path[fig/v] (F9)  circle[radius=\PointR];
  \path[fig/v] (F10) circle[radius=\PointR];

  \node[inner sep=0pt, label={[above]:{$v_{7}$}}]  at (F7)  {};
  \node[inner sep=0pt, label={[below]:{$v_{8}$}}]  at (F8)  {};
  \node[inner sep=0pt, label={[above]:{$v_{9}$}}]  at (F9)  {};
  \node[inner sep=0pt, label={[below]:{$v_{10}$}}] at (F10) {};
\end{scope}

\end{tikzpicture}

\endgroup
\caption{Illustrations for $(\romannumeral5)$ showing the existence of a $2$-edge path:
blue edges indicate the position of the $2$-edge path.}
\label{maintheoremfig4}
\end{figure}

Suppose that $(\romannumeral5)$ of Theorem~\ref{main-theorem:triangle} holds, and so $G''$ contains an induced cycle $C$ not containing $x,y'$ and containing a vertex $r$ such that every vertex in $V(C)\setminus N[r]$ has degree two in  $G''$.
Since $|V(C)|\ge 12$, by a similar analysis of the case $(\romannumeral4)$, it follows that $C$ contains a path $P=p_1p_2p_3$ of order three such that $N(s)\cap V(P)=\{p_2\}$, and all vertices of $P$ have degree two in $G''$ and are non-adjacent to $x$ and $y'$.
  Since $x^*$ is not in a 4-cycle in $G'$ by Claim~\ref{claim:no-4-cycle-z}, not both $p_1$ and $p_3$ have a neighbor in $N_k$.   
  Therefore,  $\{p_1,p_2,p_3\}$ is anticomplete to $\{x,y,z\}$. It follows that either $p_1$ or $p_3$ has degree two in $G$, a contradiction.

Therefore, $(\romannumeral6)$--$(\romannumeral10)$ of Theorem~\ref{main-theorem:triangle} holds. This implies that there exists an induced cycle $C$ in $G''$ with even length containing $y'$
such that at most one vertex $v$ in $V(C)\setminus N[y']$ has $\deg_{G''}(v)>2$; or there exists
an induced cycle $C$ in $G''$ with even length containing neither $x$ nor $y'$ and a vertex
$r\in V(C)$ such that at most one vertex $v$ in $V(C)\setminus N[r]$ has $\deg_{G''}(v)>2$. Note that $G''$ has girth at least $7$ and $G''-y'$ has girth at least $12$. 

If $C$ contains $y'$, then $C$ contains a vertex $v \notin N(s)$ at distance three from $y'$ on $C$ and of degree two in $G''$ (see Figure~\ref{fig-y'}).
Let $y'abv$ be the three-edge path from $y'$ to $v$ on $C$.
Moreover, $v$ is anticomplete to $N_k$; otherwise, $x^*abvx^*$ forms a $4$-cycle in $G'$, contradicting Claim~\ref{claim:no-4-cycle-z}.
Thus $v$ has degree two in $G$ and is not contained in $N[x] \cup N[y] \cup N[z]$, a contradiction.

\begin{figure}[!ht]
\centering
\begingroup

\begin{tikzpicture}[x=0.5pt,y=0.5pt,yscale=-1,xscale=1]
\tikzset{
  figY/edge/.style     ={line width=1.5, draw=black},
  figY/blueedge/.style ={figY/edge, draw=blue},
  figY/v/.style        ={draw=black, fill=black},
  figY/vblue/.style    ={draw=blue, fill=blue},
  figY/vred/.style     ={draw=red,  fill=red},
}

\def\PointR{4}

\begin{scope}
  \coordinate (x) at (118,228);
  \coordinate (u) at (171,228);
  \coordinate (v) at (220,228);
  \coordinate (r) at (272,228);
  \coordinate (z) at (319,228);

  \coordinate (y) at (145,155);
  \coordinate (p) at (196,155);
  \coordinate (w) at (245,155);
  \coordinate (t) at (296,155);

  \coordinate (s) at (225,100);

  \draw[figY/edge] (x)--(y);
  \draw[figY/edge] (u)--(y);

  \draw[figY/edge] (p)--(u);
  \draw[figY/edge] (p)--(v);

  \draw[figY/edge] (v)--(w);
  \draw[figY/edge] (w)--(r);

  \draw[figY/edge] (r)--(t);
  \draw[figY/edge] (t)--(z);
  \draw[figY/edge] (z)--(y);

  \draw[figY/edge] (p)--(s);
  \draw[figY/edge] (s)--(y);
  \draw[figY/edge] (s)--(w);
  \draw[figY/edge] (s)--(t);

  \draw[figY/blueedge] (y)--(z);
  \draw[figY/blueedge] (z)--(t);
  \draw[figY/blueedge] (t)--(r);

  \path[figY/v] (x) circle[radius=\PointR];
  \path[figY/v] (u) circle[radius=\PointR];
  \path[figY/v] (y) circle[radius=\PointR];
  \path[figY/v] (p) circle[radius=\PointR];
  \path[figY/v] (v) circle[radius=\PointR];
  \path[figY/v] (w) circle[radius=\PointR];
  \path[figY/v] (r) circle[radius=\PointR];
  \path[figY/v] (t) circle[radius=\PointR];
  \path[figY/v] (z) circle[radius=\PointR];
  \path[figY/v] (s) circle[radius=\PointR];

  \path[figY/vblue] (v) circle[radius=\PointR];
  \path[figY/vred]  (r) circle[radius=\PointR];

  \node[inner sep=0pt, label={[above, yshift=-0.5mm]:{$s$}}] at (s) {};
  \node[inner sep=0pt, label={[left,  xshift=-0.5mm]:{$x$}}] at (x) {};
  \node[inner sep=0pt, label={[above, yshift=-0.5mm]:{$y'$}}] at (y) {};
\end{scope}

\begin{scope}[xshift=200]
  \coordinate (x) at (118,228);
  \coordinate (u) at (171,228);
  \coordinate (v) at (220,228);
  \coordinate (r) at (272,228);
  \coordinate (z) at (319,228);

  \coordinate (y) at (145,155);
  \coordinate (p) at (196,155);
  \coordinate (w) at (245,155);
  \coordinate (t) at (296,155);

  \coordinate (s) at (225,100);

  \draw[figY/edge] (x)--(y);
  \draw[figY/edge] (u)--(y);

  \draw[figY/edge] (p)--(u);
  \draw[figY/edge] (p)--(v);

  \draw[figY/edge] (v)--(w);
  \draw[figY/edge] (w)--(r);

  \draw[figY/edge] (r)--(t);
  \draw[figY/edge] (t)--(z);
  \draw[figY/edge] (z)--(y);

  \draw[figY/edge] (p)--(s);
  \draw[figY/edge] (s)--(y);
  \draw[figY/edge] (s)--(w);
  \draw[figY/edge] (s)--(t);

  \draw[figY/blueedge] (y)--(u);
  \draw[figY/blueedge] (u)--(p);
  \draw[figY/blueedge] (p)--(v);

  \path[figY/v] (x) circle[radius=\PointR];
  \path[figY/v] (u) circle[radius=\PointR];
  \path[figY/v] (y) circle[radius=\PointR];
  \path[figY/v] (p) circle[radius=\PointR];
  \path[figY/v] (v) circle[radius=\PointR];
  \path[figY/v] (w) circle[radius=\PointR];
  \path[figY/v] (r) circle[radius=\PointR];
  \path[figY/v] (t) circle[radius=\PointR];
  \path[figY/v] (z) circle[radius=\PointR];
  \path[figY/v] (s) circle[radius=\PointR];

  \path[figY/vblue] (r) circle[radius=\PointR];
  \path[figY/vred]  (v) circle[radius=\PointR];

  \node[inner sep=0pt, label={[above, yshift=-0.5mm]:{$s$}}] at (s) {};
  \node[inner sep=0pt, label={[left,  xshift=-0.5mm]:{$x$}}] at (x) {};
  \node[inner sep=0pt, label={[above, yshift=-0.5mm]:{$y'$}}] at (y) {};
\end{scope}

\end{tikzpicture}

\endgroup
\caption{Illustrations for $(\romannumeral6)$--$(\romannumeral10)$ showing the existence of a $3$-edge path:
blue edges indicate the position of the $3$-edge path,
blue vertices indicate possible degree-$3$ vertices,
and red vertices indicate degree-$2$ vertices.}
\label{fig-y'}
\end{figure}
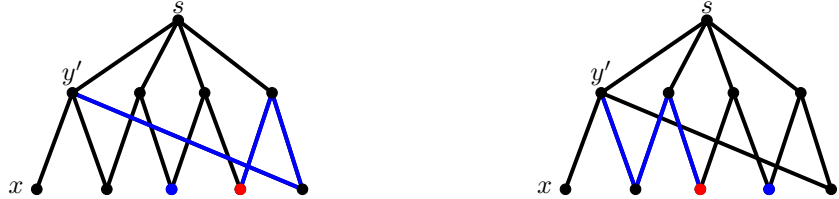

Therefore, $C$ contains neither $x$ nor $y'$, and there exists a vertex
$r \in V(C)$ such that at most one vertex $v \in V(C) \setminus N[r]$ satisfies $\deg_{G''}(v) > 2$.
Since $|V(C)| \geq 12$, it follows that $C$ contains a path $P = p_1p_2p_3$ of length two such that
$N(s) \cap V(P) = \{p_2\}$, all vertices of $P$ have degree two in $G''$, and $P$ is anticomplete to $\{x,y'\}$ (see Figure~\ref{fig-r}).
Since $x^*$ is not contained in any $4$-cycle of $G^*$ by Claim~\ref{claim:no-4-cycle-z}, not both $p_1$ and $p_3$ have a neighbor in $N_k$.
Therefore, $\{p_1,p_2,p_3\}$ is anticomplete to $\{x,y,z\}$.
It follows that either $p_1$ or $p_3$ has degree two in $G$, a contradiction.

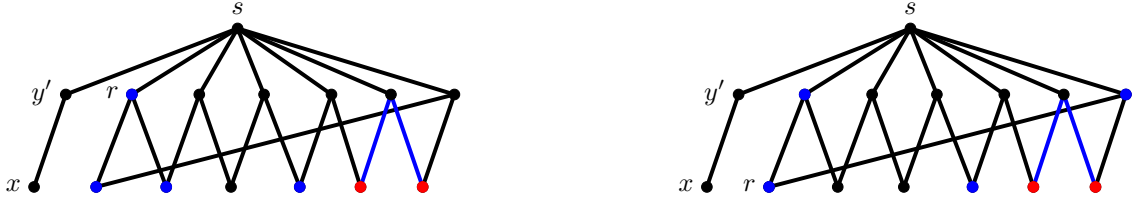
\begin{figure}[H]
\centering
\begingroup

\begin{tikzpicture}[x=0.5pt,y=0.5pt,yscale=-1,xscale=1]
\tikzset{
  fig/edge/.style     ={line width=1.5, draw=black},
  fig/blueedge/.style ={fig/edge, draw=blue},
  fig/v/.style        ={draw=black, fill=black},
  fig/vblue/.style    ={draw=blue,  fill=blue},
  fig/vred/.style     ={draw=red,   fill=red},
}

\def\PointR{4}

\newdimen\FigSep
\FigSep=80pt

\begin{scope}[xshift=-\dimexpr\FigSep/2\relax]
  \coordinate (x)   at (24,240);
  \coordinate (yp)  at (48,170);     
  \coordinate (a)   at (71,240);     
  \coordinate (r)   at (98,170);     
  \coordinate (c)   at (124,240);    
  \coordinate (d)   at (149,170);
  \coordinate (e)   at (173,240);
  \coordinate (s)   at (178,120);     
  \coordinate (f)   at (198,170);
  \coordinate (g)   at (225,240);    
  \coordinate (h)   at (249,170);
  \coordinate (r2)  at (271,240);    
  \coordinate (m)   at (294,170);    
  \coordinate (r3)  at (318,240);    
  \coordinate (t)   at (342,170);    

  \draw[fig/edge] (x)--(yp);
  \draw[fig/edge] (s)--(yp);
  \draw[fig/edge] (s)--(r);
  \draw[fig/edge] (s)--(d);
  \draw[fig/edge] (a)--(r);
  \draw[fig/edge] (c)--(r);

  \draw[fig/edge] (c)--(d);
  \draw[fig/edge] (d)--(e);
  \draw[fig/edge] (e)--(f);

  \draw[fig/edge] (f)--(g);
  \draw[fig/edge] (g)--(h);
  \draw[fig/edge] (h)--(r2);

  \draw[fig/edge] (s)--(f);
  \draw[fig/edge] (s)--(h);
  \draw[fig/edge] (s)--(m);
  \draw[fig/edge] (s)--(t);

  \draw[fig/edge] (a)--(t);
  \draw[fig/edge] (r3)--(t);

  \draw[fig/blueedge] (r2)--(m);
  \draw[fig/blueedge] (m)--(r3);

  \path[fig/v] (x)  circle[radius=\PointR];
  \path[fig/v] (yp) circle[radius=\PointR];
  \path[fig/v] (a)  circle[radius=\PointR];
  \path[fig/v] (r)  circle[radius=\PointR];
  \path[fig/v] (c)  circle[radius=\PointR];
  \path[fig/v] (d)  circle[radius=\PointR];
  \path[fig/v] (e)  circle[radius=\PointR];
  \path[fig/v] (s)  circle[radius=\PointR];
  \path[fig/v] (f)  circle[radius=\PointR];
  \path[fig/v] (g)  circle[radius=\PointR];
  \path[fig/v] (h)  circle[radius=\PointR];
  \path[fig/v] (r2) circle[radius=\PointR];
  \path[fig/v] (m)  circle[radius=\PointR];
  \path[fig/v] (r3) circle[radius=\PointR];
  \path[fig/v] (t)  circle[radius=\PointR];

  \path[fig/vblue] (a) circle[radius=\PointR];
  \path[fig/vblue] (c) circle[radius=\PointR];
  \path[fig/vblue] (g) circle[radius=\PointR];
  \path[fig/vblue] (r) circle[radius=\PointR];

  \path[fig/vred] (r2) circle[radius=\PointR];
  \path[fig/vred] (r3) circle[radius=\PointR];

  \node[inner sep=0pt, label={[left,  xshift=-0.5mm]:{$x$}}]  at (x)  {};
  \node[inner sep=0pt, label={[left,  xshift=-0.5mm]:{$y'$}}] at (yp) {};
  \node[inner sep=0pt, label={[above, yshift=0.5mm]:{$s$}}]  at (s)  {};
  \node[inner sep=0pt, label={[left, xshift=-0.5mm]:{$r$}}]   at (r)  {};
\end{scope}

\begin{scope}[xshift=\dimexpr\FigSep/2\relax]
  \coordinate (x)   at (373,240);
  \coordinate (yp)  at (397,170);    
  \coordinate (a)   at (420,240);    
  \coordinate (r)   at (447,170);    
  \coordinate (c)   at (472,240);
  \coordinate (d)   at (498,170);
  \coordinate (e)   at (522,240);
  \coordinate (s)   at (527,120);     
  \coordinate (f)   at (547,170);
  \coordinate (g)   at (574,240);    
  \coordinate (h)   at (598,170);
  \coordinate (r2)  at (620,240);    
  \coordinate (m)   at (643,170);
  \coordinate (r3)  at (667,240);    
  \coordinate (t)   at (690,170);    

  \draw[fig/edge] (x)--(yp);
  \draw[fig/edge] (s)--(yp);
  \draw[fig/edge] (s)--(r);
\draw[fig/edge] (s)--(d);
  \draw[fig/edge] (a)--(r);
  \draw[fig/edge] (c)--(r);

  \draw[fig/edge] (d)--(c);
  \draw[fig/edge] (d)--(e);
  \draw[fig/edge] (e)--(f);

  \draw[fig/edge] (f)--(g);
  \draw[fig/edge] (g)--(h);
  \draw[fig/edge] (h)--(r2);

  \draw[fig/edge] (s)--(f);
  \draw[fig/edge] (s)--(h);
  \draw[fig/edge] (s)--(m);
  \draw[fig/edge] (s)--(t);

  \draw[fig/edge] (a)--(t);
  \draw[fig/edge] (r3)--(t);

  \draw[fig/blueedge] (r2)--(m);
  \draw[fig/blueedge] (m)--(r3);

  \path[fig/v] (x)  circle[radius=\PointR];
  \path[fig/v] (yp) circle[radius=\PointR];
  \path[fig/v] (a)  circle[radius=\PointR];
  \path[fig/v] (r)  circle[radius=\PointR];
  \path[fig/v] (c)  circle[radius=\PointR];
  \path[fig/v] (d)  circle[radius=\PointR];
  \path[fig/v] (e)  circle[radius=\PointR];
  \path[fig/v] (s)  circle[radius=\PointR];
  \path[fig/v] (f)  circle[radius=\PointR];
  \path[fig/v] (g)  circle[radius=\PointR];
  \path[fig/v] (h)  circle[radius=\PointR];
  \path[fig/v] (r2) circle[radius=\PointR];
  \path[fig/v] (m)  circle[radius=\PointR];
  \path[fig/v] (r3) circle[radius=\PointR];
  \path[fig/v] (t)  circle[radius=\PointR];

  \path[fig/vblue] (a) circle[radius=\PointR];
  \path[fig/vblue] (g) circle[radius=\PointR];
  \path[fig/vblue] (r) circle[radius=\PointR];
  \path[fig/vblue] (t) circle[radius=\PointR];

  \path[fig/vred] (r2) circle[radius=\PointR];
  \path[fig/vred] (r3) circle[radius=\PointR];

  \node[inner sep=0pt, label={[left,  xshift=-0.5mm]:{$x$}}]  at (x)  {};
  \node[inner sep=0pt, label={[left,  xshift=-0.5mm]:{$y'$}}] at (yp) {};
  \node[inner sep=0pt, label={[above, yshift=0.5mm]:{$s$}}]  at (s)  {};
  \node[inner sep=0pt, label={[left, xshift=-0.5mm]:{$r$}}]   at (a)  {};
\end{scope}

\end{tikzpicture}

\endgroup
\caption{Illustrations for $(\romannumeral6)$--$(\romannumeral10)$ showing the existence of a $2$-edge path:
blue edges indicate the position of the $2$-edge path,
blue vertices indicate possible degree-$3$ vertices and special vertices,
and red vertices indicate possible degree-$2$ vertices.}
\label{fig-r}
\end{figure}
  
  This completes the proof of Theorem~\ref{thm:property-p}.
\end{proof}

\begin{corollary}\label{cor:degree-2}
Every   $\iskdbpk$-free graph  without a clique cutset contains a vertex of degree at most two.
\end{corollary}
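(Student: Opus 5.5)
The plan is to split on whether $G$ is series-parallel, and in the main case apply Theorem~\ref{thm:property-p} to a well-chosen noncentral triple. Let $G$ be an $\iskdbpk$-free graph without a clique cutset.

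\textbf{Series-parallel case.} If $G$ is series-parallel, we are done at once: every series-parallel graph has a vertex of degree at most two (as recalled in the introduction, following~\cite{duffin1965}).

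\textbf{Non-series-parallel case.} Now assume $G$ is not series-parallel. Theorem~\ref{thm:property-p} needs a noncentral triple $(x,y,z)$, so the key step is to find a vertex that is not the center of any proper wheel, and then take $x=y=z$ equal to it.

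To find such a vertex, I would use Lemma~\ref{lem:ISK4-decomposition} together with Lemma~\ref{lem:proper-wheel-exist}. These give a proper wheel $W=(C,s)$. Choose $x\in V(C)\setminus N(s)$; this is possible because every sector of a wheel in a diamond-free graph has nonempty interior (otherwise two adjacent spokes together with $s$ would form a triangle, which is easily excluded). I expect the main obstacle to be showing that $x$ is not itself the center of a proper wheel.

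My first attempt at this obstacle uses Corollary~\ref{cor:component-number}: if $x$ were a center, then $G-N[x]$ would have at least three components. Alternatively, one can iterate Lemma~\ref{lem:component-center}. Take a component $K$ of $G-N[s]$ and set $H=G[V(K)\cup N\cup\{s\}]$. Then $s$ is not a proper-wheel center in $H$, and centers in $H$ are also centers in $G$. Descending in this way and using finiteness should yield a vertex that is not a center.

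A cleaner route is to pick a center $s$ and a component $K$ of $G-N[s]$, and argue that some vertex of $K$ is not a center. If every vertex of $K$ were a center, one could choose a center $s'\in K$ and a component $K'$ of $G-N[s']$ that avoids $s$ and is strictly contained in $K$ in a suitable sense. Minimality of $|K|$ then gives a contradiction.

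\textbf{Conclusion.} With a noncentral vertex $x$ in hand, Theorem~\ref{thm:property-p} applied to $(x,x,x)$ produces a vertex of degree at most two in $V(G)\setminus N[x]$. This proves the corollary.
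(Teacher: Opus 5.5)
Your series-parallel case and your final step (applying Theorem~\ref{thm:property-p} to $(x,x,x)$ once a noncentral vertex $x$ is in hand) are fine. The gap is that the non-series-parallel case depends entirely on finding a vertex of $G$ that is not the center of any proper wheel, and none of your three sketches establishes this.
\begin{itemize}
\item Corollary~\ref{cor:component-number} gives only a necessary condition for being a center: $G-N[x]$ has at least three components. Nothing in your argument shows that a rim vertex $x\in V(C)\setminus N(s)$ violates it.
\item Lemma~\ref{lem:component-center} transfers centrality \emph{upward}: centers of $H$ are centers of $G$. So knowing that $s$ is not a center of $H$ says nothing about $G$; indeed $s$ is a center of $G$. ``Descending'' therefore never certifies a non-center of $G$.
\item In the minimal-$|K|$ argument, take a center $s'\in K$ and a component $K'$ of $G-N[s']$ not containing $s$. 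Then $K'$ avoids $N[s]$, so it lies in some component of $G-N[s]$. But nothing forces that component to be $K$. It can be a different component $L$ of $G-N[s]$ that attaches to the rest of $G$ only through $N(s)\cap N(s')$. In that case $|K'|=|L|\ge|K|$ and minimality yields no contradiction.
\end{itemize}
A smaller point: triangles are not excluded in this class, so a sector can be a single edge (this is exactly the situation of Lemma~\ref{lem:triangle-wheel-4}). A rim vertex non-adjacent to $s$ still exists, since three consecutive rim vertices adjacent to $s$ would induce a diamond, but the reason you give is wrong.

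The paper avoids the issue entirely. It applies Theorem~\ref{thm:property-p} not to $G$ but to $G$ plus a new isolated vertex $x$, with the triple $(x,x,x)$. This works because:
\begin{itemize}
\item an isolated vertex is trivially not a wheel center;
\item adding it creates no forbidden induced subgraph, and (with the paper's definition of cutsets for possibly disconnected graphs) no clique cutset;
\item the enlarged graph is still not series-parallel;
\item $N[x]=\{x\}$, so the vertex of degree at most two supplied by property $\mathcal{P}$ lies in $V(G)$ and has the same degree there.
\end{itemize}
If you insist on staying inside $G$, the natural move is to apply Theorem~\ref{thm:property-p} to $H=G[V(K)\cup N\cup\{s\}]$ with the triple $(s,s,s)$. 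That works when $H$ is not series-parallel. When every such $H$ is series-parallel, however, you need the contraction and girth arguments from the proof of Theorem~\ref{thm:property-p}, which amounts to redoing that proof.
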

\begin{proof}
 If $G$ is seires-parallel, then $G$ contains a vertex of degree at most two. If $G$ is not series-parallel, then $G$ contains a vertex of degree at most two by Theorem~\ref{thm:property-p} applied to the graph obtaining from $G$ by adding an isolated vertex $x$ with the noncenter triple $(x,x,x)$.
 This completes the proof of Corollary~\ref{cor:degree-2}.
\end{proof}

\medskip

We can now prove Theorem~\ref{Main-theorem-structure}, which we restate:
\begin{theorem}\label{Main-theorem-structure-2}
	Let $G$ be an $\iskdb$-free graph. Then $G$ is either series-parallel, a complete bipartite graph, the line graph of a sparse graph with maximum degree at most three, a graph with a clique cutset or a proper $2$-cutset, or a graph having a vertex of degree at most two.
Furthermore, if $G$ is prism-free, then $G$ is either series-parallel, a complete bipartite graph, a graph with a clique cutset, or a graph having a vertex of degree at most two. 
\end{theorem}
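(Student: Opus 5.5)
We argue by a case split according to which of prism, $K_{3,3}$, or neither occurs as an induced subgraph of $G$, feeding each case into an earlier structural result. Throughout, let $G$ be $\iskdb$-free. We may assume $G$ has no clique cutset (and, for the first statement, no proper $2$-cutset), has minimum degree at least three, and is not series-parallel; otherwise we are done.

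\emph{Case 1: $G$ contains a prism.} By Lemma~\ref{lemma: prism}, $G$ is then the line graph of a sparse graph of maximum degree $3$, or $G$ has a clique cutset (of size at most $3$), or $G$ has a proper $2$-cutset. Each of these is an allowed outcome of the first statement. For the second statement this case does not arise, since $G$ is then prism-free. This explains why the proper-$2$-cutset and line-graph outcomes may be dropped in the prism-free version.

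\emph{Case 2: $G$ is prism-free and contains a $K_{3,3}$.} By Theorem~\ref{lemma: K33}, $G$ is complete bipartite, or tripartite, or has a clique cutset of size at most $3$. The only outcome needing work is the tripartite, non-bipartite one. Here we must exclude it or reduce it to a bipartite graph.

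I would argue as follows. Take a $K_{3,3}$ with sides $A,B$ and a vertex $v$ lying in a triangle (one exists if $G$ is not bipartite). Diamond- and bowtie-freeness severely restrict the neighbours of $v$ in $A\cup B$. If $v$ has two neighbours in $A$ and one in $B$, we get a diamond. Otherwise a short path from $v$ into the $K_{3,3}$ yields an $\iskfour$.

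Concretely, I expect the tripartite outcome of L\'ev\^eque et al.\ to be the one in which every vertex outside a maximal complete bipartite $K$ is attached to it in a very specific way. Checking that triangles in such a structure force a diamond or bowtie is routine but fiddly. This is the step I expect to be the main obstacle, since Theorem~\ref{lemma: K33} is quoted without its internal structure. If it cannot be handled directly, I would fall back on analysing a maximal complete bipartite induced subgraph $K_{m,n}$ with $m,n\ge 3$. A vertex outside it with neighbours on both sides creates a diamond. A vertex with at least two neighbours on one side lies in $K$ by maximality, or else creates an $\iskfour$ through a path. Hence vertices outside attach to at most one vertex of $K$, and since $G$ is connected without clique cutsets, $G=K$.

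\emph{Case 3: $G$ is prism-free and $K_{3,3}$-free.} Then $G$ is $\iskdbpk$-free without a clique cutset, and Corollary~\ref{cor:degree-2} gives a vertex of degree at most two.

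Combining the three cases yields both statements: the first uses all outcomes, and the second uses only Cases 2 and 3 together with the series-parallel case.
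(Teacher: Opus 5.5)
Your overall architecture is the paper's: a prism is handled by Lemma~\ref{lemma: prism}, an induced $K_{3,3}$ by Theorem~\ref{lemma: K33}, and the $\iskdbpk$-free case without clique cutsets by Corollary~\ref{cor:degree-2}. Cases~1 and~3 are fine. The gap is in Case~2, at exactly the step you flag: you never dispose of the tripartite alternative, and both sketches you offer contain false steps.

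The first sketch assumes that a non-bipartite graph has a triangle, which fails in general (odd holes). It also never specifies the ``short path'' that is supposed to give an $\isk$.

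The fallback asserts that a vertex outside a maximal $K_{m,n}$ with neighbours on both sides creates a diamond, and this is false. Let $v$ be adjacent to exactly one vertex $a\in A$ and one vertex $b\in B$ of an induced $K_{3,3}$. Then $K_{3,3}$ plus $v$ is $\iskdb$-free. Its only triangle is $vab$, so it has no diamond or bowtie. It has no $\isk$ either: any triangle in a subdivision of $K_4$ consists of three corners, so the degree-two vertex $v$ cannot lie in one. Your conclusion that outside vertices attach to at most one vertex of $K$, and hence $G=K$, also does not follow. A component of $G-V(K)$ can meet $K$ through several different vertices, and excluding this needs path arguments, which amounts to re-proving L\'ev\^eque et al.'s lemma.

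The missing observation is that the tripartite alternative in Theorem~\ref{lemma: K33} (as in L\'ev\^eque et al.) is a \emph{complete} tripartite graph, and no internal structure is needed. If one part contains two vertices $x_1,x_2$ and $y,z$ lie in the other two parts, then $\{x_1,x_2,y,z\}$ induces a diamond. So a diamond-free complete tripartite graph is just a triangle. The paper dispatches this as having a vertex of degree two; it cannot even contain $K_{3,3}$. With this one line, Case~2 yields only the complete bipartite or clique-cutset outcomes, and your proof coincides with the paper's.
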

\begin{proof}
Let $G$ be an $\iskdb$-free graph. If $G$ contains $K_{3,3}$ as an induced subgraph, then by Lemma~\ref{lemma: K33}, either $G$ is a complete bipartite or complete tripartite graph, or $G$ has a clique cutset of size at most three. Since $G$ is diamond-free, if $G$ is a complete tripartite graph, it must be a triangle. Therefore, $G$ contains a vertex of degree two.  If $G$ is an $\iskdbpk$-free graph without a clique cutset, then by Corollary~\ref{cor:degree-2}, $G$ contains a vertex of degree at most two.

If $G$ contains a prism as an induced subgraph, then by Lemma~\ref{lemma: prism}, either $G$ is the line graph of a sparse graph with maximum degree $3$, or $G$ has a clique cutset of size at most $3$, or $G$ has a proper $2$-cutset.
 This completes the proof of Theorem~\ref{Main-theorem-structure-2}.
\end{proof}

Next, we  prove Theorem~\ref{Main-theorem-color}, which we restate:
\begin{theorem}\label{Main-theorem-color-1}
	If $G$ is $\iskdb$-free graph, then $G$ is $3$-colorable.
\end{theorem}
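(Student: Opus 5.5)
We argue by induction on $|V(G)|$, taking $G$ to be a minimal counterexample: an $\iskdb$-free graph that is not $3$-colorable but all of whose proper induced subgraphs are $3$-colorable. Since $\iskdb$-freeness is hereditary, the plan is to run through the outcomes of Theorem~\ref{Main-theorem-structure-2} and show that each either yields a $3$-coloring directly or contradicts minimality. First we record the easy reductions. $G$ is connected, since otherwise we color the components separately. $G$ has no vertex of degree at most two, since otherwise we $3$-color $G-v$ and extend to $v$. $G$ has no clique cutset: if $S$ is one and $A_1,\dots ,A_m$ are the components of $G-S$, we $3$-color each $G[A_i\cup S]$, permute colors so the colorings agree on the clique $S$ (which is possible because $|S|\le 3$, as $G$ is $K_4$-free), and glue. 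Series-parallel graphs are $2$-degenerate, hence $3$-colorable. Complete bipartite graphs are $2$-colorable. So only two outcomes need real work: $G$ is the line graph of a sparse graph of maximum degree at most three, or $G$ has a proper $2$-cutset.

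\textbf{Line graph case.} Let $G=L(H)$ with $H$ sparse and $\Delta(H)\le 3$. A $3$-coloring of $G$ is exactly a proper $3$-edge-coloring of $H$. Since $H$ is sparse, every edge has an endpoint of degree at most two. Such a graph is $3$-edge-colorable: at an edge $uv$ with $\deg(v)\le 2$, the edge $uv$ conflicts with at most $2$ other edges at $u$ and at most $1$ other edge at $v$, so a greedy or Kempe-chain argument (equivalently, noting that $H$ is a subgraph of a subdivided cubic multigraph, which is bipartite) produces a proper $3$-edge-coloring. The bipartite viewpoint is the cleanest: the branch vertices form one side, and König's theorem gives $\chi'(H)=\Delta(H)\le 3$.

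\textbf{Proper $2$-cutset case.} Let $T=\{a,b\}$ with $ab\notin E(G)$, and let $(T,X,Y)$ be the corresponding partition. We want $3$-colorings of the two sides that agree on $a$ and $b$. The natural move is to build blocks $G_X=G[X\cup T]$ plus a marker enforcing equality or inequality of the colors of $a$ and $b$. Adding the edge $ab$ gives a block in which $a,b$ must differ. Identifying $a$ with $b$, or adding a path $a\,c\,b$ with one internal vertex, gives blocks that permit (respectively force) equal colors. We then need $3$-colorings of $G_X$ and $G_Y$ of the same type, one with $c(a)=c(b)$ and one with $c(a)\ne c(b)$, on at least one side.

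I expect this to be the main obstacle. The blocks must remain $\iskdb$-free and smaller than $G$, and adding $ab$ may create diamonds or bowties. The approach I would try first is to replace the other side by a path of length $2$ or $3$ between $a$ and $b$ that is chosen to preserve $\isk$-freeness. The standard L\'ev\^eque et al.\ argument gives this: because $G[Y\cup T]$ is not a path, an induced $a$–$b$ path through $Y$ exists, and its parity can be matched. Properness of the cutset guarantees both blocks are strictly smaller. Such a block then has colorings realizing at least one of the two relations between $c(a)$ and $c(b)$. If the two sides disagree in type, for example if one side forces $c(a)=c(b)$, we appeal to a Kempe-type swap to flip the relation, in the spirit of Lemma~\ref{lemma:cubic-edge-color} (two colorings differing on whether the two end edges get equal colors), on the side that is a line graph or $2$-degenerate. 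Alternatively, we choose a minimal proper $2$-cutset so that one side is prism-free and hence $2$-degenerate after deleting $T$, which gives flexibility in both relations.
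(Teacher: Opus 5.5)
Your reductions (connectivity, minimum degree at least three, clique cutsets, the series-parallel and complete bipartite outcomes) match the paper. The proper $2$-cutset case, which carries the whole difficulty, is left unresolved, and the devices you suggest cannot resolve it.

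What is needed is a side, say $G[X\cup T]$, admitting one $3$-coloring with $c(a)=c(b)$ and another with $c(a)\neq c(b)$. Then the other side is colored by induction and the two are glued. Your proposed devices do not produce this:
\begin{itemize}
\item Replacing the opposite side by an $a$--$b$ path of length at least $2$ gives no information. Such a path never constrains the colors of its ends in a $3$-coloring, since its interior can always be completed, so matching parities achieves nothing.
\item Adding $ab$ or identifying $a$ and $b$ may leave the class, as you note.
\item Your Kempe-swap appeal to Lemma~\ref{lemma:cubic-edge-color} presupposes that $G[X\cup T]$ is precisely the line graph of a cubic graph with one edge subdivided twice and every other edge once, with $a,b$ the two end edges of the doubly subdivided edge. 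Establishing that is the actual content of the proof.
\item Your alternative fails outright. Minimality of the cutset does not make a side prism-free, and degeneracy gives no freedom. In a minimal counterexample every vertex of $X$ has degree at least three with all its neighbors in $X\cup T$, so no vertex of $X$ can be peeled off after precoloring $a,b$.
\end{itemize}

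The missing idea is a structural identification of the minimal side. Choose the proper $2$-cutset $T=\{a,b\}$ with $|X_T|$ minimum, and let $G'$ be $G[X_T\cup T]$ plus a new vertex $u$ with $N(u)=\{a,b\}$. Using an induced $a$--$b$ path through $Y_T$, one checks that $G'$ is still $\iskdbk$-free. Minimality of $|X_T|$ shows $G'$ has no clique cutset and no proper $2$-cutset.

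If $G'$ is prism-free, Theorem~\ref{thm:property-p} (with the noncentral triple $(u,u,u)$) produces a vertex of $X_T$ of degree at most two in $G$. When $G'$ is series-parallel, Lemmas~\ref{lem:tree} and~\ref{lem:farcycle} do the same. Either way this is a contradiction, so $G'$ contains a prism.

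Then Lemmas~\ref{lem:-substantial-graph}, \ref{lem:no-substantial-graph} and~\ref{lem:cyclically-3-connected}, together with the fact that $u$ is the only vertex of degree two in $G'$, force one of two structures:
\begin{itemize}
\item $G'$ is the seven-vertex prism in which $a\,u\,b$ is one of the three paths; or
\item $G'=L(H)$ for $H$ exactly as in Lemma~\ref{lemma:cubic-edge-color}, with $a,b$ the edges flanking $u$.
\end{itemize}
In both cases colorings of both types exist.

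A smaller point concerns the line-graph case. Your conclusion is right, but the bipartite justification is false: an odd cycle, or a theta with paths of lengths $2,2,3$, is sparse and not bipartite. Use instead that $\Delta(L(H))\le 3$ and $L(H)\neq K_4$, so Brooks' theorem applies.
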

\begin{proof}
The proof is by induction on $|V(G)|$ using Theorem~\ref{Main-theorem-structure}. Then $G$ is connected. If $G$ is a complete bipartite graph, then $G$ is 2-colorable. If $G$ has a vertex $v$ of degree at most two, then by induction, $G \setminus v$ is 3-colorable, and hence $G$ is 3-colorable. If $G$ has a clique cutset $C$ such that $(A, C, B)$ is a separation of $G$ with $A$ anticomplete to $B$ and $C$ a clique, then $\chi(G) = \max\{\chi(G[A \cup C]), \chi(G[B \cup C])\}$, and again by induction, $G$ is 3-colorable.  If $G$ is the line graph of a sparse graph, then  $G$ has maximum degree three. Note that $G$ is $K_4$-free. By Brooks' theorem, we have $\chi(G)\leq 3$. Therefore, we may assume that $G$ is $K_{3,3}$-free, has no clique cutset, has minimum degree at least three, and has a proper 2-cutset. This implies that $G$ is $2$-connected.

By Theorem~\ref{Main-theorem-structure-2}, we have that $G$ has a proper 2-cutset. Let $T=\{a,b\}$ be a proper $2$-cutset in $G$. Then $V(G)\setminus T$ can be partitioned into non-empty sets $X_T$ and $Y_T$ such that there are no edges between them. Without loss of generality, we may assume that $|X_T|\leq |Y_T|$. Among all proper 2-cutsets in $G$, we choose $T$ such that $|X_T|$ is minimum. This implies that either $G[X_T]$ is connected, or $G[X_T]$ has exactly two components and $G[X_T\cup T]$ is a cycle. 

Note that $G[T\cup Y_T]$ is $\iskdb$-free. By the induction hypothesis, it is $3$-colorable. Let $c$ be a $3$-coloring of $G[T\cup Y_T]$. Suppose that $G[X_T]$ has exactly two components and $G[X_T\cup T]$ is a cycle. It is clear that  there exist two 3-coloring $c_1,c_2$ of $G[X_T\cup T]$ such that $c_1(a)=c_1(b)$ and $c_2(a)\neq c_2(b)$. We can combine $c_1'$ and $c$ (if $c(a)=c(b)$) or $c_2'$ and $c$ (if $c(a)\neq c(b)$) such that they coincide on $T$, and hence obtain a $3$-coloring of $G$. Therefore, we further assume that $G[X_T]$ is connected.

Let $G'$ be the graph obtained from $G$ by adding an extra vertex $u$ and  two edges $au$ and $bu$. Let $P_Y$ be an induced path with ends $a$ and $b$, and internal vertices in $Y$. Clearly, $G'$ is $\isk$-free. Indeed, if $G'$ contains an $\isk$, say $H$. Since $G[X_T \cup T]$ is $\isk$-free, we have $u \in V(H)$. Since $u$ has degree two in $G$, the path $aub$ is a subpath of some branch of $H$. But then, $G[V(H-u)\cup V(P_Y)]$ is an $\isk$ in $G$, a contradiction. Note  that $u$ has degree two and $G$ is $\iskdbk$-free. Therefore, $G'$ is $\iskdbk$-free.

\begin{claim}\label{clique-proper2}
  $G'$ has neither a clique cutset nor a proper $2$-cutset.
\end{claim}

Suppose that $G'$ has a clique cutset $S$  such that $(A, B, S)$ is a separation of $V(G')$ with $A$ anticomplete to $B$.  Clearly, $u\notin S$. That is, $S\subseteq X_T\cup T$.  Without loss of generality, we may assume that $\{u,a,b\}\subseteq A\cup S$.  However, it follows that $S$ is a clique cutset of $G$ with  $(A\cup Y_T, B, S)$ is a separation of $V(G)$ with $A\cup Y_T$ anticomplete to $B$, a contradiction.

Suppose that $G'$ has a proper 2-cutset $T'=\{a',b'\}$ such that $(A', B', T')$ is a separation of $V(G')$ with $A'$ anticomplete to $B'$ and neither $G'[A'\cup T']$ nor $G'[B'\cup T']$ is a path between $a'$ and $b'$. Without loss of generality, we may assume that $|A'|\leq |B'|$. Since $G'$ has no clique cutset, $G'$ is 2-connected. Suppose $u \in T'$, and further assume that $u = a'$.
 This implies that  $T\cap A'\neq \emptyset$ and  $T\cap  B'\neq \emptyset$. Without loss of generality, we may assume that $a\in A'$ and $b\in B'$. 
 Note that $G'$ has no clique cutset, $ab'\notin E(G')$. Since $G'[A'\cup T']$  is not a path between $a'$ and $b'$,  $G'[A'\cup \{b'\}]$ is not a path between $a$ and $b'$. 
 However, it follows that $\{a,b'\}$ is a proper 2-cutset with $|A'\setminus \{a\}|<|X_T|$, a contradiction to the choice of $T$. Therefore, $u\notin T'$. 
 Without loss of generality, we may assume that $u\in A'$. But now, $\{a', b'\}$ is a proper 2-cutset with $|B'| < |X_T|$, which again contradicts the choice of $T$. 
 Therefore, $G'$ has no proper 2-cutset. This proves Claim~\ref{clique-proper2}.

 We now distinguish two cases according to whether $G'$ contains a prism or not.
 \begin{case}
    $G'$ is prism-free.
 \end{case}
 
 Now, $G'$ is an $\iskdbpk$-free graph without a clique cutset.  If $G'$ is not a series-parallel graph, then by Theorem~\ref{thm:property-p}, we have that $(G',u)$ has property $\mathcal{P}$. However, it follows that there exists $x \in X_T$ with degree at most two in $G'$, and hence $x$ has degree at most two in $G$, a contradiction. Therefore, $G'$ is a series-parallel graph. Suppose that $G'-u$ is a forest.  Clearly, $(\romannumeral1)$, $(\romannumeral2)$, $(\romannumeral4)$ and $(\romannumeral5)$ of Lemma~\ref{lem:tree} do not hold. Therefore, $G'$ has an induced cycle $C$ containing $u$, and $V(C) \setminus \{u\}$ has at most one vertex with degree greater than two in $G'$. Moreover, if such a vertex exists, it is a neighbor of $u$ on $C$. Since $ab\notin E(G)$, $C$ has length at least four. Then there exists $x \in X_T\cap V(C)$ with degree two in $G'$, and hence $x$ has degree at most two in $G$, a contradiction. Therefore, $G'-u$ contains a cycle. Clearly,  $(\romannumeral2)$ of Lemma~\ref{lem:farcycle} do not hold. Therefore, it follows from Lemma~\ref{lem:farcycle}  that there is an induced cycle $C$ in $G'$ such that $u\notin V(C)$ and all but at most two vertices of $C$ have degree two in $G'$. Let $x \in V(C)$ be of degree two in $G'$. Since $u \notin V(C)$, we have $x \notin \{a, b\}$. This implies that $x \in X_T$. Therefore, $x$ has degree at most two in $G$, a contradiction.

 \begin{case}
     $G'$ contains a prism.
 \end{case}
 
First suppose that $a$ has degree two in $G'$. That is, $a$ has exactly one neighbor $a'$ in $X_T$. Since $G'$ has no clique cutset, $a'b\notin E(G)$. However, it follows that $\{a',b\}$ is a proper $2$-cutset with $|X_T\setminus \{a'\}|<|X_T|$, a contradiction to the choice of $T$. Therefore, $a$ has degree at least three in $G'$. Similarly, $b$ has degree at least three in $G'$. 

Suppose $G$ does not contain the line graph of a substantial graph as an induced graph.  By Lemma~\ref{lem:no-substantial-graph}, we have that $G'$ is a prism. Note that $u$ is the unique vertex of degree two in $G'$. Therefore, $G'$ has order seven (see Figure~\ref{fig:seven-prism} for a description). It is clear that  there exist two 3-coloring $c_1,c_2$ of $G[X_T\cup T]$ such that $c_1(a)=c_1(b)$ and $c_2(a)\neq c_2(b)$ (see figure~\ref{fig:seven-prism} for a description). We can combine $c_1'$ and $c$ (if $c(a)=c(b)$) or $c_2'$ and $c$ (if $c(a)\neq c(b)$) such that they coincide on $T$, and hence obtain a $3$-coloring of $G$. 
 Therefore, $G$  contains the line graph of a substantial graph $H$ as an induced graph. By Lemma~\ref{lem:-substantial-graph} and Claim~\ref{clique-proper2}, we have that $G'=L(H)$. 
 Since $G'$ is $\db$-free graph, $H$ is a sparse graph. 
 By Lemma~\ref{lem:cyclically-3-connected} and $G'$ is not a prism, we have that $H$ is a subdivision of a $3$-connected graph $H'$.
 Since $G'$ is $\iskfour$-free, $H'$ is a cubic graph. Since $u$ is the unique vertex of degree two in $G'$ and $H$ is a sparse graph, $H'$ has exactly one edge subdivided twice, and every other edge is subdivided exactly once.  By Lemma~\ref{lemma:cubic-edge-color},  there exist two 3-coloring $c_1,c_2$ of $G[X_T\cup T]$ such that $c_1(a)=c_1(b)$ and $c_2(a)\neq c_2(b)$. 
 We can combine $c_1'$ and $c$ (if $c(a)=c(b)$) or $c_2'$ and $c$ (if $c(a)\neq c(b)$) such that they coincide on $T$, and hence obtaining a $3$-coloring of $G$.  
 This completes the proof of Theorem~\ref{Main-theorem-color-1}.
\end{proof}

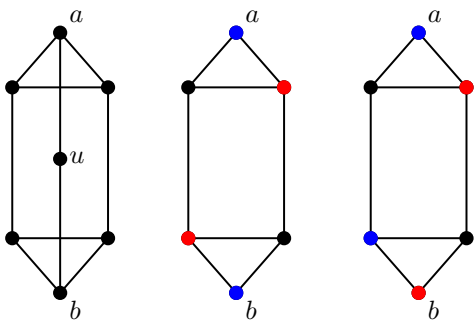
\begin{figure}[H]
\centering
\begingroup

\begin{tikzpicture}[x=0.75pt,y=0.75pt,yscale=-1,xscale=1,scale=0.69]
\tikzset{
  prism/edge/.style  ={line width=0.75pt, draw=black},
  prism/v/.style     ={draw=black, fill=black, line width=0.75pt},
  prism/vblue/.style ={draw=blue,  fill=blue,  line width=0.75pt},
  prism/vred/.style  ={draw=red,   fill=red,   line width=0.75pt},
}
\def\PointR{3.35pt}

\begin{scope}
  \coordinate (T)  at (199.33,56.83);
  \coordinate (TL) at (164.33,96.83);
  \coordinate (TR) at (234.33,96.83);

  \coordinate (B)  at (199.33,247.17);
  \coordinate (BL) at (164.33,207.17);
  \coordinate (BR) at (234.33,207.17);

  \coordinate (U)  at (199.33,149.17);

  \draw[prism/edge] (T)--(TL)--(TR)--cycle;
  \draw[prism/edge] (B)--(BL)--(BR)--cycle;

  \draw[prism/edge] (TL)--(BL);
  \draw[prism/edge] (TR)--(BR);

  \draw[prism/edge] (T)--(U);
  \draw[prism/edge] (U)--(B);

  \path[prism/v] (T)  circle[radius=\PointR];
  \path[prism/v] (TL) circle[radius=\PointR];
  \path[prism/v] (TR) circle[radius=\PointR];
  \path[prism/v] (U)  circle[radius=\PointR];
  \path[prism/v] (B)  circle[radius=\PointR];
  \path[prism/v] (BL) circle[radius=\PointR];
  \path[prism/v] (BR) circle[radius=\PointR];

  \node[inner sep=0pt, label={[above right]:{$a$}}] at (T) {};
  \node[inner sep=0pt, label={[right]:{$u$}}]       at (U) {};
  \node[inner sep=0pt, label={[below right]:{$b$}}] at (B) {};
\end{scope}

\begin{scope}
  \coordinate (T)  at (328,56.83);
  \coordinate (TL) at (293,96.83);
  \coordinate (TR) at (363,96.83);

  \coordinate (B)  at (328,247.17);
  \coordinate (BL) at (293,207.17);
  \coordinate (BR) at (363,207.17);

  \draw[prism/edge] (T)--(TL)--(TR)--cycle;
  \draw[prism/edge] (B)--(BL)--(BR)--cycle;

  \draw[prism/edge] (TL)--(BL);
  \draw[prism/edge] (TR)--(BR);

  \path[prism/v] (T)  circle[radius=\PointR];
  \path[prism/v] (TL) circle[radius=\PointR];
  \path[prism/v] (TR) circle[radius=\PointR];
  \path[prism/v] (B)  circle[radius=\PointR];
  \path[prism/v] (BL) circle[radius=\PointR];
  \path[prism/v] (BR) circle[radius=\PointR];

  \path[prism/vblue] (T)  circle[radius=\PointR];
  \path[prism/vblue] (B)  circle[radius=\PointR];
  \path[prism/vred]  (TR) circle[radius=\PointR];
  \path[prism/vred]  (BL) circle[radius=\PointR];

  \node[inner sep=0pt, label={[above right]:{$a$}}] at (T) {};
  \node[inner sep=0pt, label={[below right]:{$b$}}] at (B) {};
\end{scope}

\begin{scope}
  \coordinate (T)  at (461.33,56.83);
  \coordinate (TL) at (426.33,96.83);
  \coordinate (TR) at (496.33,96.83);

  \coordinate (B)  at (461.33,247.17);
  \coordinate (BL) at (426.33,207.17);
  \coordinate (BR) at (496.33,207.17);

  \draw[prism/edge] (T)--(TL)--(TR)--cycle;
  \draw[prism/edge] (B)--(BL)--(BR)--cycle;

  \draw[prism/edge] (TL)--(BL);
  \draw[prism/edge] (TR)--(BR);

  \path[prism/v] (T)  circle[radius=\PointR];
  \path[prism/v] (TL) circle[radius=\PointR];
  \path[prism/v] (TR) circle[radius=\PointR];
  \path[prism/v] (B)  circle[radius=\PointR];
  \path[prism/v] (BL) circle[radius=\PointR];
  \path[prism/v] (BR) circle[radius=\PointR];

  \path[prism/vblue] (T)  circle[radius=\PointR];
  \path[prism/vblue] (BL) circle[radius=\PointR];
  \path[prism/vred]  (TR) circle[radius=\PointR];
  \path[prism/vred]  (B)  circle[radius=\PointR];

  \node[inner sep=0pt, label={[above right]:{$a$}}] at (T) {};
  \node[inner sep=0pt, label={[below right]:{$b$}}] at (B) {};
\end{scope}

\end{tikzpicture}

\endgroup
\caption{$G'$, $c_1(a)=c_1(b)$, $c_1(a)\neq c_1(b)$}
\label{fig:seven-prism}
\end{figure}

\section{\texorpdfstring{Decomposition and coloring algorithm for $\iskdb$-free graphs}{Decomposition and coloring algorithm for $\iskdb$-free graphs}}
In this section, we present a decomposition algorithm for the class of $\iskdb$-free graphs, based on the structural results established in the previous sections. 
Let $G$ be an graph with $n$ vertices and $m$ edges. Our goal is to decompose $G$ in polynomial time. This is achieved by repeatedly applying clique cutsets and proper $2$-cutsets, and by removing vertices of degree at most two.

Suppose that $G$ has a clique cutset $K$. Then $V(G) \setminus K$ can be partitioned into two nonempty sets $X$ and $Y$ such that there are no edges between $X$ and $Y$. Let $G_X$ and $G_Y$ denote the subgraphs of $G$ induced by $V[X \cup K]$ and $V[Y \cup K]$, respectively. Then we say that $G$ is decomposed into $G_X$ and $G_Y$. These subgraphs may themselves be further decomposed by their clique cutsets.

\vspace{-8pt}

\begin{definition}
    A \textit{clique cutset decomposition tree} of $ G $, denoted by $T$, is defined as follows:
 \begin{itemize}[nosep]
     \item[$(\romannumeral1)$]  Each node of $T$ is an induced subgraph of $ G $.    
     \item[$(\romannumeral2)$]  The root of $T$ is $ G $.
    \item[$(\romannumeral3)$]  For any non-leaf node $ X \subseteq V $ of $T$:
    \begin{itemize}[nosep]
         \item[$(\romannumeral3.1)$]  If $ X $ contains a clique cutset $ K $, then $ X \setminus K $ has at least two connected components, say $ C_1, \dots, C_r $.
         \item[$(\romannumeral3.2)$]  The children of $ X $ in $T$ are the vertex sets $ X_i = C_i \cup K $ for $ i = 1, \dots, r $.
    \end{itemize}
\end{itemize}\vspace{-8pt}

\medskip

\end{definition}

Note that a clique cutset decomposition tree $T$ can be constructed in $O(nm)$ time (see~\cite{Tarjan}).
Now suppose that $G$ has no clique cutset but admits a proper $2$-cutset $\{a, b\}$. Then $V(G) \setminus \{a, b\}$ can be partitioned into two nonempty sets $X$ and $Y$ such that there are no edges between $X$ and $Y$, and neither $G[X \cup \{a, b\}]$ nor $G[Y \cup \{a, b\}]$ is a path with ends $a,b$.
The decomposition by proper $2$-cutsets can be constructed in time $O(n^2m)$ by~\cite{Leveque2012}.
We start with Algorithm~\ref{alg:decomp1} to perform a single decomposition.

\begin{algorithm}[H]
\caption{Construction of $D_l$ Sets from a Clique Cutset Decomposition Tree}
\label{alg:decomp1}
\begin{algorithmic}[1]
\Require An $\iskdb$-free graph $G$.
\Ensure A collection $\{D_1, \dots, D_t\}$  and a set $\mathcal{R}$ of removed vertices of degree at most two.

\State  Let $T$ be a clique cutset decomposition tree of $ G $ with leaves $\{A_1,A_2,\dots,A_t\}$ with $t\ge1$
\State $\mathcal{R}=\emptyset$
\For{each component $A_i (1\le i\le t)$}
    \State Let $R_i$ be the set of all vertices of degree at most two in $A_i$
    \State $D_i \gets A_i \setminus R_i$
    \State $\mathcal{R} \gets \mathcal{R} \cup R_{i}$  \Comment{Record all removed vertices}
\EndFor
\State \Return $\{D_1, \dots, D_t\}$, $\mathcal{R}$
\end{algorithmic}
\end{algorithm}





Then we present a decomposition algorithm that yields graphs with no clique cutset and no vertex of degree of at most two, which are called \textit{basic graphs}.

\begin{algorithm}[H]
\caption{Obtain Basic Graphs}
\label{alg:alg2}
\begin{algorithmic}[1]
\Require An $\iskdb$-free graph $G$.
\Ensure Basic graphs $\mathcal{H}$, the number of decomposition layer $j(G)$  and a set $\mathcal{R}$ of removed vertices of degree at most two.

\State $\mathcal{S}$ $ \gets $ $\{G\}$, $\mathcal{H}$ $ \gets $ $\emptyset$, $\mathcal{R}$ $ \gets$ $ \emptyset$
\State $j\gets0$
\While{$\mathcal{S} \neq \emptyset$}
    \State $j\gets j+1$
    \State $\mathcal{S}'$ $ \gets $ $\emptyset$
    \For{each $F$ in $\mathcal{S}$}
        \If{$F$ has a clique cutset or a vertex of degree at most two}
            \State $\{F_1^j, \dots, F_t^j\}, \mathcal{R_F}^j \gets$ Algorithm~\ref{alg:decomp1} applied to $F$
            \State $\mathcal{S}' \gets \mathcal{S}' \cup \{F_1^j, \dots, F_t^j\}$
            \State $\mathcal{R} \gets \mathcal{R} \cup \mathcal{R_F}^j$
        \Else
            \State $F^j\gets F$
            \State $\mathcal{H} \gets \mathcal{H} \cup \{F^j\}$
        \EndIf
    \EndFor
    \State $\mathcal{S} \gets \mathcal{S}'$
\EndWhile
\State $j(G)\gets j$
\State \Return $\mathcal{H}, \mathcal{R}, j(G)$
\end{algorithmic}
\end{algorithm}

The following algorithm handles the case when the graph has a proper 2-cutset.
Note that $\chi_X^0$ and $\chi_X^1$ in the  algorithm exist due to Theorem~\ref{Main-theorem-color-1}.

\begin{algorithm}[H]
\caption{Recursive Processing of a Basic Graph}
\label{alg:process-basic}
\begin{algorithmic}[1]
\Require Let $G $ be an $\iskdb$ graph with no clique cutset and minimum degree at least~3.
\Ensure Either a proper $3$-coloring $\{\chi_G\}$, or two graphs $\{T_X,T_Y\}$ and two colorings $\{\chi_X^0,\chi_X^1\}$ of $T_X$.

\If{$G$ is complete bipartite}
    \State $\chi_G \gets$ a proper $2$-coloring of $G$
    \State \Return  $\{\chi_G\}$
    
\ElsIf{$G$ is the line graph of a sparse graph}
    \State $\chi_G \gets$ a proper $3$-coloring of $G$
    \State \Return $\{\chi_G\}$
    
\Else \Comment{$G$ has a proper 2-cutset}
    \State Let $T = \{a, b\}$ be a proper 2-cutset in $G$
    \State Let $X_T, Y_T$ be a partition of $V(G) \setminus T$ with no edges between $X_T$ and $Y_T$
    \State Choose $T$ such that $|X_T|$ is minimum among all proper 2-cutsets
    
    \State $T_X \gets$ induced subgraph on $X_T \cup T$
    \State $T_Y \gets$ induced subgraph on $Y_T \cup T$
    
    \State \Comment{Find two colorings of $T_X$ with different conditions on $T$}
    \State $\chi_X^0 \gets$ a proper $3$-coloring of $T_X$ where $a$ and $b$ have the same color
    \State $\chi_X^1 \gets$ a proper $3$-coloring of $T_X$ where $a$ and $b$ have different colors
    
    \Return $\{T_X,T_Y,\chi_X^0,\chi_X^1\}$  \Comment{$T_Y$ needs further processing}
\EndIf
\end{algorithmic}
\end{algorithm}

Our main algorithm is as follows. By recursively applying the decomposition procedure, we ultimately obtain a proper $3$-coloring of $G$.
For each $H_i \in \mathcal{H}$ ($1 \leq i \leq k$), let $n_i = |V(H_i)|$.
It is easy to see that $\sum_{i=1}^k n_i = O(n)$. Thus Algorithm~\ref{alg:main-full} runs in $O(n^2 m)$ time.

\noindent\rule{\textwidth}{0.8pt}\vspace{-8pt}
\leavevmode
\captionof{algorithm}{ Coloring $\iskdb$-free Graphs by Decomposition}\vspace{-8pt}
\noindent\rule{\textwidth}{0.4pt}\par\vspace{-4pt}
\label{alg:main-full}
\begin{algorithmic}[1]
\Require An $\iskdb$-free graph $G$.
\Ensure A proper $3$-coloring $\chi (G)$ of $G$.

\State $\mathcal{H},\mathcal{R},j \gets$ apply Algorithm~\ref{alg:alg2} to $G$
\State Initialize $\mathcal{T} \gets \emptyset$  \Comment{$\mathcal{T}$ is used to store pairs $(T_X, \chi_X)$}
\State Initialize $\mathcal{U} \gets \emptyset$  \Comment{$\mathcal{U}$ is used to store pairs $(H, \chi_H)$}
\State Initialize $\mathcal{H'} \gets \emptyset$ \Comment{$\mathcal{H'}$ is used to store graphs with proper $2$-cutsets}

\For{each $H \in \mathcal{H}$}
    \State $i\gets1$
    \If{$(T_{X(1)}, T_Y, \chi_{X(1)}^0, \chi_{X(1)}^1) \gets$ apply Algorithm~\ref{alg:process-basic} to $H$}
        \State $\mathcal{H'} \gets \mathcal{H'} \cup \{H\}$
        \State $\mathcal{T} \gets \mathcal{T} \cup \{(T_{X(1)}, \chi_{X(1)}^0),(T_{X(1)} , \chi_{X(1)}^1)\}$  \Comment{Store graphs with colorings}
        \While{$T_Y$ has a proper 2-cutset}
            \State $(T_{X(i)}, T_Y', \chi_{X(i)}^0, \chi_{X(i)}^1) \gets$ apply Algorithm~\ref{alg:process-basic} to $T_Y$
            \State $\mathcal{T} \gets \mathcal{T} \cup \{(T_{X(i)}, \chi_{X(i)}^0), (T_{X(i)} , \chi_{X(i)}^1)\}$
            \State $T_Y \gets T_Y'$
            \State $i\gets i+1$
        \EndWhile 
        \State $k(H)\gets$ $i-1$
        \State $\chi_Y \gets$ a proper $3$-coloring of $T_Y$
        \State $\mathcal{T} \gets \mathcal{T} \cup \{(T_Y, \chi_Y)\}$
    \Else
    \State $\mathcal{U} \gets \mathcal{U} \cup \{(H, \chi_H)\}$
    \EndIf
\EndFor

\For{each $H \in \mathcal{H'}$}               \Comment{Combine parts and colorings on proper $2$-cutsets}
    \For{$i$ in $[k(H)]$}
        \If{$\{a,b\} = V(T_{X(k(H)-i+1)}) \cap V(T_Y)$ is a proper 2-cutset}
            \If{$\chi_{X(k(H)-i+1)}^0(a) = \chi_Y(a)$ and $\chi_{X(k(H)-i+1)}^0(b) = \chi_Y(b)$}
                \State $\chi_Y(v) \gets 
                                \left\{
                                \begin{array}{ll}
                                \chi_{X(k(H)-i+1)}^0(v) & \text{if } v \in V(T_{X(k(H)-i+1)}), \\
                                \chi_Y(v)               & \text{if } v \in V(T_Y\setminus V(T_{X(k(H)-i+1)})).
                                \end{array}
                                \right.$
                \State $T_Y \gets T_{X(k(H)-i+1)} \cup T_Y$
            \Else
                \State $\chi_Y(v) \gets 
                                \left\{
                                \begin{array}{ll}
                                \chi_{X(k(H)-i+1)}^1(v) & \text{if } v \in V(T_{X(k(H)-i+1)}), \\
                                \chi_Y(v)               & \text{if } v \in V(T_Y\setminus V(T_{X(k(H)-i+1)})).
                                \end{array}
                                \right.$
                \State $T_Y \gets T_{X(k(H)-i+1)} \cup T_Y$
            \EndIf
        \EndIf
    \EndFor
    \State $\mathcal{U} \gets \mathcal{U} \cup \{(T_Y, \chi_{T_Y})\}$      \Comment{Now $T_Y=H$}
\EndFor

\For{$j$ in $[j(G)]$}               \Comment{Combine parts and colorings on clique-cutsets}
    \For{Each $H^{j(G)+1-j}$ in $\mathcal{H}$}
        \State Add the vertices of $\mathcal{R_H}^{\,j(G)+1-j}$ back to $H^{\,j(G)+1-j}$ in reverse order of their removal, and assign each such vertex in $\mathcal{R_H}^{j(G)+1-j}$ a color distinct from its neighbors
    \EndFor
    \State $H^{j(G)-j},\chi (H^{j(G)-j})\gets$ Combine all subgraphs $H^{j(G)+1-j}$ and their colorings $\chi(H^{j(G)+1-j})$ on the clique cutsets to obtain a proper 3-coloring of $G$.    
\EndFor
\State $G,\chi (G)\gets$ $H^{0},\chi (H^{0})$
\State \Return $\chi (G)$
\end{algorithmic}\vspace{-8pt}
\noindent\hrulefill




\section*{Acknowledgement} This research  was supported by National Key R\&D Program of China under Grant No. 2022YFA1006400 and National Natural Science Foundation of China under Grant No. 12571376.

\section*{Declaration}
\noindent$\textbf{Conflict~of~interest}$
The authors declare that they have no known competing financial interests or personal relationships that could have appeared to influence the work reported in this paper.
	
\noindent$\textbf{Data~availability}$
Data sharing not applicable to this paper as no datasets were generated or analysed during the current study.


\begin{thebibliography}{99}

 \bibitem{CCCFL2020} G. Chen, Y. Chen, Q. Cui, X. Feng and Q. Liu,
\newblock The chromatic number of graphs with no induced subdivision of $K_4$,
\newblock \emph{Graphs Combin.}, \textbf{36} (2020) 719--728.

\bibitem{Chen2021}
G. Chen, Y. Chen, Q. Cui, X. Feng and Q. Liu,
\newblock The chromatic number of \{$\text{ISK}_4$, diamond, bowtie\}-free graphs,
\newblock \emph{J. Graph Theory}, \textbf{96} (2021) 554--577.

\bibitem{Chudnovsky2019} 
M. Chudnovsky, C. Liu, O. Schaudt, S. Spirkl, N. Trotignon and K. Vu\v{s}kovi\'c,
\newblock  Triangle-free graphs that do not contain an induced subdivision of $K_4$ are $3$-colorable,
\newblock  \emph{J. Graph Theory},   \textbf{92} (2019) 67--95.


\bibitem{MC06} M. Chudnovsky, N. Robertson, P. Seymour and R. Thomas,
\newblock The strong perfect graph theorem, \newblock \emph{Ann. of Math.} \textbf{164} (2006) 51--229.

\bibitem{Chudnovsky2016} 
M. Chudnovsky, A. Scott, and P. Seymour,
\newblock  Induced subgraphs of graphs with large chromatic number. II. Three steps towards Gy\'{a}rf\'{a}s' conjectures,
\newblock  \emph{J. Combin. Theory Ser. B},   \textbf{118} (2016) 109--128.


\bibitem{Chudnovsky20192} 
M. Chudnovsky, A. Scott and P. Seymour,
\newblock   Induced subgraphs of graphs with large chromatic number.XII. Distant stars,
\newblock  \emph{ J. Graph Theory},   \textbf{ 92}  (2019) 237--254.

\bibitem{duffin1965} 
R. J. Duffin,
\newblock  Topology of series-parallel networks,
\newblock  \emph{J. Math. Anal. Appl.},   \textbf{ 10}  (1965) 303--318.

\bibitem{Erdos1959} 
P. Erd\H{o}s,
\newblock   Graph theory and probability,
\newblock  \emph{Can. J. Math.},   \textbf{11} (1959) 34--38.

\bibitem{g2} A. Gy\'{a}rf\'{a}s,
\newblock On Ramsey covering-numbers, 
\newblock \emph{Colloquia Mathematic Societatis J\'{a}nos Bolyai 10, Infinite and Finite Sets. North-Holland/American Elsevier, New York } (1975) 801--816.

\bibitem{g3} A. Gy\'{a}rf\'{a}s, E. Szemer\'edi and Zs. Tuza,
\newblock Induced subtrees in graphs of large chromatic number,
\newblock \emph{Discrete Mathematics} \textbf{ 30} (1980)  235--344.




\bibitem{Karp} R. M. Karp,
\newblock Reducibility among Combinatorial Problems.
\newblock \emph{in {\it Ideas that created the future--classic papers of computer science}}  (1972)  349--356, MIT Press, Cambridge
(2021) MR4679190.





\bibitem{Kierstead1994} H. A. Kierstead and S. G. Penrice,
\newblock Radius two trees specify $\chi$-bounded classes,
\newblock \emph{J. Graph Theory} \textbf{18} (1994)  119--129.

\bibitem{Kierstead2004} H. A. Kierstead and Y. Zhu,
\newblock Radius three trees in graphs with large chromatic number,
\newblock \emph{SIAM J. Discrete Math.} \textbf{18} (2004) 571--581.


\bibitem{DKDO2004} D. K\"uhn and D. Osthus,
\newblock Induced subdivisions in $K_{s,s}$-free graphs of large average degree,
\newblock\emph{ Combinatorica} \textbf{24} (2004) 287--304.

\bibitem{NKL2017} N. K. Le,
\newblock Chromatic number of $\mathrm{ISK_4}$-free graphs,
\newblock \emph{ Graphs Combin.}, \textbf{33} (2017) 1635--1646.


\bibitem{Lehot}      
P. G. H. Lehot,
\newblock An optimal algorithm to detect a line graph and output its root graph,     
\newblock  \emph{A, J. Assoc. Comput. Machin.},  \textbf{21 (4)} (1974) 569--575.



\bibitem{12}      
B. L\'ev\^eque, D. Lin, F. Maffray, N. Trotignon,
\newblock Detecting induced subgraphs,  
\newblock  \emph{Discrete Appl. Math.},  \textbf{157} (2009) 3540--3551. 




\bibitem{Leveque2012}      
B. L\'ev\^eque, F. Maffray and N. Trotignon,
\newblock On graphs with no induced subdivision of $K_4$,     
\newblock  \emph{J. Combin. Theory Ser. B},  \textbf{102} (2012) 924--947.

 \bibitem{AP2014} A. Pawlik, J. Kozik, T. Krawczyk, M. Laso\'n, P. Micek, W. T. Trotter and B. Walczak,
\newblock Triangle-free intersection graphs of line segments with large chromatic number,
\newblock\emph{ J. Combin. Theory Ser. B}, \textbf{105} (2014) 6--10.


\bibitem{scott1997} A. D. Scott,
\newblock Induced trees in graphs of large chromatic number,
\newblock\emph{ J. Graph Theory}, \textbf{24} (1997) 297--311.


\bibitem{scott2016} A. D. Scott and P. Seymour,
\newblock Induced subgraphs of graphs with large chromatic number. I. Odd holes,
\newblock\emph{ J. Combin. Theory Ser. B}, \textbf{121} (2016), 68--84.

\bibitem{ss2020} A. D. Scott and P. Seymour, 
\newblock Induced subgraphs of graphs with large chromatic number. XIII. New brooms,
\newblock \emph{Eur. J. Comb.}, \textbf{84} (2020) 103024.

\bibitem{ss} A. D. Scott and P. Seymour, 
\newblock A survey of $\chi$-boundedness,  
\newblock \emph{J. Graph Theory}, \textbf{95} (2020) 473--504.




\bibitem{Sumner1981} D. P. Sumner, 
\newblock Subtrees of a graph and the chromatic number,  
\newblock \emph{The Theory and Applications of Graphs},  (1981) 557--576.


\bibitem{Tarjan} R. E. Tarjan, 
\newblock Decomposition by clique separators,  
\newblock \emph{Discrete Math},  (1985) 221--232.


\bibitem{Trotignon2017} 
N. Trotignon and K. Vu\v{s}kovi\'c,
\newblock  On triangle-free graphs that do not contain a subdivision of the complete graph on four vertices as an induced subgraph, 
\newblock \emph{J. Graph Theory}, \textbf{84} (2017) 233--248.


\end{thebibliography}
\end{document}